\title{On uniqueness of  tensor products of irreducible categorifications}
\author{Ivan Losev} \author{Ben Webster}
\renewcommand{\sl}{\mathfrak{sl}}
\newcommand{\Cat}{\mathcal{C}}
\newcommand{\OCat}{\mathcal{O}}
\newcommand{\C}{\mathbb{K}}
\newcommand{\la}{\lambda}
\newcommand{\al}{\alpha}
\newcommand{\Z}{\mathbb{Z}}
\newcommand{\Bb}{\mathbf{b}}
\newcommand{\cata}{\mathfrak{V}}
\newcommand{\Ba}{\mathbf{a}}
\newcommand{\Bi}{\mathbf{i}}
\newcommand{\bmu}{{\boldsymbol{\underline{\mu}}}}
\newcommand{\bnu}{{\boldsymbol{\underline{\nu}}}}
\newcommand{\bla}{{\boldsymbol{\underline{\lambda}}}}
\newcommand{\End}{\operatorname{End}}
\newcommand{\excise}[1]{}
\newcommand{\arxiv}[1]{\href{http://arxiv.org/abs/#1}{\tt arXiv:\nolinkurl{#1}}}
\newcommand{\cO}{\mathcal{O}}
\newcommand{\g}{\mathfrak{g}}
\newcommand{\fg}{\mathfrak{g}}
\newcommand{\Hom}{\operatorname{Hom}}
\newcommand{\Ext}{\operatorname{Ext}}
\newcommand{\Efun}{\mathcal{E}}
\newcommand{\Ffun}{\mathcal{F}}
\newcommand{\gl}{\mathfrak{gl}}
\newcommand{\gr}{\operatorname{gr}}
\newcommand{\mmod}{\operatorname{-mod}}
\newcommand{\oDelta}{\bar{\Delta}}
\newcommand{\onabla}{\bar{\nabla}}
\numberwithin{equation}{section}
\newtheorem{Thm}{Theorem}[section]
\newtheorem{Prop}[Thm]{Proposition}
\newtheorem{Cor}[Thm]{Corollary}
\newtheorem{Lem}[Thm]{Lemma}
\newtheorem{itheorem}{Theorem}
\theoremstyle{definition}
\newtheorem{defi}[Thm]{Definition}
\newtheorem{Rem}[Thm]{Remark}
\newcommand{\ivantodo}{\todo[inline,color=green!20]}
\newcommand{\te}{\tilde{e}}
\newcommand{\tf}{\tilde{f}}
\numberwithin{table}{section} \oddsidemargin=0cm
\address{I.L.: Department
of Mathematics, Northeastern University, Boston MA 02115 USA}
\email{i.loseu@neu.edu}
\address{B.W.: Department of Mathematics, University of Virginia,
Charlottesville, VA 22904 USA}
\email{btw4e@virginia.edu}
\thanks{MSC 2010: 16G99, 17B10, 18D99}
\begin{document}

\begin{abstract}
  In this paper, we propose an axiomatic definition for a {\bf tensor
    product categorification}. A tensor product categorification is an abelian category with a
  categorical action of a Kac-Moody algebra $\fg$ in the sense of
  Rouquier or Khovanov-Lauda whose Grothendieck group is isomorphic to
  a tensor product of simple modules.  However, we require a much
  stronger structure than a mere isomorphism of representations; most
  importantly, each such categorical representation must have a
  standardly stratified structure compatible with the categorification
  functors, and with combinatorics matching those of the tensor product.

 With these stronger conditions, we recover a uniqueness theorem
 similar in flavor to that of Rouquier for categorifications of simple
 modules.  Furthermore, we already know of an example of such a
 categorification: the representation category of an algebra $T^\bla$ previously
 defined by the second author using generators and relations.
 Next, we show that tensor product categorifications give a
 categorical realization of tensor product crystals analogous to that for
 simple crystals given by cyclotomic quotients of KLR algebras.

Examples of such categories are also readily found in more classical
representation theory; for finite and affine type A, tensor product
categorifications can be realized as quotients of the representation
categories of cyclotomic $q$-Schur algebras.
%
\end{abstract}
\maketitle

\section{Introduction}
\label{sec:introduction}

A subject that has attracted great attention in recent years is that
of categorical representations of a Kac-Moody Lie algebra $\fg$; this is the study of
2-categories corresponding to universal enveloping algebras of Lie
algebras, and in particular, their actions on categories.  This theory
has deep roots, but the notion of a categorical action of
$\mathfrak{sl}_2$ was first introduced by Chuang and Rouquier
\cite{CR04}, and broadened to other Kac-Moody algebras and developed
further by Khovanov and Lauda
\cite{Lausl21,KLIII} and Rouquier \cite{Rou2KM}.

Obviously,
one important question is the relationship between categorical
representations and the usual linear representations of $\fg$.
For simple representations, this relationship is quite direct: each
simple linear representation of $\fg$ has a universal
categorification, by work of Rouquier \cite{Rou2KM}. In essence,
simple linear representations and simple categorical representations
are in bijection.

However,
categorical representations are not necessarily semi-simple, even if
the representation on their Grothendieck group is.  Examples show that
interesting non-irreducible representations should have
categorifications which are ``more than the sum of their parts''; in this paper, our main
example is tensor products of irreducibles, but the same principle
applies to the categorifications of Fock space supplied by the
categories $\mathcal{O}$ over Cherednik algebras \cite{ShanCrystal,WebRou}.

In particular, the second author defined a category $\cata^\bnu$
attached to a list of highest weights $\bnu=(\nu_1,\dots, \nu_\ell)$
in \cite{Webmerged}; in this paper, we will denote this category
$\mathcal{C}(\bnu)$.  This carries a categorical action of $\fg$ and has
Grothendieck group isomorphic to the tensor product of $U(\fg)$
modules $V_{\nu_1}\otimes \cdots \otimes V_{\nu_\ell}$ (or the
corresponding representations of $U_q(\fg)$ if one incorporates the
grading).  Many properties of this category suggest that it is the
``right'' categorification for tensor products; in particular, the
ribbon structure on the category of $U_q(\fg)$ modules, and the
canonical basis of Lusztig both have appropriate categorical
analogues.  However, the definition given in \cite{Webmerged} is {\it ad
  hoc}, defined by generators and relations, and lacks a universal
property.  In this paper, we try to correct this defect, giving an
axiomatic characterization of this category, suggested by the notion
of a {\bf highest weight categorification} introduced by the first
author \cite{LoHWCI,LoHWCII} (see, in particular, the discussion of basic
highest weight $\sl_m$-categorifications  in \cite[9.2]{LoHWCII}).

In the crudest sense, the category $\Cat(\bnu)$ is the unique abelian
category which carries a categorical $\fg$-action and which is
obtained by beginning with the unique categorification of
$V_{\nu_1}\otimes \cdots \otimes V_{\nu_\ell}$ as an irreducible
$\fg^{\oplus \ell}$-module, and then adding in new extensions between
the projectives in this category in a controlled way.  We require that the resulting category is {\it
  standardly stratified}, that is, it has a subcategory of standardly
filtered modules which is closed under categorification functors, and
whose combinatorics are controlled by those of the tensor product (in
particular, the preorder used  in the stratification depends on
the order of the tensor factors).  We formalize this idea with the
definition of a {\bf tensor product categorification} (\S \ref{SS_definition}).  The main body
of the paper is dedicated to the proof of the uniqueness result
described above:
\begin{itheorem}[Thm.~\ref{main2}, Thm.~\ref{Thm:cryst}]\label{main}
  Any tensor product categorification for the representation
  $V_{\nu_1}\otimes \cdots \otimes V_{\nu_\ell}$ is strongly
  equivariantly equivalent to $\Cat(\bnu)$ as a categorical
  $\fg$-module.

  For any such categorification, there is a canonical isomorphism of
  crystals between the isomorphism classes of simple objects and the
  tensor product crystal $B(\nu_1)\otimes \cdots \otimes B(\nu_\ell)$
  (as  conjectured in \cite[\S \ref{m-sec:crystal}]{Webmerged}).
\end{itheorem}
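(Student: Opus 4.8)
The plan is to reduce both statements to Rouquier's uniqueness theorem for categorifications of \emph{simple} modules \cite{Rou2KM}, by using the standardly stratified structure to pass to an associated graded category and then a rigidity argument to glue it back up. Throughout, write $\Cat$ for an arbitrary tensor product categorification for $V_{\nu_1}\otimes\cdots\otimes V_{\nu_\ell}$, with standard objects $\Delta(\bmu)$, proper standard objects $\oDelta(\bmu)$, standardly filtered subcategory $\Cat^{\Delta}$, and stratification preorder given --- by the tensor-product axioms --- by dominance ``from the right'' on weight profiles, so that its equivalence classes (the strata) are indexed by tuples $(\gamma_1,\dots,\gamma_\ell)$ with $\gamma_i$ a weight of $V_{\nu_i}$.

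\textbf{Step 1: the associated graded.} First I would show that the associated graded category $\gr\Cat=\bigoplus_{\gamma}\Cat_{\gamma}$ carries a categorical action of $\fg^{\oplus\ell}$: the functors $E_i,F_i$ are filtered with respect to the stratification, the leading term of $F_i$ is a sum $\sum_{j=1}^{\ell}F_i^{(j)}$ of $\ell$ pieces supported one on each tensor factor, and the pieces attached to different factors commute. The Grothendieck group of $\gr\Cat$ is then the \emph{external} tensor product $V_{\nu_1}\boxtimes\cdots\boxtimes V_{\nu_\ell}$, a simple integrable module over $\fg^{\oplus\ell}$. Rouquier's uniqueness theorem, applied to the symmetrizable Kac--Moody algebra $\fg^{\oplus\ell}$, then identifies $\gr\Cat$, as a categorical $\fg^{\oplus\ell}$-module, with the external product $\Cat^{\mathrm{min}}(\nu_1)\boxtimes\cdots\boxtimes\Cat^{\mathrm{min}}(\nu_\ell)$ of minimal (cyclotomic KLR) categorifications. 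In particular $\gr\Cat$ is independent of $\Cat$ and coincides with $\gr\Cat(\bnu)$.

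\textbf{Step 2: rigidity of the gluing --- the main obstacle.} It then remains to show that $\Cat$ is recovered from $\gr\Cat$ together with the categorical $\fg$-action in an essentially unique way. I would argue by a deformation/rigidity principle in the spirit of the uniqueness of highest weight categorifications of \cite{LoHWCI,LoHWCII}: deform $\Cat$ over a formal parameter so that the special fibre is $\Cat$ while the generic fibre ``splits'' into a categorical $\fg^{\oplus\ell}$-module isomorphic to $\gr\Cat$, and observe that the categorical $\fg$-action constrains the filtered lift so tightly that every first-order deformation of $\gr\Cat$ compatible with the structure is realized already by $\Cat(\bnu)$ and is rigid. Concretely, one produces a canonical projective generator $P$ of $\Cat$ out of the $\fg$-action --- a sum of objects $F_{i_1}\cdots F_{i_k}P^{\mathrm{top}}$ with $P^{\mathrm{top}}$ a projective cover in the top stratum --- and identifies $\End_{\Cat}(P)$ with $T^{\bla}$: the generators of $\End_{\Cat}(P)$ are forced by the KLR $2$-morphisms together with the ``tensor'' morphisms obtained from adjunction units across strata, and the relations are forced because they already hold in $\gr\End_{\Cat}(P)=\gr T^{\bla}$ and the standardly stratified structure leaves no room to deform them. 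Since the identical analysis applies to $\Cat(\bnu)=T^{\bla}\mmod$, this produces an equivalence $\Cat\simeq\Cat(\bnu)$, which necessarily intertwines the categorical $\fg$-actions strongly because $P$ and the isomorphism $\End_{\Cat}(P)\cong T^{\bla}$ were built out of those actions. I expect this to be the genuinely hard step: one must compute the groups $\Ext^{\bullet}_{\Cat}\!\big(\Delta(\bmu),\Delta(\bmu')\big)$ between standards lying in different strata and show that the categorical action pins them, and the higher coherences among them, down. The natural tools --- the contravariant duality fixing simples, BGG-type reciprocity for standardly stratified categories, and the forced existence of the braiding functors --- should suffice, but upgrading a match of numerical invariants to an actual equivalence of categorical modules is where the real effort goes.

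\textbf{Step 3: the crystal.} For the second statement, recall that the crystal on the set of isomorphism classes of simples of a categorical $\fg$-module is intrinsic, with $\tf_i L=\head(F_i L)$ and $\te_i L=\head(E_i L)$ after the usual normalization and $\varepsilon_i,\varphi_i$ read off from the $\sl_2^{(i)}$-action. Given the first statement it would suffice to compute this crystal for $\Cat(\bnu)$, but it is cleaner to argue directly from the stratification, which is available for every $\Cat$. By Step 1 each simple of $\Cat$ lies over a unique simple of $\gr\Cat=\Cat^{\mathrm{min}}(\nu_1)\boxtimes\cdots\boxtimes\Cat^{\mathrm{min}}(\nu_\ell)$ and so receives a label $(b_1,\dots,b_\ell)\in B(\nu_1)\times\cdots\times B(\nu_\ell)$ via the known crystal on the simples of a minimal categorification. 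Since $F_i$ is filtered along the stratification with leading term $\sum_j F_i^{(j)}$ and the head of a filtered object is governed by its leading stratum, the operators induced on simples obey Kashiwara's tensor-product rule: $\tf_i$ acts in the tensor factor selected by the $i$-signature of $(b_1,\dots,b_\ell)$, the order on strata --- which by the axioms records the order of the tensor factors --- being precisely what computes that signature. This gives a bijection of the simples of $\Cat$ with $B(\nu_1)\otimes\cdots\otimes B(\nu_\ell)$ commuting with all $\te_i$ and $\tf_i$; the compatibility of $\varepsilon_i,\varphi_i$, hence of the entire crystal graph, follows from the same, easier, analysis of the $\sl_2^{(i)}$-strata. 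This confirms the conjecture \cite[3.12]{WebCTP}.
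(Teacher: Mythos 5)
Your proposal diverges from the paper's route in Step~2 and Step~3, and both of those steps contain genuine gaps.

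The paper's proof of the first assertion is by \emph{categorical splitting} plus a \emph{double centralizer property}, inducting on the number $\ell$ of tensor factors. Concretely: from $\Cat$ one produces a quotient $\underline{\Cat}^{+}$ carrying an induced categorical $\fg$-action that makes it a tensor product categorification of $V_{\nu_1}\otimes\cdots\otimes V_{\nu_{\ell-1}}$ (the technical content is truncating $E_i$, since the relevant Serre subcategory is $F_i$-stable but not $E_i$-stable). Separately, one proves that the functor $\pi_{\mathrm{top}}$ to the cyclotomic KLR quotient of highest weight $|\bnu|$ is fully faithful on projectives, which needs the fact (Prop.~5.2 in the paper) that socles of standardly filtered objects survive under $\pi_{\mathrm{top}}$. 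The uniqueness then follows by comparing images of projectives inside $R^{|\bnu|}\mmod$, together with a rather delicate label-matching argument using string parameterizations. Your Step~2 instead posits a deformation/rigidity argument: that $\End_{\Cat}(P)$ must equal $T^{\bla}$ because their associated gradeds agree and ``the standardly stratified structure leaves no room to deform them.'' That last clause is precisely the assertion that needs a proof, and nothing in the proposal supplies one: a filtered algebra is in general not determined by its associated graded, and the Ext groups $\Ext^{\bullet}_{\Cat}(\Delta(\bmu),\Delta(\bmu'))$ you propose to compute are genuinely nontrivial and not controlled by the axioms in any obvious way. The paper's double-centralizer mechanism is exactly the device that replaces such an unavailable rigidity statement; you cannot do without it, or something of equal strength, here. (A smaller point: your Step~1 presents the $\fg^{\oplus\ell}$-action on $\gr\Cat$ as something to prove, whereas by axiom (TPC2) it is part of the data; what remains is only the application of Proposition~3.3.)

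The crystal statement (Step~3) also does not follow the way you suggest. What the stratification and (TPC3) give you directly is only the analogue of the paper's Lemma~6.5: $\tf_i L(\bla)=L({}_j\tf_i\bla)$ for \emph{some} index $j$, because the head of $F_i L(\bla)$ is a quotient of the head of $F_i\oDelta(\bla)$ and the latter is filtered by the $\Delta({}_jF_i\cdot)$. It does not tell you \emph{which} $j$, and ``the head is governed by the leading stratum'' is not a correct or even well-defined statement: $L(\bla)$ is not $\Delta$-filtered, so the filtration from (TPC3) lives on $F_i\oDelta(\bla)$, not on $F_i L(\bla)$, and the simple head may be carried by a middle filtration piece. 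Identifying the correct $j$ with the one predicted by Kashiwara's $i$-signature rule is exactly the content of the paper's Proposition~6.4, an Ext-vanishing statement proved by a double induction on $w(\bla)$ and $h_{-}$ and using (TPC3) again in a delicate way; that argument is not replaceable by the soft filtration reasoning you give. In short, the overall strategy of reducing to the simple case via $\gr\Cat$ and then rigidifying/reading off crystals is attractive, but both the rigidification and the crystal identification require the paper's actual machinery (splitting, double centralizer, Ext vanishing), none of which appear in the proposal.
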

\excise{
We note a number of consequences of this observation which are not
obvious from the definition of a tensor product category; in
particular, in type A, all such categorifications are quotients of
categories of representations over cyclotomic $q$-Schur algebras
(Corollary \ref{Cor:type_A_fund}).}

The proof is by induction; we use the notion of {\bf categorical
  splitting}, introduced by the first author in \cite{LoHWCII}.
Roughly, inside any categorification of $V_{\nu_1}\otimes \cdots
\otimes V_{\nu_\ell}$, one finds a categorification of $V_{\nu_1}\otimes
\cdots \otimes V_{\nu_{\ell-1}}$. The inductive hypothesis allows us
to identify this subcategory with
$\Cat(\nu_1,\dots,\nu_{\ell-1})$; we can then argue that any tensor product  $\fg$-categorification
containing an appropriately embedded copy of
$\Cat(\nu_1,\dots,\nu_{\ell-1})$ must be $\Cat(\bnu)$.

This theorem has quite powerful applications in the theory of Lie
superalgebras, which are
explored further in a joint paper of Brundan and the
authors \cite{BLW}.

We should note that this theorem is likely not the last word in the
question of how one can categorify a tensor product.  Of particular
import is work announced by Rouquier, which proposes a notion of
internal tensor product for the 2-category of categorical
$\fg$-actions.  Obviously, we anticipate that our tensor product
categorifications are the categories we would arrive at using
Rouquier's internal tensor product, but this remains to be confirmed.

\section*{Acknowledgements}

I.L. was supported by the NSF under Grant DMS-1161584.
B.W. was supported by the NSF under Grant DMS-1151473.

\section{Standardly stratified categories}
\subsection{Definition}
Let $\C$ be a field of any characteristic. We consider an  abelian
category $\Cat$ such that each block of $\Cat$ is equivalent to the category of
finite dimensional modules over a finite dimensional $\C$-algebra, and
such that, for every simple object $L$, we have $\End_{\Cat}(L)\cong\C$, i.e. every
irreducible in $\Cat$ is absolutely irreducible.
We will consider a set $\Lambda$ with fixed bijection to
the isomorphism classes of simple objects in $\Cat$.
For any $\la\in \Lambda$, we let $L(\lambda)$ denote the corresponding simple and let $P(\lambda)$ be its
projective cover.

Now consider a poset $\Xi$ with the sets $\{\xi'\in \Xi\vert \xi'>\xi\}$
and $\{\xi'\in \Xi\vert \xi'<\xi\}$ finite for each $\xi\in \Xi$.
Choose a map $\varrho:\Lambda\rightarrow \Xi$
with finite fibers; this map induces a natural preorder on  $\Lambda$. To each $\xi\in \Xi$, we assign the
Serre subcategories $\Cat_{\leqslant \xi}$ (resp., $\Cat_{<\xi}$)
of $\Cat$ spanned by $L(\lambda)$ with $\varrho(\lambda)\leqslant \xi$ (resp., $\varrho(\lambda)<\xi$).
Of course, if $\xi\leqslant \xi'$, then $\Cat_{\leqslant \xi}\subset
  \Cat_{\leqslant \xi'}$. For $\xi\in \Xi$, set
$\Cat_\xi:=\Cat_{\leqslant \xi}/\Cat_{<\xi}$. For $\lambda\in \varrho^{-1}(\xi)$ let $L_\xi(\lambda)$ denote the simple
object in $\Cat_\xi$ corresponding to $\lambda$. Let $P_\xi(\lambda)$ denote the projective cover
of $L_\xi(\lambda)$ in $\Cat_\xi$. 

Let $\pi_\xi$ denote the quotient functor $\Cat_{\leqslant \xi}\twoheadrightarrow \Cat_\xi$.
We suppose that this functor has an exact left   adjoint functor.
\begin{defi}\label{defi:ststr}
  We will call this left adjoint the {\bf standardization} functor and
  denote it by $\Delta_\xi$.  We will often omit $\xi$ from the
  notation. For $\lambda\in \Lambda$ let $\Delta(\lambda)$ (resp.,
  $\oDelta(\lambda)$) denote the object
  $\Delta_{\varrho(\lambda)}(P_\xi(\lambda))$ (resp.,
  $\Delta_{\varrho(\lambda)}(L_\xi(\lambda))$). The objects
  $\Delta(\lambda),\oDelta(\lambda)$ will be called {\bf standard} and
  {\bf proper standard}.

We call the category $\Cat$ equipped with a filtration $\Cat_{\leqslant \xi}$ (such that $\Delta_\xi$ is an exact functor) a {\bf standardly stratified}
category if there is an epimorphism $P(\lambda)\twoheadrightarrow \Delta(\lambda)$ whose kernel admits a filtration
by objects $\Delta(\mu)$ with $\mu>\lambda$.
\end{defi}
If for each $\xi$, the category $\Cat_\xi$ coincides with the category $\operatorname{Vect}$ of vector spaces,
then we arrive at the usual definition of a highest weight category.

\begin{Rem}
  We would like to point out that one definition of a standardly
  stratified category already exists in the literature,
  \cite[2.2.1]{CPS96}. A standardly stratified category in our sense
  is also standardly stratified in the sense of \cite{CPS96}, but our
  definition is more restrictive: in \cite{CPS96} it is not required
  that $\pi_\xi$ admits an exact left adjoint. We feel that our
  definition is more natural, and the structure theory behaves better
  (for example, see Lemma \ref{Lem:co-st-ext} below). So, despite the fact that our
  axioms differ from \cite{CPS96}, we will still call categories from Definition \ref{defi:ststr} ``standardly stratified.''
\end{Rem}

We remark that, by the definition of $\Delta(\lambda), \oDelta(\lambda)$,
there is an epimorphism $\Delta(\lambda)\twoheadrightarrow \oDelta(\lambda)$ and the head of $\Delta(\lambda)$
is simple and coincides with $L(\lambda)$. We also remark that the simple constituents of
the radical of $\oDelta(\lambda)$ are of the form $L(\mu)$ with $\mu<\lambda$.


It is a standard fact that the condition on a filtration of projectives implies
\begin{align}
&\Ext_{\Cat}^i(\Delta_\xi(M),\Delta_{\xi'}(M'))=0, \text{ for
}\xi\not\leqslant\xi', i>0. \label{eq:Ext_vanish}\\
&\Ext_{\Cat}^i(\Delta_\xi(M),\Delta_{\xi}(M'))=\Ext^i_{\Cat_\xi}(M,M'), i\geqslant 0. \label{eq:Ext_preserv}
\end{align}

Let $\Cat^{\Delta}$ (resp., $\Cat^{\oDelta}$)
denote the full subcategory of $\Cat$ consisting of all objects admitting a filtration whose successive
quotients are standard (resp., proper standard) objects. So, in particular,
$\Cat-\operatorname{proj}\subset \Cat^{\Delta}\subset \Cat^{\oDelta}$.
The following lemma is a direct corollary of (\ref{eq:Ext_vanish}).

\begin{Lem}\label{Lem:proj_exact}
Let $\iota_\xi$ be the inclusion functor $\Cat_{\leqslant \xi}\hookrightarrow \Cat$ and
$\iota_\xi^!$ be its left adjoint functor $\Cat\rightarrow \Cat_{\leqslant \xi}$. Then the functor
$\iota_\xi^!$ is exact on $\Cat^{\oDelta}$ (meaning that it maps exact sequences to exact sequences).
\end{Lem}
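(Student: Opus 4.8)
The plan is to compute $\iota_\xi^!$ explicitly on $\Cat^{\oDelta}$ by rearranging proper standard filtrations, and then to read off exactness from a bookkeeping argument in the Grothendieck group. First I would recall that, being left adjoint to the inclusion of the Serre subcategory $\Cat_{\leqslant\xi}$ into the finite-length category $\Cat$, the functor $\iota_\xi^!$ sends an object $X$ to its largest quotient lying in $\Cat_{\leqslant\xi}$; concretely $\iota_\xi^! X\cong X/X^\sharp$, where $X^\sharp$ is the smallest subobject of $X$ with $X/X^\sharp\in\Cat_{\leqslant\xi}$, and $\iota_\xi^!$ is right exact.

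The key step is to produce, for each $X\in\Cat^{\oDelta}$, a short exact sequence $0\to X^\sharp\to X\to X_{\leqslant\xi}\to 0$ in which $X^\sharp$ admits a proper standard filtration with sections $\oDelta(\mu)$, $\varrho(\mu)\not\leqslant\xi$, while $X_{\leqslant\xi}$ admits one with sections $\oDelta(\mu)$, $\varrho(\mu)\leqslant\xi$. Starting from an arbitrary proper standard filtration of $X$, I would ``bubble sort'' it so that the sections with label $\not\leqslant\xi$ sink to the bottom: whenever some $\oDelta(\nu)$ with $\varrho(\nu)\leqslant\xi$ sits immediately below an $\oDelta(\mu)$ with $\varrho(\mu)\not\leqslant\xi$, then $\varrho(\mu)\not\leqslant\varrho(\nu)$ (else $\varrho(\mu)\leqslant\varrho(\nu)\leqslant\xi$ by transitivity), so $\Ext^1_\Cat(\oDelta(\mu),\oDelta(\nu))=0$ by (\ref{eq:Ext_vanish}), the corresponding subquotient extension splits, and the two sections may be interchanged; an inversion count shows this terminates. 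By construction all composition factors of $X_{\leqslant\xi}$ have label $\leqslant\xi$, so $X_{\leqslant\xi}\in\Cat_{\leqslant\xi}$; and since each $\oDelta(\mu)$ appearing in $X^\sharp$ has simple head $L(\mu)$ with $\varrho(\mu)\not\leqslant\xi$, an induction along the filtration using left exactness of $\Hom$ gives $\Hom_\Cat(X^\sharp,Z)=0$ for every $Z\in\Cat_{\leqslant\xi}$. This last vanishing forces $X^\sharp\subseteq Y$ for any $Y\subseteq X$ with $X/Y\in\Cat_{\leqslant\xi}$, i.e. $X^\sharp$ is the minimal such subobject; hence $\iota_\xi^! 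X=X_{\leqslant\xi}$, which in particular lies in $\Cat^{\oDelta}$.

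To finish I would pass to $K_0(\Cat)=\bigoplus_\lambda\Z[L(\lambda)]$. The classes $[\oDelta(\mu)]$ form a $\Z$-basis of $K_0(\Cat)$, the transition matrix from the $[L(\lambda)]$ being unitriangular for the preorder induced by $\varrho$; consequently the multiplicity $(X:\oDelta(\mu))$ of $\oDelta(\mu)$ in a proper standard filtration of $X\in\Cat^{\oDelta}$ is well defined and additive in short exact sequences of objects of $\Cat^{\oDelta}$. By the previous paragraph $[\iota_\xi^! X]=[X_{\leqslant\xi}]=\sum_{\varrho(\mu)\leqslant\xi}(X:\oDelta(\mu))\,[\oDelta(\mu)]$, so for a short exact sequence $0\to A\to B\to C\to 0$ with all terms in $\Cat^{\oDelta}$ one has $[\iota_\xi^! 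A]-[\iota_\xi^! B]+[\iota_\xi^! C]=0$. Combined with right exactness of $\iota_\xi^!$: putting $K:=\ker(\iota_\xi^! A\to\iota_\xi^! B)$, the sequence $0\to K\to\iota_\xi^! A\to\iota_\xi^! B\to\iota_\xi^! C\to 0$ is exact, whence $[K]=0$ and therefore $K=0$; so $0\to\iota_\xi^! A\to\iota_\xi^! B\to\iota_\xi^! C\to 0$ is exact. A general exact sequence in $\Cat^{\oDelta}$ then splices into such short exact sequences.

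I expect the main obstacle to be the rearrangement in the second paragraph: one must not only split off the ``wrong-stratum'' sections but also recognize the resulting subobject $X^\sharp$ intrinsically, as the minimal subobject with quotient in $\Cat_{\leqslant\xi}$, so that the identification $\iota_\xi^! X=X_{\leqslant\xi}$ genuinely holds and is functorial. This is precisely where the hypothesis on filtrations of projectives enters, through (\ref{eq:Ext_vanish}); the remaining steps are formal manipulations with the adjunction and with $K_0$.
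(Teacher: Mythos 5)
The paper gives no proof of this lemma, only the remark that it is ``a direct corollary of (\ref{eq:Ext_vanish})''; so there is nothing literal to compare against. Your argument is correct and is a full, reasonable unpacking of that assertion.

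The core is sound: (\ref{eq:Ext_vanish}) applied with $M,M'$ simples gives exactly $\Ext^1(\oDelta(\mu),\oDelta(\nu))=0$ whenever $\varrho(\mu)\not\leqslant\varrho(\nu)$, which is what your bubble-sort needs; the conclusion that $\iota_\xi^!X$ is the top piece of the rearranged filtration and is again $\oDelta$-filtered, with $(\iota_\xi^!X:\oDelta(\mu))=(X:\oDelta(\mu))$ for $\varrho(\mu)\leqslant\xi$ and $0$ otherwise, is right; and the closing $K_0$ bookkeeping (right exactness of a left adjoint plus additivity of $\oDelta$-multiplicities, which one can see either as you do via unitriangularity or via $(X:\oDelta(\mu))=\dim\Hom(X,\nabla(\mu))$ and $\Ext^1(X,\nabla(\mu))=0$ from Lemma~\ref{Lem:co-st-ext}(1)) does force the left-hand kernel to vanish. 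Two small remarks. First, in the bubble-sort step you should say explicitly that $\oDelta(\lambda)=\Delta_{\varrho(\lambda)}(L_{\varrho(\lambda)}(\lambda))$ is of the form $\Delta_{\xi}(M)$ to which (\ref{eq:Ext_vanish}) applies; this is how the paper's numbered vanishing yields the needed $\Ext^1(\oDelta(\mu),\oDelta(\nu))=0$. Second, your last sentence about splicing a general exact sequence in $\Cat^{\oDelta}$ into short ones implicitly uses that $\Cat^{\oDelta}$ is closed under kernels of epimorphisms between its objects; this is true (it follows from Lemma~\ref{Lem:co-st-ext}), but it is worth a word, since $\Cat^{\oDelta}$ is not closed under cokernels of monomorphisms. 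In practice the paper only applies the lemma to short exact sequences, so this does not affect anything.
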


For any standardly stratified category $\Cat$, we can consider its
{\bf associated graded} \linebreak $\gr \Cat=\oplus_\xi\Cat_\xi$; we can view $\Delta:=\bigoplus_{\xi\in \Xi}\Delta_\xi$
as a faithful inclusion $\gr \Cat\to \Cat$, which fails to be full.

For an ideal $\Xi_0$ in $\Xi$ (i.e., $\xi\in \Xi_0, \xi'<\xi$ implies $\xi'\in \Xi_0$),
we can consider the Serre subcategory $\Cat_{\Xi_0}\subset \Cat$ spanned by $L(\lambda)$
with $\varrho(\lambda)\in \Xi_0$ and the quotient
$\Cat/\Cat_{\Xi_0}$. Both these categories have a natural standardly stratified
structure. Indeed, it is clear that the functors $\pi_\xi^!$ are exact for both these
categories and then we can use \cite[Theorem 2.2.6]{CPS96}.

The quotient functor
$\pi_{\Xi_0}:\Cat\twoheadrightarrow\Cat/\Cat_{\Xi_0}$  has a left adjoint $\pi_{\Xi_0}^!$ which
is exact on
$(\Cat/\Cat_{\Xi_0})^{\oDelta}$ (see the discussion of recollement in
\cite[\S 1.3]{CPS96} or \cite[4.1]{LoHWCII} for the case of highest weight categories, the general
case is completely analogous).
For each $\xi\in \Xi\setminus
\Xi_0$, this functor
satisfies $\Delta_\xi=\pi_{\Xi_0}^!\circ \Delta_{0,\xi}$, where $\Delta_{0,\xi}$ is
the standardization functor for $\Cat/\Cat_{\Xi_0}$. This is an easy corollary
of the triangularity property for the projectives and is shown as in the
proof of  \cite[3.5.1]{CPS96} or as in \cite[4.1]{LoHWCII}.
In particular, \[\pi_{\Xi_0}^!(\Delta_0(\lambda))=\Delta(\lambda)\qquad \pi_{\Xi_0}^!(\oDelta_0(\lambda))=\oDelta(\lambda).\]

\excise{We also remark that $\Cat_\xi$ is the kernel of  $\Cat/\Cat_{\Xi_0}\twoheadrightarrow \Cat/\Cat_{\Xi_0\sqcup\{\xi\}}$
for $\Xi_0=\{\xi'\in \Xi| \xi'\not\geqslant \xi\}$.}

\subsection{Costandard objects}
The category $\Cat^{opp}$ has a ``dual'' standardly stratified
structure, which we describe here.
Let $I(\lambda)$ denote the injective hull of $L(\lambda)$.
\begin{defi}
  Define the {\bf costandard} object $\nabla(\lambda)$ as the maximal
  subobject of $I(\lambda)$ whose simple constituents are of the form
  $L(\mu)$ with $\mu\leqslant \lambda$. Also define the proper costandard
  object $\onabla(\lambda)$ as the maximal subobject of $I(\lambda)$
  such that the simple subquotients of $\onabla(\lambda)/L(\lambda)$
  are of the form $L(\mu)$ with $\mu<\lambda$.

Let  $\Cat^\nabla$ (resp., $\Cat^{\onabla}$)
denote the full subcategories of $\Cat$ consisting of all objects admitting a filtration whose successive
quotients are  costandard (resp., proper costandard) objects.
\end{defi}

We remark that, by the definition, $\nabla(\lambda)$ is injective in $\Cat_{\leqslant \varrho(\lambda)}$.

\begin{Lem}\label{Lem:co-st-ext}
\label{Lem:cost_filt} \mbox{}
\begin{enumerate}
\item We have $\dim \Ext^i(\Delta(\lambda), \onabla(\mu))=\dim
  \Ext^i(\oDelta(\lambda), \nabla(\mu))=
  \delta_{i,0}\delta_{\lambda,\mu}$. 

 \item For $N\in \Cat$, we have $N\in \Cat^\nabla$ (resp., $N\in
  \Cat^{\onabla}$) if and only if $\Ext^1(\oDelta(\lambda),N)=0$
  (resp., $\Ext^1(\Delta(\lambda), N)=0$) for all $\lambda$.

\item The right adjoint functor $\nabla_\xi$ of the projection
$\Cat_{\leqslant \xi}\rightarrow \Cat_\xi$ is exact and  \[\nabla_\xi(I_\xi(\lambda))=\nabla(\lambda)\qquad \nabla_\xi(L_\xi(\lambda))=\onabla(\lambda).\]
\end{enumerate}

\end{Lem}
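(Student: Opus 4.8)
The plan is to reduce everything to computations in the subquotients $\Cat_\xi$ using the three‑way adjunction $\Delta_\xi\dashv\pi_\xi\dashv\nabla_\xi$ together with (\ref{eq:Ext_vanish})--(\ref{eq:Ext_preserv}), and to get the ``second halves'' of (1) and (2) for free by transposing to $\Cat^{opp}$, which carries the transposed stratified data ($P^{opp}(\lambda)=I(\lambda)$, $\Delta^{opp}(\lambda)=\nabla(\lambda)$, $\oDelta^{opp}(\lambda)=\onabla(\lambda)$, same poset $\Xi$). The first thing to notice, which disposes of the ``moreover'' in (1), is that since the preorder on $\Lambda$ is pulled back along $\varrho$, the condition ``$\mu\leqslant\lambda$'' on a composition factor is just ``$L(\mu)\in\Cat_{\leqslant\varrho(\lambda)}$''. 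Hence $\nabla(\lambda)$ is precisely the largest subobject of $I(\lambda)$ lying in the Serre subcategory $\Cat_{\leqslant\varrho(\lambda)}$, and such a subobject is automatically injective in $\Cat_{\leqslant\varrho(\lambda)}$ and essential over $L(\lambda)$, so $\nabla(\lambda)=I_{\leqslant\varrho(\lambda)}(\lambda)$.

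I would prove (3) next, as it underpins the rest. The quotient functor $\pi_\xi$ between the finite categories $\Cat_{\leqslant\xi}$ and $\Cat_\xi$ has a right adjoint $\nabla_\xi$; being right adjoint to the exact functor $\pi_\xi$ it is left exact, preserves injectives, and satisfies $\pi_\xi\nabla_\xi\cong\mathrm{id}$ (so it is fully faithful). The identity $\Hom(L(\mu),\nabla_\xi I_\xi(\lambda))=\Hom(\pi_\xi L(\mu),I_\xi(\lambda))$ shows $\nabla_\xi I_\xi(\lambda)$ is an indecomposable injective of $\Cat_{\leqslant\xi}$ with socle $L(\lambda)$, hence equals $I_{\leqslant\xi}(\lambda)=\nabla(\lambda)$; pushing $L_\xi(\lambda)\hookrightarrow I_\xi(\lambda)$ through left exactness and the unit/counit, and comparing $\varrho$‑strata, identifies $\nabla_\xi L_\xi(\lambda)$ with the largest subobject of $\nabla(\lambda)$ that is proper costandard at $\lambda$, i.e. with $\onabla(\lambda)$. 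The real content is the exactness of $\nabla_\xi$ --- equivalently, that the localization $\pi_\xi$ sends injectives to injectives --- and this is the step I expect to be the main obstacle. I would extract it from the triangularity of the projectives: presenting $\Cat_{\leqslant\xi}$ as $A\mmod$ with $\Cat_{<\xi}$ the modules killed by an idempotent $e$ and $\Cat_\xi\cong eAe\mmod$, $\pi_\xi=e(-)$, exactness of $\Delta_\xi\cong Ae\otimes_{eAe}-$ says $Ae$ is projective as a right $eAe$‑module; applying the exact functor $e(-)$ to a $\Delta$‑filtration of ${}_A A$ (which exists because $\Cat_{\leqslant\xi}$ is standardly stratified, being $\Cat_{\Xi_0}$ for the ideal $\Xi_0=\{\xi'\leqslant\xi\}$) yields a finite filtration of ${}_{eAe}(eA)$ whose subquotients are each either $0$ or a projective $P_\xi(\nu)=e\Delta(\nu)$; peeling from the top, each step splits since $P_\xi(\nu)$ is projective, so ${}_{eAe}(eA)$ is projective, which is exactly exactness of $\nabla_\xi\cong\Hom_{eAe}(eA,-)$. (One could instead cite this from the literature on stratified algebras.)

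Granting (3), the first identity in (1) is routine. For $\Hom(\Delta(\lambda),\onabla(\mu))$: the image of any morphism is a quotient of $\Delta(\lambda)$, hence has head $L(\lambda)$, and is a nonzero subobject of $\onabla(\mu)$, hence contains $\operatorname{soc}\onabla(\mu)=L(\mu)$; comparing the $\varrho$‑value of the head with that of the socle (and applying $\pi_{\varrho(\lambda)}$ when $\varrho(\lambda)=\varrho(\mu)$) forces $\lambda=\mu$, and then the $\Hom$ is one‑dimensional. For $\Ext^1(\Delta(\lambda),\onabla(\mu))$ I would split on $\varrho(\lambda)$ versus $\xi:=\varrho(\mu)$. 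If $\varrho(\lambda)\leqslant\xi$: given an extension of $\Delta(\lambda)$ by $\onabla(\mu)$, push out along $\onabla(\mu)\hookrightarrow\nabla(\mu)$; the new extension of $\Delta(\lambda)$ by $\nabla(\mu)$ lies in $\Cat_{\leqslant\xi}$ and splits because $\nabla(\mu)$ is injective there, so the original class factors through $\Hom(\Delta(\lambda),\nabla(\mu)/\onabla(\mu))$, and since $\nabla(\mu)/\onabla(\mu)\cong\nabla_\xi(I_\xi(\mu)/L_\xi(\mu))$ (using exactness of $\nabla_\xi$) the adjunction $\Delta_\xi\dashv\pi_\xi$ and projectivity of $P_\xi(\lambda)$ give a surjection $\Hom(\Delta(\lambda),\nabla(\mu))\twoheadrightarrow\Hom(\Delta(\lambda),\nabla(\mu)/\onabla(\mu))$, so the class lifts and vanishes. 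If $\varrho(\lambda)\not\leqslant\xi$: then $\head\Delta(\lambda)\notin\Cat_{\leqslant\xi}\ni\onabla(\mu)$ kills $\Hom$, and descending induction on the finite up‑set of $\varrho(\lambda)$, using $P(\lambda)\twoheadrightarrow\Delta(\lambda)$ with kernel $\Delta$‑filtered by $\Delta(\kappa)$, $\kappa>\lambda$ (which still satisfy $\varrho(\kappa)\not\leqslant\xi$), kills $\Ext^1$ via the long exact sequence. The vanishing of $\Ext^i(\Delta(\lambda),\onabla(\mu))$ for $i\geqslant2$ then follows from the same presentation together with a further descending induction, now using the just‑established vanishing of $\Ext^{\leqslant1}$. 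The identity $\Ext^i(\oDelta(\lambda),\nabla(\mu))=\delta_{i,0}\delta_{\lambda,\mu}$ is this statement applied in $\Cat^{opp}$.

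Finally (2). The direction ($\Rightarrow$) is immediate from the transpose of (1): if $N$ is costandardly filtered then $\Ext^{\geqslant1}(\oDelta(\lambda),\nabla(\mu))=0$ and a long exact sequence give $\Ext^1(\oDelta(\lambda),N)=0$. For ($\Leftarrow$) I would run the usual peeling argument: assuming $\Ext^1(\oDelta(\lambda),N)=0$ for all $\lambda$, choose $\lambda$ with $[N:L(\lambda)]\neq0$ and $\varrho(\lambda)$ maximal among such, use (1) to split off a quotient of $N$ that is an iterated extension of copies of $\nabla(\lambda)$ and whose kernel still satisfies the $\Ext^1$‑vanishing, and induct on length; the $\onabla$‑variant is obtained the same way with $\onabla$ in place of $\nabla$, or by transposition. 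The only genuine obstacle throughout is the exactness of $\nabla_\xi$ in (3); everything downstream is bookkeeping with the two adjunctions and long exact sequences.
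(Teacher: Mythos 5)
Your route is genuinely different from the paper's, and the most substantial difference is a good one. The paper proves (1) as a ``standard calculation,'' proves (2) by an induction on $\xi$ (peeling off the largest subobject $N_0\in\Cat_{<\xi}$), and then deduces (3) from (2); you instead prove (3) directly, by showing $eA$ is a projective left $eAe$-module via applying $\pi_\xi=e(-)$ to a $\Delta$-filtration of the regular module. That argument is correct and gives a cleaner logical order, with (1) following from (3) via the two adjunctions and $\pi_\xi\nabla_\xi\cong\mathrm{id}$. (Your treatment of the ``moreover'' in (1) is also fine: the largest subobject of $I(\lambda)$ inside a Serre subcategory is automatically injective there, and is essential over $L(\lambda)$.)

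There are, however, two real gaps. First, the reduction of the second identity in (1) --- and of the $\onabla$-variant of (2) --- ``by transposition to $\Cat^{opp}$'' is circular as written. To run your $\Ext^1$ argument in $\Cat^{opp}$ you need the triangularity of injectives in $\Cat$ (i.e.\ $\nabla(\lambda)\hookrightarrow I(\lambda)$ with $\nabla$-filtered cokernel indexed by $\kappa>\lambda$), which is exactly what the Proposition immediately after this Lemma establishes, and it is proved there using this Lemma and BGG reciprocity. You should instead run the dual computation directly, using the presentation $0\to\Delta_\xi(\operatorname{rad}P_\xi(\lambda))\to\Delta(\lambda)\to\oDelta(\lambda)\to 0$ together with what you have already proved for $\Ext^\bullet(\Delta(\lambda),\nabla(\mu))$ --- which is also what the paper implicitly does when it says ``the proofs are parallel.''

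Second, the peeling argument you sketch for (2)$(\Leftarrow)$ does not quite work as an induction on length. If $\xi$ is maximal among $\varrho$-values of composition factors and you split off a quotient $Q$ filtered by $\nabla(\mu)$'s with $\varrho(\mu)=\xi$ (note there may be several such $\mu$, not a single $\lambda$), the kernel $K$ does not obviously satisfy $\Ext^1(\oDelta(\lambda),K)=0$ for $\lambda$ with $\varrho(\lambda)=\xi$: from the long exact sequence this requires surjectivity of $\Hom(\oDelta(\lambda),N)\to\Hom(\oDelta(\lambda),Q)$, which is essentially the statement you are trying to prove. The paper avoids this by inducting on $\xi$ rather than on length: take $N_0$ the largest subobject in $\Cat_{<\xi}$, observe that $\Ext^1(\oDelta(\lambda),N_0)=\Hom(\oDelta(\lambda),N/N_0)=0$ for $\varrho(\lambda)<\xi$ (which is all that the inductive hypothesis in $\Cat_{<\xi}$ needs), conclude $N_0$ is $\nabla$-filtered, and then deduce the vanishing $\Ext^1(\oDelta,N_0)=0$ for \emph{all} $\lambda$ from part (1) rather than from the long exact sequence. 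You should adopt that organization; the rest of the embedding-into-injectives step is as you describe.
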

Here we write $I_\xi(\lambda)$ for the injective envelope of $L_\xi(\lambda)$ in $\Cat_\xi$.
We remark that here it is essential that we require the standardization functor to be exact,
see \cite[2.2.5]{CPS96} for a counter-example.
\begin{proof}
We start with (1).

 a) We prove that $\dim \Ext^i(\Delta(\lambda), \nabla(\mu))\neq 0$ implies $i=0$ and $\lambda,\mu$ are comparable. Let us deal with $i=0,1$. If $\lambda<\mu$, these cases follow from $\nabla(\mu)$ being injective in  $\Cat_{\leqslant \varrho(\mu)}$. If $\lambda\not\leqslant \mu$, then we use a filtration on $P(\lambda)$ and induction. Now that we know that $\Ext^1$ vanishes, we use filtrations on projectives to deduce the vanishing of higher Ext's.

b) To prove $\dim \Ext^i(\bar{\Delta}(\lambda), \nabla(\mu))=\delta_{i0}\delta_{\lambda\mu}$,  we use the exactness of $\Delta$ and look at a resolution of $\bar{\Delta}(\lambda)$ by means of $\Delta(\lambda')$ with $\lambda'$ comparable to $\lambda$. To have $\Ext^i(\bar{\Delta}(\lambda), \nabla(\mu))\neq 0$, we need $\mu$ to be comparable to $\lambda$. Then we can project to $\Cat_{\varrho(\lambda)}$ and use that the projection of $\nabla(\mu)$ is an indecomposable injective.

c) To prove $\dim \Ext^i(\Delta(\lambda), \bar{\nabla}(\mu))=\delta_{i0}\delta_{\lambda\mu}$ we use the definition of $\bar{\nabla}(\mu)$ to treat the $i=0$ case and the case of $i=1$ and $\lambda\leqslant \mu$ (note that
the extension with $\Delta(\lambda)$ on top and $\bar{\nabla}(\mu)$ on the bottom admits a morphism
to the indecomposable injective $I(\mu)$). The remaining cases are treated as in a).

Let us  turn to part (2);
furthermore, since the proofs are parallel, we only check that
$\Ext^1(\oDelta(\lambda),N)=0$ if and only if $N\in \Cat^\nabla$.
By induction, we may assume that $\Cat=\Cat_{\leqslant \xi}$  and that for objects in
$\Cat_{<\xi}$,  our claims are proved.

Let $N_0$ be the largest subobject in $N$ belonging to
$\Cat_{<\xi}$. Then for any $\lambda$ with $\varrho(\lambda)<\xi$ we
have $\Ext^1(\oDelta(\lambda),N_0)\cong \Hom(\oDelta(\lambda),N/N_0)=0$. Therefore, by the inductive assumption,  $N_0$
is $\nabla$-filtered
and hence $\Ext^i(\oDelta(\lambda),N_0)=0$ for all $i>0$. It follows that
$\Ext^1(\oDelta(\lambda),N/N_0)\cong \Ext^2(\oDelta(\lambda),N_0)=0$.
So it is enough to consider the case when $N_0=0$, i.e., the socle of
$N$ is a sum of
simples of the form $L(\mu)$ for  $\varrho(\mu)=\xi$. It follows that $N$ embeds to the sum  $I$ of several
$I(\mu)$'s with $\varrho(\mu)=\xi$ such that the socles of $I$ and of $N$ coincide.
But for such $\mu$ we have $I(\mu)=\nabla(\mu)$.
If we have $\Ext^1(\oDelta(\lambda), N)=0$ for all $\lambda$, then we
have a surjection $\Hom(\oDelta(\lambda),I)\twoheadrightarrow \Hom(\oDelta(\lambda),I/N)$.
But since $I$ is the sum of costandard objects, all homomorphisms from $\oDelta(\lambda)$ to $I$
factor through the socle of $I$ and hence through $N$. So $\Hom(\oDelta(\lambda),I/N)=0$ for all $\lambda$
and therefore $I/N=0$.  Thus, $N=I$ has a costandard filtration.

Part (3) follows immediately; consider an exact sequence
$0\rightarrow E_1\rightarrow E\rightarrow E_2\rightarrow 0$ in $\Cat_\xi$. We can assume by induction
that both $\nabla(E_1),\nabla(E_2)$ are $\onabla$-filtered.
The cokernel $N$ of $\nabla_\xi(E_1)\hookrightarrow \nabla_\xi(E)$ satisfies $\Ext^1(\Delta(\lambda),N)=0$
for all $\lambda$ and so is $\onabla$-filtered. So we have an embedding $N\hookrightarrow \nabla_\xi(E_2)$
of $\onabla$-filtered objects that becomes an isomorphism after projecting to $\Cat_\xi$. This forces
the costandard quotients of $N,\nabla_\xi(E_2)$ to be the same and the embedding to be an isomorphism
(recall that all blocks of $\Cat$ are finite).
%
\end{proof}

We also point out the following form of the BGG reciprocity.

\begin{Lem}\label{Lem:BGG}
The multiplicity $[P(\lambda):\Delta(\mu)]$ of $\Delta(\mu)$ in $P(\lambda)$ equals to
the multiplicity $[\onabla(\mu):L(\lambda)]$. Similarly, the multiplicity $[I(\lambda):\nabla(\mu)]$
equals $[\oDelta(\mu):L(\lambda)]$.
\end{Lem}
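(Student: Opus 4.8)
The plan is to prove BGG reciprocity by a standard dimension-counting argument using the $\Ext$-orthogonality between (proper) standard and (proper) costandard objects established in Lemma~\ref{Lem:co-st-ext}(1). First I would recall that $P(\lambda)\in\Cat^\Delta$, so we may write $[P(\lambda):\Delta(\mu)]$ for the (well-defined, by the $\Ext^1$-vanishing \eqref{eq:Ext_vanish} combined with Lemma~\ref{Lem:co-st-ext}) multiplicity of $\Delta(\mu)$ in any standard filtration of $P(\lambda)$. Applying $\Hom(-,\onabla(\mu))$ to such a filtration and using that $\Ext^i(\Delta(\nu),\onabla(\mu))=\delta_{i,0}\delta_{\nu,\mu}\C$, the long exact sequences collapse and give $\dim\Hom(P(\lambda),\onabla(\mu))=[P(\lambda):\Delta(\mu)]$. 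On the other hand, $P(\lambda)$ is projective, so $\dim\Hom(P(\lambda),\onabla(\mu))$ is exactly the multiplicity $[\onabla(\mu):L(\lambda)]$ of the simple $L(\lambda)$ in $\onabla(\mu)$ (using that every simple is absolutely irreducible, so $\End(L(\lambda))=\C$). Combining the two identities yields $[P(\lambda):\Delta(\mu)]=[\onabla(\mu):L(\lambda)]$.

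The second statement is the dual assertion, obtained by running the same argument in $\Cat^{opp}$. Concretely, $I(\lambda)\in\Cat^\nabla$, so $[I(\lambda):\nabla(\mu)]$ is well-defined; applying $\Hom(\oDelta(\mu),-)$ to a costandard filtration of $I(\lambda)$ and using $\Ext^i(\oDelta(\mu),\nabla(\nu))=\delta_{i,0}\delta_{\mu,\nu}\C$ from Lemma~\ref{Lem:co-st-ext}(1), one gets $\dim\Hom(\oDelta(\mu),I(\lambda))=[I(\lambda):\nabla(\mu)]$. Since $I(\lambda)$ is injective, $\dim\Hom(\oDelta(\mu),I(\lambda))=[\oDelta(\mu):L(\lambda)]$. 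This gives $[I(\lambda):\nabla(\mu)]=[\oDelta(\mu):L(\lambda)]$.

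The only genuinely nontrivial input is the $\Ext$-orthogonality in Lemma~\ref{Lem:co-st-ext}(1), which we are allowed to assume; everything else is the collapse of long exact sequences under a filtration, which is routine. The one point requiring a little care is the well-definedness of the filtration multiplicities $[P(\lambda):\Delta(\mu)]$ and $[I(\lambda):\nabla(\mu)]$ — i.e.\ that they do not depend on the chosen filtration — but this is immediate from the $\Hom$-computations above, which express each multiplicity intrinsically as $\dim\Hom(P(\lambda),\onabla(\mu))$ (respectively $\dim\Hom(\oDelta(\mu),I(\lambda))$), independent of any choice. So I do not anticipate a substantive obstacle; the proof is a couple of lines once Lemma~\ref{Lem:co-st-ext}(1) is in hand.
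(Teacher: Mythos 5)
Your proof is correct. The paper states Lemma~\ref{Lem:BGG} without proof, treating it as a standard consequence of the $\Ext$-orthogonality in Lemma~\ref{Lem:co-st-ext}(1); your argument is exactly the expected standard one, computing $\dim\Hom(P(\lambda),\onabla(\mu))$ two ways (collapsing along a $\Delta$-filtration of $P(\lambda)$ using $\Ext^{>0}(\Delta(\nu),\onabla(\mu))=0$, and using projectivity plus $\End(L(\lambda))=\C$), and dually for the second statement.
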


Combining these results, we see that:
\begin{Prop}
  The category $\Cat^{opp}$ is standardly stratified with respect to
  the map
  $\varrho:\Lambda\ \rightarrow \Xi$ and the standardization functor
  $\nabla_\xi:\Cat^{opp}_\xi\rightarrow \Cat^{opp}_{\leqslant \xi}$.
\end{Prop}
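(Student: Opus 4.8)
The plan is to check the defining conditions of a standardly stratified category directly for $\Cat^{opp}$ with the candidate standardization functors $\nabla_\xi$. The formal part comes first. Each block of $\Cat^{opp}$ is the category of finite-dimensional modules over the opposite of a finite-dimensional algebra, hence again of this form, and $\End_{\Cat^{opp}}(L(\la))\cong\End_{\Cat}(L(\la))^{opp}\cong\C$. The Serre subcategories of $\Cat^{opp}$ attached to $\varrho$ are spanned by the same simples as in $\Cat$, so $\Cat^{opp}_{\leqslant\xi}=(\Cat_{\leqslant\xi})^{opp}$, $\Cat^{opp}_{<\xi}=(\Cat_{<\xi})^{opp}$, and $\Cat^{opp}_\xi=(\Cat_\xi)^{opp}$ with quotient functor $\pi_\xi^{opp}$. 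Its left adjoint is $\nabla_\xi$, regarded as a functor $(\Cat_\xi)^{opp}\to(\Cat_{\leqslant\xi})^{opp}$, because $\nabla_\xi$ is the right adjoint of $\pi_\xi$ by Lemma~\ref{Lem:cost_filt}(3); the same lemma records that $\nabla_\xi$ is exact, as a standardization functor must be. The projective cover of $L(\la)$ in $\Cat^{opp}_\xi$ is $I_\xi(\la)$, and the projective cover of $L(\la)$ in $\Cat^{opp}$ is the injective hull $I(\la)$ of $L(\la)$ in $\Cat$; hence, by Lemma~\ref{Lem:cost_filt}(3), the standard and proper standard objects of $\Cat^{opp}$ are $\nabla_\xi(I_\xi(\la))=\nabla(\la)$ and $\nabla_\xi(L_\xi(\la))=\onabla(\la)$.

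What remains is the triangularity condition: for each $\la$ there should be an epimorphism $I(\la)\twoheadrightarrow\nabla(\la)$ in $\Cat^{opp}$ whose kernel, computed in $\Cat^{opp}$, is filtered by objects $\nabla(\mu)$ with $\mu>\la$. Dualizing, I must show that the canonical monomorphism $\nabla(\la)\hookrightarrow I(\la)$ in $\Cat$ has a cokernel $C:=I(\la)/\nabla(\la)$ lying in $\Cat^\nabla$ with all its $\nabla$-subquotients of the form $\nabla(\mu)$, $\mu>\la$. I would first note that $I(\la)$ is injective, so $\Ext^1_{\Cat}(\oDelta(\mu),I(\la))=0$ and $I(\la)\in\Cat^\nabla$ by Lemma~\ref{Lem:cost_filt}(2). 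Feeding $0\to\nabla(\la)\to I(\la)\to C\to 0$ into $\Hom_{\Cat}(\oDelta(\mu),-)$ and using $\Ext^i_{\Cat}(\oDelta(\mu),\nabla(\la))=\delta_{i,0}\delta_{\mu,\la}$ from Lemma~\ref{Lem:cost_filt}(1), the long exact sequence has both $\Ext^1(\oDelta(\mu),I(\la))$ and $\Ext^2(\oDelta(\mu),\nabla(\la))$ equal to zero, hence $\Ext^1_{\Cat}(\oDelta(\mu),C)=0$ for every $\mu$, so $C\in\Cat^\nabla$, again by Lemma~\ref{Lem:cost_filt}(2). Thus $I(\la)$ has a $\nabla$-filtration refining $0\subset\nabla(\la)\subset I(\la)$.

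To identify the labels appearing in $C$, I would use that for a $\nabla$-filtered object $M$ one has $[M:\nabla(\mu)]=\dim\Hom_{\Cat}(\oDelta(\mu),M)$, by Lemma~\ref{Lem:cost_filt}(1), and that this multiplicity is additive along short exact sequences of $\nabla$-filtered objects; hence $[C:\nabla(\mu)]=[I(\la):\nabla(\mu)]-\delta_{\mu,\la}$. By BGG reciprocity (Lemma~\ref{Lem:BGG}), $[I(\la):\nabla(\mu)]=[\oDelta(\mu):L(\la)]$, which equals $1$ for $\mu=\la$ and otherwise is nonzero only when $\varrho(\la)<\varrho(\mu)$, since the composition factors of $\oDelta(\mu)$ are $L(\mu)$ together with simples $L(\nu)$ having $\varrho(\nu)<\varrho(\mu)$. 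So $[C:\nabla(\mu)]\neq 0$ forces $\mu>\la$, and the epimorphism $I(\la)\twoheadrightarrow\nabla(\la)$ in $\Cat^{opp}$ has the required kernel. The only step that needs real care, as opposed to formal dualization, is exactly this last one --- making sure the kernel is filtered by standard objects with strictly \emph{larger} index --- and it is there that the structure of proper standards and BGG reciprocity are essential; the rest is bookkeeping about opposite categories.
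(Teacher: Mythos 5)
Your proof is correct, and it follows essentially the same route the paper intends when it introduces the proposition with ``Combining these results, we see that:'' --- namely, reading off the standardly stratified axioms for $\Cat^{opp}$ from Lemma~\ref{Lem:cost_filt} (exactness of $\nabla_\xi$, its identification with the left adjoint of $\pi_\xi^{opp}$, the $\Ext$-orthogonality and the characterization of $\Cat^\nabla$) together with the BGG reciprocity of Lemma~\ref{Lem:BGG} to pin down the labels in the filtration of $I(\la)/\nabla(\la)$. You have simply spelled out in full the verification the paper leaves implicit.
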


\excise{
\begin{Rem}\label{Rem:ringel_dual}
For  highest weight categories, one can define tilting objects and has the Ringel duality.
This can be generalized to standardly stratified categories. Namely, we say that an object
$M$ in a standardly stratified category $\mathcal{C}$ is {\bf tilting} if it is both
$\Delta$-filtered and $\onabla$-filtered (there is also a dual notion of a {\bf co-tilting} object
that has to be $\oDelta$- and $\nabla$-filtered). Then, similarly to the highest weight case,
one can show that every standard object $\Delta(\lambda)$ has a unique tilting hull $T(\lambda)$;
the $T(\lambda)$'s are pairwise non-isomorphic and exhaust all indecomposable tiltings.
Set $T:=\bigoplus_{\lambda\in \Lambda}T(\lambda)$. By the Ringel dual $\Cat^\vee$ of $\Cat$
we mean the category of finite dimensional right $\operatorname{End}(T)$-modules. This category
admits a natural standardly stratified structure with standardization functor
$\Hom_{\Cat}(T, \nabla(\bullet))$. One can check the axioms similarly, for example, to
\cite[4.1.5]{RouqSchur}. We have a natural equivalence $(\Cat^\vee)^{\oDelta}\cong \Cat^{\onabla}$.
\end{Rem}
}

\excise{
\subsection{Ringel duality}\label{SS_Ringel}
We will also need to discuss Ringel duality in this context. For this we will need to recall the definitions of
tilting and cotilting objects.
\begin{defi}
  We call an object {\bf tilting} if it lies in the subcategory $\Cat-\operatorname{tilt}:=\Cat^{\Delta}\cap
  \Cat^{\onabla}$ and {\bf cotilting} if it lies in the subcategory
  $\Cat-\operatorname{tilt}^*:=\Cat^{\oDelta}\cap \Cat^{\nabla}$.
\end{defi}

For a standard object $\Delta(\lambda)$ we can define its {\bf tilting hull}
$T(\lambda)$ to be the unique indecomposable tilting object such that
$\Hom(\Delta(\lambda), T(\lambda))\neq 0$, and $\Hom(\Delta(\mu),
T(\lambda))= 0$ for $\mu> \la$.
This object can be constructed by standard methods. Let
$\xi:=\varrho(\lambda)$ and choose a linear order $\prec$ on $\{\xi'\in \Xi|\xi'<\xi\}$ refining the partial
order on that set: $\xi_1\succ\xi_2\succ\ldots\succ \xi_m$.
Then we can define successively better approximations to the desired
tilting hull $T_i(\lambda), i=0,\ldots,m$ inductively as follows:
begin with $T_0(\lambda)=\Delta(\lambda)$  and let $T_i(\lambda)$ is the natural extension of $$\bigoplus_{\mu\in \varrho^{-1}(\xi_i)}\Delta(\mu)^{\Ext^1(\Delta(\mu), T_{i-1}(\lambda))}$$
by $T_{i-1}(\lambda)$. The object $T(\lambda):=T_m(\lambda)$ is tilting. Similarly, we can define a
{\bf cotilting cover} $T^*(\lambda)$ of $\nabla(\lambda)$. Of course, passing to the opposite category
switches $T(\lambda)$ and $T^*(\lambda)$.

Set $T:=\bigoplus_{\lambda\in \Lambda} T(\lambda)$. Let $\Cat^\vee$ be the category of right
$\End(T)$-modules. We have an equivalence $D^b(\Cat)\xrightarrow{\sim}D^b(\Cat^\vee)$
given by $\operatorname{RHom}_{\Cat}(T,\bullet)$. This equivalence is
exact on $\Cat^{\onabla}$.

The category $\Cat^\vee$ has simples again indexed $\Lambda$, with the
projective $P^\vee(\la)$ given by $\Hom(T,T(\la))$.
\begin{Prop}
  The category $\Cat^{\vee}$ has the structure of a
  standardly stratified category for the opposite preorder on $\Lambda$.
\end{Prop}
\begin{proof}
  For an ideal $\Xi_0\subset \Xi$, the category $\Cat_{\Xi_0}^\vee$ is
  naturally identified with a quotient of $\Cat^\vee$. The quotient
  functor is $\operatorname{Hom}(\bigoplus_{\mu\in
    \Xi_0}T(\mu),\bullet)$.  So for a coideal $\Xi^0\subset \Xi$
  (=ideal $\Xi^0\subset \Xi^{opp}$) we can define the subcategory
  $\Cat^{\vee}_{\Xi^0}\subset \Cat^\vee$ as the kernel of the
  projection $\Cat^\vee\twoheadrightarrow
  \Cat^{\vee}_{\Xi\setminus\Xi^0}$.

  Now we claim that $\Cat^\vee_\xi$ is naturally identified with
  $\Cat_\xi$. Indeed, by the definition of our filtration, we can
  represent $\Cat^{\vee}_\xi$ as the kernel of the quotient
  $\Cat_{\leqslant \xi}^\vee\twoheadrightarrow \Cat_{<\xi}^\vee$.  So
  we can assume that $\xi$ is the largest element of $\Xi$. We claim
  that the required equivalence is provided by the functor
  $\operatorname{Hom}(T, \nabla_\xi(\bullet)):\Cat_\xi\rightarrow
  \Cat^\vee$. First of all, the image of this functor is contained in
  $\Cat^\vee_\xi$ because $\Hom(T(\mu),\nabla(\lambda))=0$ for
  $\mu<\lambda$.  The target category is the category of modules over
  $\End(T)^{opp}$ that are annihilated by the ideal of all
  endomorphism that factor through $\bigoplus_{\mu\not\in
    \varrho^{-1}(\xi)}T(\mu)$.  We claim that the quotient, say $A_1$,
  of $\End(T)^{opp}$ by this ideal is just $A_2:=\End(R)^{opp}$, where
  $R:=\bigoplus_{\lambda\in \varrho^{-1}(\xi)}\Delta(\lambda)$.  There
  is a natural map of restriction to $R$, it produces an algebra
  homomorphism $A_1\rightarrow A_2$. This homomorphism is clearly
  injective.  It is also surjective: any homomorphism $R\rightarrow T$
  extends to $T$ because $T/R\in \Cat^{\Delta}, T\in \Cat^{\onabla}$
  and so $\Ext^1(T/R,T)=0$.  So $\Cat^\vee_\xi$ is equivalent to the
  category of $A_2$-modules, but $A_2$ is just
  $\End_{\Cat_\xi}(\bigoplus Q_\xi(\lambda))^{opp}$.  It follows that
  $\Cat^\vee_\xi$ identifies with $\Cat_\xi^{opp}$ and hence, thanks
  to the duality functor, with $\Cat_\xi$.  Under this identification,
  our functor $\Cat_\xi\rightarrow \Cat_\xi^\vee$ is just the identity
  functor.

  Let us show that under the identification $\Cat_\xi\cong
  \Cat^\vee_\xi$, the quotient functor $\Cat^\vee_{\geqslant
    \xi}\twoheadrightarrow \Cat_\xi$ gets identified with
  $\pi_\xi(\bullet\otimes_{\End(T)^{opp}}T)$, where
  $\pi_\xi:\Cat_{\leqslant \xi}\rightarrow \Cat_\xi$ is the
  projection. Indeed, passing to a quotient of $\Cat^\vee$ we reduce
  to the case when $\xi$ is maximal in $\Xi$ and the claim now follows
  from the previous paragraph.
  Therefore the functor $\Delta^\vee_\xi:\Cat_\xi\rightarrow
  \Cat^\vee_{\geqslant \xi}$ defined by
  $\Delta_\xi^\vee(\bullet):=\Hom_{\Cat}(T,\nabla_{\xi}(\bullet))$ is
  left adjoint to the projection $\Cat^{\vee}_{\geqslant
    \xi}\twoheadrightarrow \Cat^\vee_{\xi}$. This functor is exact and
  so we can take it for a standardization functor. Moreover, this
  functor identifies $\Cat^{\onabla}$ with
  $(\Cat^\vee)^{\oDelta}$. Under this identification, we have
  \[\oDelta^\vee(\lambda)=\onabla(\lambda)\quad
  \Delta^\vee(\lambda)=\nabla(\lambda)\quad P^\vee(\lambda)=T(\lambda)\quad
  T^{*\vee}(\lambda)=P(\lambda).\]  It follows that $\Cat^\vee$
  satisfies the upper triangularity property for projectives and hence
  is a standardly stratified category.
\end{proof}

We can also define the Ringel co-dual category $\,^\vee\!\Cat:=((\Cat^{opp})^\vee)^{opp}$. It is not difficult
to see that the objects $\Hom_{\Cat}(T,P(\lambda))$ are {\it co}tilting in $\Cat^\vee$ and so
$\Cat=\,^\vee\!(\Cat^\vee)=(^\vee\Cat)^\vee$.
}

\section{Tensor product categorifications}

\subsection{Categorical \texorpdfstring{$\fg$}{g}-actions}
\label{sec:categ-texorpdfstr-ac}

Let $\g$ be a Kac-Moody algebra with its set $\{\alpha_i, i\in I\}$ of simple roots.
There are a variety of notions of {\bf categorical $\g$-actions} which
have appeared in the literature, in the work of Rouquier
\cite{Rou2KM}, Khovanov and Lauda \cite{KLIII}, Cautis and Lauda
\cite{CaLa} and others.  Of course, as with all definitions where
there is some flexibility, one endeavors to use the
weakest version possible when proving facts about objects
satisfying said definition and the strongest when showing that an
object does satisfy it (though one is often forced to do the
opposite).

All of these definitions employ the {\bf KLR algebra} or {\bf quiver Hecke
  algebra} $R$, a sum of finitely generated algebras $R_\mu$ attached to every
element $\mu$ in the positive cone of the root lattice of $\fg$; we
let $R_k$ for an integer $k$ denote the sum of the $R_\mu$'s for $\mu$
a sum of $k$ simple roots. We should note that our definition of KLR algebra follows that of
Rouquier \cite[\S 3.2]{Rou2KM}, and thus involves a choice of
polynomials $Q_{ij}(u,v)$ for each pair of elements in the Dynkin
diagram with degree in $u$ bounded above by the entry $-a_{ij}$ of the
negated Cartan matrix, and similarly the degree in $v$ bounded above
by $-a_{ji}$. We say this choice is {\bf homogeneous} if the polynomial
$Q_{ij}(u,v)$ is homogeneous when the ratio between the degrees of the
variables $u$ and $v$ equals the ratio between the lengths of the
simply roots $\alpha_i$ and $\alpha_j$.  We'll follow the conventions
of \cite[\S \ref{m-sec:categorification}]{Webmerged} throughout.

The finitely generated modules over the KLR algebra form a
monoidal category under induction functors; this monoidal category on its own
is a categorification, in a certain sense, of the enveloping algebra
$U(\fg_+)$ of the Borel $\fg_+$.

For our purposes, a {\bf categorical $\g$-action} on an
abelian $\C$-linear category $\Cat$ is an action
of the strict 2-category Rouquier denotes $\mathfrak{A}$; that is, it
consists of
\begin{itemize}
\item a module category structure over the representations of the KLR algebra
  generated by exact functors $F_i$; that is, a functor $F\cong \oplus F_i$
  such that $F^k$ carries an action of $R_k$.  In particular, each of the
  functors $F_i$ carries a
  natural transformation $y$, usually denoted as a dot in literature
  such as \cite{KLIII,Webmerged}, and
\item exact right adjoints $E_i$ to these functors, such that
\item   the map Rouquier denotes $\rho_{s,\mu}$ in \cite[\S
  4.1.3]{Rou2KM} is an isomorphism.
\end{itemize}
In particular, each pair of functors $E_i,F_i$ should be thought of a categorical
$\mathfrak{sl}_2$ action in the sense of Chuang and Rouquier.
All other notions of categorification mentioned above
are adding additional structure to this schema.

Since our main theorem will be a classification/uniqueness theorem, we
need to have a notion of equivalence between categorical actions.

\begin{defi}
  A {\bf strongly equivariant functor} between two categories
  $\mathcal{C}_1,\mathcal{C}_2$ with
  categorical $\fg$-actions is
  \begin{itemize}
  \item a functor $\eta\colon
    \mathcal{C}_1\to\mathcal{C}_2$ together with
  \item isomorphisms of functors $F\eta\cong \eta F$ which commute
    with the $R_k$-actions on $F^k\eta\cong \eta F^k$, such that the
    map $\eta E \to EF\eta E\cong
E\eta FE\to E\eta$ is also an isomorphism.
  \end{itemize}
  If we think of a categorical $\fg$-action as a representation of the
  2-category $\mathfrak{A}$ in the strict 2-category of $\C$-linear
  categories, this is the usual notion of natural transformation between
  representations of a 2-category.

  We call such a functor a {\bf strongly equivariant equivalence} if
  $\eta$ is an equivalence.
\end{defi}


Our starting point is the categorification of $ V$, an irreducible
representation of $\fg$ with highest weight $\nu$. As mentioned in the
introduction, there are several uniqueness theorems for such
categorifications, based on work of Rouquier \cite[\S 5.1]{Rou2KM}.
Since there are different contexts in which such representations
appear, we record here the version that we require. We use $R^\nu$ to
denote the cyclotomic KLR algebra for $\fg$ and the highest
weight $\nu$ (for example, as discussed
in \cite[\S \ref{m-sec:categorification}]{Webmerged}), and let $R^\nu_k$ denote the finite dimensional
subalgebra spanned by diagrams with $k$ strands, that is, the image of
$R_k$.

Assume $\EuScript{C}$ is an artinian and noetherian abelian $\C$-linear category.
\begin{Prop}\label{simple-unique}
Assume $\EuScript{C}$ has a $\fg$-action by exact functors such that:
\begin{enumerate}
\item the Grothendieck group $\mathbb{C}\otimes_{\Z} K^0(\EuScript{C})$ isomorphic to $ V$;
\item the subcategory $\EuScript{C}_{\nu}$ has a fixed equivalence to
  $\operatorname{Vect}_\C$, sending $\C$ to an object $\mathbb{V}$;
 \item the transformation $y$ acts nilpotently on $F_i$.
\end{enumerate}
Then $\EuScript{C}$ is strongly equivariantly equivalent to the
category of finite-dimensional
modules over $R^\nu$. The adjoint equivalences are given by
\[\big(\bigoplus_kF^k \mathbb{V}\big)\otimes_{R^\nu}\bullet\colon
R^\nu\operatorname{-mod}\to \EuScript{C},\qquad \bigoplus_k\Hom(F^k
\mathbb{V},\bullet)\colon \EuScript{C}\to  R^\nu\mmod.\]
\end{Prop}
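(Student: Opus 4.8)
The plan is to reduce Proposition~\ref{simple-unique} to the known uniqueness theorem for categorifications of simple modules, essentially that of Rouquier~\cite[\S 5.1]{Rou2KM} (see also the reformulations in \cite{Rou2KM,KI-HRT}), by exhibiting the two displayed functors as mutually inverse strongly equivariant equivalences. First I would set $\mathbb{P}:=\bigoplus_k F^k\mathbb{V}$, viewed as an object of $\EuScript{C}$ carrying a right action of $R^\nu$: the algebra $R_k$ acts on $F^k$ by hypothesis, the cyclotomic relations hold because $\EuScript{C}$ categorifies the \emph{irreducible} module $V$ of highest weight $\nu$ (so the relevant diagrams act by zero—this is where the cyclotomic quotient $R^\nu$ rather than $R$ appears), and assumption~(3) guarantees that only the finite-dimensional quotients $R^\nu_k$ act nontrivially, so $\mathbb{P}$ is a genuine $(\EuScript{C},R^\nu)$-bimodule object with each $F^k\mathbb{V}$ finitely generated projective over $R^\nu_k$. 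Define $\Phi:=\mathbb{P}\otimes_{R^\nu}\bullet\colon R^\nu\mmod\to\EuScript{C}$ and $\Psi:=\bigoplus_k\Hom_{\EuScript{C}}(F^k\mathbb{V},\bullet)\colon\EuScript{C}\to R^\nu\mmod$; the adjunction $(\Phi,\Psi)$ is formal once $\mathbb{P}$ is set up correctly.

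Next I would check that $\Phi$ and $\Psi$ are strongly equivariant, i.e. intertwine the categorical $\fg$-actions. On the $\EuScript{C}$ side the functor $F$ acts directly; on the $R^\nu\mmod$ side the functor $F_i$ is the standard one given by induction $\Ind_{R^\nu_k}^{R^\nu_{k+1}}$ tensored with the appropriate idempotent, together with the dot $y$ coming from the polynomial generator of the KLR algebra. The isomorphism $F\Phi\cong\Phi F$ comes from the associativity $F^{k+1}\mathbb{V}\cong F(F^k\mathbb{V})$ together with the compatibility of the $R_\bullet$-actions, which is exactly the content of the module-category axiom for $\mathfrak{A}$; commutation with the $R_k$-actions on higher powers is then bookkeeping with these identifications. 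The existence of adjoints $E_i$ and the invertibility of Rouquier's map $\rho_{s,\mu}$ on both sides mean that $\Phi,\Psi$ automatically respect the $E$-functors as well.

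The heart of the argument is to show $\Phi$ (equivalently $\Psi$) is an equivalence. Both categories are highest weight categorifications of $V$ in the sense recalled in the excerpt—so in particular each is a highest weight category with respect to the dominance order on weights, with standard objects obtained by applying powers of $F$ to the one-dimensional top piece—and by hypothesis~(2) the top strata $\EuScript{C}_\nu$ and $(R^\nu\mmod)_\nu$ are both identified with $\operatorname{Vect}_\C$, compatibly with $\Phi$ (since $\Phi(R^\nu_0)=\mathbb{V}$). I would then argue by induction down the weight poset: $\Phi$ sends the projective cover $P(\lambda)$ of the simple in the cyclotomic category, which by Rouquier's construction is a summand of some $F^k\mathbb{V}$-module $R^\nu_k e$, to the corresponding summand of $F^k\mathbb{V}$ in $\EuScript{C}$, and one shows inductively using the filtration of $F^k\mathbb{V}$ by standard objects (matching combinatorics forced by the Grothendieck group isomorphism~(1) together with the categorical $\fg$-module structure, via Proposition~\ref{simple-unique} applied stratum-by-stratum, or equivalently via the crystal/divided-power combinatorics) that $\Phi$ induces bijections on simples and isomorphisms on $\Hom$ and $\Ext^1$ between projectives. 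Faithfully flatness of $\mathbb{P}$ over $R^\nu$ then upgrades this to an equivalence of abelian categories, and exactness of $\Phi$ is clear since each $F^k\mathbb{V}$ is projective over $R^\nu_k$.

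The main obstacle I anticipate is the inductive identification of the standard/projective filtrations on the two sides: one must verify that the categorical action alone (plus the normalization~(2) and the nilpotency~(3)) pins down the multiplicities $[F^k\mathbb{V}:\Delta(\lambda)]$ in $\EuScript{C}$ to agree with the universal values computed in $R^\nu\mmod$. This is precisely where Rouquier's uniqueness theorem for simple categorifications does the real work—one is really re-deriving, or citing, the statement that the assignment $\mathbb{V}\mapsto\bigoplus F^k\mathbb{V}$ produces a module isomorphic to the regular representation of $R^\nu$—so the proof is essentially an application of \cite[\S 5.1]{Rou2KM}, with assumption~(3) inserted to ensure we land in the \emph{finite-dimensional} module category over the cyclotomic quotient rather than over the full KLR algebra.
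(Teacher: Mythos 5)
Your overall strategy — form the bimodule $\mathbb{P}=\bigoplus_k F^k\mathbb{V}$, show the cyclotomic ideal for $\nu$ annihilates it, and then invoke Rouquier's uniqueness — is the same as the paper's, but two crucial steps are glossed over in ways that leave real gaps.

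First, the assertion that the cyclotomic relations hold ``because $\EuScript{C}$ categorifies the irreducible module $V$'' is not an argument: irreducibility of the Grothendieck group alone gives no control over \emph{which} polynomial $y$ satisfies on $F_i\mathbb{V}$. The paper's proof does genuine work here: it computes $\dim\Hom(F_i\mathbb{V},F_i\mathbb{V})=\al_i^\vee(\nu)$ by biadjointness, invokes \cite[3.12]{CaLa} to see that this $\Hom$-space is spanned by $1,y,\dots,y^{\al_i^\vee(\nu)-1}$, and only then uses hypothesis~(3) — nilpotency of $y$ — to conclude that the minimal polynomial is exactly $y^{\al_i^\vee(\nu)}$ rather than some shift $(y-a)^{\al_i^\vee(\nu)}$. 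That is the actual role of~(3); it is not merely bookkeeping to land in finite-dimensional modules, and you have mislocated it. Second, your ``induction down the weight poset'' is circular as stated: you invoke Proposition~\ref{simple-unique} ``applied stratum-by-stratum,'' but the strata of a single categorification are not smaller instances of the same problem, and the proposition is precisely what is to be proved. The appeal to ``faithfully flatness of $\mathbb{P}$'' is also not the relevant notion for upgrading matching of $\Hom$ and $\Ext^1$ to an equivalence. What the paper does instead is cite \cite[5.4]{Rou2KM} directly for full faithfulness of the induced functor on projectives (once the cyclotomic quotient is pinned down and the identification $\EuScript{C}_\nu\cong\operatorname{Vect}_\C$ is fixed), and then closes with a Grothendieck-group count: by hypothesis~(1), the classes of the summands of $F^k\mathbb{V}$ already span $V$, so there can be no additional indecomposable projectives, giving essential surjectivity. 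Both ingredients are needed and neither is actually established in your sketch; filling them in amounts to writing the paper's proof.
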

\begin{proof}
Consider the category of projectives
$\EuScript{C}\operatorname{-proj}$; this is preserved by the functors
$E_i$ and $F_i$, since both have exact adjoints.  We can now apply
\cite[5.7]{Rou2KM} with
$\mathcal{V}=\EuScript{C}\operatorname{-proj}$.  This implies that
$\EuScript{C}\operatorname{-proj}$ is equivalent to a base change of
the universal categorification which Rouquier denotes
$\mathcal{L}(\nu)$.  By \cite[4.25]{RouQH}, this universal
categorification is the category of projective modules over a deformed
cyclotomic quotient $\mathcal{B}(\nu)$, and the base change precisely
consists of killing the deformation parameters $z_i$ to obtain the
category of 
projective modules over $R^\nu$.  The equivalence of additive
categories between the categories of projectives induces the desired
equivalence of abelian categories.
\end{proof}

We'll note, every irrep of the KLR algebra is absolutely irreducible,
since \cite[3.20]{KLI} implies that $\End(S_b)\cong \C$ for every
simple $S_b$.   This implies that in $\EuScript{C}$, every irreducible is
absolutely irreducible as well.

\subsection{Definition}\label{SS_definition}
Let $V_1,\ldots, V_n$ be irreducible integrable
$\g$-modules with highest weights $\bnu=(\nu_1,\dots, \nu_n)$ and $\C$
be an infinite field; we let $\nu=\nu_1+\cdots +\nu_n$. We are
going to define the notion of a categorification of the {\it ordered}
tensor product $V_1\otimes V_2\otimes\ldots\otimes V_n$.

As before, let $\Cat$ be an abelian artinian $\C$-linear category,
with each block equivalent to the representation category of a finite
dimensional $\C$-algebra. The data of a tensor product
categorification on $\Cat$ consists
of two parts:
\begin{itemize}
\item
a categorical $\g$-action on $\Cat$ in the sense of Rouquier where the
functors $E_i$ and $F_i$ are exact, 
  and the natural transformation $y$ acts locally nilpotently on $F_i$ and,
\item the structure of a standardly stratified category on $\Cat$ with
  poset $\Xi$.
\end{itemize}

These two pieces of data have to satisfy some compatibility conditions to be explained below.
\begin{itemize}
\item[(TPC1)] The poset $\Xi$ is the set of
  $n$-tuples $\bmu=(\mu_1,\ldots,\mu_n)$, where $\mu_i$ is a weight of
  $V_i$.  The poset structure is given by ``inverse dominance
  order'': we have
  \[\bmu=(\mu_1,\ldots,\mu_n)\geqslant \bmu'=(\mu'_1,\ldots,\mu'_n)\]
  if and only if $ \sum_{i=1}^n\mu_i=\sum_{i=1}^n \mu_i' $ and for all $1\leqslant j< n$, we have
\[\sum_{i=1}^j
  \mu_i\leqslant \sum_{i=1}^j \mu_i'.\]
  As usual, we write $\beta_1\leqslant
  \beta_2$ if $\beta_2-\beta_1$ is a linear combination of positive
  roots with non-negative coefficients.  We should note that the
  importance of order of tensor factors becomes immediately apparent in this definition.

\item[(TPC2)]
The associated graded category $\gr\Cat$ carries a categorical
$\fg^{\oplus n}$-action with $K^0(\gr\Cat)\cong V_1\otimes \cdots
\otimes V_n$ as $\fg^{\oplus n}$-modules such that the weight $\bmu$ subcategory of
$\gr\Cat$ is precisely the subquotient $\Cat_{\bmu}(=\Cat_{\leqslant  \bmu}/\Cat_{<\bmu})$.
We use $\,_{j}\!E_i$ and $\,_{j}\!F_i$
to denote categorification functors for the $j$th copy of $\fg$.  We
assume that $\Cat_\nu\cong \operatorname{Vect}_\C$;
we let
$\mathbb{V}$ denote the unique indecomposable object in this
subcategory. By Proposition \ref{simple-unique}, we thus have that
$\gr\Cat\cong \EuScript{C}$, the representations of the cyclotomic KLR
algebra of $\fg^{\oplus n}$ for the highest weight $\bnu$.

\item[(TPC3)] Finally, we must have a compatibility between the categorical
  $\fg$-action on $\Cat$ and the categorical $\fg^{\oplus n}$-action
  on $\gr\Cat$: for each $M\in \Cat_{\bmu}$, the object $E_i\Delta_{\bmu}(M)$ admits a filtration with successive quotients
being $\Delta(\,_{j}\!E_i M), j=1,\ldots,n$.
It is easy to see that such a filtration is determined uniquely,
we call it a {\it standard} filtration on $E_i \Delta_{\bmu}(M)$.

Similarly, we require that $F_i\Delta_{\bmu}(M)$ comes equipped with a filtration whose successive quotients
are $\Delta(\,_{j}\!F_i M), j=1,\ldots,n$.
\end{itemize}

Since every irreducible is absolutely irreducible in $\gr\Cat$ as we
noted above, this means that the same property will hold in any tensor
product categorification.

\begin{Rem}
Of course, we could try to make this definition more general by not
requiring $y$ to be nilpotent; however, this would not really gain us
any additional generality.  If $y$ acted on the functor $F_i$ with
more than one eigenvalue, it could then be split into
generalized eigenspaces for $y$, and these functors would give a pair
of
categorical actions of $\fg$.  Thus, we may as well assume that $y$
has only one eigenvalue $a$. Note
  that we can change this eigenvalue $a$ using the substitution $y\mapsto
  y-a$, at the cost of changing the relations of the KLR algebra.
  We must change the polynomials $Q_{ij}(u,v)$ by the same
  substitution.
\end{Rem}
\begin{Rem}
  Another point where the reader might wish to generalize this is to
  replace the condition that $\Cat_\nu \cong \operatorname{Vect}_\C$
  with the condition that (say) $\Cat_\nu$ is the representation
  category of a local Artinian $\C$-algebra $A$.  Our results should extend
  to this case, but at a considerable cost; in particular, the
  categorification obtained is no longer unique.  Rather, the
  possible choices of $\gr\Cat$ will have moduli, given by considering the minimal polynomial of $y$ for its
  induced action on ${}_jF_i\mathbb{V}$; the coefficients of this
  polynomial can be arbitrary elements of the radical of $A$, and one
  expects that there is a unique TPC with this choice of $\gr\Cat$.
  Aside from the intrinsic nuisance of working relative to $A$, there
  are two relatively minor, but non-trivial, technical obstacles here:
  \begin{itemize}
  \item there are competing definitions of categorical $\fg$-action,
    and it's not clear that they give the same result.
    The classification mentioned above in terms of minimal polynomials
    is known for the Cautis-Lauda 2-category from \cite{CaLa} by \cite[\ref{m-universal}]{Webmerged};
    Rouquier has announced the same result for his 2-category, but the
    proof has yet to appear.
 \item it is not actually proven that a TPC will exist in this
   relative case since the corresponding algebras are not considered
   in \cite{Webmerged}, though most results could be ported over by a
   careful use of Nakayama's Lemma.
  \end{itemize}
\end{Rem}
As in any standardly stratified category, we have an isomorphism of
Grothendieck groups $K^0(\gr \Cat)\cong K^0(\Cat)$ via
the map sending $[M]\mapsto [\Delta(M)]$.  By assumption, we obtain an isomorphism $K^0(\Cat)\cong V_1\otimes \cdots
\otimes V_n$. It follows immediately from (TPC3) that:
  \begin{Prop}
    For any tensor product categorification, this map is an
    isomorphism of $\g$-modules.
  \end{Prop}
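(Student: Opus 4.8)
The plan is to verify that the natural isomorphism $K^0(\gr\Cat)\cong K^0(\Cat)$, $[M]\mapsto[\Delta(M)]$, intertwines the $\fg$-action on $K^0(\Cat)$ (coming from the exact functors $E_i,F_i$ on $\Cat$) with the diagonal $\fg$-action on $K^0(\gr\Cat)\cong V_1\otimes\cdots\otimes V_n$ (coming from the $\fg^{\oplus n}$-action on $\gr\Cat$, embedded diagonally). Since $\{[\Delta(M)]\}$ as $M$ runs over a basis of $K^0(\gr\Cat)$ is a basis of $K^0(\Cat)$, it suffices to check the two actions agree on such elements, and since both $E_i$ and $F_i$ are exact on $\Cat$ they descend to well-defined operators on $K^0(\Cat)$; similarly $\,_j\!E_i,\,_j\!F_i$ descend on $K^0(\gr\Cat)$.

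First I would fix $i\in I$ and $M\in\Cat_{\bmu}$ and compute $[E_i\Delta_{\bmu}(M)]$ in $K^0(\Cat)$. By (TPC3), $E_i\Delta_{\bmu}(M)$ has a filtration with successive quotients $\Delta(\,_j\!E_i M)$ for $j=1,\dots,n$, so in the Grothendieck group
\begin{equation*}
[E_i\Delta_{\bmu}(M)]=\sum_{j=1}^n[\Delta(\,_j\!E_i M)].
\end{equation*}
Under the identification $[\Delta(N)]\leftrightarrow[N]$, the right-hand side is exactly $\sum_{j=1}^n[\,_j\!E_i M]$, which is precisely the image of $[M]$ under the diagonal action of $e_i$ (the $i$-th Chevalley generator) on $K^0(\gr\Cat)=V_1\otimes\cdots\otimes V_n$, where $\,_j\!E_i$ realizes $e_i$ acting on the $j$-th tensor factor. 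The same computation with the $F_i$-part of (TPC3) handles the $f_i$ generators. Since the $e_i,f_i$ (together with the torus, which acts through the weight grading that matches by (TPC1)--(TPC2)) generate $\fg$, this shows the map is $\fg$-equivariant on the distinguished basis, hence on all of $K^0(\Cat)$.

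This argument is essentially immediate once (TPC3) is granted, so I do not expect any serious obstacle; the only point requiring a word of care is that passing to the Grothendieck group kills the filtration data and leaves only the alternating (here, plain, since the filtration is finite) sum of the associated graded pieces, which is why exactness of $E_i,F_i$ on $\Cat$ and of the $\,_j\!E_i,\,_j\!F_i$ on $\gr\Cat$ is used. One should also note that the torus part of the $\fg$-action is automatically matched: the weight decomposition of $K^0(\Cat)$ is read off from the filtration poset $\Xi$ via (TPC1), and (TPC2) identifies the $\bmu$-graded piece of $\gr\Cat$ with $\Cat_{\leqslant\bmu}/\Cat_{<\bmu}$, so the Cartan elements act by the scalar $\sum_i\mu_i$ on both sides. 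Thus the map $K^0(\gr\Cat)\to K^0(\Cat)$ is an isomorphism of $\fg$-modules, as claimed.
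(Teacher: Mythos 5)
Your proof is correct and takes precisely the route the paper intends: the paper simply remarks that the proposition ``follows immediately from (TPC3),'' and your argument spells out exactly that computation, namely $[E_i\Delta_{\bmu}(M)]=\sum_j[\Delta(\,_j\!E_i M)]$ (and dually for $F_i$), together with the observation that the weight gradings match via (TPC1)--(TPC2).
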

We also note that any tensor product categorification is
integrable, so by \cite[5.16]{Rou2KM}, the functors $E_i$ and $F_i$
are necessarily biadjoint.
\begin{Rem}
In fact, we could give an axiomatic description of a tensor product of arbitrary $\g$-categorifications
$\Cat^1,\ldots,\Cat^n$. Let us elaborate on this in the case when $n=2$. We say that a $\g$-categorification
$\Cat$ equipped with a standardly stratified structure is the tensor product $\Cat^1\otimes\Cat^2$ if
\begin{itemize}
\item[(TPC1')]
the poset $\Xi$ of $\Cat$ is the set of pairs $(\nu_1,\nu_2)$, where $\nu_i$ is a weight for $\Cat^i$,
with the order given by $(\nu_1,\nu_2)\leqslant (\nu_1',\nu_2')$ if $\nu_1\geqslant \nu_1'$ and
$\nu_1+\nu_2=\nu_1'+\nu_2'$.
\item[(TPC2')] There is an identification of $\gr\Cat$ with $\Cat^1\boxtimes \Cat^2$.
\item[(TPC3')] For each $M\in \gr\Cat$, the object $E_i\Delta(M)$
  admits a short exact sequence \[0 \longrightarrow \Delta(E_i^2 M) \longrightarrow
  E_i\Delta(M) \longrightarrow \Delta(E_i^1 M) \longrightarrow 0\] and
  similarly for the functors $F_i$, since $(\nu_1+\al_i,\nu_2)<(\nu_1,\nu_2+\al_i)$ in reverse dominance order.
\end{itemize}
Unfortunately, in general, we can prove neither existence nor
uniqueness of such tensor products; we expect that they will arise from
Rouquier's proposed internal tensor product.
\end{Rem}

\subsection{Labeling}\label{sec:labeling}
 Before getting too deep into the structure of these categories, we
 should give a set of labels for simples (or indecomposable projectives) in $\Cat$.

As usual with a standardly stratified category, the simples (or indecomposable projectives) in $\Cat$ are in canonical
bijection with the simples (or indecomposable projectives) in $\gr
\Cat$.  Let $B(\nu_j)$ be the crystal of the irreducible
representation $V_j$.  By \cite[\S 5.1]{LV}, we have a canonical
bijection between the product $B(\nu_1)\times \dots \times B(\nu_n)$
and the set of simples (or indecomposable projectives) in $\gr\Cat$.
Recall that the set of simple
objects in an arbitrary $\g$-categorification has a $\g$-crystal structure: if $L$ is a simple, then for $\tilde{e}_i L$ we take the
head (equivalently, the socle) of the object $E_i L$ if the latter is nonzero and $0$ else. The crystal operator
$\tilde{f}_i$ is defined similarly using $F_i$; as usual, we use the
notation ${}_j \tilde{e}_i,{}_j \tilde{f}_i$ when considering these
operators for $\fg^{\oplus n}$.


First consider the case of the categorification of a simple module
with highest weight $\nu$, which, as before, we denote
$\EuScript{C}$.  One straightforward description of the projective for a
crystal element uses the string parameterization of vertices of $B(\nu)$. Consider
$\lambda\in B(\nu)$ for some $j=1,\ldots,n$, and let $P(\la)$ be the
associated projective.  Choose an infinite
sequence $i_1,i_2,\dots$ of nodes in the Dynkin diagram of $\fg$
containing each node infinitely many times.  The {\bf string
  parameterization} of $\lambda$ is the unique  infinite sequence of integers
$(a_1,a_2,\dots)$ with almost all
entries 0 such that \[\te_{i_j}^{a_j}\cdots \te_1^{a_1}\la\neq 0
\qquad \te_{i_j}^{a_j+1}\te_{i_{j-1}}^{a_{j-1}}\cdots \te_1^{a_1}\la=
0\qquad \text{for all $j$.}\]

We can order crystal elements by comparing string parametrizations
lexicographically.
\begin{Prop}
[\mbox{Khovanov-Lauda \cite[3.20]{KLI}}]
  The projective $P(\la)$ is the unique summand of
  $F_{i_1}^{a_1}F^{a_2}_{i_2}\cdots \mathbb{V}$ which doesn't appear
  in $F_{i_1}^{a_1'}F^{a_2'}_{i_2}\cdots \mathbb{V}$ for a word
  $\mathbf{a}'$ larger than $\mathbf{a}$ in
  lexicographic order.
\end{Prop}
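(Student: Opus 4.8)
The plan is to argue by induction on the length $|\mathbf{a}| = a_1 + a_2 + \cdots$ of the string parameter, using the crystal-theoretic description of how indecomposable projective summands of $F_i^{a}\mathbb{V}$-type objects decompose. The key mechanism is the following: for a divided-power functor $F_i^{(a)}$, the object $F_i^{(a)}P(\mu)$ decomposes as $P(\tf_i^a\mu)$ plus a sum of $P(\nu)$ with $\nu$ strictly higher than $\tf_i^a\mu$ in the relevant (dominance-type) order, provided $a = \varepsilon_i(\tf_i^a\mu)$, i.e. $\mu$ is "$i$-highest" up to the top of its $i$-string; more generally one tracks the $i$-string structure via the categorical $\mathfrak{sl}_2$-action of Chuang--Rouquier. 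This is exactly the content of the cited Khovanov--Lauda result \cite[3.20]{KLI}, so in a sense the proposition \emph{is} that statement, and the task is to record it in the present notation and verify the ordering claim matches the lexicographic order on string parameters.

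The first step is to set up the induction: given $\lambda \in B(\nu)$ with string parameter $\mathbf{a} = (a_1, a_2, \dots)$ relative to the fixed sequence $i_1, i_2, \dots$, let $\lambda' = \te_{i_1}^{a_1}\lambda$, which has string parameter $(0, a_2, a_3, \dots)$ relative to the \emph{shifted} sequence $i_2, i_3, \dots$ — equivalently, by the inductive hypothesis applied to the categorification $\EuScript{C}$ with the sequence $i_2, i_3, \dots$, the projective $P(\lambda')$ is the unique summand of $F_{i_2}^{a_2}F_{i_3}^{a_3}\cdots\mathbb{V}$ not appearing in any such object for a lexicographically larger word. The second step is to pass from $\lambda'$ to $\lambda$ by applying $F_{i_1}^{a_1}$ (or better $F_{i_1}^{(a_1)}$): since $a_1$ is by definition the maximal power of $\te_{i_1}$ killing $\lambda$, we have $\varepsilon_{i_1}(\lambda') = 0$ and $\varphi_{i_1}(\lambda') \geqslant a_1$, so $\lambda = \tf_{i_1}^{a_1}\lambda'$ sits at height $a_1$ in its $i_1$-string with $\lambda'$ at the top. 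By the categorical $\mathfrak{sl}_2$-theory (Chuang--Rouquier), $F_{i_1}^{(a_1)}P(\lambda')$ has $P(\lambda)$ as a summand with multiplicity one, and all other indecomposable summands are $P(\kappa)$ with $\kappa$ obtained from some $\mu$ appearing in $P(\lambda')$ with $\mu > \lambda'$, together with summands coming from the $i_1$-string combinatorics that land strictly above $\lambda$ in the string order. Unwinding through the (non-divided) functors $F_{i_1}^{a_1}$ via $F_{i_1}^{a_1} = [a_1]! \cdot F_{i_1}^{(a_1)}$ (up to grading shift), one gets the same statement for $F_{i_1}^{a_1}F_{i_2}^{a_2}\cdots\mathbb{V}$.

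The third step is the bookkeeping that the "other summands" are genuinely lexicographically larger. A summand $P(\kappa)$ of $F_{i_1}^{a_1}\cdots\mathbb{V}$ other than $P(\lambda)$ either (a) comes from a $P(\mu)$, $\mu > \lambda'$, in $F_{i_2}^{a_2}\cdots\mathbb{V}$ — by induction such $\mu$ appears in $F_{i_2}^{a_2'}F_{i_3}^{a_3'}\cdots\mathbb{V}$ only for $\mathbf{a}'$ with $(a_2', a_3', \dots)$ lexicographically $\geqslant (a_2, a_3, \dots)$, and applying $F_{i_1}^{a_1}$ gives words with first entry $\leqslant a_1$ — wait, one must be careful here, so the cleaner route is to phrase everything in terms of which $\kappa$ can appear and use that $\varepsilon_{i_1}(\kappa) > a_1$ would be needed for $P(\kappa)$ not to be a summand of $F_{i_1}^{a_1}(\text{lower stuff})$, forcing the first string-parameter entry of $\kappa$ to exceed $a_1$; or (b) it comes from the $i_1$-string of $\lambda'$ itself landing at height $> a_1$, whence again its first string entry exceeds $a_1$. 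Either way $\kappa$'s string parameter is lexicographically $> \mathbf{a}$. Conversely, any $P(\kappa)$ with string parameter $> \mathbf{a}$: if the first entry is $> a_1$ it cannot be a summand of $F_{i_1}^{a_1}(\cdots)$ by the $\mathfrak{sl}_2$-structure (the $i_1$-divided power of height $a_1$ cannot produce an object with $\varepsilon_{i_1} > a_1$ in its socle/head pattern in the required way); if the first entry equals $a_1$, then $\te_{i_1}^{a_1}\kappa$ has larger string parameter than $\lambda'$ and induction finishes it.

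The main obstacle I anticipate is the precise form of the divided-power decomposition $F_i^{(a)}P(\mu)$ — making rigorous "$P(\tf_i^a\mu)$ plus strictly higher terms" and pinning down exactly which order the "strictly higher" is with respect to — and reconciling that with the lexicographic string order. This is genuinely the heart of \cite[3.20]{KLI}; in the write-up I would either cite it directly with a sentence explaining the translation of conventions, or reprove the one-step claim using the categorical $\mathfrak{sl}_2$-module structure on $\EuScript{C}$ (projective objects, the $\te_i/\tf_i$ crystal on simples, and the fact that $E_i, F_i$ are exact biadjoint) together with the known branching of projectives along a single $i$-string, which is the Chuang--Rouquier analysis. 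The length-zero base case ($\mathbf{a} = 0$, $\lambda = $ the highest-weight vertex, $P(\lambda) = \mathbb{V}$) is immediate.
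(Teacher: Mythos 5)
The paper does not prove this proposition: it is stated purely as a citation of Khovanov--Lauda \cite[3.20]{KLI} (and implicitly \cite[\S 5.1]{LV}), and no argument is given in the text. So there is no paper proof to compare against; what I can do is assess your reconstruction on its own terms.

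Your outline is the right shape, but as written it does not close. Two concrete problems. First, you induct on $|\mathbf{a}| = \sum a_j$, yet your inductive step replaces $\lambda$ by $\lambda' = \te_{i_1}^{a_1}\lambda$ and shifts the reference sequence to $(i_2, i_3, \dots)$. When $a_1 = 0$ we have $\lambda' = \lambda$ and $|\mathbf{a}|$ is unchanged, so the induction does not terminate. You need either a secondary induction on the position of the first nonzero entry, or a different organizing quantity such as the length of the monomial $F_{i_1}^{a_1}F_{i_2}^{a_2}\cdots$. Second, your step three has an explicit seam (``wait, one must be careful here''). The claim that a summand $P(\kappa)$ of $F_{i_1}^{a_1}P(\mu)$ with $\mu > \lambda'$ necessarily has string parameter lexicographically larger than $\mathbf{a}$ is not established; one has to rule out the possibility that applying $F_{i_1}^{a_1}$ to something with shifted string parameter $\succ (a_2, a_3, \dots)$ produces a $P(\kappa)$ whose first two string entries are exactly $(a_1, a_2)$. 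Closing that case again needs the $\mathfrak{sl}_2$-string analysis (control of $\varepsilon_{i_1}$ on heads and socles of $F_{i_1}^{a}P(\mu)$ via Chuang--Rouquier, e.g.\ the arguments around \cite[Lemma 5.11, Prop.\ 5.20]{CR04}).

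You flag this yourself: the decomposition of $F_i^{(a)}P(\mu)$ into $P(\tf_i^a\mu)$ plus strictly higher summands, with the correct order, is precisely the content of the cited result. So the honest conclusion is that your proposal is a good sketch of \emph{why} \cite[3.20]{KLI} is true, not an independent proof of it. For the purposes of this paper the right move is what you propose in your last paragraph: cite \cite[3.20]{KLI} (or equivalently \cite[\S 5.1]{LV}) and add a sentence translating notation from their setting (cyclotomic KLR modules, string parameterization) to the present one, rather than attempt a proof. If you do want a self-contained argument, the one-step $\mathfrak{sl}_2$ lemma should be isolated and proved first, and the induction should be reorganized to genuinely decrease.
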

For $\fg^{\oplus n}$, we want to proceed a little differently; instead
of applying this construction directly (which will work perfectly
well), we compute the string parameterization of each component of
$\bla=(\la_1,\dots, \la_n)\in B(\nu_1)\times \dots \times B(\nu_n)$.
Thus, we obtain $n$ different words $\Ba^{(1)}=(a_1^{(1)},\dots)$,
etc.  We can easily modify the proposition above to:
\begin{Prop}
  The projective $P(\bla)$ is the unique summand of
  ${}_{1}F_{i_1}^{a_1^{(1)}}\!\!{}_{1}F^{a_2^{(1)}}_{i_2}\!\!\cdots {}_{2}F_{i_1}^{a_1^{(2)}}\!\!{}_{2}F^{a_2^{(2)}}_{i_2}\!\!\cdots\mathbb{V}$ which doesn't appear
  in a corresponding monomial where
  any $\mathbf{a}^{(j)}$ is replaced by a larger word in
  lexicographic order.
\end{Prop}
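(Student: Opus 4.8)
The plan is to transport the single‑factor statement of Khovanov--Lauda recalled just above along the identification of $\gr\Cat$ with an outer tensor product of simple categorifications. First I would apply Proposition~\ref{simple-unique} to $\fg^{\oplus n}$ with highest weight $\bnu$. The Dynkin diagram of $\fg^{\oplus n}$ is a disjoint union of $n$ copies of that of $\fg$, so the cyclotomic KLR algebra $R^{\bnu}$ of $\fg^{\oplus n}$ factors as $R^{\nu_1}\otimes_\C\cdots\otimes_\C R^{\nu_n}$; hence $\gr\Cat\cong\Cat(\nu_1)\boxtimes\cdots\boxtimes\Cat(\nu_n)$, where $\Cat(\nu_j):=R^{\nu_j}\mmod$ is the categorification of $V_j$ as an $\fg$-module. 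Under this equivalence the functor ${}_jF_i$ is $\mathrm{id}\boxtimes\cdots\boxtimes F_i\boxtimes\cdots\boxtimes\mathrm{id}$ with $F_i$ in the $j$th slot (so in particular ${}_jF_i$ and ${}_kF_{i'}$ commute for $j\neq k$), and $\mathbb{V}=\mathbb{V}_1\boxtimes\cdots\boxtimes\mathbb{V}_n$ for $\mathbb{V}_j$ the distinguished generating object of $\Cat(\nu_j)$. Reordering the monomial of the statement by factor and using these remarks,
\[{}_1F_{i_1}^{a_1^{(1)}}{}_1F_{i_2}^{a_2^{(1)}}\cdots\,{}_2F_{i_1}^{a_1^{(2)}}{}_2F_{i_2}^{a_2^{(2)}}\cdots\mathbb{V}\;\cong\;M^{(1)}\boxtimes\cdots\boxtimes M^{(n)},\qquad M^{(j)}:=F_{i_1}^{a_1^{(j)}}F_{i_2}^{a_2^{(j)}}\cdots\mathbb{V}_j,\]
and replacing one word $\Ba^{(j)}$ by a lexicographically larger word changes only the $j$th box factor $M^{(j)}$.

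Second, I would record the multiplicativity of Krull--Schmidt decompositions over the box product: every indecomposable summand of $M^{(1)}\boxtimes\cdots\boxtimes M^{(n)}$ has the form $Q_1\boxtimes\cdots\boxtimes Q_n$ with $Q_j$ an indecomposable summand of $M^{(j)}$, conversely every such box product is an indecomposable summand, and $Q_1\boxtimes\cdots\boxtimes Q_n\cong Q_1'\boxtimes\cdots\boxtimes Q_n'$ iff $Q_j\cong Q_j'$ for all $j$. This follows from $\End(M^{(1)}\boxtimes\cdots\boxtimes M^{(n)})\cong\bigotimes_j\End(M^{(j)})$ together with the fact that a finite tensor product over $\C$ of local finite‑dimensional algebras with residue field $\C$ is again local with residue field $\C$ (its radical is the sum of the pulled‑back radicals, which is nilpotent). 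The residue‑field hypothesis is precisely the absolute irreducibility built into $\Cat$ and inherited by each $\Cat(\nu_j)$; this step is the only point of the argument that is not a purely formal transport of Khovanov--Lauda, and I expect it to be the (mild) crux. Applying the statement to projectives gives $P(\bla)=P(\la_1)\boxtimes\cdots\boxtimes P(\la_n)$.

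With these two reductions the argument is bookkeeping against the Khovanov--Lauda proposition above applied to each $\Cat(\nu_j)$ with the string parameterization $\Ba^{(j)}$ of $\la_j$. That proposition gives $P(\la_j)\mid M^{(j)}$, so $P(\bla)\mid M^{(1)}\boxtimes\cdots\boxtimes M^{(n)}$. If in this monomial some word $\Ba^{(j)}$ is replaced by a larger one, the $j$th factor becomes $F_{i_1}^{a_1'}F_{i_2}^{a_2'}\cdots\mathbb{V}_j$, which by Khovanov--Lauda has no summand isomorphic to $P(\la_j)$; by the multiplicativity above, no summand whose $j$th box factor is $P(\la_j)$ occurs in the modified monomial, so $P(\bla)$ does not. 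For uniqueness, let $Q=Q_1\boxtimes\cdots\boxtimes Q_n$ be an indecomposable summand of $M^{(1)}\boxtimes\cdots\boxtimes M^{(n)}$ with $Q\not\cong P(\bla)$; then $Q_j\not\cong P(\la_j)$ for some $j$, and the uniqueness clause of Khovanov--Lauda supplies a word $\Ba'>\Ba^{(j)}$ with $Q_j\mid F_{i_1}^{a_1'}F_{i_2}^{a_2'}\cdots\mathbb{V}_j$; since the remaining $Q_k$ are still summands of the unchanged $M^{(k)}$, $Q$ is a summand of the corresponding larger box product. This proves the proposition.
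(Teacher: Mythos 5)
The paper gives no proof of this proposition; it simply asserts it as an "easy modification" of the Khovanov--Lauda result stated just above, so there is no argument of the authors' to compare against. Your argument is a correct and reasonable filling-in of that step: decompose $\gr\Cat$ as the external product $\Cat(\nu_1)\boxtimes\cdots\boxtimes\Cat(\nu_n)$ via (TPC2) and Proposition~\ref{simple-unique} applied to $\fg^{\oplus n}$, identify ${}_jF_i$ with $F_i$ acting in the $j$th slot, and then run the single-factor proposition componentwise. The Krull--Schmidt bookkeeping over the box product is exactly the nontrivial piece, and your treatment via $\End(M^{(1)}\boxtimes\cdots\boxtimes M^{(n)})\cong\bigotimes_j\End(M^{(j)})$ and locality of tensor products of local finite-dimensional $\C$-algebras with residue field $\C$ is sound and does use the absolute-irreducibility hypothesis as you flag. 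One small imprecision worth polishing: the cyclotomic KLR algebra of $\fg^{\oplus n}$ at $\bnu$ is not literally isomorphic to $R^{\nu_1}\otimes_\C\cdots\otimes_\C R^{\nu_n}$ (idempotents are indexed by sequences that interleave the $n$ index sets); since $Q_{ij}$ is a unit for $i,j$ in different components, the relevant crossings are invertible and one gets a Morita equivalence, which is all you use. Stating it as an equivalence of module categories, as you in fact do in the next clause, is the safe formulation.
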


\subsection{Tensor product categorification on $\Cat^{opp}$}
Suppose that $\Cat$ is a tensor product categorification of $V_1\otimes\ldots\otimes V_n$ in the sense
of the definition above.  In this subsection we are going to prove
that $\Cat^{opp}$ is also a tensor
product categorification of $V_1\otimes V_2\otimes\ldots\otimes V_n$. 

\begin{Prop}\label{Lem:opp_catn}
The category $\Cat^{opp}$ is a tensor
product categorification of $V_1\otimes\ldots\otimes V_n$
\end{Prop}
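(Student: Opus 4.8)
The plan is to verify the three axioms (TPC1)--(TPC3) for $\Cat^{opp}$, using the standardly stratified structure on $\Cat^{opp}$ from the Proposition preceding this statement, together with the biadjointness of $E_i$ and $F_i$ noted above. The key observation is that on $\Cat^{opp}$ the standardization functor is $\nabla_\xi$ (so the ``standard'' objects for $\Cat^{opp}$ are the costandard objects of $\Cat$), and that passing to the opposite category reverses the preorder on $\Lambda$ coming from $\varrho\colon\Lambda\to\Xi$. Thus the first step is to see that the correct poset for $\Cat^{opp}$ is again $\Xi$ but with the \emph{opposite} order; I then need to check that this opposite order is once more an ``inverse dominance order'' of the required type. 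This is where the choice of $n$-tuples matters: replacing $\bmu\leqslant\bmu'$ (meaning $\sum_{i=1}^j\mu_i\leqslant\sum_{i=1}^j\mu'_i$ for all $j$) by its opposite is the same as reversing all the partial-sum inequalities, which is exactly the inverse dominance order attached to the \emph{reversed} tensor factors. But since $\g^{\oplus n}$ acts and the weight spaces are indexed symmetrically, we may reindex and recover (TPC1) verbatim for the tuple $\bnu$ itself; this reindexing must be tracked carefully but is purely combinatorial.

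Next, for (TPC2): by Lemma~\ref{Lem:co-st-ext}(3) the functor $\nabla_\xi$ identifies $\gr(\Cat^{opp})=\bigoplus_\xi (\Cat^{opp})_\xi$ with $\bigoplus_\xi \Cat_\xi^{opp}=(\gr\Cat)^{opp}$, and the duality (each block of $\Cat_\xi$ is finite-dimensional modules over a finite-dimensional algebra, with absolutely irreducible simples) gives $\Cat_\xi^{opp}\cong\Cat_\xi$. Now $\gr\Cat$ is the representation category of the cyclotomic KLR algebra of $\fg^{\oplus n}$ at highest weight $\bnu$ by (TPC2) for $\Cat$, and this category admits a categorical $\fg^{\oplus n}$-action whose Grothendieck group is $V_1\otimes\cdots\otimes V_n$; moreover this category is equivalent to its own opposite via the standard contravariant duality on cyclotomic KLR algebras (which intertwines $E_i$ with $F_i$ up to the standard twist, preserving the $\fg^{\oplus n}$-action). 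In particular $\Cat_\nu^{opp}\cong\operatorname{Vect}_\C$ with $\mathbb{V}$ again the unique indecomposable, so Proposition~\ref{simple-unique} applies and (TPC2) holds for $\Cat^{opp}$.

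Finally, (TPC3) for $\Cat^{opp}$ is the statement that for $M\in(\Cat^{opp})_{\bmu}=\Cat_{\bmu}^{opp}$ the object $E_i\nabla_{\bmu}(M)$ (where $E_i$ is now computed in $\Cat^{opp}$, i.e.\ it is the functor that was $F_i$ on $\Cat$, using biadjointness) has a filtration with subquotients $\nabla({}_jE_i M)$, and dually for $F_i$. The plan is to deduce this from (TPC3) for $\Cat$ by applying the contravariant duality: a standard filtration of $F_i\Delta_{\bmu}(M)$ in $\Cat$ with subquotients $\Delta({}_jF_iM)$ becomes, after dualizing, a filtration of the image of $F_i$ in $\Cat^{opp}$ applied to $\nabla_{\bmu}(M)$, with subquotients the duals of $\Delta({}_jF_iM)$, which are $\nabla({}_jF_iM)$ in $\Cat^{opp}$; one then uses that the functor $F_i$ in $\Cat^{opp}$ equals the biadjoint $E_i$ of $\Cat$ composed with the reindexing, and that the duality on $\gr\Cat$ swaps ${}_jF_i$ with ${}_jE_i$ in a way compatible with the reversal of the tensor order. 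Putting this together one gets exactly the filtrations required by (TPC3), and the uniqueness of the standard filtration is automatic as before. The main obstacle I anticipate is precisely this bookkeeping in the last step: one must be careful that the contravariant duality's effect on the $\fg^{\oplus n}$-action (interchanging $E$ and $F$) combines correctly with the order-reversal on $\Xi$ and the attendant relabeling of tensor factors, so that the subquotients come out indexed in the order demanded by (TPC3) rather than the reversed order. Once the conventions are pinned down this is mechanical, but it is the only place where a sign-error-like slip could occur.
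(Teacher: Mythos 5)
Your proposal goes astray in several places, and the cumulative effect is that it does not establish the result.

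First, passing to $\Cat^{opp}$ does \emph{not} reverse the preorder on $\Xi$. The Serre filtration $\{\Cat_{\leqslant\xi}\}$ is a collection of subcategories, and the same collection serves for $\Cat^{opp}$; the left adjoint of $\pi_\xi$ on $\Cat^{opp}$ is the right adjoint of $\pi_\xi$ on $\Cat$, namely $\nabla_\xi$, and BGG reciprocity (Lemma~\ref{Lem:BGG}) shows that the projective $I(\lambda)$ of $\Cat^{opp}$ surjects onto $\nabla(\lambda)$ with kernel filtered by $\nabla(\mu)$, $\mu>\lambda$, \emph{for the original order}. This is exactly why the paper can say that (TPC1) and (TPC2) are tautological. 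You are conflating $\Cat^{opp}$ with Ringel duality $\Cat^\vee$, which is the construction where the order does reverse and the factors do get reindexed to $V_n\otimes\cdots\otimes V_1$ (see Remark~\ref{Rem:ringel_dual}). Your reindexing paragraph is an attempt to patch a problem that doesn't exist, and the patch is purely asserted.

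Second, the functors $E_i,F_i$ do \emph{not} swap on $\Cat^{opp}$. They are the same endofunctors; biadjointness merely means $E_i$ is simultaneously left and right adjoint to $F_i$, so viewing them on $\Cat^{opp}$ still gives a valid adjoint pair with $F_i$ lowering the weight and carrying the KLR action (up to the standard anti-automorphism of $R_k$). The paper's statement of what (TPC3) requires for $\Cat^{opp}$ uses $E_i$ and $F_i$ in their original roles.

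Third, and most seriously, the argument you offer for (TPC3) appeals to a ``contravariant duality'' that sends $\Delta({}_jF_iM)$ to $\nabla({}_jF_iM)$. There is no such functor $\Cat\to\Cat^{opp}$; a tensor product categorification is not assumed to carry an internal duality. The tautological identity $\Cat\to\Cat^{opp}$ sends $\Delta(\lambda)$ to $\Delta(\lambda)$, not to $\nabla(\lambda)$. The duality that does exist lives only on $\gr\Cat$ (on each $\Cat_\xi$, since it is modules over a cyclotomic KLR algebra), and it does not lift to $\Cat$. So (TPC3) is not a formal consequence of (TPC3) for $\Cat$: it requires genuine work. The paper's proof establishes that $E_i,F_i$ preserve $\Cat^{\onabla}$ and $\Cat^{\nabla}$, computes $\Hom(\Delta_{\bmu'}(N),E_i\nabla_\bmu(M))$ via a chain of adjunctions, first proves the filtration claim when $M$ is projective-injective in $\Cat_\bmu$ by multiplicity counting in $\Cat^\nabla$, and then treats general $M$ using the exactness of $\pi_{\bmu'}^!\circ\pi_{\bmu'}\circ\iota_{\bmu'}^!$ on $\Cat^{\oDelta}$. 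None of this machinery appears in your proposal, which instead leans on a duality that isn't there.
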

\begin{proof}
Conditions (TPC1)  is tautologically equivalent for
$\Cat$ and $\Cat^{opp}$. To prove that (TPC2) holds for $\Cat^{opp}$, we notice that Proposition \ref{simple-unique}
provides a strongly equivariant equivalence of $\gr (\Cat^{opp})=(\gr\Cat)^{opp}$
and $\gr\Cat$.  Thus, we need only establish (TPC3).

We need to prove that $E_i\nabla_\bmu(M)$ has a filtration whose successive quotients are
$\nabla_{\bmu+\alpha^j_i}(\,_{j}\!E_i M), $ and the analogous claim for $F_i \nabla_\bmu(M)$. Here $\alpha_i^j$
means the simple root $\alpha_i$ for the $j$th copy of $\g$.

First of all,  $E_i,F_i$ preserve $\Cat^{\onabla},\Cat^\nabla$. This follows from the  observation
that $E_i,F_i$ preserve $\Cat^{\Delta},\Cat^{\oDelta}$, combined with Lemma \ref{Lem:cost_filt} and the biadjointness
of $E_i$ and $F_i$.

Pick $N\in \Cat_{\bmu'}$, where $\bmu'-\bmu=\alpha_i^j$. We see that
\begin{align}\label{eq:Hom_eq}&\Hom_{\Cat}(\Delta_{\bmu'}(N),E_i\nabla_\bmu(M))= \Hom_{\Cat}(F_i\Delta_{\bmu'}(N),\nabla_\bmu(M))=\\\nonumber &=\Hom_{\Cat_\bmu}(\,_{j}\!F_i N, M)=\bigoplus_{j=1}^n \Hom_{\Cat_{\bmu'}}(N, \,_{j}\!E_iM)=\\\nonumber&=\Hom_{\Cat}(\Delta_{\bmu'}(N), \nabla_{\bmu'}(\,_{j}\!E_i M)), \end{align}
where all equalities are natural isomorphisms of $\operatorname{End}_{\Cat_{\bmu'}}(N)$-modules.

Now let us show that the claim in the beginning of the proof holds when $M$ is projective (=injective)
in $\Cat_\bmu$. Recall that $E_i \nabla_\bmu(M)\in \Cat^\nabla$.
In particular,  if, in (\ref{eq:Hom_eq}), for $N$ we take the simple in $\Cat_{\bmu'}$ labeled by $\lambda$,
we  see that the multiplicity of $\nabla(\lambda)$ in $E\nabla_\bmu(M)$ and $\bigoplus_{i=1}^n \nabla_{\bmu+\alpha_i^j,\bmu'}(\,_{j}\!E_i M)$
coincide. This implies the existence of a required filtration on $E_i\nabla_\bmu(M)$.

Proceed to the case of a general $M$. In this case, Lemma \ref{Lem:cost_filt} just implies that
$E_j\nabla(M)\in \Cat^{\oDelta}$. So, by (\ref{eq:Ext_vanish}),
the object $E_j\nabla(M)$ has a filtration
with successive quotients $\nabla(M')$,  $M'\in\Cat_{\bmu'}$,  for $\bmu'=\bmu+\alpha_i^j, j=1,\ldots,n$.
Recall that we write $\iota_{\bmu'}$ for the inclusion functor
$\Cat_{\leqslant \bmu'}\hookrightarrow \Cat$, $\pi_{\bmu'}$ for the projection
functor $\Cat_{\leqslant \bmu'}\twoheadrightarrow \Cat_{\bmu'}$, and $\iota_{\bmu'}^!,\pi_{\bmu'}^!$
for the left adjoint functors. Choose $j=1,\ldots,n$ and set $\bmu'=\bmu+\alpha_i^j$. Consider the functor $\mathcal{F}_{\bmu'}:=\pi_{\bmu'}^!\circ\pi_{\bmu'}\circ\iota_{\bmu'}^!:\Cat\rightarrow
\Cat_{\leqslant \bmu'}$. It follows from Lemma \ref{Lem:proj_exact} that the functor $\mathcal{F}_{\bmu'}$ is exact
on $\Cat^{\oDelta}$. It maps $M'\in \Cat^{\oDelta}$ to the subquotient of the form $\Delta_{\bmu'}(N')$, where
$N'=\pi_{\bmu'}\circ\iota_{\bmu'}^!(M')$. So  we just need to prove that the functor $\pi_{\bmu'}\circ\iota_{\bmu'}^!\circ E_i\circ\Delta_\bmu:
\Cat_{\bmu}\rightarrow \Cat_{\bmu'}$ is isomorphic  to $\,_{j}\!E_i$ if $\bmu'=\bmu+\alpha_i^j$ for some $j=1,\ldots,n$
and is zero else. The vanishing result follows from the form of $E_i\nabla_\bmu(M)$, for $M$ projective,
obtained above.  The isomorphism of functors follows from (\ref{eq:Hom_eq}).
\end{proof}

\excise{
\begin{Rem}
One can also equip the Ringel dual $\Cat^\vee$ with a categorical $\g$-action turning $\Cat^\vee$ into
a tensor product categorification of $V_n\otimes V_{n-1}\otimes\ldots\otimes V_1$, compare with
\cite[7.1]{LoHWCII}. Namely, using the
identification $(\Cat^\vee)^{\oDelta}\cong \Cat^{\onabla}$ one defines the categorification functors
on $(\Cat^\vee)^{\oDelta}$ and then extends them to the whole category $\Cat^\vee$ obtaining a categorical
$\g$-action. Then it is not difficult to see that together with the standardly stratified structure on
$\Cat^\vee$, this action becomes a tensor product categorification of $V_n\otimes V_{n-1}\otimes\ldots\otimes V_1$.
\end{Rem}
}
%

\excise{
Now let us turn our attention to $\Cat^\vee$.
We have seen in Subsection \ref{SS_Ringel}, that $\Cat^\vee$ has a natural standardly stratified structure.
A categorical $\g$-action on $\Cat^\vee$ can be defined by analogy with \cite[7.1]{LoHWCII}, compare also with
\cite[9.2]{Lotowards}. Namely, we first define the categorification functors $E_i^\vee, F_i^\vee$
on the category $(\Cat^\vee)^{\oDelta}$ by using the identification $(\Cat^\vee)^{\oDelta}\cong \Cat^{\onabla}$
(so that $E_i^\vee$ corresponds to $E_i$, and $F_i^\vee$ corresponds to $F_i$). Clearly, $E_i,F_i$
preserve $\Cat-\operatorname{tilt}$, therefore $E_i^\vee, F_i^\vee$ preserve $\Cat^\vee-\operatorname{proj}$
and are biadjoint on that category. Therefore they uniquely extend to biadjoint functors $E_i^\vee,F_i^\vee$
on $\Cat^\vee$. It suffices to check the axioms of a categorification
for the action on projective objects, so these functors define one on
$\Cat^\vee$.

\begin{Prop}
  The category $\Cat^\vee$ is a tensor product categorification of
  $V_n\otimes V_{n-1}\otimes\ldots\otimes V_1$.
\end{Prop}
\begin{proof}
  Condition (TPC1) holds for $\Cat^\vee$ because the poset of
  $\Cat^\vee$ is $\Xi^{opp}$, this is precisely the poset for
  $V_n\otimes V_{n-1}\otimes\ldots\otimes V_{1}$.  Condition (TPC2)
  follows from the natural identification of $\Cat^\vee_\bmu$ and
  $\Cat_\bmu$ explained in Subsection \ref{SS_Ringel}. Finally, (TPC3)
  for $\Cat^\vee$ follows from the construction of the functors
  $E_i^\vee, F_i^\vee$, (TPC3) for $\Cat^{opp}$, and the isomorphism
  of functors $\Delta_\bmu^\vee(\bullet)\cong
  \operatorname{Hom}(T,\nabla_\bmu(\bullet))$.
\end{proof}
Of course, applying these constructions in turn will give rise to a tensor product categorification
on $\,^\vee\!\Cat$ (this will be a categorification of $V_n\otimes V_{n-1}\otimes\ldots \otimes V_1$). It is straightforward
to check that the natural identification of $\Cat$ and
$\,^\vee\!(\Cat^\vee)$ is a strong equivariant equivalence of tensor product categorifications.
}
\subsection{Relation with previous constructions}
\label{sec:relat-with-prev}

  Concrete examples of categorical $\g$-actions whose Grothendieck
groups are tensor products have arisen in work in
representation theory and topology.

\subsubsection*{Diagrammatic realizations} One obvious construction to
compare the definition above with are the algebras $T^\bnu$ defined by the
second author in \cite[\S \ref{m-sec:KL}]{Webmerged}.  We refer the reader to that paper for
the details of the definition.
What is important for us is an inductive description of the
representation categories of these algebras.  Given the sequence of weights $\bnu$, we define weights
$\nu^{(k)}=\nu_1+\cdots +\nu_k$.

Attached to each $k$, we have an associated cyclotomic
quotient of the KLR algebra \[R^{\nu^{(k)}}=R/\langle y_1^{\alpha^\vee_{i_1}(\nu^{(k)})}e(\Bi)|\Bi\in I^m\rangle,\] equipped with projections
$R^{\nu^{(k)}}\to R^{\nu^{(k-1)}}$, and induced inflation functors
$\operatorname{inf}_k\colon  R^{\nu^{(k-1)}}\mmod\to  R^{\nu^{(k)}}\mmod$.

Now, consider the category $\Cat(\bnu)$ defined as the category of
representations of its category $\Cat(\bnu)\operatorname{-proj}$ of projectives (via the Yoneda embedding):
\begin{itemize}
\item we let $\Cat(\nu)$ just be the category of finite dimensional
  representations of $R^{\nu}$.
\item The category of projectives $\Cat(\bnu)\operatorname{-proj}$ is
  the additive category of generated by summands of categorification
  functors applied to the image
  $\operatorname{inf}_k(\Cat(\nu_1,\dots, \nu_{k-1})\operatorname{-proj})$.
\end{itemize}
Thus, these are the minimal subcategories closed under the categorical
$\fg$-action which contains the images of all inflation functors.

\begin{Prop}
  We have an equivalence $T^{\bnu}\mmod\cong \Cat(\bnu)$.
\end{Prop}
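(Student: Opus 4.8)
\emph{Proof proposal.}
The plan is to induct on the number $n$ of tensor factors, carrying along the stronger assertion that the equivalence is strongly equivariant for the categorical $\fg$-actions and intertwines the inflation functors. The base case $n=1$ is a tautology: $T^{(\nu_1)}$ is by definition the cyclotomic KLR algebra $R^{\nu_1}$, and $\Cat(\nu_1)$ was defined to be $R^{\nu_1}\mmod$. So suppose the statement is known for $\bnu' := (\nu_1,\dots,\nu_{n-1})$, and set $\nu=\nu_1+\cdots+\nu_n$.

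I would extract two facts from \cite{WebCTP}. First, $T^{\bnu}\mmod$ carries a categorical $\fg$-action by exact biadjoint functors $E_i,F_i$, realized diagrammatically by removing and adding a black strand at the right-hand edge. Second -- and this is the substance -- there is an inflation functor $\operatorname{inf}_n\colon T^{\bnu'}\mmod\to T^{\bnu}\mmod$, pullback along the algebra surjection $T^{\bnu}\twoheadrightarrow T^{\bnu'}$ that kills every diagram in which some black strand passes to the right of the last red strand, and the idempotent-closed additive subcategory of $T^{\bnu}\operatorname{-proj}$ generated by the objects $\Ffun_{\Bi}\,\operatorname{inf}_n(Q)$ (for $Q$ a projective $T^{\bnu'}$-module and $\Bi$ a finite sequence in $I$) is all of $T^{\bnu}\operatorname{-proj}$. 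The latter is exactly the inductive description of $T^{\bnu}$ in \cite{WebCTP}: inflating a projective from the shorter algebra and then adjoining black strands produces the standard modules, through which every indecomposable projective of $T^{\bnu}$ occurs as a summand. Unwinding the recursion, this says that $\Cat(\bnu)$ as defined, with the ranging $\operatorname{inf}_k$, is precisely the minimal additive idempotent-closed subcategory stable under the $F_i$ that contains all the inflated copies of $R^{\nu_1}\operatorname{-proj}$.

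Granting these, the inductive step is formal. By the strengthened inductive hypothesis there is a strongly equivariant equivalence $\Phi'\colon T^{\bnu'}\mmod\xrightarrow{\sim}\Cat(\bnu')$ compatible with inflations. Strong equivariance means $\Phi'$ comes with isomorphisms $F_i\Phi'\cong\Phi'F_i$ respecting the KLR action, so $\Phi'$ sends each generating object $\Ffun_{\Bi}\,\operatorname{inf}_n(Q)$ to an object isomorphic to $\Ffun_{\Bi}\,\operatorname{inf}_n(\Phi'Q)$. Hence $\Phi'$ induces an equivalence between the idempotent-closed additive subcategory of $T^{\bnu}\operatorname{-proj}$ generated by these objects -- all of $T^{\bnu}\operatorname{-proj}$, by the second fact above -- and the subcategory of the same type generated by $\Ffun_{\Bi}\,\operatorname{inf}_n(Q')$ with $Q'\in\Cat(\bnu')\operatorname{-proj}$, which is $\Cat(\bnu)\operatorname{-proj}$ by definition. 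Both these additive categories are locally finite -- finitely many indecomposables and finite-dimensional Hom-spaces in each weight, since the weight spaces of $V_1\otimes\cdots\otimes V_n$ are finite-dimensional -- so passing to finite-dimensional modules over the endomorphism algebra of a projective generator (Yoneda) turns the equivalence of projective-categories into an equivalence $T^{\bnu}\mmod\cong\Cat(\bnu)$ of abelian categories with finite-dimensional blocks. By construction it intertwines the $F_i$ and the inflation functors, so the strengthened statement descends and the induction closes.

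The principal obstacle is the second input: correctly identifying the surjection $T^{\bnu}\twoheadrightarrow T^{\bnu'}$ and the functor $\operatorname{inf}_n$, and then extracting from \cite{WebCTP} the structural statement that the categorification functors together with these inflations generate all of $T^{\bnu}\operatorname{-proj}$ -- which is essentially the content of the standard-module construction there. Everything after that is bookkeeping, but one must be disciplined about it: the equivalence has to be propagated as a morphism of categorical $\fg$-modules, not merely of abelian categories, since it is precisely that equivariance which allows the $F_i$ on the two sides to be matched; this is why the inductive hypothesis must be strengthened to include strong equivariance and compatibility with inflation, rather than a bare equivalence.
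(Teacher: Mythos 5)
The overall shape of your argument — induct on $n$, use the generative structure of $T^{\bnu}\operatorname{-proj}$, and transport it along the inductive equivalence — does echo the paper's proof. But there is a genuine gap in the way you set up the inductive step, centered on what the ``inflation'' $T^{\bnu'}\mmod\to T^{\bnu}\mmod$ actually is.

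You describe $\operatorname{inf}_n$ as pullback (restriction of scalars) along an algebra surjection $T^{\bnu}\twoheadrightarrow T^{\bnu'}$. Restriction of scalars along a surjection of finite-dimensional algebras is fully faithful and exact, but it does \emph{not} take projectives to projectives unless the kernel is generated by an idempotent, which is not the case here. Consequently the objects $\Ffun_{\Bi}\operatorname{inf}_n(Q)$ with $Q\in T^{\bnu'}\operatorname{-proj}$ need not lie in $T^{\bnu}\operatorname{-proj}$, and the claim that they generate $T^{\bnu}\operatorname{-proj}$ as an additive idempotent-closed category is not well-posed as stated. The functor that does take $T^{\bnu'}$-projectives to $T^{\bnu}$-projectives is the left adjoint $\pi^!=T^{\bnu}e\otimes_{eT^{\bnu}e}\bullet$ of the quotient functor $\pi\colon T^{\bnu}\mmod\to T^{\bnu'}\mmod$, where $e$ is the idempotent with $eT^{\bnu}e\cong T^{\bnu'}$ (this is \cite[2.7]{WebCTP}); this is categorical induction, not restriction of scalars, and the resulting image of a projective is a standard $\Delta(\bla)$ with $\varrho(\bla)_n=\nu_n$, which is projective precisely because $\{\bmu:\mu_n=\nu_n\}$ is a coideal in the stratification poset.

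There is also a second, subtler point your plan does not confront. The recursive definition of $\Cat(\bnu)\operatorname{-proj}$ uses the KLR-side inflation $\operatorname{inf}_k\colon R^{\nu^{(k-1)}}\mmod\to R^{\nu^{(k)}}\mmod$, i.e.\ pullback along the surjection of cyclotomic quotients, applied to the realization of $\Cat(\nu_1,\dots,\nu_{k-1})\operatorname{-proj}$ inside $R^{\nu^{(k-1)}}\mmod$. So $\Cat(\bnu)\operatorname{-proj}$ lives inside $R^{\nu}\mmod$, not inside $R^{\nu}\operatorname{-proj}$. To match your intrinsic $T^{\bnu}$-side picture with the recursive definition, you would need to show that the image of $\pi^!(Q)$ under the fully faithful functor $T^{\bnu}\operatorname{-pmod}\to R^{\nu}\mmod$ coincides with $\operatorname{inf}_n$ applied to the image of $Q$. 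That identification is precisely the technical content of \cite[3.24]{WebCTP}, and it is what the paper's proof leans on: it realizes both $T^{\bnu}\operatorname{-pmod}$ (via the $y_{\Bi,\kappa}R^{\nu}$) and $\Cat(\bnu)\operatorname{-proj}$ as concrete subcategories of $R^{\nu}\mmod$ and checks that they coincide, using the observation $y_{\Bi,\kappa}T^{\nu}=F_{i_m}(y_{\Bi^-,\kappa}T^{\nu})$ for $\kappa(n)\neq m$ together with the inductive hypothesis for $\bnu'$. Working entirely inside $R^{\nu}\mmod$ sidesteps both the projectivity issue and the compatibility issue you would otherwise need to establish.
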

\begin{proof}
  This follows from \cite[\ref{m-prop:add-embed}]{Webmerged}.  In that paper we define a
  fully faithful functor $T^\bnu\operatorname{-pmod}\to R^\nu\mmod$
  whose essential image is exactly the
  additive category generated by summands of $y_{\Bi,\nu}R^\nu$
for
  certain elements $y_{\Bi,\kappa}$ associated to sequences $\Bi=(i_1,\dots, i_m)$ and functions $\kappa\colon
  [1,n]\to [0,m]$. 
If $\kappa(n)\neq m$, then
  $y_{\Bi,\kappa}T^\nu=F_{i_m}(y_{\Bi^-,\kappa}T^\nu)$ where
  $\Bi^-=(i_1,\dots, i_{m-1})$.  Thus, the image of $T^{\bnu}\mmod$ is
  generated by categorification functors applied to the modules  $y_{\Bi,\kappa}T^\nu$
  with
  $\kappa(n)= m$.  Since these modules are  exactly obtained by
  applying the inflation functor $\operatorname{inf}_{n-1}$ to
  the images of
  $T^{(\nu_1,\dots, \nu_{n-1})}\operatorname{-pmod}$, we
  are done.
\end{proof}

The most important fact for us is that:
\begin{Thm}[\mbox{\cite[\ref{m-TPC}]{Webmerged}}]
  The category $\Cat(\bnu)$ with its standardly stratified structure from
  \cite[\ref{m-SS}]{Webmerged} and categorical $\fg$-action from
  \cite[\ref{m-full-action}]{Webmerged} is a tensor product categorification for
  $V_1\otimes \cdots \otimes V_n$.
\end{Thm}
\begin{proof}
  We consider the axioms of a tensor product categorification in turn,
  and confirm them:
\begin{itemize}
\item[(TPC1)] We must have that  the poset underlying the
  stratification is that of
  $n$-tuples $\bmu=(\mu_1,\ldots,\mu_n)$, where $\mu_i$ is a weight of
  $V_i$.  The poset structure is given by ``inverse dominance
  order'': we have
  \[\bmu=(\mu_1,\ldots,\mu_n)\geqslant \bmu'=(\mu'_1,\ldots,\mu'_n)\]
  if and only if $ \sum_{i=1}^n\mu_i=\sum_{i=1}^n \mu_i' $ and for all $1\leqslant j< n$, we have
\[\sum_{i=1}^j
  \mu_i\leqslant \sum_{i=1}^j \mu_i'.\]
This precisely matches the definition of the order on root functions
from \cite[\S \ref{m-sec:standard-def}]{Webmerged}, since $\mu_i=\la_i-{\bf \alpha}(i)$.
The standardization
functors are exact by \cite[\ref{m-standard-exact}]{Webmerged} and $\Cat(\bnu)$
is standardly stratified in the sense of \cite{CPS96} by \cite[\ref{m-SS}]{Webmerged}.

\item[(TPC2)]
From \cite[\ref{m-semi-orthogonal}]{Webmerged} we see that the subquotients of this
standardly stratified structure are equivalent to
$\mathcal{C}(\nu_1)\otimes \cdots
 \otimes \mathcal{C}(\nu_n)$ and thus carry the expected categorical $\fg^{\oplus n}$ action
on these subquotients.

\item[(TPC3)]  The filtration of  \cite[\ref{m-prop:act-filter}]{Webmerged} shows that $E_i$
and $F_i$ acting on $\Delta(M)$ have the desired filtrations.\qedhere
\end{itemize}
\end{proof}

Tensor product categorifications for $\sl_m$ also arise in  more classical representation theory.
Here, we  give two examples.

\subsubsection*{Category $\mathcal{O}$} Consider the Lie algebra $\gl_N(\mathbb{C})$, its parabolic subalgebra with blocks (from top to bottom) of sizes
$m_1,\ldots,m_n$ and also fix a positive integer $n$. Let $\OCat$ be the corresponding parabolic category
$\mathcal{O}$. The integral blocks of this category form a highest weight category whose standard
objects are parabolic Verma modules \[\Delta(\lambda)\text{ with
}\lambda=(\lambda_1,\cdots,\lambda_N) \text{ for }
\lambda_1>\lambda_2>\cdots>\lambda_{m_1},\lambda_{m_1+1}>\cdots>\lambda_{m_2},\cdots\]
of highest weight $\rho + \sum\lambda_i\epsilon_i$.

\begin{defi}
  Let $\Cat(\mathbf{m})$ be the the sum of blocks of $\OCat$ spanned by
  $\Delta(\lambda)$ with $\lambda_i\in \{1,\ldots,m\}$.
\end{defi}

This category  is a tensor product categorification of
$\bigwedge^{m_1}\C^m\otimes\bigwedge^{m_2}\C^m\otimes \cdots\otimes \bigwedge^{m_n}\C^m$:
checking (TPC1) and (TPC2) is easy, while (TPC3) follows, for example, from \cite[4.3]{LoHWCII}. It was shown
in \cite[\ref{m-equiv}]{Webmerged} that the category $\Cat$ is strongly equivariantly equivalent to $\Cat(\omega_{m_1},\ldots,\omega_{m_n})$,
where $\omega_i$ is the $i$th fundamental weight. The main theorem of
this paper also provides a new proof of this equivalence.

\subsubsection*{Representations of $GL_n$} Often we can also realize tensor product categorifications as subquotients of interesting categories.
Let us give an example when the field $\C$ has characteristic $p>0$ and the algebra $\g$ acting
is $\sl_{p}$. Consider the category $\tilde{\Cat}=\bigoplus_{n=0}^{+\infty} \tilde{\Cat}_n$, where $\tilde{\Cat}_n$
is the category of polynomial representations of $\operatorname{GL}_k$ of degree $n\leqslant k$.
This is a highest weight category, whose labeling poset is that of
partitions (with respect to the $p$-dominance ordering).

A categorical $\hat{\sl}_p$-action on this category was first
introduced in \cite{HY}, and  this action is highest
weight in the sense of \cite{LoHWCII}. Fix a residue $r$ and consider the subalgebra $\sl_{p}
\subset \hat{\sl}_p$ corresponding to the other $p-1$ residues.

We introduce an equivalence relation
$\sim_r$ on the set of Young diagrams: $\lambda\sim_r\mu$ if the boxes in $\lambda$ and $\mu$ with residue
$r$ are the same.   Attached to each such equivalence class is a list
of coordinates $(x_0,y_0), (x_1,y_1),\cdots, (x_\ell,x_{\ell})$ given by the rightmost box
in each diagonal of the partition diagram with content congruent to
$r\pmod p$ listed left to right; we must also include the first empty
diagonals encountered on the left and right, that is, we have $x_0=0$
and $y_\ell=0$.  We let $m_i=y_i -y_{i+1}$, and note that $0\leqslant
m_i\leqslant p$.

Each equivalence class is an interval in the highest weight poset of $\tilde{\Cat}$,
so for an equivalence class $e$, we can consider the subquotient category $\Cat_e$ corresponding to $e$.
This is  a highest weight category with a well-defined highest weight
categorical action of $\sl_{p}$ and a tensor product categorification of the product
$\bigwedge^{m_1}\C^{p}\otimes\bigwedge^{m_2}\C^{p}\otimes \cdots\otimes \bigwedge^{m_n}\C^{p}$.

For example, if $p=3,r=0$ and $\lambda=(7,5,1^5)$
\begin{figure}
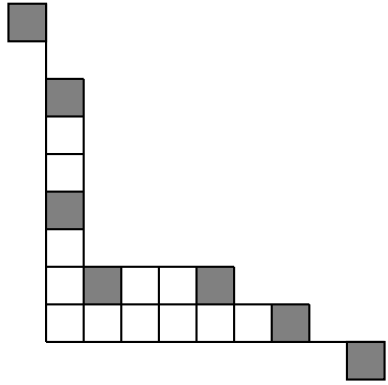

  \centering
  \tikz[thick,scale=.5]{
\fill[gray] (1,1) -- (1,2) --(2,2) -- (2,1) --cycle;
\fill[gray] (1,4) -- (1,3) --(0,3) -- (0,4) --cycle;
\fill[gray] (4,1) -- (5,1) --(5,2) -- (4,2) --cycle;
\fill[gray] (1,7) -- (1,6) --(0,6) -- (0,7) --cycle;
\fill[gray] (7,1) -- (6,1) --(6,0) -- (7,0) --cycle;
\fill[gray,draw=black] (-1,9) -- (-1,8) --(0,8) -- (0,9) --cycle;
\fill[gray, draw=black] (9,-1) -- (8,-1) --(8,0) -- (9,0) --cycle;
\draw (0,0) -- (9,0);
\draw (0,0) -- (0,9);
\draw (0,1) --(7,1);
\draw (1,0) --(1,7);
\draw (0,2) --(5,2);
\draw (2,0) --(2,2);
\draw (0,3) --(1,3);
\draw (3,0) --(3,2);
\draw (0,4) --(1,4);
\draw (4,0) --(4,2);
\draw (0,5) --(1,5);
\draw (5,0) --(5,2);
\draw (0,6) --(1,6);
\draw (6,0) --(6,1);
\draw (0,7) --(1,7);
\draw (7,0) --(7,1);
}
  \caption{The example with $p=3,r=0$ and $\lambda=(7,5,1^5)$.  The
    boxes $\{(9,0),(7,1),(5,2),(2,2),(1,4),(1,7),(0,9)\}$ are marked.}
\end{figure}
, we have the sequence of boxes \[\{(9,0),(7,1),(5,2),(2,2),(1,4),(1,7),(0,9)\}\]
and the tensor product categorified is  $\bigwedge^{2}\C^3\otimes \bigwedge^{2}\C^3\otimes \C^3\otimes \C^3$.

\subsubsection*{Quotient categories} When $\g$ is
$\widehat{\sl}_m$, then by \cite[\ref{m-q-Schur}]{Webmerged} the category
$\Cat(\omega_{m_1},\ldots,\omega_{m_n})$ is equivalent to an appropriate quotient
of the category of representations of cyclotomic $q$-Schur algebras,
where $q$ is a primitive $m$th root of $1$, with parameters
corresponding to the residues $m_1,\ldots,m_n$. However, it is not so easy to see the structure of a tensor product
categorification on the latter from the beginning, this should require an appropriate modification of the splitting
procedure explained below.

Also we would like to point out that tensor product categorifications for non-fundamental weights can be realized as
quotients of parabolic categories $\mathcal{O}$ (for finite type $A$) or of the representation categories
of cyclotomic Schur algebras. Again, it is easier to see this if one works with categories $\Cat(\nu_1,\ldots,\nu_n)$.

\begin{Prop}\label{Prop:condens}
  For any tensor product categorification $\Cat(\nu_1,\ldots,\nu_n)$,
  and list of indices $1\leqslant j_1< \cdots <j_m< n$, we have an exact
  quotient functor \[p_{\mathbf{j}}\colon \Cat(\nu_1,\ldots,\nu_n)\to \Cat(\nu_1+\cdots
  +\nu_{j_1},\nu_{j_1+1}+\cdots
  +\nu_{j_2},\dots,\nu_{j_m+1}+\cdots+\nu_n)\] categorifying the
  natural projection of tensor product representations.
\end{Prop}
\begin{proof}
  By \cite[\ref{m-split-strands}]{Webmerged}, there's an idempotent $e_{\mathbf{j}}$ in
  $T^\bnu$ such that $e_{\mathbf{j}}T^{\bnu}e_{\mathbf{j}}\cong
  T^{\bnu_{\mathbf{j}}}$ where $\bnu_{\mathbf{j}}=(\nu_1+\cdots
  +\nu_{j_1},\dots,\nu_{j_m+1}+\cdots+\nu_n)$.  This is
  the sum of the diagrams with no crossings, no dots, and no black
  strands between the red strands corresponding to weights we have
  condensed.

  Thus, the desired exact functor $T^{\bnu}\mmod\to
  T^{\bnu_{\mathbf{j}}}\mmod$ is just multiplication by $e_{\mathbf{j}}$.
\end{proof}

\section{Categorical splitting}
\subsection{Setting}

As before, fix simple $\g$-modules $V_i$ with highest weight $\nu_i$.
Consider a tensor product categorification $\Cat$
of $V_1\otimes V_2\otimes\dots\otimes V_n$ with poset
$\Xi=\{\bmu=(\mu_1,\ldots,\mu_n)\}$. Set $\Xi_0:=\{\mu\in \Xi|
\mu_n<\nu_n\}$.
We intend to study the subcategory $\Cat_{\Xi_0}$ and the
corresponding quotient $\underline{\Cat}^+:=\Cat/\Cat_{\Xi_0}$. We remark that, under our identification $[\Cat]\cong V_1\otimes V_2\otimes\dots\otimes V_n$, the space
$[\underline{\Cat}^+]$ is naturally identified with $V_1\otimes V_2\otimes \dots\otimes V_{n-1}\otimes v_{\nu_n}$,
where $v_{\nu_n}$ is the singular vector in $V_n$. We write $\Cat^i$ for the categorification of the simple $V_i$.
Further, for $\lambda_i\in B(\nu_i)$, by $v_{\lambda_i}$ we denote the class of the corresponding simple
object in $[\Cat^i]=V_i$.

Our goal in this section is to produce a categorical $\g$-action on the quotient $\underline{\Cat}^+:=\Cat/\Cat_{\Xi_0}$ making it into
a tensor product categorification of $V_1\otimes V_2\otimes\dots\otimes V_{n-1}$. Since $\Xi_0$ is a poset ideal,
the quotient category has a natural standardly stratified structure that we are going to use.
The corresponding poset will be denoted by $\underline{\Xi}^+$, the poset associated to $V_1\otimes V_2\otimes\dots\otimes V_{n-1}$
in the same way as $\Xi$ is associated to $V_1\otimes V_2\otimes\dots\otimes V_n$.

The following lemma shows that half of categorification functors act
in a straight-forward way on $\underline{\Cat}^+$.

\begin{Lem}\label{Lem:F_stab}
The subcategory $\Cat_{\Xi_0}$ is stable with respect to the functors $F_i$.
\end{Lem}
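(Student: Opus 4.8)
The plan is to show that applying $F_i$ to an object supported on weights $\bmu$ with $\mu_n < \nu_n$ produces an object with the same property, by tracking weights through the standardly stratified structure. First I would recall that $\Cat_{\Xi_0}$ is by definition the Serre subcategory spanned by the simples $L(\lambda)$ with $\varrho(\lambda) \in \Xi_0$, and that since $F_i$ is exact it suffices to check that $F_i L(\lambda) \in \Cat_{\Xi_0}$ whenever $\mu_n(\lambda) < \nu_n$. Equivalently, because $\Cat_{\Xi_0}$ is Serre, it suffices to verify that every simple constituent $L(\mu)$ of $F_i L(\lambda)$ again satisfies $\mu_n < \nu_n$.

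The key point is the compatibility condition (TPC2): the weight decomposition of $\gr\Cat$ refines $\Xi$ so that the $\bmu$-weight space of $\gr\Cat$ (for the $\fg^{\oplus n}$-action) is exactly $\Cat_{\leqslant \bmu}/\Cat_{<\bmu}$, and the last coordinate $\mu_n$ records the $\fg$-weight in the $n$th tensor factor. Under the isomorphism $K^0(\Cat) \cong V_1 \otimes \cdots \otimes V_n$ of Proposition following (TPC3), the functor $F_i$ acts as $F_i^{(1)} + \cdots + F_i^{(n)}$, i.e. it moves weights by subtracting $\alpha_i$ in one of the $n$ slots. Concretely, I would use (TPC3): for $M \in \Cat_{\bmu}$ the object $F_i\Delta_{\bmu}(M)$ is filtered by the $\Delta(\,_j\!F_i M)$ for $j = 1, \ldots, n$, and $\,_j\!F_i M$ lives in $\Cat_{\bmu - \alpha_i^j}$. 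Since subtracting $\alpha_i$ from any coordinate $\mu_j$ can only decrease $\mu_n$ or leave it fixed (it decreases it precisely when $j = n$, and leaves it fixed when $j \ne n$), every standard subquotient of $F_i \Delta_{\bmu}(M)$ is supported on a tuple $\bmu'$ with $\mu'_n \leqslant \mu_n$. Applying this with $M$ running over the simples of $\Cat_{\bmu}$, and using that $[\Delta(\lambda)]$ and $[L(\lambda)]$ are unitriangularly related so that the support of $L(\lambda)$ in the $\Xi$-filtration sits below $\varrho(\lambda)$, I conclude that $F_i L(\lambda)$ has all composition factors $L(\mu)$ with $\mu_n \leqslant \mu_n(\lambda) < \nu_n$, hence lies in $\Cat_{\Xi_0}$.

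I expect the main obstacle to be bookkeeping rather than conceptual: one must be careful that $\Cat_{\Xi_0}$ is defined via the \emph{preorder} induced by $\varrho$, so that "support below $\varrho(\lambda)$" must be interpreted correctly, and that (TPC3) is phrased for standardizations $\Delta_{\bmu}(M)$ rather than for arbitrary objects — so the reduction from general objects of $\Cat_{\Xi_0}$ to standard objects, and then to simples in the subquotients $\Cat_{\bmu}$, needs the exactness of $F_i$ together with the fact that $\Cat_{\Xi_0}$ is closed under extensions and subquotients. An alternative, cleaner route that avoids composition-factor arguments entirely: observe directly that $\Cat_{\Xi_0} = \Cat_{\Xi_0}^{\oDelta}$-generated, note $F_i$ preserves $\Cat^{\oDelta}$, and use that a $\oDelta$-filtered object lies in $\Cat_{\Xi_0}$ iff all its proper standard subquotients do; then (TPC3) for proper standard objects gives the claim immediately since $\mu_n$ is non-increasing under all the $\,_j\!F_i$. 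Either way the substance is the single observation that $-\alpha_i$ in slot $j$ never increases the $n$th coordinate.
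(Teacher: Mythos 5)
Your proposal is correct, and your ``alternative, cleaner route'' is essentially the paper's proof: the paper reduces to showing $F_i\Delta(\lambda)\in\Cat_{\Xi_0}$ for $\varrho(\lambda)\in\Xi_0$ (using that each simple in $\Cat_{\Xi_0}$ is a subquotient of such a $\Delta(\lambda)$ and $F_i$ is exact), then invokes (TPC3) and the observation you correctly identify as the substance of the lemma, namely that $-\alpha_i^j$ never increases the $n$th coordinate, so each filtration quotient $\Delta(\,_j\!F_i\,\cdot\,)$ lies in the ideal $\Xi_0$. Your first route is a slightly longer way of saying the same thing (the appeal to unitriangularity is an unnecessary detour --- all one needs is that $L(\lambda)$ is a quotient of $\Delta(\lambda)$ or $\oDelta(\lambda)$ and that $\Cat_{\Xi_0}$ is a Serre subcategory attached to a poset ideal), but the conclusion and the key point are right.
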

\begin{proof}
Any simple in $\Cat_{\Xi_0}$ is a composition factor of $\Delta(\lambda)$ with $\varrho(\lambda)\in \Xi_0$.
So it is enough to show that $F_i\Delta(\lambda)\in \Cat_{\Xi_0}$. This is an easy corollary of (TPC3).
\end{proof}

So we have the induced functor $\underline{F}_i^+$ on $\underline{\Cat}^+$.
The same condition (TPC3) shows that, in general, $\Cat_{\Xi_0}$ is not closed with respect to
$E_i$. In order to get the functor $\underline{E}_i^+$ on $\underline{\Cat}^+$ we will need
to truncate the functor $E_i$. Our construction will generalizes
\cite[Section 5]{LoHWCII}, where the first author studied essentially the case of the tensor
products of the tautological $\sl_2$-modules.

\subsection{An equivalence}
We will construct the functors $\underline{E}_i^+$ one simple root at
a time.  Thus, throughout the remainder of this section, we fix an
index $i\in I$.

First, we are going to take a minimal, in a way, standardly stratified quotient of $\Cat$, where $E_i$ is well-defined.
Namely, consider $\Xi_1=\{\bmu\in \Xi| \nu_n-\mu_n\not\in \Z\alpha_i\}$, clearly, $\Xi_1$ is a poset
ideal contained in $\Xi_0$. An analogous argument to Lemma
\ref{Lem:F_stab} shows that:
\begin{Lem}
 The subcategory  $\Cat_{\Xi_1}$ is stable with respect to both $E_i,F_i$ and so we
  have well-defined functors on
  $\underline{\Cat}(=\underline{\Cat}_i):=\Cat/\Cat_{\Xi_1}$ again
  denoted by $E_i,F_i$.
\end{Lem}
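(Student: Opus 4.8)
The plan is to run the argument of Lemma~\ref{Lem:F_stab} for $E_i$ and $F_i$ simultaneously, the point being that membership in $\Xi_1$ is a condition on the $n$-th coordinate of a weight alone, and that coordinate is insensitive to the operations implemented by $E_i$ and $F_i$. First I would record the (asserted) fact that $\Xi_1$ is a poset ideal: if $\bmu'\leqslant\bmu$ in the inverse dominance order and $\bmu\in\Xi_1$, then comparing $(n-1)$-st partial sums gives $\sum_{k\leqslant n-1}\mu_k\leqslant\sum_{k\leqslant n-1}\mu_k'$, i.e. $\mu_n-\mu_n'$ lies in the positive cone; since $\nu_n-\mu_n$ lies in the positive cone and $\nu_n-\mu_n\notin\Z\alpha_i$, it has a strictly positive coefficient on some $\alpha_k$ with $k\neq i$, so $\nu_n-\mu_n'=(\nu_n-\mu_n)+(\mu_n-\mu_n')$ keeps that coefficient strictly positive and $\bmu'\in\Xi_1$. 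In particular $\Cat_{\Xi_1}$ is a Serre subcategory and $\Cat/\Cat_{\Xi_1}$ inherits a standardly stratified structure.

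Since $E_i$ and $F_i$ are exact, to prove stability of $\Cat_{\Xi_1}$ it suffices---exactly as in Lemma~\ref{Lem:F_stab}, because every simple of $\Cat_{\Xi_1}$ is a composition factor of some $\Delta(\lambda)$ with $\varrho(\lambda)\in\Xi_1$---to check that $E_i\Delta(\lambda)$ and $F_i\Delta(\lambda)$ lie in $\Cat_{\Xi_1}$ for each such $\lambda$. Put $\bmu=\varrho(\lambda)$, so $\Delta(\lambda)=\Delta_\bmu(P_\bmu(\lambda))$ with $P_\bmu(\lambda)\in\Cat_\bmu$, and apply (TPC3): $F_i\Delta(\lambda)$ is filtered by the standard objects $\Delta({}_{j}F_i P_\bmu(\lambda))$, whose underlying objects ${}_{j}F_i P_\bmu(\lambda)$ lie in $\Cat_{\bmu+\alpha_i^j}$, and dually $E_i\Delta(\lambda)$ is filtered by the $\Delta({}_{j}E_i P_\bmu(\lambda))$ with ${}_{j}E_i P_\bmu(\lambda)\in\Cat_{\bmu-\alpha_i^j}$. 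A standardized object $\Delta_\xi(N)$ with $N\in\Cat_\xi$ has all of its composition factors of the form $L(\mu')$ with $\varrho(\mu')\leqslant\xi$: filter $N$ by simples of $\Cat_\xi$, apply the exact functor $\Delta_\xi$, and use that each $\oDelta(\mu)=\Delta_\xi(L_\xi(\mu))$ has head $L(\mu)$ and radical involving only $L(\mu')$ with $\mu'<\mu$.

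It then remains to observe that $\bmu\pm\alpha_i^j\in\Xi_1$ whenever $\bmu\in\Xi_1$, for every $j$: if $j<n$ the $n$-th coordinate of $\bmu\pm\alpha_i^j$ is still $\mu_n$, and if $j=n$ it is $\mu_n\mp\alpha_i$, so in all cases $\nu_n$ minus the $n$-th coordinate stays in the coset $(\nu_n-\mu_n)+\Z\alpha_i$, which avoids $\Z\alpha_i$ by hypothesis. Since $\Xi_1$ is a poset ideal, every composition factor $L(\mu')$ of $E_i\Delta(\lambda)$ or $F_i\Delta(\lambda)$, having $\varrho(\mu')\leqslant\bmu\pm\alpha_i^j\in\Xi_1$, satisfies $\varrho(\mu')\in\Xi_1$, so $E_i\Delta(\lambda),F_i\Delta(\lambda)\in\Cat_{\Xi_1}$. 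Thus $\Cat_{\Xi_1}$ is stable under $E_i$ and $F_i$, and a Serre subcategory stable under an exact endofunctor yields a canonically induced exact endofunctor of the quotient; these are the claimed $E_i,F_i$ on $\underline{\Cat}=\Cat/\Cat_{\Xi_1}$. I do not anticipate a genuine obstacle here: the only things to be careful about are that (TPC3) is phrased for $\Delta_\bmu$ of an object of $\Cat_\bmu$ (which is precisely the form of $\Delta(\lambda)$), and that one needs only the poset-ideal property of $\Xi_1$ rather than any finer control of which composition factors actually occur.
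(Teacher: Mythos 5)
Your proof is correct and is exactly the argument the paper has in mind when it says "an analogous argument to Lemma~\ref{Lem:F_stab}": reduce to $E_i\Delta(\lambda)$, $F_i\Delta(\lambda)$ via composition factors, use (TPC3), and note that the $n$-th coordinate of $\varrho$ is either unchanged (if $j<n$) or shifted by $\pm\alpha_i$ (if $j=n$), which cannot move $\nu_n-\mu_n$ into $\Z\alpha_i$. You additionally verify that $\Xi_1$ is a poset ideal, which the paper asserts without proof; note only that your signs for ${}_jE_i$ and ${}_jF_i$ are reversed (${}_jE_iM\in\Cat_{\bmu+\alpha_i^j}$, ${}_jF_iM\in\Cat_{\bmu-\alpha_i^j}$), though since your argument treats $\pm$ symmetrically this does not affect anything.
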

Set $\underline{\Xi}=\Xi/\Xi_1$, this is the poset of the standardly stratified
category $\underline{\Cat}$.
We will need a standardly stratified subcategory $\underline{\Cat}^-(=\underline{\Cat}^-_i)$ inside
$\underline{\Cat}$. Namely, consider the poset ideal $\underline{\Xi}^-\subset \underline{\Xi}$
consisting of all $\bmu$ with $\mu_n=s_i\nu_n=\nu_n-r\al_i$, where
$s_i$ is the simple reflection corresponding to $i$ and $r=\al_i^\vee(\nu_n)$.
Let $\underline{\Cat}^-\subset \underline{\Cat}$ be the subcategory corresponding to $\underline{\Xi}^-$.
Under our identification $[\Cat]\cong V_1\otimes\ldots\otimes V_n$, the  complexified Grothendieck group
$[\underline{\Cat}^-]$ is identified with $V_1\otimes V_2\otimes\ldots\otimes V_{n-1}\otimes v_{\mu_n}$. Let $\iota$ be the inclusion $\underline{\Cat}^-\hookrightarrow \underline{\Cat}$,
and  $\pi:\underline{\Cat}\twoheadrightarrow \underline{\Cat}^+$ be the projection.
\begin{Prop}
  The functor $\Efun=\pi\circ
  E_i^{(r)}\circ\iota:\underline{\Cat}^-\rightarrow
  \underline{\Cat}^+$ is an equivalence of standardly stratified
  categories with quasi-inverse given by $\Ffun:=\iota^!\circ F_i^{(r)}\circ\pi^!$.
  Here $r=\al_i^\vee(\nu_n)$.
\end{Prop}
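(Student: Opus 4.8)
The plan is to prove the equivalence by first reducing to a calculation on standard modules and then showing that the composite $\Ffun\circ\Efun$ and $\Efun\circ\Ffun$ are isomorphic to the identity. First I would observe that, by $\mathfrak{sl}_2$-categorification theory applied to the $i$th copy of $\fg$ acting on $\underline{\Cat}$, the divided-power functor $E_i^{(r)}$ (where $r=\alpha_i^\vee(\nu_n)$) restricted to the top weight piece of the string gives an equivalence onto the next piece; concretely, on $\gr\underline{\Cat}=\gr\Cat/\gr\Cat_{\Xi_1}$ the functor ${}_nE_i^{(r)}$ (the divided power for the $n$th tensor factor) is an isomorphism from the $\mu_n=\nu_n$ part to the $\mu_n=s_i\nu_n$ part, with quasi-inverse ${}_nF_i^{(r)}$. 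This is exactly the statement that the Chuang--Rouquier equivalence realizes the simple reflection $s_i$ on the extremal weight spaces of $V_n$, and since the other factors ${}_jF_i$, ${}_jE_i$ ($j<n$) vanish on these extremal weights, only the $n$th-factor divided power survives in the filtration of (TPC3). So on the level of $\gr$, the claimed equivalence is clear.

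Next I would lift this to $\underline{\Cat}^-$ and $\underline{\Cat}^+$ themselves using the standardly stratified structure. The key point is that by (TPC3), $E_i\Delta_\bmu(M)$ has a standard filtration with quotients $\Delta(\,_j\!E_i M)$, and iterating, $E_i^{(r)}\Delta_\bmu(M)$ has a standard filtration; for $\bmu$ with $\mu_n=\nu_n$, all the terms ${}_j\!E_i$-contributions with $j<n$ lie in $\Xi_1$ (since they fail to stay in $\Z\alpha_i$-translate of $\nu_n$, or more precisely their $n$th coordinate stays $\nu_n$ but the weight leaves the relevant coset), so after applying $\pi$ only the term $\Delta(\,_n\!E_i^{(r)}M)$ survives. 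Thus $\Efun(\Delta_\bmu(M))\cong\Delta_{\bmu'}(\,_n\!E_i^{(r)}M)$ where $\bmu'$ has $n$th coordinate $s_i\nu_n$. Dually, using the costandard version of (TPC3) (Proposition \ref{Lem:opp_catn}), $\Ffun$ sends $\nabla$'s to $\nabla$'s compatibly. Since $\Efun$ is right-exact (composition of $\pi$, exact $E_i^{(r)}$, and $\iota$ which is exact on $\underline{\Cat}^-$) and $\Ffun$ is left-exact in the appropriate sense, and since both are exact on the relevant $\Delta$- and $\onabla$-filtered subcategories, it suffices to check that $\Ffun\Efun$ and $\Efun\Ffun$ restrict to the identity on standard objects, which follows from the $\gr$-level computation and the fact that ${}_nF_i^{(r)}{}_nE_i^{(r)}\cong\mathrm{id}$ and ${}_nE_i^{(r)}{}_nF_i^{(r)}\cong\mathrm{id}$ on the extremal weight spaces by Proposition \ref{simple-unique}.

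To make the ``restrict to the identity'' argument precise I would proceed as follows: the adjunction unit and counit between $E_i^{(r)}$ and $F_i^{(r)}$ on $\underline{\Cat}$ (these are biadjoint since the action is integrable) induce natural transformations $\mathrm{id}\to\Ffun\Efun$ and $\Efun\Ffun\to\mathrm{id}$ (after composing with $\pi,\iota,\pi^!,\iota^!$ and using $\pi\pi^!\cong\mathrm{id}$, $\iota^!\iota\cong\mathrm{id}$ on the respective subcategories). On standard objects these become the extremal-weight isomorphisms from the $\mathfrak{sl}_2$-theory, hence are isomorphisms there; since both functors are exact on $\underline{\Cat}^{-,\Delta}$ resp.\ $\underline{\Cat}^{+,\Delta}$ and these subcategories generate the derived categories, the transformations are isomorphisms of functors, giving the equivalence. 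Finally, compatibility with the standardly stratified structure is immediate from $\Efun(\Delta(\lambda))\cong\Delta(\te_i^{(r)}\text{-shifted }\lambda)$: $\Efun$ sends standards to standards and projectives to projectives (a projective is a summand of $F^k\mathbb{V}$-type objects, and these are $\Delta$-filtered), hence is an equivalence of standardly stratified categories, and likewise for $\Ffun$ with costandards.

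The main obstacle I anticipate is controlling the truncation carefully: when we pass from $E_i\Delta_\bmu(M)$ to $\pi(E_i\Delta_\bmu(M))$, we must verify that exactly one term of the (TPC3) standard filtration survives and that the surviving term is genuinely $\Delta_{\bmu'}(\,_n\!E_i^{(r)}M)$ with no spurious extension contributions — i.e.\ that $\pi\circ E_i^{(r)}\circ\iota$ applied to a standard object really is a single standard, not a nontrivial extension. This requires knowing that the poset ideal $\underline{\Xi}^-$ consists precisely of the minimal elements (those $\bmu$ with $\mu_n=s_i\nu_n$) so that $\iota$ hits only these, and that for such minimal $\bmu$ the terms $\Delta(\,_j\!E_iM)$ with $j<n$ in the iterated filtration all have $n$th weight coordinate still equal to $s_i\nu_n$ but weight not in the correct $s_i\nu_n + \Z\alpha_i$-alignment needed to avoid $\Xi_1$ — a careful bookkeeping of which weights land in $\Xi_1$ versus $\underline{\Xi}^-$. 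Once this combinatorial bookkeeping is pinned down, the rest is a formal consequence of biadjointness and the Chuang--Rouquier $\mathfrak{sl}_2$-equivalence.
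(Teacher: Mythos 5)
Your overall strategy — reduce to a computation on standard objects using the iterated (TPC3) filtration, use the Chuang--Rouquier $\mathfrak{sl}_2$-equivalence on extremal weight spaces, and then bootstrap to the full equivalence — is the same skeleton the paper uses (through its Lemmas on the properties of $\Efun$ and $\Ffun$). However, as written there are two genuine gaps, and the second one is the heart of the paper's proof.

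First, a bookkeeping issue you flag but do not resolve: condition (TPC3) governs $E_i$, not the divided power $E_i^{(r)}$. Iterating (TPC3) $r$ times gives a filtration of $E_i^r\Delta_{\bmu_-}(N)$, and after applying $\pi$ the surviving piece is $\Delta_{\bmu_+}({}_n E_i^r N)$; since $E_i^r\cong (E_i^{(r)})^{\oplus r!}$, what one obtains directly is an isomorphism of functors $\Efun\circ\Delta_{\bmu,-}^{\oplus r!}\cong\Delta_{\bmu,+}^{\oplus r!}$, and one must then \emph{cancel} the $r!$ to get the clean statement $\Efun\circ\Delta_{\bmu,-}\cong\Delta_{\bmu,+}$. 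The paper does this with a Krull--Schmidt argument on the bimodules representing these right-exact functors, blockwise. This is not mere ``careful bookkeeping'': without it, you cannot even assert that $\Efun$ sends a single standard object to a single standard object as a functorial isomorphism. (A similar cancellation is needed for $\Ffun$.)

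Second — and this is the real gap — your conclusion ``since both functors are exact on $\underline{\Cat}^{-,\Delta}$ resp.\ $\underline{\Cat}^{+,\Delta}$ and these subcategories generate the derived categories, the transformations are isomorphisms of functors'' does not carry the weight you need. What can be established directly from the filtration analysis (after the Krull--Schmidt step) is that the counit $\Ffun\Efun\to\mathrm{id}$ is an isomorphism on $\oDelta$-filtered objects (the paper's Lemma 4.5(4)); this already gives full faithfulness of $\Efun$ on projectives by adjunction. But essential surjectivity on projectives — i.e.\ that $\Efun P_-(\lambda)\cong P_+(\lambda)$ — is a separate and genuinely harder point. The paper devotes its Lemma 4.6 to this: it first uses BGG reciprocity together with the intertwining of standardization and costandardization functors to show that the classes $[P_+(\lambda)]$ and $[\Efun P_-(\lambda)]$ agree in the Grothendieck group, and then shows the canonical map $\varphi\colon P_+(\lambda)\to\Efun P_-(\lambda)$ is surjective by showing that if its cokernel $K$ were nonzero then $\Ffun K=0$, forcing $\Hom(K,L_+(\lambda'))=\Hom(\Ffun K,L_-(\lambda'))=0$ for all $\lambda'$. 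Simply asserting that the adjunction unit $\mathrm{id}\to\Efun\Ffun$ is ``the extremal-weight isomorphism'' on standards and then invoking derived generation hides exactly this step: you would need, for instance, the triangle identity together with the counit isomorphism to deduce the unit is an isomorphism on standards, and even then you need to spell out why $\Efun\Ffun$ is exact on $\oDelta$-filtered objects before you can do the devissage to projectives. None of this is carried out. (As a minor remark, you also have the adjunction in the wrong direction: since $\Ffun$ is built from the \emph{left} adjoints $\iota^!$ and $\pi^!$, it is $\Ffun$ that is left adjoint to $\Efun$, so the unit is $\mathrm{id}\to\Efun\Ffun$ and the counit is $\Ffun\Efun\to\mathrm{id}$, not what you wrote.)
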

The proof closely follows that in \cite[Section 5.2]{LoHWCII} but we are going to provide it for readers convenience.
So far, we notice that, by the construction, $\Efun$ is exact and $\Ffun$ is left adjoint to $\Efun$.

We start by establishing some basic properties of $\Efun$, compare with \cite[Lemma 5.1]{LoHWCII}.
Thanks to results of Chuang and Rouquier, \cite{CR04}, for $\bmu\in \underline{\Xi}^-$ the functors
${}_nE_i^{(r)},{}_nF_i^{(r)}$ restrict to quasi-inverse equivalences between $\Cat^n_{s_i\nu_n},
\Cat^n_{\nu_n}$. We will identify these categories using the functors. Also we identify the posets
$\underline{\Xi}^\pm$ with the poset associated to $V_1\otimes\ldots\otimes V_{n-1}$.  Finally, we identify
$[\underline{\Cat}^\pm]$ with $V_1\otimes V_2\otimes\ldots \otimes V_{n-1}$ by sending $\oDelta(\lambda_1,\ldots,\lambda_{n-1},\lambda_n)$
(where, recall, $\lambda_n$ is the label of the only simple object in $\Cat^n_{\nu_n}$ or $\Cat^n_{s_i\nu_n}$)
to $v_{\lambda_1}\otimes\ldots\otimes v_{\lambda_{n-1}}$. Below for $\bla=(\lambda_1,\ldots,\lambda_{n-1})$
we write $L_\pm(\bla), \Delta_\pm(\bla)$ etc. for the corresponding objects in $\underline{\Cat}^\pm$.


\begin{Lem}\label{Lem:Efun_prop}
\mbox{}
\begin{enumerate}
\item The functor $\Efun$ intertwines the standardization functors $\Delta_\bmu$, where $\bmu=(\mu_1,\ldots,\mu_{n-1})$.
\item The functor $\Efun$ intertwines the costandardization functors $\nabla_\bmu$.
\item The induced map $[\Efun]:[\underline{\Cat}^-]\rightarrow [\underline{\Cat}^+]$ is the identity.
\item $\Efun(L_-(\lambda))=L_+(\lambda)$.
\end{enumerate}
\end{Lem}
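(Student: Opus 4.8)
The plan is to prove the four parts in the order (1), (2), (3), (4), following \cite[Section~5.2]{LoHWCII}: (1) and (2) by transporting (TPC3) and its costandard analogue through the quotient/inclusion pair $(\pi,\iota)$, (3) as a formal corollary of (1), and (4) by combining (1)--(3) with the fact that $\Hom(\oDelta,\onabla)$ is one-dimensional.

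For (1), fix $\bmu=(\mu_1,\dots,\mu_{n-1})$ and let $\bmu^-,\bmu^+\in\Xi$ be obtained by appending $s_i\nu_n$, resp.\ $\nu_n$, as the $n$th coordinate, so $\bmu^-\in\underline{\Xi}^-$, $\bmu^+\in\underline{\Xi}^+$; fix $M\in\Cat_{\bmu^-}$. Since $\iota$ takes the standardization functor of the Serre subcategory $\underline{\Cat}^-$ (attached to a poset ideal) to the ambient standardization functor $\Delta_{\bmu^-}$, we have $\Efun(\Delta_{\bmu^-}^-(M))=\pi(E_i^{(r)}\Delta_{\bmu^-}(M))$. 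The key input is a divided-power refinement of (TPC3) (obtained by iterating (TPC3), or, equivalently, by viewing $E_i^{(r)}$ as a summand of $E_i^r$): the object $E_i^{(r)}\Delta_{\bmu^-}(M)$ has a filtration with subquotients $\Delta({}_1E_i^{(a_1)}\cdots{}_nE_i^{(a_n)}M)$ over all $(a_1,\dots,a_n)$ with $a_1+\cdots+a_n=r$. The $n$th coordinate of the weight of the $(a_1,\dots,a_n)$-summand is $s_i\nu_n+a_n\alpha_i$, which is $<\nu_n$ unless $a_n=r$, so every summand but the one indexed by $(0,\dots,0,r)$ lies in $\Cat_{\Xi_0}/\Cat_{\Xi_1}=\ker\pi$. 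As $\pi$ is exact, it therefore kills all subquotients except one, yielding $\Efun(\Delta^-_{\bmu^-}(M))\cong\pi(\Delta_{\bmu^+}({}_nE_i^{(r)}M))\cong\Delta^+_{\bmu^+}({}_nE_i^{(r)}M)$, where the last step uses $\Delta_{\bmu^+}=\pi^!\circ\Delta^+_{\bmu^+}$ and $\pi\circ\pi^!\cong\operatorname{id}$. Since ${}_nE_i^{(r)}$ is exactly the Chuang--Rouquier equivalence $\Cat^n_{s_i\nu_n}\xrightarrow{\sim}\Cat^n_{\nu_n}$ through which $\Cat_{\bmu^-}$ was identified with $\Cat_{\bmu^+}$, this is precisely the statement that $\Efun$ intertwines $\Delta_\bmu$.

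Part (2) is the same computation with $\nabla$ in place of $\Delta$: by Proposition~\ref{Lem:opp_catn}, $\Cat^{opp}$ is a tensor product categorification, and the costandard form of (TPC3) established in its proof gives a filtration of $E_i^{(r)}\nabla_{\bmu^-}(M)$ by the objects $\nabla({}_1E_i^{(a_1)}\cdots{}_nE_i^{(a_n)}M)$; again only $(0,\dots,0,r)$ survives $\pi$, and the dual of the identity used in (1) gives $\Efun\circ\nabla^-_{\bmu}\cong\nabla^+_{\bmu}$. Part (3) then follows at once: by (1), $[\Efun]$ sends $[\oDelta_-(\bla)]=[\Delta_{\bmu}(L_{\bmu}(\bla))]$ to $[\oDelta_+(\bla)]$, and these classes are precisely the bases of $[\underline{\Cat}^-]$ and $[\underline{\Cat}^+]$ that were identified with the monomial basis $v_{\lambda_1}\otimes\cdots\otimes v_{\lambda_{n-1}}$ of $V_1\otimes\cdots\otimes V_{n-1}$, so $[\Efun]=\operatorname{id}$.

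For (4): applying the exact functor $\Efun$ to the canonical maps $\oDelta_-(\bla)\twoheadrightarrow L_-(\bla)\hookrightarrow\onabla_-(\bla)$ and using (1) and (2) gives an epimorphism $\oDelta_+(\bla)\twoheadrightarrow\Efun(L_-(\bla))$ and a monomorphism $\Efun(L_-(\bla))\hookrightarrow\onabla_+(\bla)$; by (3), $\Efun(L_-(\bla))\neq0$, so their composite $\oDelta_+(\bla)\to\onabla_+(\bla)$ is nonzero. But $\Hom(\oDelta_+(\bla),\onabla_+(\bla))$ is one-dimensional (Lemma~\ref{Lem:co-st-ext}(1) together with the epimorphism $\Delta_+(\bla)\twoheadrightarrow\oDelta_+(\bla)$), so this composite is a nonzero multiple of the canonical map $\oDelta_+(\bla)\twoheadrightarrow L_+(\bla)\hookrightarrow\onabla_+(\bla)$, whose image is $L_+(\bla)$; since the image of the composite is at the same time $\Efun(L_-(\bla))$ (a monomorphism applied to an epimorphism onto $\Efun(L_-(\bla))$), we conclude $\Efun(L_-(\bla))\cong L_+(\bla)$. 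The only genuinely delicate point is the divided-power sharpening of (TPC3) and the weight count isolating the single subquotient surviving $\pi$; granting that, (2)--(4) are formal, exactly as in \cite[Lemma~5.1]{LoHWCII}.
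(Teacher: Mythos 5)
Your overall strategy is the same as the paper's, and parts (2)--(4) are handled just as the paper handles them: (2) by passing to $\Cat^{opp}$, (3) as a formal corollary of (1), and (4) by exactness of $\Efun$ together with (1)--(3) (the paper notes directly that a nonzero map $\oDelta_+(\bla)\to\onabla_+(\bla)$ has image $L_+(\bla)$, which is the same observation you make via the one-dimensionality of $\Hom(\oDelta_+,\onabla_+)$).

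The gap is in part (1), and it is precisely at the step the paper spends the most care on. You invoke a ``divided-power refinement of (TPC3)'' asserting that $E_i^{(r)}\Delta_{\bmu^-}(M)$ carries a \emph{natural} filtration whose subquotients are $\Delta(\,_1E_i^{(a_1)}\cdots\,_nE_i^{(a_n)}M)$, each with multiplicity one. Neither of your two suggested justifications actually delivers this. Iterating (TPC3) $r$ times produces a natural $\Delta$-filtration of $E_i^{\,r}\Delta_{\bmu^-}(M)$, not of $E_i^{(r)}\Delta_{\bmu^-}(M)$; and the observation that $E_i^{(r)}$ is a direct summand of $E_i^{\,r}$ shows (via the Grothendieck group and the fact that summands of $\Delta$-filtered objects are $\Delta$-filtered) that each object $E_i^{(r)}\Delta_{\bmu^-}(M)$ has a $\Delta$-filtration with the expected multiplicities, but gives no naturality in $M$ --- and naturality is exactly what ``$\Efun$ intertwines the standardization functors'' requires. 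The paper sidesteps this: it works with $E_i^{\,r}$ throughout, applies (TPC3) $r$ times, and concludes $\Efun\circ\Delta_{\bmu,-}^{\oplus r!}\cong\Delta_{\bmu,+}^{\oplus r!}$ as functors (using $E_i^{\,r}\cong (E_i^{(r)})^{\oplus r!}$ and $\,_nE_i^{\,r}\cong(\,_nE_i^{(r)})^{\oplus r!}$). It then represents both sides, blockwise, as tensor products with bimodules over finite-dimensional algebras and applies the Krull--Schmidt theorem to cancel the $r!$ and deduce $\Efun\circ\Delta_{\bmu,-}\cong\Delta_{\bmu,+}$. If you want to keep your more direct route, you would need to argue that the natural filtration on $E_i^{\,r}\Delta_{\bmu^-}(\cdot)$ (whose steps are the natural subquotient functors $\iota_{\leqslant\xi}\iota_{\leqslant\xi}^!$ attached to the distinct weights $\bmu^-+\alpha$) is respected by the idempotent cutting out $E_i^{(r)}$; as written, that step is missing, and it is not a triviality.
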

\begin{proof}
Let us prove (1).
Set $\bmu_-:=(\mu_1,\ldots,\mu_{n-1},s_i\nu_n), \bmu_+:=(\mu_1,\ldots,\mu_{n-1},\nu_n)$.
The object $E_i^r\Delta_{\bmu_-}(N)$ is $\oDelta$-filtered. Moreover, applying condition (TPC3) in the definition
of a tensor product categorification $r$ times we get a filtration on $E_i^r \Delta_{\bmu_-}(N)$ whose
successive quotients looks as follows: $\Delta_{\bmu_-+\alpha}(E_i^{\alpha}N)$, where $\alpha=\sum_{j=1}^n m_j \alpha_i^j$
with $\sum m_j=r$, and $E_i^{\alpha}:=\,_1\!E_i^{m_1}\boxtimes \,_2\!E_i^{m_2}\boxtimes\ldots\boxtimes \,_n\!E_i^{m_n}$, and that quotient appears $r!\binom{r}{m_1,\ldots,m_n}$ times. The only quotient that survives under $\pi$ is the subobject $\Delta_{\bmu_+}(\,_n\!E_i^r N)$.
The functor $\pi \circ E_i^r\circ\iota$ is isomorphic to $\Efun^{\oplus r!}$, and $\,_n\!E_i^r$ is isomorphic to the sum of $r!$
copies of our identification $\Cat_{\bmu_-}\hookrightarrow \Cat_{\bmu_+}$. We deduce that the functors $\Efun\circ \Delta_{\bmu,-}^{\oplus r!}$
and $\Delta_{\bmu,+}^{\oplus r!}$ are isomorphic. We claim that this implies that $\Efun\circ \Delta_{\bmu,-}\cong \Delta_{\bmu,+}$.
It is enough to prove an isomorphism on each block separately. Since all weight spaces in $V_1\otimes\ldots\otimes V_{n-1}$
are finite dimensional, the blocks of $\underline{\Cat}^{\pm}$ are isomorphic to categories of finite dimensional modules
of finite dimensional algebras. The functors $\Efun\circ\Delta_{\bmu,-},\Delta_{\bmu,+}$ are right exact and so are given
by tensor products with bimodules, say $B_1,B_2$. We know that $B_1^{\oplus r!}\cong B_2^{\oplus r!}$ and hence,
by the Krull-Schmidt theorem, $B_1\cong B_2$.

%

(2) follows from (1) applied to $\Cat^{opp}$ or can be proved completely analogously to (1).
(3) is a direct corollary of (1) and the particular form of the identification  $[\underline{\Cat}^+]
\cong [\underline{\Cat}^-]$. To prove (4) we notice that $L_-(\lambda)$ is the image of any nonzero
morphism $\varphi:\oDelta_-(\lambda)\rightarrow \onabla_-(\lambda)$. Since $\Efun$ is exact, we see that
$\Efun(L_-(\lambda))$ is the image of $\Efun(\varphi):\Efun(\oDelta_-(\lambda))\rightarrow \Efun(\onabla_-(\lambda))$.
Thanks to (1) and (2), $\Efun(\oDelta_-(\lambda))=\oDelta_+(\lambda),\Efun(\onabla_-(\lambda))=\onabla_+(\lambda)$.
So $\Efun(L_-(\lambda))$ is either $L_+(\lambda)$ or $0$. The latter is impossible because of (3).
\end{proof}

Now let us list some basic properties of $\Ffun$, compare with \cite[Lemmas 5.2,5.3]{LoHWCII}.

\begin{Lem}\label{Lem:Ffun_prop}
\mbox{}
\begin{enumerate}
\item We have an isomorphism $\Ffun(P_+(\bla))\cong P_-(\bla)$ for any $\bla=(\lambda_1,\ldots,\lambda_{n-1})$.
\item The natural morphism $\Ffun\circ \Efun(M)\rightarrow M$ is surjective for any $M\in \underline{\Cat}^-$.
\item The functors $\Ffun(\Delta_{\bmu,+}(\bullet))$ and $\Delta_{\bmu,-}(\bullet)$ are isomorphic.
\item The natural morphism $\Ffun\circ \Efun(M)\rightarrow M$ is an isomorphism for $M\in (\underline{\Cat}^-)^{\oDelta}$.
\end{enumerate}
\end{Lem}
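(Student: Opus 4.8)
The four statements are closely interlinked, and I would prove them roughly in the order (1), (3), (2), (4), using the already-established properties of $\Efun$ from Lemma \ref{Lem:Efun_prop} together with the adjunction $(\Ffun,\Efun)$. For part (1), the key input is that $\Efun=\pi\circ E_i^{(r)}\circ\iota$ is exact and sends $L_-(\bla)$ to $L_+(\bla)$ (Lemma \ref{Lem:Efun_prop}(4)); by adjunction $\Ffun$ is left adjoint to $\Efun$, so $\Ffun$ sends projectives to objects whose $\Hom$'s into arbitrary $N\in\underline{\Cat}^+$ are computed by $\Hom_{\underline{\Cat}^-}(P_-(\bla),\Efun(N))$. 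Since $\Efun(L_+(\bla'))$ is a simple (indeed $L_-$ of the same label — this needs $\Efun$ to also be, up to the identification, inverse to $\Ffun$ on simples, which one extracts from part (4) of Lemma \ref{Lem:Efun_prop} applied on $\underline{\Cat}^+$ by symmetry of the construction, or directly from the Chuang–Rouquier equivalence), one reads off that $\Ffun(P_+(\bla))$ is indecomposable projective with head $L_-(\bla)$, hence $\cong P_-(\bla)$. The main subtlety here is making sure $\Efun$ induces a bijection on simples in \emph{both} directions so that the multiplicity count $[\Ffun(P_+(\bla)):L_-(\bla')]=[\text{head}]$ is clean; this follows because $\Efun$ is an exact functor between artinian categories sending simples to simples bijectively (Lemma \ref{Lem:Efun_prop}(3),(4)), so it is an equivalence on the level of Serre subcategories generated by finitely many simples, and in particular $\Hom_{\underline{\Cat}^+}(P_+(\bla),-)\cong\Hom_{\underline{\Cat}^-}(\Ffun P_+(\bla),-)\circ\Efun^{-1}$ on each block.

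For part (3), the argument mirrors Lemma \ref{Lem:Efun_prop}(1): the functors $\Ffun\circ\Delta_{\bmu,+}$ and $\Delta_{\bmu,-}$ are both right exact functors $\underline{\Cat}_{\bmu}\to\underline{\Cat}^-$, hence given by tensoring with bimodules $B_1,B_2$; I would compute $\Efun\circ\Ffun\circ\Delta_{\bmu,+}\cong\Delta_{\bmu,+}$ on projectives (using part (1) together with $\Efun\Delta_{\bmu,-}\cong\Delta_{\bmu,+}$ from Lemma \ref{Lem:Efun_prop}(1)), and then use that $\Efun$ is an equivalence on the relevant subcategory to cancel it, reducing to $\Ffun\Delta_{\bmu,+}\cong\Delta_{\bmu,-}$; the Krull–Schmidt / Noether–Deuring argument as in Lemma \ref{Lem:Efun_prop}(1) handles the passage from an isomorphism-after-$\Efun$ to an honest isomorphism of the bimodules. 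Alternatively, and perhaps more cleanly: by adjunction $\Hom(\Ffun\Delta_{\bmu,+}(N),M)=\Hom(\Delta_{\bmu,+}(N),\Efun(M))$, and by \eqref{eq:Ext_preserv} applied in $\underline{\Cat}^+$ together with $\Efun\Delta=\Delta\Efun_{\text{gr}}$, one identifies this with $\Hom(\Delta_{\bmu,-}(N),M)$ naturally in $M$, whence $\Ffun\Delta_{\bmu,+}(N)\cong\Delta_{\bmu,-}(N)$ by Yoneda.

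Part (2) then follows from (1): every $M\in\underline{\Cat}^-$ is a quotient of a projective $P=\bigoplus P_-(\bla_k)$, and $\Ffun\Efun$ is right exact (composite of an exact and a right exact functor), so $\Ffun\Efun(P)\twoheadrightarrow\Ffun\Efun(M)$; since $\Ffun\Efun(P)\cong\Ffun\Efun\big(\bigoplus P_-(\bla_k)\big)$ and by (1) combined with $\Ffun P_+ \cong P_-$ — here I need that $\Efun P_-(\bla)$ contains $P_+(\bla)$ as the projective cover of its head, so that the counit $\Ffun\Efun P_-(\bla)\to P_-(\bla)$ is, after identifying $\Ffun\Efun P_-(\bla)\cong\Ffun P_+(\bla)\oplus(\text{stuff})\cong P_-(\bla)\oplus(\ldots)$, surjective — the counit $\Ffun\Efun(P)\to P$ is surjective (indeed split onto the $P_-$ summands), and chasing the naturality square for $P\twoheadrightarrow M$ shows $\Ffun\Efun(M)\to M$ is surjective. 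The one point requiring care is that $\Efun P_-(\bla)$ need not be projective or indecomposable in $\underline{\Cat}^+$, so "$\Ffun\Efun P_-(\bla)\twoheadrightarrow P_-(\bla)$" should be argued via the triangle identity: the composite $P_-(\bla)\xrightarrow{\eta\text{ unit}}\Efun\Ffun??$ — rather, use that $\Efun$ applied to the counit $\Ffun\Efun P_-\to P_-$ is split by the unit, so the counit is a split epi after applying the faithful exact $\Efun$, hence a (split) epi. For part (4), I would first show the counit $\Ffun\Efun\to\mathrm{id}$ is an isomorphism on standard objects $\Delta_{\bmu,-}$: indeed $\Ffun\Efun\Delta_{\bmu,-}\cong\Ffun\Delta_{\bmu,+}\cong\Delta_{\bmu,-}$ by Lemma \ref{Lem:Efun_prop}(1) and part (3), and one checks the counit realizes this isomorphism (it is an epi by (2), and an iso on Grothendieck classes by Lemma \ref{Lem:Efun_prop}(3) plus \eqref{eq:Ext_preserv}, and in an artinian category an epi that is an iso on classes is an iso). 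Then dévissage along a $\Delta$-filtration of $M\in(\underline{\Cat}^-)^{\oDelta}$, using exactness of $\Efun$ and right-exactness of $\Ffun$ together with the five lemma, upgrades this to all of $(\underline{\Cat}^-)^{\oDelta}$. The main obstacle throughout is bookkeeping the multiplicities $r!\binom{r}{m_1,\dots,m_n}$ and the "$\oplus r!$" redundancy coming from $\pi\circ E_i^r\circ\iota\cong\Efun^{\oplus r!}$, i.e. consistently working with the divided-power functor $E_i^{(r)}$ so that these combinatorial factors cancel — exactly the step that makes Lemma \ref{Lem:Efun_prop}(1) work, and which must be re-invoked each time a filtration from (TPC3) is used.
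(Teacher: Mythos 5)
Your parts (1), (2), and (4) track the paper's argument fairly closely, with some extra scaffolding. The clean version of (1) is simply: $\Ffun$ is left adjoint to the exact $\Efun$, hence preserves projectives; $\Hom(\Ffun P_+(\bla),L_-(\bmu))=\Hom(P_+(\bla),\Efun L_-(\bmu))=\Hom(P_+(\bla),L_+(\bmu))$ by Lemma \ref{Lem:Efun_prop}(4), so $\Ffun P_+(\bla)$ is an indecomposable projective with head $L_-(\bla)$. The extended discussion about $\Efun$ being ``a bijection on simples in both directions'' is unnecessary and at this point unproven (one doesn't yet know $\Efun$ is an equivalence). Part (2) is also simpler than you make it: the triangle identity says $\Efun$ applied to the counit is a split epi, hence the cokernel $C$ of the counit has $\Efun(C)=0$, and then Lemma \ref{Lem:Efun_prop}(3) forces $[C]=0$, hence $C=0$ in the artinian setting; you don't need the projective-cover detour, and the word ``faithful'' should be justified (it follows from exactness plus conservativity on objects). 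Part (4) is essentially the paper's: iso of the counit on $\Delta_{\bmu,-}(N)$, then dévissage, though note that $(\underline{\Cat}^-)^{\oDelta}$ is the category of $\oDelta$-filtered objects, and you should phrase the dévissage accordingly.

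However, both routes you suggest for part (3) have genuine gaps, and this is where the paper does real work. Your first route (``compute $\Efun\Ffun\Delta_{\bmu,+}\cong\Delta_{\bmu,+}$ on projectives, then cancel $\Efun$'') does not follow from part (1): $\Delta_{\bmu,+}(P_\bmu(\lambda))=\Delta(\lambda)$ is a standard object, not a projective of $\underline{\Cat}^+$, so you cannot invoke $\Ffun P_+\cong P_-$. You would need something like $\Efun\Ffun\cong\operatorname{id}$ on standardly filtered objects, which is precisely what you are trying to establish and which is only known after (3) and (4). Your second route (Yoneda) requires a natural identification $\Hom_{\underline{\Cat}^+}(\Delta_{\bmu,+}(N),\Efun(M))\cong\Hom_{\underline{\Cat}^-}(\Delta_{\bmu,-}(N),M)$ for \emph{arbitrary} $M$, but (\ref{eq:Ext_preserv}) only computes $\Hom$'s between standard objects, and $\Efun(M)$ is not standard for general $M$; the needed identification is morally the statement that $\Efun$ is an equivalence of standardly stratified categories, again not yet available. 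The paper's proof of (3) instead unwinds the definition $\Ffun=\iota^!\circ F_i^{(r)}\circ\pi^!$ directly, uses $\pi^!\circ\Delta_{\bmu,+}=\Delta_\bmu$, applies (TPC3) $r$ times to obtain the standard filtration of $F_i^r\Delta_\bmu(M)$ with multiplicities $r!\binom{r}{m_1,\dots,m_n}$, observes that $\iota^!$ picks out the top piece $\Delta_{\bmu,-}^{\oplus r!}(\bullet)$, and then cancels the $r!$ by Krull–Schmidt. You flag this multiplicity-bookkeeping at the end as ``the main obstacle'' but your two proposed arguments for (3) sidestep it rather than solve it; you need to run the TPC3 filtration argument as in Lemma \ref{Lem:Efun_prop}(1).
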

\begin{proof}
Being a left adjoint of the exact functor $\Efun$, the functor $\Ffun$ maps projectives to projectives.
Since, thanks to Lemma \ref{Lem:Efun_prop}, $\Efun(L_-(\lambda))=L_+(\lambda)$, (1) follows.

To prove (2) we notice that the cokernel of $\Ffun\circ \Efun(M)\rightarrow M$ vanishes under $\Efun$.
Thanks to (3) of Lemma \ref{Lem:Efun_prop}, this implies that the cokernel is $0$.

Let us prove (3).  Similarly to the proof of (1) in Lemma \ref{Lem:Efun_prop}, it is enough to show that
the functors $\Ffun\circ\Delta_{\bmu,+}(\bullet)^{\oplus r!}=\iota^!\circ F_i^r\circ \pi^!\circ \Delta_{\bmu_+}(\bullet)$
and $\Delta_{\bmu,-}(\bullet)^{\oplus r!}=\Delta_{\bmu_+}(\otimes_{i=1}^{n-1}1_{\Cat^i_{\mu_i}}\otimes \,_n\!F_i^r(\bullet))$
are isomorphic. We have $\pi^!\circ\Delta_{\bmu,+}=\Delta_{\bmu}$. Then thanks to condition (TPC3) for $F_i$ applied
$r$ times, we see that $F_i^r\circ \Delta_{\bmu}(M)$ has a filtration with successive quotients of the form
$\Delta_{\bmu-\alpha}(F_i^{\alpha}M)$, each occurring with multiplicity $r!\binom{r}{m_1,\ldots,m_n}$
(our notation is the same as in the proof of Lemma \ref{Lem:Efun_prop}).
The only quotient lying in $\underline{\Cat}_-$ is the top quotient, it is naturally identified with
$\Delta_{\bmu,-}(\bullet)^{\oplus r!}$. The image of $\Delta_{\bmu-\alpha}(F_i^{\alpha}M)$ under $\iota^!$
coincides with the top quotient and so (3) is proved.

Let us prove (4). Thanks to (3) and (1) of Lemma \ref{Lem:Efun_prop}, the morphism $\Ffun\circ\Efun(\oDelta(\lambda))
\rightarrow \oDelta(\lambda)$ is an isomorphism. Now the proof repeats that of \cite[Lemma 5.3(2)]{LoHWCII}
(we remark that there the notation is different:  $\Ffun$ denotes an  exact functor, while $\Efun$ is its left adjoint).
\end{proof}

As in \cite{LoHWCII}, to show that $\Efun$ is an equivalence, it remains to prove the following result.

\begin{Lem}\label{Lem:Efun_equi}
$\Efun P_-(\lambda)=P_+(\lambda)$ for all $\lambda$, and $\Efun$ is fully faithful on $\underline{\Cat}^--\operatorname{proj}$.
\end{Lem}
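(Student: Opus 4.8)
The plan is to leverage the adjunction $(\Ffun, \Efun)$ together with the structural results of Lemmas \ref{Lem:Efun_prop} and \ref{Lem:Ffun_prop}. First I would show $\Efun P_-(\lambda) = P_+(\lambda)$. By Lemma \ref{Lem:Ffun_prop}(1), $\Ffun(P_+(\bla)) \cong P_-(\bla)$. Since $\Efun$ is exact and $\Ffun$ is its left adjoint, $\Efun$ sends injectives to injectives; but a more direct route is available: by Lemma \ref{Lem:Ffun_prop}(4), the counit $\Ffun\Efun(M) \to M$ is an isomorphism on $M \in (\underline{\Cat}^-)^{\oDelta}$, and in particular on $M = P_-(\bla)$ since projectives are $\oDelta$-filtered. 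Meanwhile the object $\Efun P_-(\bla)$ is projective in $\underline{\Cat}^+$: indeed, $\Efun$ is $\pi \circ E_i^{(r)} \circ \iota$, $\iota$ carries projectives of $\underline{\Cat}^-$ to objects that are summands of projectives of $\underline{\Cat}$ (as $\underline{\Cat}^-$ is a Serre subcategory attached to a poset ideal, $\iota^!$ is left adjoint to $\iota$ and $\iota$ takes $\underline{\Cat}^-$-projectives to $\Delta$-filtered objects whose top piece is the projective), $E_i^{(r)}$ is a summand of $E_i^r$ which preserves projectives by biadjointness, and $\pi$ (a quotient by a poset ideal) preserves projectives. So $\Efun P_-(\bla)$ is a projective object whose image under the exact functor $\Ffun$ is $P_-(\bla)$; combined with the fact that $[\Efun]$ is the identity on Grothendieck groups (Lemma \ref{Lem:Efun_prop}(3)) and $\Efun(L_-(\lambda)) = L_+(\lambda)$ (Lemma \ref{Lem:Efun_prop}(4)), we conclude $\Efun P_-(\bla) = P_+(\bla)$: it is projective, indecomposable (since applying $\Ffun$ returns the indecomposable $P_-(\bla)$, and $\Efun$ cannot split it nontrivially as its head is the simple $L_+(\bla)$), and has the correct head.

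Next I would establish full faithfulness of $\Efun$ on $\underline{\Cat}^- \operatorname{-proj}$. It suffices to check that for all $\bla, \bmu$ the natural map
\[
\Hom_{\underline{\Cat}^-}(P_-(\bla), P_-(\bmu)) \longrightarrow \Hom_{\underline{\Cat}^+}(\Efun P_-(\bla), \Efun P_-(\bmu))
\]
is an isomorphism. Using the counit isomorphism $\Ffun\Efun(P_-(\bmu)) \xrightarrow{\sim} P_-(\bmu)$ from Lemma \ref{Lem:Ffun_prop}(4) and the adjunction $(\Ffun,\Efun)$, the right-hand side is
\[
\Hom_{\underline{\Cat}^+}(\Efun P_-(\bla), \Efun P_-(\bmu)) \cong \Hom_{\underline{\Cat}^-}(\Ffun\Efun P_-(\bla), P_-(\bmu)) \cong \Hom_{\underline{\Cat}^-}(P_-(\bla), P_-(\bmu)),
\]
where the last isomorphism again uses the counit isomorphism on $P_-(\bla)$. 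One must then check that this chain of identifications agrees with the natural map induced by $\Efun$, which is a standard diagram chase: the composite $\Hom(P_-(\bla), P_-(\bmu)) \to \Hom(\Efun P_-(\bla), \Efun P_-(\bmu)) \xrightarrow{\text{adj}} \Hom(\Ffun\Efun P_-(\bla), P_-(\bmu))$ is precisely post-composition with the counit, which on $\oDelta$-filtered objects is an isomorphism by Lemma \ref{Lem:Ffun_prop}(4).

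The main obstacle I anticipate is verifying that $\Efun P_-(\bla)$ is genuinely projective and indecomposable, rather than merely a projective that dominates $P_+(\bla)$ — this requires carefully tracking projectivity through the three functors $\iota$, $E_i^{(r)}$, $\pi$, and in particular knowing that $\iota$ sends $\underline{\Cat}^-$-projectives to objects from which the relevant $\underline{\Cat}$-projective summand can be recovered after applying $\Efun$. This is where the precise form of the standardly stratified structure (the behavior of $\iota^!$ on $\Cat^{\oDelta}$, Lemma \ref{Lem:proj_exact}) and the divided-power combinatorics from the proof of Lemma \ref{Lem:Efun_prop}(1) will be needed, and it is essentially the point at which our argument specializes the general strategy of \cite[Section 5]{LoHWCII} to the present setting. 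Once full faithfulness on projectives and $\Efun P_-(\bla) = P_+(\bla)$ are in hand, the conclusion that $\Efun$ is an equivalence of standardly stratified categories follows formally, since a fully faithful exact functor inducing a bijection on indecomposable projectives between categories of modules over finite-dimensional algebras is an equivalence, and Lemma \ref{Lem:Efun_prop}(1)--(2) guarantees it intertwines the standardization and costandardization functors.
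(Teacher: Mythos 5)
The central claim that $\Efun P_-(\lambda)$ is projective in $\underline{\Cat}^+$ is not justified by the argument you give, and this is where the proposal breaks. You write that $\iota$ carries projectives of $\underline{\Cat}^-$ to summands of projectives in $\underline{\Cat}$ because $\underline{\Cat}^-$ is a Serre subcategory attached to a poset ideal. This is false: for a poset ideal the projective cover $P_-(\lambda)$ in $\underline{\Cat}^-$ is $\iota^! P(\lambda)$, a \emph{quotient} of the projective in $\underline{\Cat}$, and is not itself projective or a summand of a projective there. Consequently $E_i^{(r)}(\iota P_-(\lambda))$ has no reason to be projective, and neither does its image under $\pi$. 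More abstractly, $\Efun$ is exact with left adjoint $\Ffun$, which is why $\Ffun$ preserves projectives (Lemma \ref{Lem:Ffun_prop}(1)); to conclude that $\Efun$ preserves projectives you would need an \emph{exact right} adjoint of $\Efun$, and the candidate $\iota^*\circ F_i^{(r)}\circ\pi^*$ involves $\pi^*$, which is only left exact. In short, that $\Efun$ preserves projectives is a consequence of the equivalence we are trying to prove, not an available input. The indecomposability argument has a related gap: from $\Ffun\Efun P_-(\lambda)\cong P_-(\lambda)$ indecomposable you cannot conclude $\Efun P_-(\lambda)$ indecomposable, since $\Ffun$ might annihilate a nonzero summand.

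The paper's proof avoids the projectivity question altogether. It uses the adjunction isomorphism $\Hom(P_+(\lambda),\Efun P_-(\lambda))\cong\End(P_-(\lambda))$ to produce a specific morphism $\varphi\colon P_+(\lambda)\to\Efun P_-(\lambda)$ (the one corresponding to $\operatorname{id}$), establishes $[P_+(\lambda)]=[\Efun P_-(\lambda)]$ in $K_0$ using BGG reciprocity (Lemma \ref{Lem:BGG}) together with parts (2)--(4) of Lemma \ref{Lem:Efun_prop} -- note that your appeal to Lemma \ref{Lem:Efun_prop}(3) alone gives $[\Efun P_-(\lambda)]=[P_-(\lambda)]$ but not the comparison with $[P_+(\lambda)]$ -- and then reduces to showing $\varphi$ is surjective. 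Surjectivity is shown by applying $\Ffun$ to the cokernel $K$, observing $\Ffun K=0$ since $\Ffun\varphi$ is an isomorphism, and then using $\Hom_{\underline{\Cat}^-}(\Ffun K, L_-(\lambda'))\cong\Hom_{\underline{\Cat}^+}(K,\Efun L_-(\lambda'))=\Hom_{\underline{\Cat}^+}(K, L_+(\lambda'))$ to conclude $K=0$. Your full faithfulness argument via the counit isomorphism of Lemma \ref{Lem:Ffun_prop}(4) is sound (and essentially matches the paper's, once $\Efun P_-(\lambda)=P_+(\lambda)$ is known), so the task is to replace the projectivity-tracking argument with the adjunction/$K_0$/surjectivity argument sketched above.
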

\begin{proof}
We have an identification
$$\sigma: \Hom_{\underline{\Cat}^+}(P_+(\lambda),\Efun P_-(\lambda))=\Hom_{\underline{\Cat}^-}(\Ffun P_+(\lambda), P_-(\lambda))=
\End_{\underline{\Cat}^-}(P_-(\lambda),P_-(\lambda)).$$
We want to prove that $\varphi:=\sigma^{-1}(\operatorname{id})$ is an isomorphism. We have $\sigma(\varphi)=\eta\circ \Ffun\varphi\circ\theta$, where $\eta$ is a natural morphism $\Ffun\Efun P_-(\lambda)\rightarrow P_-(\lambda)$ that is an isomorphism
by (4) of Lemma \ref{Lem:Ffun_prop}, and $\theta$ is an isomorphism $\Ffun P_+(\lambda)\xrightarrow{\sim} P_-(\lambda)$ from (1)
of Lemma \ref{Lem:Ffun_prop}. We conclude that $\mathcal{F}\varphi=\eta^{-1}\circ \theta^{-1}$.

We claim that $P_+(\lambda)$ and $P_-(\lambda)$ have the same classes in $[\underline{\Cat}^+]=[\underline{\Cat}^-]$.
By Lemma \ref{Lem:BGG} to check this it suffices to show that $[\onabla_+(\lambda'):L_+(\lambda)]=[\onabla_-(\lambda'):L_-(\lambda)]$.
The latter follows from (2)-(4) of Lemma \ref{Lem:Efun_equi}. The same lemma now implies that the classes of $P_+(\lambda),\Efun P_-(\lambda)$ in $\underline{\Cat}^+$ coincide.  So, as in \cite[Lemma 5.4]{LoHWCII}, it is enough to prove that $\varphi$ is surjective.

Assume the converse, let $P_+(\lambda)\xrightarrow{\varphi} \Efun P_-(\lambda)$ have a nontrivial cokernel , say $K$.
Applying $\Ffun$ to the exact sequence $$P_+(\lambda)\xrightarrow{\varphi} \Efun P_-(\lambda)\rightarrow K\rightarrow 0$$
we get an exact sequence
$$\Ffun P_+(\lambda)\xrightarrow{\Ffun \varphi} \Ffun\Efun P_-(\lambda)\rightarrow \Ffun K\rightarrow 0.$$
But, being a composition of isomorphisms, $\Ffun\varphi$ is an isomorphism itself. So $\Ffun K=0$. It follows that
$\Hom_{\underline{\Cat}^-}(\Ffun K, L_-(\lambda'))=0$ for any $\lambda'$.
But the last Hom is $\Hom_{\underline{\Cat}^+}(K,\Efun L_-(\lambda'))=
\Hom_{\underline{\Cat}^+}(K, L_+(\lambda'))$. Since the latter is $0$ for all $\lambda'$ we deduce that $K$ is
zero. This completes the proof of $\Efun P_-(\lambda)=P_+(\lambda)$.

The full faithfulness follows from
\begin{align*}&\Hom_{\underline{\Cat}^-}(P_-(\lambda), P_-(\lambda'))=\Hom_{\underline{\Cat}^-}(\Ffun P_+(\lambda), P_-(\lambda'))=\\
&\Hom_{\underline{\Cat}^+}(P_+(\lambda), \Efun P_-(\lambda'))=\Hom_{\underline{\Cat}^+}(P_+(\lambda),P_+(\lambda')).\end{align*}
\end{proof}

\begin{Lem}\label{Lem:categor_commut}
We have $\Ffun\circ F_i\cong F_i\circ \Ffun$ and $\Efun\circ F_i\cong F_i\circ \Efun$.
\end{Lem}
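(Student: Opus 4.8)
The plan is to deduce both isomorphisms from the adjunctions and the structure established in Lemmas \ref{Lem:Efun_prop}--\ref{Lem:Efun_equi}, together with the fact that $F_i$ (and $E_i$) are biadjoint on any tensor product categorification. First I would prove $\Efun\circ F_i\cong F_i\circ \Efun$. Recall $\Efun=\pi\circ E_i^{(r)}\circ\iota$; since $\iota$ is the inclusion of the poset ideal $\underline{\Xi}^-$ and $\pi$ is the projection onto $\underline{\Cat}^+$, and since $F_i$ preserves $\underline{\Cat}^-$ (as $F_i$ maps $\underline{\Cat}^-$ into itself by an analogue of Lemma \ref{Lem:F_stab}) and commutes with the divided power $E_i^{(r)}$ up to the standard $\mathfrak{sl}_2$-relations of Chuang--Rouquier, the composite $\pi\circ E_i^{(r)}\circ F_i\circ\iota$ decomposes via those relations into a sum of terms $\pi\circ F_i\circ E_i^{(r)}\circ\iota$ plus lower divided power terms $\pi\circ E_i^{(r-1)}\circ\iota$, etc. The key point is that all the ``error terms'' involve $E_i^{(s)}$ with $s<r$ applied to objects of $\underline{\Cat}^-$, which lie in weights $\bmu$ with $\mu_n\notin\{s_i\nu_n,\ldots\}$ strictly between $s_i\nu_n$ and $\nu_n$; after applying $\pi$ (which kills everything in $\Xi_0$) these vanish, leaving exactly $F_i\circ\pi\circ E_i^{(r)}\circ\iota=F_i\circ\Efun$. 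One must check that $F_i$ commutes with $\pi$ and $\iota$ appropriately: $F_i$ commutes with $\pi$ because $\Cat_{\Xi_0}$ is $F_i$-stable (Lemma \ref{Lem:F_stab}), and $F_i\circ\iota\cong\iota\circ F_i$ because $\underline{\Cat}^-$ is $F_i$-stable.

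Next I would obtain $\Ffun\circ F_i\cong F_i\circ\Ffun$ by passing to adjoints. Since $\Ffun$ is left adjoint to $\Efun$ and $F_i$ is biadjoint to $E_i$ (hence $E_i$ is biadjoint to $F_i$), the isomorphism $\Efun\circ F_i\cong F_i\circ\Efun$ dualizes: taking left adjoints of both sides gives $E_i^{\text{left-adj-of-}F_i}\circ\Ffun$ versus $\Ffun\circ E_i^{\text{left-adj-of-}F_i}$, and using biadjointness to replace the left adjoint of $F_i$ by $E_i$... but this gives a statement about $E_i$, not $F_i$. So instead I would argue directly: we already know $\Ffun(P_+(\bla))\cong P_-(\bla)$ (Lemma \ref{Lem:Ffun_prop}(1)) and $\Ffun\circ\Efun\cong\mathrm{id}$ on $(\underline{\Cat}^-)^{\oDelta}$, hence $\Efun$ is an equivalence with quasi-inverse $\Ffun$ by Lemma \ref{Lem:Efun_equi}. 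Since $\Efun$ is an equivalence intertwining $F_i$ with $F_i$, its quasi-inverse $\Ffun$ automatically intertwines $F_i$ with $F_i$ as well: from $\Efun\circ F_i\cong F_i\circ\Efun$ we get $F_i\cong\Ffun\circ F_i\circ\Efun$, hence $F_i\circ\Ffun\cong\Ffun\circ F_i\circ\Efun\circ\Ffun\cong\Ffun\circ F_i$, using that $\Efun\circ\Ffun\cong\mathrm{id}$.

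The main obstacle, and the place requiring genuine care rather than formal manipulation, is the first isomorphism $\Efun\circ F_i\cong F_i\circ\Efun$, specifically the bookkeeping of the Chuang--Rouquier $\mathfrak{sl}_2$-commutation relation $E_i^{(r)}F_i\cong F_iE_i^{(r)}\oplus(\text{terms with }E_i^{(r-1)})$ (with the precise form depending on the weight) and the verification that every correction term dies after applying $\pi$. One has to be careful that the weight $\bmu$ on which we evaluate has $n$th component exactly $\nu_n$, so that $E_i$ raises it out of the ``interval'' $[s_i\nu_n,\nu_n]$ only through the $n$th tensor factor's action; the divided-power relation then has a controlled form, and all intermediate objects $E_i^{(s)}\iota(M)$ for $s<r$ have $n$th weight strictly inside the interval where $\mu_n\notin\{\nu_n,s_i\nu_n\}$, hence project to zero in $\underline{\Cat}^+$. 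This is entirely parallel to the corresponding step in \cite[Section 5.2]{LoHWCII}, so I would follow that argument, checking that the reduction to divided powers and the vanishing of error terms go through in the present generality; no new idea is needed beyond the careful weight analysis.
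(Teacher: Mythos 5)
Your proposal is correct and follows essentially the same route as the paper: the paper also notes that $F_i\iota\cong\iota F_i$, reduces to showing $\Efun\circ F_i\cong F_i\circ\Efun$ blockwise via the Chuang--Rouquier $\mathfrak{sl}_2$-commutation relation $F_iE_i^r\oplus(E_i^{r-1})^{\oplus d_1}\cong E_i^rF_i\oplus(E_i^{r-1})^{\oplus d_2}$, observes that $\pi\circ E_i^{r-1}\circ\iota=0$ by the weight argument (and uses the $r!$-multiplicity/Krull--Schmidt trick from Lemma \ref{Lem:Efun_prop}(1) to pass between $E_i^{(r)}$ and $E_i^r$), and then deduces the $\Ffun$ statement from the fact that $\Ffun$ and $\Efun$ are quasi-inverse. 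You correctly identified that naive adjoint-passing yields a statement about $E_i$ rather than $F_i$, and the quasi-inverse argument you landed on is exactly the one the paper uses.
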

\begin{proof}
Since we have already checked that $\Ffun,\Efun$ are mutually quasi-inverse equivalences, it is enough to show that
$\Efun\circ F_i\cong F_i\circ \Efun$. We have $\Efun\circ F_i=\pi\circ E_i^{(r)}\circ \iota \circ F_i\cong\pi\circ E_i^{(r)}F_i\circ\iota$
(because $F_i\circ \iota\cong \iota\circ F_i$) and $F_i\circ \Efun\cong \pi\circ F_i E_i^{(r)}\circ \iota$. As above, it is enough
to check that $\pi\circ E_i^rF_i\circ \iota\cong \pi\circ F_iE_i^r\circ\iota$. Also it is enough to do this blockwise.
But every block lies in a weight subcategory of $\Cat$. By \cite{CR04}, on a weight subcategory, we have an isomorphism
$F_i E_i^r\oplus (E_i^{r-1})^{\oplus d_1}\cong E_i^r F_i\oplus (E_i^{r-1})^{\oplus d_2}$, where $d_1,d_2$ are non-negative integers.
Repeating the argument of the proof of (1) in Lemma \ref{Lem:Efun_prop}, we see that $\pi\circ E_i^{r-1}\circ\iota=0$.
\end{proof}

\subsection{Functor \texorpdfstring{$\underline{E_i}$}{E_i}}
Recall that the subcategory $\underline{\Cat}^-\subset
\underline{\Cat}$ is closed under $F_i$, let $\underline{F}_i$ denote
the restriction of $F_i$ to $\underline{\Cat}^-$. The functor
$\underline{F}_i$ has both a left adjoint $F_i^!$, and a right adjoint
$F_i^*$. They are obtained as \[F_i^!=\iota^!\circ E_i\circ
\iota\qquad F_i^*=\iota^*\circ E_i\circ \iota,\] where $\iota$ is the
inclusion functor $\underline{\Cat}^-\hookrightarrow
\underline{\Cat}$, and $\iota^!,\iota^*$ are its left and right
adjoints.  To produce a functor $\underline{E}_i$ on
$\underline{\Cat}^-$ that, together with $\underline{F}_i$, will equip
$\underline{\Cat}^-$ with a categorical $\sl_2$-action, it is enough
to show that $F_i^!\cong F_i^*$.

We will approach this problem in a way analogous to \cite{LoHWCII}: we will show that both $F_i^!$ and $F_i^*$ will be isomorphic
to the third functor, $\underline{E}_i:=\Ffun\circ \pi\circ
E_i^{(r+1)}\circ \iota$.
\begin{Lem}\label{Lem:trunc_iso}
  We have isomorphisms of functors $F_i^!\cong \underline{E}_i\cong F_i^*$.
\end{Lem}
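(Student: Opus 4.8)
The plan is to prove the two isomorphisms $F_i^!\cong\underline{E}_i$ and $\underline{E}_i\cong F_i^*$ separately but by the same mechanism, exploiting the Chuang--Rouquier $\sl_2$-relations in the ambient category $\underline{\Cat}$ together with the fact, established in Lemma~\ref{Lem:categor_commut} and the proof of Lemma~\ref{Lem:Efun_prop}(1), that $\pi\circ E_i^{(r-1)}\circ\iota=0$ and more generally that $\pi$ kills everything in $\underline{\Cat}$ supported below the stratum $\underline{\Xi}^-$. The starting point is the defining formula $F_i^!=\iota^!\circ E_i\circ\iota$. Since $\iota$ lands in $\underline{\Cat}^-$, on which ${}_nE_i^{(r)},{}_nF_i^{(r)}$ give quasi-inverse equivalences $\Cat^n_{s_i\nu_n}\cong\Cat^n_{\nu_n}$, one rewrites $E_i\iota$ using the categorical $\sl_2$-isomorphism $E_iE_i^{(r)}\cong E_i^{(r+1)}\oplus (E_i^{(r-1)})^{\oplus ?}$ after precomposing $\iota$ with the equivalence $E_i^{(r)}\circ({}_n\!F_i^{(r)})$ — i.e.\ I would insert the identification of $\underline{\Cat}^-$ with the appropriate weight piece so that an object of $\underline{\Cat}^-$ is literally $E_i^{(r)}$ applied to something killed by $E_i^{(r+1)}$ is impossible, so instead I use divided-power relations to trade $E_i\circ E_i^{(r)}$ for $E_i^{(r+1)}$ modulo lower terms.

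Concretely, the key computation is: for $M\in\underline{\Cat}^-$, write $M\cong {}_nE_i^{(r)}M'$ where $M'$ lies in the stratum with $\mu_n=\nu_n$; then in $\underline{\Cat}$ one has $E_i\iota(M)\cong E_i E_i^{(r)}\iota(M')$, and the categorical $\sl_2$-action gives $E_iE_i^{(r)}\cong E_i^{(r+1)}\oplus (E_i^{(r)}\text{-lower})$, where every summand other than $E_i^{(r+1)}\iota(M')$ is supported on strata strictly below $\underline{\Xi}^-$ in $\underline{\Cat}$ (this is the content of the vanishing $\pi\circ E_i^{r-1}\circ\iota=0$ from Lemma~\ref{Lem:categor_commut}, now used one notch higher). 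Applying $\iota^!$: on $\underline{\Cat}^{\oDelta}$ the functor $\iota^!$ is exact (Lemma~\ref{Lem:proj_exact}), and it kills the lower strata and extracts the top quotient $\Delta_{\bmu,-}$-part, so $\iota^!E_i E_i^{(r)}\iota(M')\cong \iota^!(E_i^{(r+1)}\iota(M'))$. But $\iota^!\circ E_i^{(r+1)}\circ\iota$ differs from $\underline{E}_i=\Ffun\circ\pi\circ E_i^{(r+1)}\circ\iota$ only by replacing $\iota^!$ by $\Ffun\circ\pi$, and these agree on the relevant subcategory because $\Ffun\circ\pi$ is, after the equivalence $\Efun$ of the previous subsection, precisely the left-adjoint truncation $\iota^!$ up to the identification $\Efun$. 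So I would phrase this last matching as: $\Ffun\circ\pi\cong\iota^!$ as functors out of the relevant piece of $\underline{\Cat}$, using that $\Efun=\pi\circ E_i^{(r)}\circ\iota$ is an equivalence with quasi-inverse $\Ffun=\iota^!\circ F_i^{(r)}\circ\pi^!$ and that $F_i^{(r)},E_i^{(r)}$ cancel on these strata by Chuang--Rouquier.

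For the right-adjoint side, $F_i^*=\iota^*\circ E_i\circ\iota$, the argument is dual: one uses the biadjointness of $E_i,F_i$ (noted after Proposition~2.13 in the excerpt) and the corresponding statement for $\Cat^{opp}$, which is again a tensor product categorification by Proposition~\ref{Lem:opp_catn}; running the same divided-power bookkeeping in $\underline{\Cat}^{opp}$ identifies $\iota^*\circ E_i\circ\iota$ with the opposite-side truncation of $E_i^{(r+1)}$, and then one checks that this opposite truncation also equals $\underline{E}_i$ — essentially because $\Ffun\circ\pi$ is simultaneously a left- and right-truncation on the subcategory where everything is concentrated in the top two strata $\underline{\Xi}^-$ and its predecessor, there being no room for the left and right adjoints to differ. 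I would cite the analogous step \cite[proof of Lemma in Section 5]{LoHWCII} for the precise diagram chase.

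The main obstacle I anticipate is the careful bookkeeping of divided powers and multiplicities: the Chuang--Rouquier isomorphism $E_iE_i^{(r)}\cong E_i^{(r+1)}\oplus(E_i^{(r-1)})^{\oplus d}$ and its $F$-analogue hold only on fixed weight subcategories with $d$ depending on the weight, so one must work block by block (as in the proof of Lemma~\ref{Lem:categor_commut}) and keep track of which summands survive $\pi$ and which are exact for $\iota^!$ versus $\iota^*$. The genuinely delicate point is establishing that $\Ffun\circ\pi$ agrees with \emph{both} $\iota^!$ and $\iota^*$ after the relevant truncations — i.e.\ that the left and right adjoint truncations of $E_i^{(r+1)}\iota$ coincide — since a priori $\iota^!\ne\iota^*$; this is exactly where one needs that the objects in play have $\oDelta$- and $\nabla$-filtrations concentrated in two consecutive strata, so that Lemma~\ref{Lem:proj_exact} and its dual force the two truncations together. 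Everything else is a formal consequence of the equivalence $\Efun$ and the adjunctions already in hand.
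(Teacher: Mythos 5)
The paper's proof proceeds quite differently, and your proposal contains a genuine error at its core. You write that the Chuang--Rouquier relations give $E_iE_i^{(r)}\cong E_i^{(r+1)}\oplus (E_i^{(r-1)})^{\oplus ?}$ with "every summand other than $E_i^{(r+1)}$ supported on strata strictly below $\underline{\Xi}^-$," and you argue that $\iota^!$ or $\pi$ then kills these lower terms. But the divided-power isomorphism has no lower terms at all: it reads $E_iE_i^{(r)}\cong E_i^{(r+1)}\otimes\C^{r+1}$, i.e.\ $r+1$ identical copies of $E_i^{(r+1)}$ and nothing else. (You are likely thinking of the relations for $E_iF_i^{(r)}$ or $F_iE_i^{(r)}$, which do produce shifts in the divided power.) Because of this, the whole difficulty of the lemma is precisely the one your argument makes disappear: the adjunction counit $\iota\iota^!\to\operatorname{id}$ gives an epimorphism $\pi E_i^{(r)}E_i\iota\twoheadrightarrow\Efun\circ F_i^!$ onto a single copy, and one has to prove that a suitable \emph{choice} of embedding $\C\hookrightarrow\C^{r+1}$ makes the composite $\pi E_i^{(r+1)}\iota\to\Efun\circ F_i^!$ an isomorphism. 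The paper's proof handles this by reducing to $\Delta_{\bmu,-}(N)$ with $N$ simple, observing that the target has simple head (so the set of good embeddings is a nonempty open subset of $\mathbb{P}^r$, at least after intersecting over finitely many simples), and then comparing classes in the Grothendieck group using (TPC3) to upgrade the epimorphism to an isomorphism. None of this counting is reproducible from "the other summands vanish."

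A second problem is the rewriting $M\cong {}_nE_i^{(r)}M'$ for arbitrary $M\in\underline{\Cat}^-$. The functor ${}_nE_i^{(r)}$ lives on the associated graded category $\gr\Cat$, acting between single strata; a general object of $\underline{\Cat}^-$ is not in the image of the standardization functor and cannot be written as $E_i^{(r)}$ of anything in a natural way. You would first have to reduce to $\Delta$-filtered objects and then to objects $\Delta_{\bmu,-}(N)$ (exactly as the paper does, using right-exactness), but at that point you are essentially following the paper's reduction. Finally, for the second isomorphism, the paper simply invokes symmetry: $\Cat^{opp}$ is again a TPC, and the left-adjoint statement there is the right-adjoint statement here. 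Your attempt to argue directly that "the left and right truncations of $E_i^{(r+1)}\iota$ coincide" is both more work than necessary and not actually substantiated. In short: the approach via the divided-power relation is the right one, but the crux is choosing a line in $\C^{r+1}$ that hits the generator of the target, not killing lower-order terms.
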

We note that this proof  closely follows \cite[5.3]{LoHWCII}.
\begin{proof}
If we prove the first equality, the second will follow by symmetry,
applying the theorem to $\Cat^{opp}$.

It is sufficient to prove that $\Efun\circ \underline{E}_i=\pi\circ E_i^{(r+1)}\circ \iota:\underline{\Cat}^-\rightarrow \underline{\Cat}^+$
is isomorphic to $\Efun\circ \underline{F}_i^!=\pi\circ E_i^{(r)}\circ\iota\iota^!\circ E_i\circ\iota$. Consider the adjunction epimorphism
$\operatorname{id}_{\underline{\Cat}}\twoheadrightarrow \iota\iota^!$. Composing it with $\pi\circ E_i^{(r)}$ on the left and
$E_i\circ\pi^!$ on the right, we get an epimorphism
\begin{equation}\label{eq:fun_epi_E_i}
\pi\circ E_i^{(r)}E_i\circ \iota\twoheadrightarrow \Efun\circ \underline{F}_i^!.
\end{equation}
Recall that we have a  decomposition $E_i^{(r)}E_i\cong \C^{r+1}\otimes E_i^{(r+1)}$. Picking a vector space embedding
$\C\hookrightarrow \C^{r+1}$, we get a functor morphism
\begin{equation}\label{eq:fun_mor}\pi\circ E_i^{(r+1)}\circ \iota\rightarrow \Efun\circ \underline{F}_i^!.\end{equation}
We need to check that, for each given weight subcategory, there is an embedding $\C\hookrightarrow \C^{r+1}$ that makes
the corresponding morphism (\ref{eq:fun_mor}) an isomorphism.

It is enough to check that there is an embedding $\C\hookrightarrow \C^{r+1}$ such that (\ref{eq:fun_mor})
is an isomorphism on all standardly filtered objects (because all functors under consideration are right exact).
This reduces to checking that (\ref{eq:fun_mor}) is an isomorphism on all objects $\Delta_{\bmu-}(N)$ in a given
weight subcategory as all functors under consideration are exact on standardly filtered objects (the embedding
$\C\hookrightarrow \C^{r+1}$ may depend on $\sum_{i=1}^{n-1}\mu_i$). This, in its turn, boils down to checking that there is an embedding $\C\hookrightarrow \C^{r+1}$ such that the composed morphism
\begin{equation}\label{eq:funct_cond2}\rho_{\bmu',+}\circ E_i^{(r+1)}\Delta_{\bmu,-}\hookrightarrow \rho_{\bmu',+}E_i^{(r)}E_i\Delta_{\bmu,-}\twoheadrightarrow \rho_{\bmu',+}E_i^{(r)}\circ F_i^!\Delta_{\bmu,-}\end{equation}
is an isomorphism of functors $\underline{\Cat}^-_{\bmu}\rightarrow \underline{\Cat}^+_{\bmu'}$.
Here $\rho_{\bmu',+}$ is the functor $\underline{\Cat}^{\oDelta}\rightarrow \Cat_{(\bmu',\nu_n)}$ that is the composition
of the left adjoint of the inclusion $\underline{\Cat}_{\leqslant (\bmu',\nu_n)}\hookrightarrow \underline{\Cat}$ and the projection $\underline{\Cat}_{\leqslant (\bmu',\nu_n)}\twoheadrightarrow
\Cat_{(\bmu',\nu_n)}$. The functor $\rho_{\bmu',+}$ is exact on $\Cat^{\oDelta}$. Therefore all functors in (\ref{eq:funct_cond2})
are exact.

From the weight considerations, all functors
in the exact sequence are $0$ unless $\bmu'=\bmu-\alpha_i^\ell$ for some $\ell=1,\ldots,n-1$. So consider $\bmu'$ of this form.
We claim that the rightmost functor is $\operatorname{id}^{\boxtimes\ell-1}\boxtimes \,_\ell\!E_i\boxtimes \operatorname{id}^{\boxtimes n-\ell-2}\boxtimes \,_n\!E_i^{(r)}$. Indeed, since $\Efun:\underline{\Cat}^-\rightarrow \underline{\Cat}^+$ is an equivalence
of standardly stratified categories, we see that the right-most functor equals $\Efun\circ \rho_{\bmu',-}\circ F_i^!\circ\Delta_{\bmu,-}$.
But $\rho_{\bmu',-}\circ F_i^!\circ\Delta_\bmu=\rho_{\bmu',-}\circ E_i\circ\Delta_\bmu=\operatorname{id}^{\ell-1}\boxtimes \,_{\ell}\!E_i\boxtimes\operatorname{id}^{n-\ell-1}$. Our claim follows since the equivalence $\underline{\Cat}^-_{\bmu',-}\rightarrow
\underline{\Cat}^+_{\bmu',+}$ is $\operatorname{id}^{\boxtimes n-1}\boxtimes \,_n\!E_i^{(r)}$. Therefore the right functor
maps a simple object to an object with simple head.

Also the middle functor in (\ref{eq:funct_cond2}) is the sum of $r+1$ copies of the left-most functor. Apply the functors
in (\ref{eq:funct_cond2}) to a simple object $N$. The object on the right has simple head. It follows that the set
of embeddings $\C\hookrightarrow \C^{r+1}$ such that the composition in (\ref{eq:funct_cond2}) is surjective on $N$
is a complement to a hyperplane in $\mathbb{P}^r$. The number of simples in $\Cat_{\bmu,-}$ with given $|\bmu|$ is finite
so we have an open subset of $\mathbb{P}^r$ such that the composition is surjective if we choose our embedding $\C\hookrightarrow\C^{r+1}$
in this subset. But using condition (TPC3) in the definition of a tensor product categorification one sees that $$[\rho_{\bmu',+}E_i^{r+1}\Delta_{\bmu,-}(N)]=(r+1)![\operatorname{id}^{\boxtimes\ell-1}\boxtimes \,_{\ell}\!E_i\boxtimes \operatorname{id}^{\boxtimes n-\ell-2}\boxtimes \,_{n}\!E_i^{(r)}(N)].$$
It follows that the classes of $\rho_{\bmu',+}E_i^{(r+1)}\Delta_{\bmu,-}(N)$ and
$\rho_{\bmu',+}E_i^{(r)}\circ F_i^!\Delta_{\bmu,-}(N)$ in the Grothendieck group coincide.
So any epimorphism between the two objects has to be an isomorphism.
This completes the proof.
\end{proof}

Using the identification $\Efun:\underline{\Cat}^-\hookrightarrow
\underline{\Cat}^+$, we can transfer $\underline{E}_i$ to
$\underline{\Cat}^+$. Let  $\underline{F}_i\colon
\underline{\Cat}^+\to \underline{\Cat}^+$ denote the functor
induced by $F_i$.
Thanks to Lemma \ref{Lem:categor_commut}, we see that, being both left and right adjoint to $\underline{F}_i$, the functor $\underline{E}_i$
preserves the subcategories $(\underline{\Cat}^{\pm})^{\oDelta},(\underline{\Cat}^{\pm})^{\Delta}, (\underline{\Cat}^{\pm})^{\onabla},
(\underline{\Cat}^{\pm})^{\nabla}$.

\begin{Lem}\label{Lem:fun_coinc}
Under the embedding
$\pi^!:(\underline{\Cat}^{+})^{\oDelta}\hookrightarrow
\Cat^{\oDelta}$, we have an isomorphism of functors $\pi^!\circ
\underline{E}_i\cong E_i\circ \pi^!$. The induced isomorphism $\pi^!\circ
\underline{E}_i^n\cong E_i^n\circ \pi^!$ intertwines the $R$ actions
on these functors.
\end{Lem}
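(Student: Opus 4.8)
The plan is to build an explicit model of $\underline{E}_i$ on $(\underline{\Cat}^+)^{\oDelta}$ directly from $E_i$, for the fixed node $i$. First I would record what $\pi^!$ does on $(\underline{\Cat}^+)^{\oDelta}$: it is exact and fully faithful, and since $\pi^!(\oDelta_+(\lambda))=\oDelta(\lambda)$, the isomorphism of Grothendieck groups together with Lemma~\ref{Lem:BGG} identifies $(\underline{\Cat}^+)^{\oDelta}$ with the full subcategory $\mathcal{D}\subset\Cat^{\oDelta}$ of objects whose proper standard filtration involves only $\oDelta(\lambda)$ with $\varrho(\lambda)_n=\nu_n$. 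The crucial point is that $E_i$ preserves $\mathcal{D}$: by the $\oDelta$-analogue of (TPC3), $E_i\oDelta(\lambda)$ is filtered by objects built from ${}_jE_i$, $j=1,\dots,n$, the $j=n$ term vanishing since ${}_nE_i$ raises the last coordinate to $\nu_n+\alpha_i$, which is not a weight of $V_n$, so the corresponding weight subcategory of $\gr\Cat$ is zero; the surviving terms keep last coordinate $\nu_n$. Transporting $E_i|_{\mathcal{D}}$ along the equivalence $\pi^!\colon(\underline{\Cat}^+)^{\oDelta}\xrightarrow{\sim}\mathcal{D}$ thus yields an endofunctor $\tilde{E}_i$ of $(\underline{\Cat}^+)^{\oDelta}$ with $\pi^!\tilde{E}_i\cong E_i\pi^!$ by construction, whose powers $\tilde{E}_i^n\cong\widetilde{E_i^{\,n}}$ carry the $R_n$-action transported from that on $E_i^n|_{\mathcal{D}}$. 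So the lemma reduces to an isomorphism $\tilde{E}_i\cong\underline{E}_i$ of endofunctors of $(\underline{\Cat}^+)^{\oDelta}$ compatible with these $R_n$-actions on powers.

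The isomorphism $\tilde{E}_i\cong\underline{E}_i$ I would prove by the bookkeeping already used in this section. Both functors are right exact and preserve standardly filtered objects, and on a standard object $\oDelta_+(\bla)$, $\bla=(\lambda_1,\dots,\lambda_{n-1})$, each produces a filtration with successive quotients $\oDelta_+({}_j\te_i\bla)$, $j=1,\dots,n-1$: for $\tilde{E}_i$ this is the computation above after applying the exact quotient functor $\Cat\to\underline{\Cat}^+$, and for $\underline{E}_i$ it follows from Lemma~\ref{Lem:trunc_iso} and the proof of Lemma~\ref{Lem:Efun_prop}(1) (or directly from Lemmas~\ref{Lem:Efun_prop}--\ref{Lem:Ffun_prop} and (TPC3)). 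Representing the right exact composites $\tilde{E}_i\circ\Delta_{\bmu,+}$ and $\underline{E}_i\circ\Delta_{\bmu,+}$ by bimodules over the finite-dimensional block algebras, matching them after passing to enough copies via the multiplicities of (TPC3), and invoking Krull--Schmidt then gives $\tilde{E}_i\cong\underline{E}_i$; this proves the first assertion. (Alternatively one may rewrite $\tilde{E}_i$ entirely inside $\underline{\Cat}$, transport it to $\underline{\Cat}^-$ along $\Efun$, and recognise it there as $\iota^!\circ E_i\circ\iota=F_i^!\cong\underline{E}_i$ using Lemma~\ref{Lem:trunc_iso}.) Since $\underline{E}_i$ preserves $(\underline{\Cat}^+)^{\oDelta}$, composing the $n=1$ isomorphism with itself produces the isomorphisms $\pi^!\circ\underline{E}_i^n\cong E_i^n\circ\pi^!$.

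For the $R_n$-action, recall that for the single node $i$ the algebra $R_n$ is the nilHecke algebra, generated by the dot $y_1$ and the crossings $\psi_1,\dots,\psi_{n-1}$. As these arise from the single dot $y$ on $E_i$ and the single crossing $\psi$ on $E_i^2$ by whiskering with identity endomorphisms of the remaining copies of $E_i$, and the isomorphism $\pi^!\underline{E}_i^n\cong E_i^n\pi^!$ is obtained from the $n=1$ case by horizontal composition and hence is coherent with such whiskering, it suffices to treat $n=1$ and $n=2$. For $n=1$ one must check that under $\tilde{E}_i\cong\underline{E}_i$ the dot on $\tilde{E}_i$ transported from $y$ on $E_i$ agrees with the dot on $\underline{E}_i$, which is the mate, under the biadjunction $\underline{E}_i\dashv\underline{F}_i\dashv\underline{E}_i$, of the dot on $\underline{F}_i$ (itself induced by $y$ on $F_i$); both are natural endomorphisms of a right exact functor, hence determined by their values on standard objects, where — using that the standard filtration of $E_i\oDelta(\bla)$ is unique and so is preserved by any natural endomorphism — the comparison reduces to the action of the dot of the cyclotomic KLR algebra of $\fg^{\oplus n}$ on the subquotients $\oDelta({}_j\te_i\bla)$, and the two dots induce the same action there. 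The case $n=2$ of the crossing $\psi$ is handled the same way, now also invoking Lemma~\ref{Lem:categor_commut} to commute $\underline{E}_i$ past $F_i$.

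I expect this last step to be the main obstacle: producing $\tilde{E}_i\cong\underline{E}_i$ and the isomorphisms of powers is routine standardly-stratified bimodule bookkeeping of the sort already carried out in Lemmas~\ref{Lem:Efun_prop}, \ref{Lem:Ffun_prop} and \ref{Lem:trunc_iso}, whereas chasing the dot and the crossing through the truncations and equivalences that define $\underline{E}_i$, and verifying the coherence that upgrades the $n\le 2$ statements to all $n$, is the delicate part --- precisely where one relies on \cite[Section~5.3]{LoHWCII}.
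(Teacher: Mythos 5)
Your first paragraph — identifying $(\underline{\Cat}^+)^{\oDelta}$ with a subcategory of $\Cat^{\oDelta}$ via $\pi^!$, observing that (TPC3) forces $E_i$ to preserve this subcategory because ${}_nE_i$ vanishes, and transporting $E_i$ back to get a functor $\tilde E_i$ with $\pi^!\tilde E_i\cong E_i\pi^!$ — coincides with the paper's opening move: the paper writes this functor as $E_i'\cong\pi\circ E_i\circ\pi^!$. After that the routes diverge. The paper does not compare $E_i'$ and $\underline{E}_i$ on standards at all; it computes the \emph{right adjoint} of $E_i'=\pi\circ E_i\circ\pi^!$, finds it is $\pi\circ F_i\circ\pi^*\cong\underline{F}_i$, and then invokes uniqueness of left adjoints to conclude $E_i'\cong\underline{E}_i$ in one line. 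This adjunction observation also disposes of the $R$-action statement essentially for free: the $R$-action on $\underline{F}^n$ is, by definition, the one induced from $\pi\circ F^n\circ\pi^*$, so the adjoint isomorphism automatically carries the action on $\pi\circ E^n\circ\pi^!$ to the one on $\underline{E}^n$. Your bimodule/Krull--Schmidt comparison on standards, and then the generator-by-generator chase of $y$ and $\psi$ through the truncations, is a legitimate but much heavier substitute for this one adjunction; your parenthetical alternative $\tilde E_i\cong F_i^!\cong\underline{E}_i$ via Lemma~\ref{Lem:trunc_iso} is closer in spirit, but it still leaves the $R$-action to be checked by hand. I would also flag one point in your $R$-action argument that needs tightening: a natural transformation between right exact functors is determined by its values on projectives, not on standard objects, so ``determined by their values on standard objects'' requires either restricting attention to the exact subcategory of $\oDelta$-filtered objects and arguing there, or replacing standards by the projectives $F^k\mathbb{V}$; the paper's proof neatly avoids having to make this precise.
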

\begin{proof}
It follows from condition (TPC3) that $E_i$ preserves
$\pi^!(\underline{\Cat}^{+})^{\oDelta}$, so it induces a functor
$E'_i\colon (\underline{\Cat}^{+})^{\oDelta}\to
(\underline{\Cat}^{+})^{\oDelta}$ such that $\pi^!\circ E'_i\cong
E_i\circ \pi^!$.  In fact, this functor can be described as $E'_i\cong
\pi\circ E_i\circ \pi^!$.  The right adjoint of this functor is
$\pi\circ F_i\circ \pi^*\cong \underline{F}_i$.  Taking the left adjoint of
this isomorphism, we obtain an isomorphism $E'_i\cong
\underline{E}_i$.  Since the action of $R$
on $\underline{F}^n$ is by definition induced by that  on $\pi\circ F^n\circ \pi^*$, the adjoint
isomorphism also intertwines these actions on $\pi\circ E^n\circ
\pi^!$ and $\underline{E}^n$.
\end{proof}

\subsection{Checking conditions}

\begin{Thm}
  The functors $\underline{E}_i$ and $\underline{F}_i$ give rise to a categorical action on
  $\underline{\Cat}^+$ inducing the tensor product action on $[\underline{\Cat}^+]\cong V_1\otimes\cdots \otimes V_{n-1}$.
\end{Thm}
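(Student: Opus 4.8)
The plan is to check the three ingredients of Rouquier's definition of a categorical $\fg$-action for the pair $(\underline E_i,\underline F_i)$ on $\underline\Cat^+$, and then to read off the operators on $K^0$. Two of these are already in hand. Biadjointness of $\underline E_i$ and $\underline F_i$: Lemma~\ref{Lem:trunc_iso} shows $F_i^!\cong\underline E_i\cong F_i^*$ on $\underline\Cat^-$, so $\underline E_i$ is there simultaneously a left and a right adjoint of $\underline F_i$, and transporting along the equivalence $\Efun\colon\underline\Cat^-\xrightarrow{\sim}\underline\Cat^+$ --- which intertwines the two copies of $\underline F_i$ by Lemma~\ref{Lem:categor_commut} --- this persists on $\underline\Cat^+$. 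The KLR module structure requires nothing new: by Lemma~\ref{Lem:F_stab} the subcategory $\Cat_{\Xi_0}$ is $F_i$-stable, so each $F_i$, along with the dots and all the natural transformations making up the $R_k$-action on $F^k$, descends along the exact quotient $\Cat\twoheadrightarrow\Cat/\Cat_{\Xi_0}=\underline\Cat^+$ to $\underline F_i$ and $\underline F^k=\bigoplus_{\mathbf{i}}\underline F_{i_1}\cdots\underline F_{i_k}$; the KLR relations are equalities of morphisms of functors and so survive the quotient, and local nilpotence of the dot is inherited. Thus $\underline\Cat^+$ is a module category over representations of the KLR algebra of $\fg$ generated by the $\underline F_i$.

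The remaining ingredient --- invertibility of Rouquier's map $\rho_{s,\mu}$ --- is the heart of the matter, and I expect it to be the main obstacle. As $\rho_{s,\mu}$ is a morphism of exact functors it suffices to test invertibility on projectives, and every projective of $\underline\Cat^+$ lies in $(\underline\Cat^+)^{\oDelta}$, a subcategory preserved by both $\underline E_i$ and $\underline F_i$. On that subcategory the left adjoint $\pi^!$ embeds $(\underline\Cat^+)^{\oDelta}$ exactly and faithfully into $\underline\Cat=\Cat/\Cat_{\Xi_1}$, and by Lemma~\ref{Lem:fun_coinc} it intertwines $\underline E_i$ with $E_i$ together with the $R$-action on $\underline E^n$ and $E^n$; a parallel, easier computation with proper standard objects --- via (TPC3) and the fact that the $n$th copy of $\fg$ cannot raise the last weight above $\nu_n$ --- identifies $\underline F_i$ with $F_i$ compatibly with dots and adjunction units, up to the pieces killed by $\pi$. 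Since $\Cat_{\Xi_1}$ is stable under both $E_i$ and $F_i$, the whole categorical action of $\Cat$ descends to $\underline\Cat$, so $\rho_{s,\mu}$ is invertible there; being reflected by the exact faithful $\pi^!$, it is invertible on $(\underline\Cat^+)^{\oDelta}$, hence on projectives, hence everywhere. This is the argument of \cite[\S5.3]{LoHWCII}; the one delicate point, as there, is exactly this transport of the $2$-morphism data, which is mildly awkward because $\pi^!$ intertwines $\underline E_i$ with $E_i$ strictly but relates $\underline F_i$ to $F_i$ only modulo the kernel of $\pi$.

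Finally, once the categorical action is established, $K^0(\underline\Cat^+)$ is a $\fg$-module; it is identified as a weight-graded space with $V_1\otimes\cdots\otimes V_{n-1}\otimes v_{\nu_n}$. The operator $[\underline F_i]$ is induced by the tensor-product operator $[F_i]$ on $K^0(\Cat)\cong V_1\otimes\cdots\otimes V_n$; the part of $[F_i]$ acting in the $n$th slot sends $V_1\otimes\cdots\otimes V_{n-1}\otimes v_{\nu_n}$ into the subspace with last weight $\nu_n-\alpha_i<\nu_n$, which is killed in $K^0(\underline\Cat^+)$, so $[\underline F_i]$ is the tensor-product action of $f_i$ on $V_1\otimes\cdots\otimes V_{n-1}$. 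Dually, $K^0(\underline\Cat^+)$ is spanned by the classes $[\oDelta_0(\lambda)]$ with $\lambda_n=\nu_n$, with $\pi^!\oDelta_0(\lambda)$ the corresponding proper standard object of $\underline\Cat$; applying $\pi^!\underline E_i\cong E_i\pi^!$ and discarding the (vanishing) $n$th-slot contribution shows that $[\underline E_i]$ is the tensor-product action of $e_i$. Hence $K^0(\underline\Cat^+)\cong V_1\otimes\cdots\otimes V_{n-1}$ as $\fg$-modules with the tensor product action, as required.
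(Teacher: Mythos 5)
Your outline is organized around checking all three ingredients of Rouquier's definition by hand, and in particular you single out the invertibility of $\rho_{s,\mu}$ as ``the heart of the matter.'' This is a genuinely different route from the paper, and it is where your argument has a real gap. The paper does not verify invertibility of $\rho_{s,\mu}$ directly at all; it invokes \cite[5.27]{Rou2KM}, which says that once one has the adjunction, the KLR action on $\underline{E}^k$ (Lemma~\ref{Lem:fun_coinc}), and the correct weight decomposition, the action is a genuine categorical $\g$-action \emph{as soon as} the induced operators on $K^0$ give an integrable $\g$-module. So the only substantive thing left to check is precisely your last paragraph, the identification of $[\underline{E}_i]$ and $[\underline{F}_i]$ with the tensor-product operators on $V_1\otimes\cdots\otimes V_{n-1}$ --- and that part of your argument is essentially the paper's.

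By contrast, your attempt to ``reflect'' invertibility of $\rho_{s,\mu}$ along $\pi^!$ does not close. You correctly note that $\pi^!$ intertwines $\underline{E}_i$ with $E_i$ strictly (Lemma~\ref{Lem:fun_coinc}), but only relates $\underline{F}_i$ to $F_i$ up to the kernel of $\pi$: for $M$ proper standardly filtered one has a short exact sequence $0\to \pi^!\underline{F}_iM\to F_i\pi^!M\to\Delta({}_nF_i\,\cdot\,)\to 0$, so $\pi^!$ does \emph{not} identify the natural transformation $\rho_{s,\mu}$ for $\underline{\Cat}^+$ with (the restriction of) the one for $\Cat$, since $\rho_{s,\mu}$ is built from the $(E,F)$-adjunction units/counits and these are modified by the truncation. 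Exactness and faithfulness of $\pi^!$ on $(\underline\Cat^+)^{\oDelta}$ therefore do not by themselves reflect invertibility, because you are not comparing the same morphism on both sides. You flag this as ``mildly awkward'' and defer to \cite[\S 5.3]{LoHWCII}, but that reference handles the truncated $\sl_2$-functor, not this transport of $2$-morphism data, and the step remains open as written. The fix is to drop the direct verification entirely and cite \cite[5.27]{Rou2KM}: then your biadjointness argument, your observation that the KLR action descends (though note the paper phrases this via the $\underline{E}^k$ through Lemma~\ref{Lem:fun_coinc} rather than via descent of the $F$-side structure), the obvious weight compatibility, and your $K^0$ computation together give the theorem with no further work.
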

\begin{proof}
  Here we apply \cite[5.27]{Rou2KM}:
  \begin{itemize}
  \item We have already checked that $\underline{E}_i$ and
    $\underline{F}_i$ are adjoint.
  \item The functors $\underline{E}^k$ inherit a KLR action from
    $\Cat$.
  \item It's clear that these functors change weights in the correct way.
  \end{itemize}
  Thus, we need only check that the linear maps $[\underline{E}_i]$
  and $[\underline{F}_i]$ induce an integrable action of $\g$ on
  $[\underline{\Cat}^+]\cong V_1\otimes\cdots \otimes V_{n-1}$.  Of
  course, we can easily check that they act with the usual tensor
  product action; this follows for $[E_i]$ by Lemma
  \ref{Lem:fun_coinc}, and for the $[F_i]$, one simply notes that
  $\pi^!(\underline{F}_i\Delta_+(\bla))$ is the kernel of the natural map
  $F_i\Delta(\bla)\to \Delta({}_nF_iQ(\bla))$; this kernel is, of
  course, filtered by $\Delta({}_jF_iQ(\bla))=\pi^!\Delta({}_j\underline{F}_iQ(\bla))$.  This precisely shows that
\[[\underline{F}_i](v_1\otimes \cdots v_{n-1})=\sum_{j=1}^{n-1}v_1\otimes
\cdots \otimes [{}_j\underline{F}_i]v_j\otimes\cdots\otimes v_{n-1}.\]
Thus, we are done.
\end{proof}

\begin{Cor}
  With its induced categorical action, $\underline{\Cat}^+$ is a tensor product categorification of
  $V_1\otimes\cdots\otimes V_{n-1}$.
\end{Cor}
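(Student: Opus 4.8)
The plan is to verify axioms (TPC1)--(TPC3) for $\underline{\Cat}^+$ carrying the categorical $\g$-action built from the functors $\underline{E}_i,\underline{F}_i$ of the preceding theorem; the other ingredients of the definition are already in place. Indeed, $\underline{\Cat}^+=\Cat/\Cat_{\Xi_0}$ is abelian artinian with each block the module category of a finite-dimensional algebra, being a Serre quotient of $\Cat$; since $\Xi_0$ is a poset ideal, $\underline{\Cat}^+$ inherits the standardly stratified structure with poset $\underline{\Xi}^+=\Xi\setminus\Xi_0$ and with $\Delta_\xi=\pi^!\circ\Delta_{\xi,+}$ for $\xi\notin\Xi_0$; the functor $\underline{F}_i$ is exact with $y$ acting locally nilpotently because it is induced from $F_i$, and $\underline{E}_i$ is exact since by Lemma~\ref{Lem:trunc_iso} it is isomorphic both to the right-exact $F_i^!$ and to the left-exact $F_i^*$; and the preceding theorem already gives that $[\underline{E}_i]$, $[\underline{F}_i]$ act by the tensor product formula, so $K^0(\underline{\Cat}^+)\cong V_1\otimes\cdots\otimes V_{n-1}$ is an isomorphism of $\g$-modules.

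First I would dispose of (TPC1): since $\nu_n$ is the highest weight of $V_n$, every weight $\mu_n$ of $V_n$ satisfies $\mu_n\leqslant\nu_n$, so $\underline{\Xi}^+=\{\bmu\in\Xi\mid\mu_n=\nu_n\}$, which is canonically the set of $(n-1)$-tuples of weights of $V_1,\dots,V_{n-1}$ summing to $\nu_1+\cdots+\nu_{n-1}$; the $j=n$ clause of the inverse dominance order becomes vacuous, so the induced order is exactly the one attached to $V_1\otimes\cdots\otimes V_{n-1}$. For (TPC2), since $\Xi_0$ is a poset ideal we have $\underline{\Cat}^+_\xi=\Cat_\xi$ for every $\xi\notin\Xi_0$, so $\gr\underline{\Cat}^+=\bigoplus_{\mu_n=\nu_n}\Cat_\bmu$ is a direct summand of $\gr\Cat\cong\EuScript{C}$. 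The functors ${}_jE_i$, ${}_jF_i$ with $j\leqslant n-1$ fix the $n$-th weight and hence preserve this summand; taking their restrictions as the $\g^{\oplus(n-1)}$-action on $\gr\underline{\Cat}^+$, the identification $K^0(\gr\Cat)\cong V_1\otimes\cdots\otimes V_n$ restricts to $K^0(\gr\underline{\Cat}^+)\cong V_1\otimes\cdots\otimes V_{n-1}\otimes v_{\nu_n}$, that is, $V_1\otimes\cdots\otimes V_{n-1}$ as a $\g^{\oplus(n-1)}$-module, with the weight-$\bmu$ subcategory equal to $\Cat_{(\bmu,\nu_n)}=\underline{\Cat}^+_{\leqslant\bmu}/\underline{\Cat}^+_{<\bmu}$. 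Finally $\underline{\Cat}^+_{\nu_1+\cdots+\nu_{n-1}}=\Cat_\nu\cong\operatorname{Vect}_\C$ with the same distinguished object $\mathbb{V}$, so Proposition~\ref{simple-unique}, applied one tensor factor at a time, identifies $\gr\underline{\Cat}^+$ with the representations of the cyclotomic KLR algebra of $\g^{\oplus(n-1)}$ at highest weight $(\nu_1,\dots,\nu_{n-1})$; this is (TPC2).

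The real content is (TPC3), and the main obstacle there is purely combinatorial, since the analytic work is already done. For $M\in\underline{\Cat}^+_\bmu=\Cat_{(\bmu,\nu_n)}$, Lemma~\ref{Lem:fun_coinc} gives $\pi^!(\underline{E}_i\Delta_{\bmu,+}(M))\cong E_i\Delta_{(\bmu,\nu_n)}(M)$, which by (TPC3) for $\Cat$ admits a filtration with subquotients $\Delta({}_jE_iM)$, $j=1,\dots,n$; the $j=n$ term vanishes, since ${}_nE_iM$ lies in $\Cat_{(\bmu,\nu_n+\alpha_i)}=0$ ($\nu_n+\alpha_i$ not being a weight of $V_n$), while for $j\leqslant n-1$ the term equals $\pi^!(\Delta_{\bmu+\alpha_i^j,+}({}_j\underline{E}_iM))$. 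Applying the exact functor $\pi$, which satisfies $\pi\circ\pi^!\cong\operatorname{id}$, descends this to the filtration of $\underline{E}_i\Delta_{\bmu,+}(M)$ required by (TPC3). For $\underline{F}_i$ one argues dually: $\underline{F}_i\Delta_{\bmu,+}(M)=\pi(F_i\Delta_{(\bmu,\nu_n)}(M))$, and (TPC3) for $\Cat$ filters $F_i\Delta_{(\bmu,\nu_n)}(M)$ by the $\Delta({}_jF_iM)$; the $j=n$ term now lies in $\Cat_{\Xi_0}$ (as $\nu_n-\alpha_i<\nu_n$) and is killed by $\pi$, leaving precisely the filtration by $\Delta_{\bmu-\alpha_i^j,+}({}_j\underline{F}_iM)$, $j=1,\dots,n-1$ — and this computation is already recorded in the proof of the preceding theorem via the identification of $\pi^!(\underline{F}_i\Delta_+(\bla))$ with the relevant kernel. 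The one thing needing care is to confirm that the discarded ${}_nE_i$- or ${}_nF_i$-step sits at the appropriate end of the standard filtration, so that the surviving subquotient filtration is by exactly the $j\leqslant n-1$ standard objects in the right order; this follows from the explicit description of the standard filtration used in the proofs of Lemmas~\ref{Lem:Efun_prop} and~\ref{Lem:Ffun_prop}. With (TPC1)--(TPC3) established, the corollary follows.
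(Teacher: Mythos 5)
Your proposal is correct and follows essentially the same route as the paper: (TPC1)--(TPC2) are observed to be built into the construction, (TPC3) for $F_i$ comes from the fact that $\underline{F}_i$ is induced by $F_i$ (the $j=n$ standard subquotient lands in $\Cat_{\Xi_0}$ and is killed by $\pi$), and (TPC3) for $E_i$ comes from Lemma~\ref{Lem:fun_coinc} (the $j=n$ subquotient vanishes outright since $\nu_n+\alpha_i$ is not a weight of $V_n$). The paper records the proof in three sentences; you have simply unwound those sentences, correctly, into the full verification.
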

\begin{proof}
  Conditions (TPC1-2) are straightforward from the construction.
  Condition (TPC3) for $F_i$ follows directly from the construction of
  $F_i$ as an induced functor. Condition (TPC3) for $E_i$ follows from
   Lemma \ref{Lem:fun_coinc}.
\end{proof}

There is also a  ``dual" splitting that will be used below. While our original splitting is designed to be compatible
with projectives, the dual one is rather compatible with tiltings.

The category which is splitting off is the subcategory $\bar{\Cat}^-\subset \Cat$ spanned by all simples $L(\lambda)$
with $\varrho(\lambda)=(\nu_1,\ldots)$. The set $\{(\nu_1,\ldots)\}$ is a poset ideal and so
$\bar{\Cat}^-\subset \Cat$ is a standardly stratified subcategory. This subcategory is stable with respect
to the functors $\underline{E}_i:=E_i$ but not stable with respect to $F_i$. However one can truncate the functors
$F_i$ getting the endofunctors $\underline{F}_i$ completely analogously to the above (first constructing the equivalence
$\mathcal{F}$ of $\bar{\Cat}^-$ with a suitable subquotient of $\Cat$ and then showing that $\underline{E}_i^!\cong \underline{E}_i^*$
by analogy with Lemma \ref{Lem:trunc_iso}; we remark that we do not need an analog of Lemma \ref{Lem:fun_coinc}).
With these functors, $\bar{\Cat}_-$ becomes a tensor product categorification of
$V_2\otimes\ldots\otimes V_n$.

\excise{
\begin{Rem}
Another way to see this categorification on $\bar{\Cat}^-$ is to
conjugate our primary construction  by
Ringel duality: we have
$\bar{\Cat}^-=\,^\vee\!(\underline{\Cat}^{\vee+})$, where we write $\,^\vee\!\bullet:=[(\bullet^{opp})^{\vee}]^{opp}$.
  The categorification
functors $E_i$ on $\,^\vee\!(\underline{\Cat}^{\vee+})$ coincide with
the restrictions of $E_i$ to $\bar{\Cat}^-$; this can be seen from Lemma \ref{Lem:fun_coinc} (applied to $\Cat^{\vee opp}$).
\end{Rem}
}
\section{Double centralizer property}
\subsection{Statement}
Let $\Cat$ be a tensor product categorification of $V_1\otimes\cdots\otimes V_n$. As before let $\bnu=(\nu_1,\ldots,\nu_n)$ be the sequence
of highest weights of $V_1,\ldots,V_n$,
and we write $|\nu|$ for $\sum_{i=1}^n\nu_i$. We consider the projectives in $\Cat$ that are direct summands of
$F^N \mathbb{V}$ for  $N\in \Z_{\geqslant 0}$, where $\mathbb{V}$ was
defined in (TPC2).
The corresponding quotient functor $\pi_{top}$ kills all simples $L(\lambda)$
such that $|\bnu-\varrho(\lambda)|$ is the sum of $k$ simple roots but $E^k L(\lambda)=0$. The quotient
category categorifies the Cartan irreducible component of $V_1\otimes\cdots\otimes V_n$
(=the only irreducible component with highest weight $|\bnu|$); we
denote it by $\Cat_{top}$.
The category $\Cat_{top}$ has an induced categorical action which
makes $\pi_{top}$ strongly equivariant.
By Proposition \ref{simple-unique},
$\Cat_{top}$ is strongly equivariantly
equivalent to the representations of a cyclotomic quotient of
the KLR algebra $R^{|\bnu|}\mmod$. As shown in the proof of that
theorem, this functor can be identified
with $M\mapsto \Hom(\mathbb{V},E^kM)=\Hom(F^k \mathbb{V},M)$; this has
a canonical action of the KLR algebra which factors through the
cyclotomic quotient $R^{|\nu|}$.

We can canonically identify $\Cat_{top}\operatorname{-proj}\cong \Cat_{top}\operatorname{-inj}$ with the
subcategory of $\Cat$ additively generated by the objects $F^N
\mathbb{V}$.

Here is the main result of this section.

\begin{Thm}\label{Thm:doub_centr}
The functor $\pi_{top}$ is fully faithful on projectives.
\end{Thm}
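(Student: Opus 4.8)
The statement is that $\pi_{top}$ induces an isomorphism $\Hom_{\Cat}(P,P')\xrightarrow{\sim}\Hom_{\Cat_{top}}(\pi_{top}P,\pi_{top}P')$ for all projectives $P,P'$ that are summands of sums of $F^N\mathbb{V}$. Since $\Cat_{top}\operatorname{-proj}$ is identified with the additive subcategory of $\Cat$ generated by the $F^N\mathbb{V}$, and every such projective is a summand of some $F^N\mathbb{V}$, it is enough by additivity to prove the claim when $P=F^N\mathbb{V}$ and $P'=F^{N'}\mathbb{V}$. The plan is to reduce everything to adjunction and the structure of the standardly stratified category. By biadjointness of $E_i$ and $F_i$ (which we have, since tensor product categorifications are integrable), one has
\[
\Hom_{\Cat}(F^N\mathbb{V},F^{N'}\mathbb{V})\cong\Hom_{\Cat_\nu}(\mathbb{V},E^N F^{N'}\mathbb{V}),
\]
and similarly on the quotient side $\Hom_{\Cat_{top}}(\pi_{top}F^N\mathbb{V},\pi_{top}F^{N'}\mathbb{V})\cong\Hom(\mathbb{V},\pi_{top}(E^N F^{N'}\mathbb{V}))$, where we use that $\pi_{top}$ is strongly equivariant so it commutes with $E$ and $F$, and that $\pi_{top}$ is the identity on $\Cat_\nu\cong\operatorname{Vect}_\C$. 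So the theorem is equivalent to the assertion that, for every object of the form $M=E^N F^{N'}\mathbb{V}$, the natural map $\Hom_{\Cat_\nu}(\mathbb V, M)\to\Hom(\mathbb V,\pi_{top}M)$ is an isomorphism; equivalently, that $M$ has no subobject or quotient supported on $\ker\pi_{top}$ interfering — more precisely, that the unit $M\to\pi_{top}^!\pi_{top}M$ induces an iso on $\Hom(\mathbb V,-)$.

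\textbf{Key steps.} First I would observe that $F^{N'}\mathbb V$ is $\Delta$-filtered: $\mathbb V=\Delta_\nu(\mathbb V)$ is standard, and by (TPC3) applying $F_i$ to a standard object produces a $\Delta$-filtered object, so by induction $F^{N'}\mathbb V\in\Cat^{\Delta}$, hence (since $\Cat\operatorname{-proj}\subset\Cat^\Delta$ anyway) it is projective \emph{and} $\Delta$-filtered, so in fact tilting-adjacent. Next, $E_i$ also sends $\Delta$-filtered objects to $\Delta$-filtered objects (by (TPC3) for $E$, or by the argument in Proposition \ref{Lem:opp_catn}), so $M=E^N F^{N'}\mathbb V$ is $\Delta$-filtered. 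Now the heart of the matter: I claim $\ker\pi_{top}$ is exactly the Serre subcategory generated by the $L(\lambda)$ with $\varrho(\lambda)=\bmu$ and $E^{k}L(\lambda)=0$ where $|\bnu-\bmu|=k\alpha$'s — call such $\lambda$ ``singular''. Because $\Cat_{top}$ categorifies the Cartan component, a standard object $\Delta(\lambda)$ projects to zero under $\pi_{top}$ precisely when $\lambda$ is singular, and to a standard (=costandard, since $\Cat_{top}$ is highest weight with trivial $\Cat_\xi$'s... actually $\Cat_{top}$ is a genuine highest weight category, being the KLR categorification of a simple) object otherwise. Hence for a $\Delta$-filtered object $M$, $\ker$ and $\mathrm{coker}$ of $M\to\pi_{top}^!\pi_{top}M$ are filtered by standard objects $\Delta(\lambda)$ with $\lambda$ singular. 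The final step is to show $\Hom(\mathbb V,\Delta(\lambda))=0$ for singular $\lambda$, and $\Ext^1(\mathbb V, \Delta(\lambda))=0$ as well so that the $\Hom(\mathbb V,-)$ stays exact along the filtration — but $\Hom_{\Cat}(\mathbb V,\Delta(\lambda))=\Hom_{\Cat}(\Delta_\nu(\mathbb V),\Delta(\lambda))$, which by \eqref{eq:Ext_vanish}–\eqref{eq:Ext_preserv} vanishes unless $\varrho(\lambda)\leqslant\nu$, i.e. unless $\lambda$ labels the simple in $\Cat_\nu$; and that simple $L(\nu)$ is \emph{not} singular. So $\Hom(\mathbb V, \Delta(\lambda))=0=\Ext^i(\mathbb V,\Delta(\lambda))$ for all singular $\lambda$ and all $i\ge 0$, which gives $\Hom_{\Cat_\nu}(\mathbb V, M)\xrightarrow{\sim}\Hom(\mathbb V,\pi_{top}M)$ and completes the proof.

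\textbf{Main obstacle.} The delicate point is the precise identification of $\ker\pi_{top}$ and the claim that, for a $\Delta$-filtered $M$, both the kernel and cokernel of $M\to\pi_{top}^!\pi_{top}M$ are $\Delta$-filtered \emph{with all subquotients singular standards}. This requires knowing that $\pi_{top}^!$ is exact on $\Cat_{top}^{\oDelta}$ (which holds by the general discussion of quotients by Serre subcategories spanned by poset ideals — here the ``ideal'' is the set of singular $\bmu$'s, which one must check is genuinely a coideal/ideal so that $\Cat_{top}=\Cat/\Cat_{\Xi_0}$ in the earlier sense) and that $\pi_{top}^!\Delta_{top}(\lambda)=\Delta(\lambda)$ for nonsingular $\lambda$. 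Alternatively — and this may be cleaner — one can avoid pinning down $\ker\pi_{top}$ by arguing directly: $\pi_{top}$ is fully faithful on a projective $P$ iff $\Hom(P,-)$ and $\Hom(-,P)$ cannot detect the kernel, and since $P$ is projective-injective-tilting one only needs $\Ext^1_{\Cat}(\Delta(\lambda),P)=0$ for singular $\lambda$, which follows from $P\in\Cat^{\onabla}$ (it is tilting) together with Lemma \ref{Lem:co-st-ext}(2). I would check whether the tilting route lets one cite \cite[5.4]{Rou2KM}-style fully-faithfulness directly; if not, the filtration argument above is robust, the only real work being the bookkeeping of which labels die in $\Cat_{top}$.
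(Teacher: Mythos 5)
You have misread what the theorem asserts. ``Fully faithful on projectives'' means on \emph{all} projectives of $\Cat$, not merely on the surviving ones (the summands of $F^N\mathbb V$). For the latter the statement is essentially tautological — indeed the paper's own proof says $\Hom_{\Cat}(M,P_i)\cong\Hom_{\Cat_{top}}(\pi_{top}M,\pi_{top}P_i)$ holds ``essentially by definition'' when $P_i\in\Cat_{top}\operatorname{-proj}$. The point of the double centralizer property, in the spirit of Soergel's Struktursatz, is that it holds for the \emph{other} projectives as well. And there are others: the classes $[F^N\mathbb V]$ only span $U(\mathfrak n^-)v_{|\bnu|}$, the Cartan component of $V_1\otimes\cdots\otimes V_n$, so as soon as $n>1$ the indecomposable projectives of $\Cat$ are not all summands of $F^N\mathbb V$. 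Your opening reduction — ``it is enough by additivity to prove the claim when $P=F^N\mathbb V$ and $P'=F^{N'}\mathbb V$'' — therefore collapses the theorem to the trivial case and misses its entire content.

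The paper proves the real statement via a Struktursatz argument: Proposition \ref{Prop:stand_soc} (the socle of any object of $\Cat^{\Delta}$ survives under $\pi_{top}$) feeds into Lemma \ref{lem:standard-injective}, which produces for any $M'\in\Cat\operatorname{-proj}$ a copresentation $0\to M'\to P_1\to P_2$ with $P_1,P_2\in\Cat_{top}\operatorname{-proj}$ and $\Delta$-filtered cokernels; applying $\pi_{top}$ and the five lemma finishes. Your proposal contains none of this input. (As a secondary remark: your plan to argue via $\pi^!_{top}$ and a filtration by ``singular'' standards also runs into trouble, as you yourself flagged. Whether $L(\bla)$ is killed by $\pi_{top}$ depends on $\bla$ and not only on $\varrho(\bla)$, so $\ker\pi_{top}$ is not of the form $\Cat_{\Xi_0}$ for a poset ideal $\Xi_0\subset\Xi$, and the standardly-stratified quotient formalism — exactness of $\pi^!$ on $\Cat^{\oDelta}$, $\pi^!\Delta_{0}(\lambda)=\Delta(\lambda)$ — does not apply. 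In the degenerate case you restricted to, this machinery isn't even needed: $\mathbb V$ is simple \emph{and} projective in $\Cat$ since $\nu$ is the maximal weight, so $\Hom_\Cat(\mathbb V,N)\to\Hom_{\Cat_{top}}(\pi_{top}\mathbb V,\pi_{top}N)$ is automatically an isomorphism for every $N$. But that only recovers the trivial part of the theorem.)
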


\excise{Our proof follows the approach of \cite{GGOR} to the KZ functor for rational Cherednik algebras, compare
with \cite{Lotowards}.}
Our approach closely follows  Soergel's proof of the Struktursatz \cite{Soe90}.
An essential prerequisite for this proof is to check that every simple appearing in the socle of an object from $\Cat^{\oDelta}$ survives under $\pi_{top}$.

\subsection{Socles of standard objects}
\begin{Prop}\label{Prop:stand_soc}
Pick $\bmu$ such that $|\bnu-\bmu|$ is the sum of $k$ simple roots. If a simple $L$ appears in the socle
of $\Delta_\bmu(N)$ with $N\in \Cat_\bmu$, then $E^k L\neq 0$.
\end{Prop}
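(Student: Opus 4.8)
The plan is to argue by a double induction, on the number $n$ of tensor factors and on $k$ (recall $|\bnu-\bmu|$ is a sum of $k$ simple roots), reducing each case either to a tensor product with fewer factors via the splitting construction, or to a smaller value of $k$ by pushing a socle constituent through the categorification functors.

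First I would reduce to the case where $N=L_\bmu(\kappa)$ is simple: choosing a composition series of $N$ and using exactness of $\Delta_\bmu$, the object $\Delta_\bmu(N)$ is filtered by proper standard objects $\oDelta(\kappa_s)$ with $\varrho(\kappa_s)=\bmu$, and since $L$ is simple it must embed into one of these subquotients; so it suffices to treat $L\subseteq\operatorname{soc}(\oDelta(\kappa))$ with $\varrho(\kappa)=\bmu$. The base case $k=0$ is immediate ($\bmu=\bnu$, $\Cat_\bnu\cong\operatorname{Vect}_\C$, $\oDelta(\kappa)=\mathbb{V}$, $E^0=\operatorname{id}$). Moreover, if $\mu_1=\nu_1$ the whole statement follows from the inductive hypothesis on $n$: every composition factor of $\oDelta(\kappa)$ is an $L(\lambda)$ with $\varrho(\lambda)\leqslant\bmu$, hence with first coordinate $\nu_1$, so $\oDelta(\kappa)$ lies in the Serre subcategory $\bar\Cat^-$; this subcategory is a tensor product categorification of $V_2\otimes\cdots\otimes V_n$, and its standard objects, functors $E_i$, socles and associated integer $k$ all coincide with those computed in $\Cat$. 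The base $n=1$ of this induction is vacuous: there $\Cat=\Cat^1$ is the categorification of a single irreducible, $\pi_{top}$ is the identity (the crystal $B(\nu_1)$ is connected), so $E^kL\neq 0$ for every simple.

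So assume $\mu_1<\nu_1$ and $k\geqslant 1$. Here two ingredients are needed: (i) an index $i$ with $E_iL\neq 0$, and (ii) a propagation step. Granting (i), fix such an $i$. Since $E_i$ is exact, $E_iL\hookrightarrow E_i\oDelta(\kappa)=E_i\Delta_\bmu(L_\bmu(\kappa))$, and by (TPC3) the latter carries a filtration with subquotients $\Delta_{\bmu+\alpha_i^j}({}_jE_iL_\bmu(\kappa))$, $j=1,\dots,n$, each a standard object $\Delta_{\bmu'}(N')$ with $|\bnu-\bmu'|$ a sum of $k-1$ simple roots. A simple summand $L'$ of $\operatorname{soc}(E_iL)$ embeds into $E_i\oDelta(\kappa)$, hence — by minimality of the filtration step at which it first appears — into the socle of one such $\Delta_{\bmu'}(N')$; the inductive hypothesis in $k$ then gives $E^{k-1}L'\neq 0$. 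Since $L'\hookrightarrow E_iL$ and $E^{k-1}$ is exact, $E^{k-1}E_iL\neq 0$, and as $E^{k-1}E_i$ is a direct summand of $E^k$ we conclude $E^kL\neq 0$.

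The genuine difficulty is step (i): that a simple in the socle of a proper standard $\oDelta(\kappa)$ with $\varrho(\kappa)=\bmu\neq\bnu$ cannot be annihilated by all the $E_i$ at once. From (TPC3) one knows $E\oDelta(\kappa)$ is a nonzero $\Delta$-filtered object — nonzero because $\mu_1<\nu_1$ forces ${}_1E_iL_\bmu(\kappa)\neq 0$ for some $i$ — so the point is to see that this nonvanishing is detected by the socle of $\oDelta(\kappa)$ and not only by a quotient. My approach would be to produce, for a suitable $i$, an embedding $\oDelta(\kappa)\hookrightarrow F_iY$ with $Y=E_i\oDelta(\kappa)$ coming from the adjunction unit $\oDelta(\kappa)\to F_iE_i\oDelta(\kappa)$; then $L\subseteq\operatorname{soc}(\oDelta(\kappa))\subseteq\operatorname{soc}(F_iY)\subseteq F_iY$, and biadjointness of $E_i,F_i$ yields $\Hom(E_iL,Y)=\Hom(L,F_iY)\neq 0$, so $E_iL\neq 0$. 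Establishing such an embedding — equivalently, that $\oDelta(\kappa)$ has no nonzero subobject killed by the chosen $E_i$ — is where one must use the explicit standard filtration of $E_i\oDelta(\kappa)$ given by (TPC3), together with its uniqueness, and possibly make a further appeal to the splitting; this is the step I expect to be the main obstacle.
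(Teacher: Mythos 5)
Your reduction to $N$ simple, the base cases ($k=0$, $n=1$), and the reduction of the case $\mu_1=\nu_1$ to $\bar{\Cat}^-\cong$ a tensor product categorification with $n-1$ factors are all correct and match the paper's framework (the paper organizes the remaining induction by the number of simple roots in $\nu_1-\mu_1$ rather than by $k$, but that is a minor bookkeeping difference). Your step (ii) — once $E_iL\neq 0$, embed $E_iL$ into $E_i\oDelta(\kappa)$ and use the standard filtration from (TPC3) to land a simple factor of $E_iL$ in the socle of some $\Delta_{\bmu'}(N')$ with $|\bnu-\bmu'|$ of length $k-1$ — is also sound.

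The genuine gap is step (i), exactly as you flag, and the route you sketch to close it is circular. Proving that the adjunction unit $\oDelta(\kappa)\to F_iE_i\oDelta(\kappa)$ is injective is equivalent to proving that no nonzero subobject of $\oDelta(\kappa)$ is annihilated by $E_i$; for $L$ simple in the socle that is precisely the assertion $E_iL\neq 0$ you are trying to establish. No appeal to (TPC3) or to the uniqueness of the standard filtration of $E_i\oDelta(\kappa)$ will break this circularity, because all of that information concerns $E_i\oDelta(\kappa)$ as a quotient target and says nothing about the kernel of the unit.

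The paper avoids step (i) altogether by choosing $i$ from the \emph{crystal of the first factor} rather than from hypothetical properties of $L$. Since $N\cong N_1\boxtimes\cdots\boxtimes N_n$ is simple and $\mu_1<\nu_1$, there is an $i$ and a simple $N_1'\in\Cat^1_{\mu_1+\alpha_i}$ such that $N_1$ lies in the socle of $F_iN_1'$ (this is a fact about the cyclotomic KLR categorification $\Cat^1$ of the irreducible $V_1$). Setting $N'=N_1'\boxtimes N_2\boxtimes\cdots\boxtimes N_n$, (TPC3) gives $F_i\Delta_{\bmu+\alpha_i^1}(N')$ a standard filtration with layers $\Delta_{\bmu+\alpha_i^1-\alpha_i^j}({}_jF_iN')$, and because $\bmu$ is the largest of these labels in the inverse dominance order, the layer $\Delta_\bmu({}_1F_iN')$ is the \emph{bottom} subobject. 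Combining with $N\hookrightarrow {}_1F_iN'$ one gets a genuine embedding $\Delta_\bmu(N)\hookrightarrow F_i\Delta_{\bmu+\alpha_i^1}(N')$, and then biadjointness gives $0\neq\Hom(L,\Delta_\bmu(N))\hookrightarrow\Hom(E_iL,\Delta_{\bmu+\alpha_i^1}(N'))$; the image of a nonzero such map supplies a simple composition factor $L'$ of $E_iL$ in the socle of $\Delta_{\bmu+\alpha_i^1}(N')$, to which the inductive hypothesis applies. In short: instead of applying $E_i$ to $\oDelta(\kappa)$ and hoping the socle survives, the paper realizes $\Delta_\bmu(N)$ as a subobject of an object of the form $F_iZ$ with $Z$ one step higher in the first factor. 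That is the idea your proposal is missing, and without it step (i) does not close.
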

\begin{proof}
The claim is vacuous when $n=1$. So in the proof we can assume that the claim is proved for all tensor product
categorifications of products with $n-1$ factors, in particular, for
$
{\bar{\Cat}}^-\subset \Cat$.
Since the subcategory $
{\bar{\Cat}}^-$ is closed under $E$,
this establishes our claim when $\mu_1=\nu_1$. From
now on we may assume that $\mu_1<\nu_1$.

Now we prove our claim by induction on  the number $\ell$ of simple roots in the decomposition of $\nu_1-\mu_1$.
We may assume that $N$ is simple, $N=N_1\boxtimes\cdots\boxtimes N_n$.
Since $\mu_1<\nu_1$, we see that $N_1$ occurs in the socle (equivalently, top) of $F_i N_1'$ for some $i$
and some simple $N_1'\in \Cat^1_{\mu_1+\alpha_i}$. Set $N'=N_1'\boxtimes N_2\boxtimes\ldots\boxtimes N$
and consider the object $F_i\Delta_{\bmu+\alpha_i^1}(N')$. According to condition (TPC3), this object has a filtration
with subobject $\Delta_\bmu({}_1F_i N')$.  This induces an injection
$\Delta_\bmu(N)\hookrightarrow F_i\Delta_{\bmu+\alpha_i^1}(N')$, and
thus an injection \[\Hom(L,\Delta_\bmu(N))\hookrightarrow \Hom(L, F_i\Delta_{\bmu+\alpha_i^1}(N'))=
\Hom(E_i L, \Delta_{\bmu+\alpha_i^1}(N')).\] Thus, if the former space
is nonzero, the latter is as well. Any non-zero map $E_iL\to
\Delta_{\bmu+\alpha_i^1}(N')$ induces an injection of a simple composition
factor $L'$ of $E_i L$ into the socle of
$\Delta_{\bmu+\alpha_i^1}(N')$.  By the inductive assumption,
$E^{k-1}L'\neq 0$ and hence $E^k L\neq 0$ by the exactness of $E$.
\end{proof}

\begin{Lem}\label{lem:standard-injective}
  For any $M\in \Cat^{\Delta}$, we have an injection
  $M\hookrightarrow P$ with $P\in \Cat_{top}\operatorname{-proj}$. Moreover, if $M\in \Cat\operatorname{-proj}$,
  then we can choose this map so that
  $P/M\in \Cat^{\Delta}$.
\end{Lem}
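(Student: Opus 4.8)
The plan is to reduce everything to the socle statement of Proposition~\ref{Prop:stand_soc} and to an explicit embedding of the surviving simples into the objects $F^N\mathbb{V}$, which is then propagated along a $\Delta$-filtration.

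\smallskip

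First I would upgrade Proposition~\ref{Prop:stand_soc} to the assertion flagged just above the lemma: for $M\in\Cat^{\oDelta}$ (in particular $M\in\Cat^{\Delta}$), no composition factor of $\operatorname{soc}(M)$ is killed by $\pi_{top}$. Indeed, given a simple $L\hookrightarrow M$, choose the step $M_{j-1}\subsetneq M_j$ of a proper-standard filtration of $M$ which is minimal with $L\subseteq M_j$; then $L\hookrightarrow M_j/M_{j-1}=\oDelta_{\bmu}(N)$. All composition factors of $\oDelta_{\bmu}(N)$ lie in a single $\fg$-weight space, so the integer $k$ attached to $L$ in the definition of $\pi_{top}$ coincides with the one in Proposition~\ref{Prop:stand_soc}; hence $E^{k}L\neq 0$, i.e. $L$ survives. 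Next, for a surviving simple $L(\lambda)$ the object $E^{k}L(\lambda)$ is a nonzero object of $\Cat_{\nu}\cong\operatorname{Vect}_\C$, so $E^{k}L(\lambda)\cong\mathbb{V}^{\oplus d}$ with $d\ge 1$; using the biadjointness of $E^{k}$ and $F^{k}$ (valid for any tensor product categorification) we get $\Hom(L(\lambda),F^{k}\mathbb{V})\cong\Hom(E^{k}L(\lambda),\mathbb{V})\neq 0$, so $L(\lambda)$ embeds into $F^{k}\mathbb{V}$.

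\smallskip

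I would then prove the lemma by induction on the length of a $\Delta$-filtration of $M$. Note first that $F^N\mathbb{V}$ is tilting: $\mathbb{V}=\Delta(\lambda_{\bnu})=\onabla(\lambda_{\bnu})$ is tilting because $\bnu$ is the minimal element of $\Xi$, and $E_i,F_i$ preserve both $\Cat^{\Delta}$ and $\Cat^{\onabla}$ by (TPC3) and the proof of Proposition~\ref{Lem:opp_catn}; hence every summand of $F^N\mathbb{V}$ is tilting, so $\Ext^1(M',F^N\mathbb{V})=0$ for $M'\in\Cat^{\Delta}$ by \eqref{eq:Ext_vanish}. Now if $0\to\Delta(\mu_1)\to M\to M'\to 0$ is the bottom step of a $\Delta$-filtration and $\Delta(\mu_1)\hookrightarrow P_1$ with $P_1\in\Cat_{top}\operatorname{-proj}$ is the base case (below), then pushing out gives $0\to P_1\to\widetilde M\to M'\to 0$, which splits since $\Ext^1(M',P_1)=0$; thus $M\hookrightarrow P_1\oplus M'\hookrightarrow P_1\oplus P'$ using the inductive embedding $M'\hookrightarrow P'$, and when $M$ is projective the same pushout keeps the cokernel $\Delta$-filtered provided the base case does, so the whole statement reduces to $M=\Delta_{\bmu}(N)$ a standard object. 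For this base case I would run the injection from inside the proof of Proposition~\ref{Prop:stand_soc}: if $\mu_1<\nu_1$ there are $i$, a simple $N_1'$, and, with $N'=N_1'\boxtimes N_2\boxtimes\cdots$, injections $\Delta_{\bmu}(N)\hookrightarrow\Delta_{\bmu}({}_1F_iN')\hookrightarrow F_i\Delta_{\bmu+\alpha_i^1}(N')$ whose cokernel is filtered (via (TPC3)) by the $\Delta({}_jF_iN')$ with $j\ge 2$ together with $\Delta_{\bmu}({}_1F_iN'/N)$; iterating raises $\mu_1$ to $\nu_1$, i.e. lands $\Delta_{\bmu}(N)$ inside $F_{i_1}\cdots F_{i_a}$ applied to a standard object of the ``first-factor-maximal'' splitting $\bar{\Cat}^{-}$, a tensor product categorification of $V_2\otimes\cdots\otimes V_n$ with the same object $\mathbb{V}$; by induction on the number $n$ of tensor factors (the case $n=1$ being trivial, $\pi_{top}=\operatorname{id}$) that standard object of $\bar{\Cat}^{-}$ embeds into $F^N\mathbb{V}$, and applying the $F_{i_\ell}$'s keeps us inside $\Cat_{top}\operatorname{-proj}$.

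\smallskip

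The main obstacle is precisely the bookkeeping in this base case. One must check that the iterated/inductive construction lands in $\operatorname{add}(F^N\mathbb{V})$ rather than in a larger tilting subcategory --- this is exactly where socle-survival is indispensable, as it forces the relevant indecomposable tiltings to be the summands of $F^N\mathbb{V}$ (equivalently, to be the $T(\mu)=P(\mu)$ with $\mu$ surviving), and also pins down how the objects $\underline F^N\mathbb{V}$ of $\bar{\Cat}^-$ sit inside $\Cat$ via the tilting-compatibility of that splitting (the Remark after the preceding Corollary). One must also verify that, when $M$ (hence $N$) is projective, every cokernel appearing is genuinely $\Delta$-filtered: the layers $\Delta_{\bmu}({}_1F_iN'/N)$ are a priori only $\oDelta$-filtered, so one must arrange $N\hookrightarrow{}_1F_iN'$ with $\Delta$-filtered cokernel by a judicious choice of $N'$, and check that the pushout and induction steps preserve this. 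Once this is in place the statement follows; for $\mu$ surviving the embedding $\Delta(\mu)\hookrightarrow T(\mu)=P(\mu)\in\Cat_{top}\operatorname{-proj}$ with $\Delta$-filtered cokernel is the tautological instance.
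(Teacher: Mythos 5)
Your treatment of the first assertion is sound and matches the paper's: by Proposition~\ref{Prop:stand_soc} every simple in $\operatorname{soc}(M)$ survives under $\pi_{top}$, and a surviving simple $L(\lambda)$ has $E^kL(\lambda)\neq 0$ in $\Cat_\nu\cong\operatorname{Vect}_\C$, whence by biadjunction $\Hom(L(\lambda),F^k\mathbb{V})\neq 0$, so $I(\lambda)$ is a summand of the injective object $F^k\mathbb{V}$ and $I(M)\in\Cat_{top}\operatorname{-inj}=\Cat_{top}\operatorname{-proj}$.

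For the ``Moreover'' part your route diverges from the paper's and runs into two genuine gaps. First, the pushout induction on the length of a $\Delta$-filtration needs, at the stage where you embed $M'=M/\Delta(\mu_1)$ into $P'$ with $\Delta$-filtered cokernel, that the ``Moreover'' conclusion holds for the \emph{non-projective} $\Delta$-filtered object $M'$. But the lemma only asserts this for $M\in\Cat\operatorname{-proj}$, and the paper's proof of it genuinely uses projectivity (via the universal property when lifting along the surjection $FE\onabla(\bla)\twoheadrightarrow\onabla(\bla)$); you would need to prove the stronger statement for all of $\Cat^\Delta$, and it is not clear this is true. Second, in your base case you import the reduction ``we may assume $N$ is simple'' from the proof of Proposition~\ref{Prop:stand_soc}; but $\Delta_\bmu(N)$ with $N$ simple is the \emph{proper} standard $\oDelta$, not the standard $\Delta(\lambda)=\Delta_\xi(P_\xi(\lambda))$, and proper standards are not objects of $\Cat^\Delta$ in general, so they cannot serve as the base case for a $\Delta$-filtration induction. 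The further descent through $\bar{\Cat}^-$ and a secondary induction on the number of tensor factors compounds the bookkeeping without resolving these issues.

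The paper's proof avoids all of this by inducting on the number $k$ of simple roots in $|\bnu|-\varrho(\lambda)$: the base $k=0$ is trivial; given $M\in\Cat\operatorname{-proj}$, the object $EM$ is again projective of depth $k-1$, so by induction $EM\hookrightarrow P'$ with $\Delta$-filtered cokernel, hence $FEM\hookrightarrow FP'$ with $\Delta$-filtered cokernel $F(P'/EM)$ by (TPC3). The whole argument then reduces to a single local lemma: the unit $M\to FEM$ is an embedding (because $E\eta_M\colon EM\to EFEM$ is a split mono and no simple in $\operatorname{soc}(M)$ is killed by $E$), and $FEM/M\in\Cat^\Delta$ (because $\Ext^1(FEM/M,\onabla(\bla))=0$ reduces, via biadjunction, to surjectivity of $\Hom(M,FE\onabla(\bla))\to\Hom(M,\onabla(\bla))$, which holds since $M$ is projective). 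This is where projectivity enters, and it is exactly what your reduction to arbitrary $\Delta$-filtered objects cannot supply.
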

\begin{proof}
By Proposition \ref{Prop:stand_soc}, the injective hull of $M\in \Cat^{\Delta}$ lies in $\Cat_{top}\operatorname{-inj}=
\Cat_{top}\operatorname{-proj}$, which shows that the desired
injection exists.
For the remainder of the proof, we assume that $M\in \Cat\operatorname{-proj}$.

We prove this by induction on $k$, where $k$ has the same meaning as in Proposition
\ref{Prop:stand_soc}. If $k=0$, then this is trivial.
Now, fix $k$, and assume the statement holds for $k-1$.  Then for any
$M\in \Cat\operatorname{-proj}$, we have that $EM\in \Cat\operatorname{-proj}$, so by
induction, we have a map $EM\hookrightarrow P'$ for $P'\in
\Cat_{top}\operatorname{-proj}$ and $P'/EM\in \Cat^{\Delta}$. This gives rise
to an embedding $FEM\hookrightarrow FP'$ and we have $FP'/FEM=F(P'/EM)\in \Cat^{\Delta}$
by (TPC3). So it remains to show that there is an embedding $M\hookrightarrow FEM$
with $FEM/M\in \Cat^\Delta$. We take the morphism $M\rightarrow FEM$ obtained from the
identity morphism $EM\rightarrow EM$ by adjunction. The induced morphism $EM\rightarrow EFEM$
is an embedding. Since there is no simple in the socle of $M$ killed by $E$, we deduce that
the morphism $M\rightarrow FEM$ is an embedding.
\excise{ Thus,
we have an induced map $M\to FP'$. If this map is not injective then
the induced map $EM\to EFP$ is not injective; however
$EM\hookrightarrow P'$ and $P'\to EFP'$ are both injective, so this is
impossible.  Thus, this map is injective, and we need only check that
$FP'/M\in \Cat^{\oDelta}$.  We have an injective map $FEM/M\to FP'/M$ and so
it suffices to check that $FP'/FEM \cong F(P'/EM)\in \Cat^{\Delta}$
and $EFM/M \in \Cat^{\Delta}$.  In the first case, this follows by
(TPC3).}

By Lemma \ref{Lem:cost_filt}(2), it remains to check that $\Ext^1(FEM/M ,\onabla(\bla))=0$ for all
$\bla$.  From the usual long exact sequence, this is equivalent to the
surjectivity of the induced map \[\Hom(FEM,\onabla(\bla))\to\Hom(M,
\onabla(\bla))\] for the unit $M\to FEM$ of the adjunction $(E,F)$;  by the
biadjunction of $F$ and $E$, this is in turn equivalent to the
surjectivity of the map \[\Hom(M,FE\onabla(\bla))\to\Hom(M,
\onabla(\bla))\] induced by the surjective {\it co}unit map
$FE\onabla(\bla)\to \onabla(\bla)$ for the adjunction $(F,E)$.  Of
course, this is just the universal property of projectives.
\end{proof}

\begin{proof}[Proof of Theorem \ref{Thm:doub_centr}]
It is enough to check that $\Hom_{\Cat}(M,M')\xrightarrow{\sim}\Hom_{\Cat^{top}}(\pi_{top}M,\pi_{top}M')$
when $M'\in \Cat \operatorname{-proj}$. By Lemma \ref{lem:standard-injective},
we have a short exact sequence \[0\to M'\to P_1\to P_2\] for
$P_1,P_2\in \Cat_{top}\operatorname{-proj}$.  We thus have a short
exact sequence \[0\to \pi_{top}(M')\to \pi_{top}(P_1)\to
\pi_{top}(P_2),\] with the latter two terms injective modules.
Essentially by definition, we have that $\Hom_{\Cat}(M,P_i)\cong
\Hom_{\Cat^{top}}(\pi_{top}M,\pi_{top}P_i)$ for any module $M$.
Thus, we have the diagram with exact rows
\[
\tikz[->,very thick]{
\matrix[row sep=15mm,column sep=9mm,ampersand replacement=\&]{
\node (a) {$0$}; \& \node (c) {$\Hom_{\Cat}(M,M')$}; \& \node (e)
{$\Hom_{\Cat}(M,P_1)$};\& \node (g) {$\Hom_{\Cat}(M,P_2)$};\\
\node (b) {$0$}; \& \node (d) {$\Hom_{\Cat^{top}}(\pi_{top}M,\pi_{top}M')$};\& \node (f) {$\Hom_{\Cat^{top}}(\pi_{top}M,\pi_{top}P_1)$};\& \node (h) {$\Hom_{\Cat^{top}}(\pi_{top}M,\pi_{top}P_2)$};\\
};
\draw (c) -- (e);
\draw (e) -- (g);
\draw (a) -- (c);
\draw (b) --(d);
\draw (d) --(f);
\draw (f) --(h);
\draw (c)--(d) node[right,midway]{$(*)$};
\draw (e)--(f) node[right,midway]{$\sim$};
\draw (g)--(h) node[right,midway]{$\sim$};
}
\]
By the 5-lemma, the map marked $(*)$ is an isomorphism.
\end{proof}
\excise{
\subsection{Proof of the double centralizer property}\hfill
\begin{Lem}\label{Lem:tilting-double}
The functor $\pi_{top}$ is fully faithful on tilting modules.
\end{Lem}
\begin{proof}
It is enough to check that $\Hom_{\Cat}(M,M')\xrightarrow{\sim}\Hom_{\Cat^{top}}(\pi_{top}M,\pi_{top}M')$
when $M'\in \Cat^{\oDelta}, M\in \Cat^{\onabla}$.  The injectivity of
this map follows from Proposition \ref{Prop:stand_soc}, since the
image of any map contains a simple constituent  $L\subset M'$ which is
not killed by $\pi_{top}$.

Similarly, applying Proposition \ref{Prop:stand_soc} applied to
  $\Cat^{opp}$ shows that if $L'$ is a
  simple with $M\twoheadrightarrow L'$, then $\pi_{top}(L')\neq
  0$.  In particular, we have a projective $P$ mapping surjectively to
  $M$ which is in the additive subcategory generated by
  $F^k\mathbb{V}$; this projective itself thus lies in the subcategory
  $\Cat^{\onabla}$, so the kernel $K$ of the map $P\twoheadrightarrow
  M$ is also is in $\Cat^{\onabla}$.  Thus, we have a surjection
  $P'\twoheadrightarrow K$ from another projective in the subcategory
  $\Cat^{\onabla}$.  Note that essentially by definition, we have
\[

 Since $\pi_{top}\left(\pi_{top}^*\pi_{top}M/M\right)=0$ it follows that
  $\Hom_{\Cat}(M,M')\xrightarrow{\sim} \Hom_{\Cat^{top}}(\pi_{top}M,
  \pi_{top}M')$. In particular, $\pi_{top}$ is fully faithful on (co)tiltings.
\end{proof}

\begin{proof}[Proof of Theorem \ref{Thm:doub_centr}]
We claim that
\begin{itemize}
\item[(*)] if $\pi_{top}$ is fully faithful on tiltings then the analogous
  functor $\pi^\vee_{top}:\Cat^\vee\rightarrow \Cat^\vee_{top}$ is
  fully faithful on projectives.
\end{itemize}

Since the set of highest weight categorifications is closed under
$\Cat\mapsto \Cat^\vee$ and $\,^\vee\!(\Cat^\vee)=\Cat$, this claim
together with Lemma \ref{Lem:tilting-double} establishes the result.

The
  proof of $(*)$ closely follows
  the analogous proof in \cite{Lotowards}. We provide the proof here
  for the reader's convenience.
  As always, we have a derived equivalence $\varsigma:
  D^b(\Cat)\xrightarrow{\sim}D^b(\Cat^\vee)$ given by
  $\operatorname{RHom}(T,\bullet)$.  From the construction, this
  equivalence intertwines the categorification functors. We have the
  weight decompositions $D^b(\Cat)=\bigoplus D^b(\Cat)_\upsilon,
  D^b(\Cat^\vee)=\bigoplus D^b(\Cat^\vee)_\upsilon$ preserved by
  $\varsigma$.  Inside $D^b(\Cat)_{\upsilon}$ consider the subcategory
  $D^b(\Cat)_{\upsilon}^{tor}$ consisting of all complexes annihilated
  by $E^k$, where $k$ is the number of simple roots in the
  decomposition $|\bnu|-\upsilon$.  Then set
  $D^b(\Cat)^{tor}=\bigoplus_\upsilon
  D^b(\Cat)_\upsilon^{tor}$. Define $D^b(\Cat^\vee)^{tor}$ similarly.
  The subcategories $D^b(\Cat)^{tor},D^b(\Cat^\vee)^{tor}$ are thick
  and so the quotients $D^b(\Cat)/D^b(\Cat)^{tor},
  D^b(\Cat^\vee)/D^b(\Cat^\vee)^{tor}$ have natural triangulated
  structure. Moreover, these quotients are naturally identified with
  $D^b(\Cat/\Cat^{tor})=D^b(\Cat_{top})$ and
  $D^b(\Cat^\vee/\Cat^{\vee,tor})=D^b(\Cat^\vee_{top})$. The
  equivalence $\varsigma$ and its inverse preserve the subcategories
  $D^b(\Cat)^{tor}, D^b(\Cat^\vee)^{tor}$ and so induce an equivalence
  $\varsigma_{top}:D^b(\Cat_{top})\xrightarrow{\sim}D^b(\Cat^\vee_{tor})$. By
  the construction, $\pi^\vee_{top}\circ\varsigma\cong
  \varsigma_{top}\circ \pi_{top}$. So, for indecomposable tiltings
  $T_1,T_2$ in $\Cat$, we have
  \begin{multline*}\Hom_{\Cat^\vee}(\varsigma T_1, \varsigma T_2)\cong
    \Hom_{\Cat}(T_1,T_2)\cong
    \Hom_{\Cat^{top}}(\pi_{top}T_1,\pi_{top}T_2) \cong \\
    \Hom_{D^b(\Cat^{\vee}_{top})}(\varsigma_{top}
    \circ\pi_{top}T_1,\varsigma_{top}\circ \pi_{top}T_2)\cong
    \Hom_{\Cat^\vee_{top}}(\pi^\vee_{top}\circ\varsigma T_1,
    \pi^{\vee}_{top}\circ \varsigma T_2).\end{multline*} These are
  finite dimensional vector spaces. Since we already know that the
  natural homomorphism $\Hom_{\Cat^\vee}(\varsigma T_1, \varsigma
  T_2)\rightarrow \Hom_{\Cat^\vee_{top}}(\pi^\vee_{top}\circ\varsigma
  T_1, \pi^{\vee}_{top}\circ \varsigma T_2)$ is injective, we see that
  it is an isomorphism. Any indecomposable projective in $\Cat^\vee$
  has the form $\varsigma T_1$ and so the claim $(*)$ is proved.
\end{proof}
}


\section{Proof of the uniqueness theorem}
\subsection{Main result}

We intend to give a classification of all tensor product
categorifications.

\begin{Thm}\label{main2}
Let $\Cat$ be a tensor product categorification of
$V_1\otimes\dots\otimes V_n$. Then we have a strongly equivariant
equivalence $\Cat\cong \Cat(\bnu)$ of standardly stratified categories
that preserves the labels of simples.
\end{Thm}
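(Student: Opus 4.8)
The natural strategy is induction on the number $n$ of tensor factors. For $n=1$ a tensor product categorification of $V_1$ is exactly a categorification of the irreducible $V_1$ as in Proposition~\ref{simple-unique}, so it is strongly equivariantly equivalent to $R^{\nu_1}\mmod=\Cat(\nu_1)$; and since the bijection between the simples and the crystal $B(\nu_1)$ is determined intrinsically by the operators $\tilde{e}_i,\tilde{f}_i$, it is automatically preserved. So assume $n\geqslant 2$ and that the theorem holds for $n-1$ factors. Apply the categorical splitting of the previous section: the Serre quotient $\underline{\Cat}^+=\Cat/\Cat_{\Xi_0}$ with $\Xi_0=\{\bmu\mid\mu_n<\nu_n\}$, equipped with the truncated functors $\underline{E}_i,\underline{F}_i$ constructed there, is a tensor product categorification of $V_1\otimes\cdots\otimes V_{n-1}$, so the inductive hypothesis gives a strongly equivariant, label-preserving equivalence $\underline{\Cat}^+\cong\Cat(\nu_1,\dots,\nu_{n-1})$.

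The quotient functor $\pi\colon\Cat\twoheadrightarrow\underline{\Cat}^+$ is exact, so its left adjoint $\pi^!$ preserves projectivity and, on $\oDelta$-filtered objects, is exact, fully faithful and compatible with standardization. Set $P_0:=\pi^!(\bar P)$ for $\bar P$ a minimal projective generator of $\underline{\Cat}^+$; then $P_0$ is the sum of the indecomposable projectives of $\Cat$ whose heads lie in the top stratum $\{\mu_n=\nu_n\}$. Let $P$ be an additive generator of the subcategory of $\Cat\operatorname{-proj}$ formed by the summands of $F^kP_0$, $k\geqslant 0$; I claim this subcategory is all of $\Cat\operatorname{-proj}$. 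Indeed, under $K^0(\Cat)\cong V_1\otimes\cdots\otimes V_n$ the summands of $P_0$ have classes spanning the subspace $V_1\otimes\cdots\otimes V_{n-1}\otimes v_{\nu_n}$, and since $[F_i]$ acts as the usual $\fg$-action, the classes $[F^kP_0]$ span the submodule this subspace generates, which is all of $V_1\otimes\cdots\otimes V_n$; as the classes of the indecomposable projectives are a basis of $K^0(\Cat)$, each of them must occur among the summands of the $F^kP_0$. The same description holds for $\Cat(\bnu)$: by \cite[3.24]{WebCTP} its projectives are generated from the image of a projective generator of $\Cat(\nu_1,\dots,\nu_{n-1})$ under the inflation $\operatorname{inf}_n$ by applying the categorification functors.

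It remains to identify $\End_\Cat(P)$, with its $F_i$-module structure and the $R_k$-actions on $F^k$, with the corresponding algebra for $\Cat(\bnu)$. Here the double centralizer property, Theorem~\ref{Thm:doub_centr}, does the work: $\pi_{top}$ is fully faithful on projectives, and $\Cat_{top}\cong R^{|\nu|}\mmod$ canonically (Proposition~\ref{simple-unique}, as $\Cat_{top}$ categorifies the Cartan component), with $\pi_{top}$ strongly equivariant. Hence $\End_\Cat(P)\cong\End_{R^{|\nu|}}(\pi_{top}P)$, and $\pi_{top}(P)=\bigoplus_kF^k\pi_{top}(P_0)$ with the $F_i$ acting by the standard induction functors on $R^{|\nu|}\mmod$ carrying their tautological $R_k$-actions; so the entire structure is determined by the single object $\pi_{top}(P_0)=\pi_{top}\pi^!(\bar P)$. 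The crucial point is the identification of functors
\[\pi_{top}\circ\pi^!\;\cong\;\operatorname{inf}_n\circ\pi_{top}^{\underline{\Cat}^+}\colon\quad\underline{\Cat}^+\longrightarrow R^{|\nu|}\mmod,\]
where $\pi_{top}^{\underline{\Cat}^+}\colon\underline{\Cat}^+\to(\underline{\Cat}^+)_{top}\cong R^{\nu^{(n-1)}}\mmod$ is the analogous top-quotient and $\operatorname{inf}_n$ is inflation along $R^{|\nu|}\twoheadrightarrow R^{\nu^{(n-1)}}$. Both composites are right exact, so it is enough to compare them on projectives, where, using (TPC3), the compatibility of $\pi^!$ with standardization, and Lemma~\ref{Lem:fun_coinc}, the statement reduces to the nesting of the cyclotomic ideals of the KLR algebra of $\fg$ that is built into \cite{WebCTP}. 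The same description applies to $\Cat(\bnu)$. Feeding in the inductive equivalence $\underline{\Cat}^+\cong\Cat(\nu_1,\dots,\nu_{n-1})$ and the canonical identification of the top quotients with cyclotomic KLR categories, we obtain an $R$-equivariant isomorphism of $\pi_{top}(P_0)$ with its $\Cat(\bnu)$-counterpart, hence of $\pi_{top}(P)$, hence an isomorphism $\End_\Cat(P)\cong\End_{\Cat(\bnu)}(P')$ intertwining the $F_i$- and $R_k$-actions.

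Passing to module categories gives the equivalence $\Cat\cong\End_\Cat(P)\mmod\cong\End_{\Cat(\bnu)}(P')\mmod\cong\Cat(\bnu)$. It is strongly equivariant because it is induced from an equivalence of the categories of projectives that intertwines the functors $F_i$ and the $R_k$-actions on the $F^k$ (the functors $E_i$ are recovered by biadjointness and play no role in the definition of strong equivariance). It preserves the labels of simples because the construction follows, step by step, the canonical bijection of simples with $B(\nu_1)\times\cdots\times B(\nu_n)$ of \cite[\S5.1]{LV} recalled in \S\ref{sec:labeling}: the top stratum is matched through $\pi^!$ and the inductive identification, and every other simple is reached from it by the crystal operators $\tilde{f}_i$, which our equivalence intertwines. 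The step I expect to be the main obstacle is the displayed identification $\pi_{top}\circ\pi^!\cong\operatorname{inf}_n\circ\pi_{top}^{\underline{\Cat}^+}$ --- this is precisely where the standardly stratified structure, the categorical $\fg$-action and the explicit KLR combinatorics of \cite{WebCTP} must be reconciled; a secondary technical point is justifying the behaviour of $\pi^!$ and the categorification functors on Grothendieck groups used above.
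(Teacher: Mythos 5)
Your construction of the strongly equivariant equivalence is essentially the paper's proof: induction on $n$, categorical splitting to reduce to $\underline{\Cat}^+$, the double centralizer property (Theorem~\ref{Thm:doub_centr}) to identify projectives with their images under $\pi_{top}$, the commutativity $\pi_{top}\circ\pi^!\cong\operatorname{inf}\circ\underline{\pi}_{top}$ on $\bar\Delta$-filtered objects (which is exactly Lemma~\ref{lem:top-inflate}, and its proof via Lemma~\ref{Lem:fun_coinc} is short --- less of an obstacle than you anticipate), and the span argument that all projectives occur as summands of $F^N$ applied to top-stratum projectives. The identification of the top quotient with $R^{|\nu|}\mmod$ via Proposition~\ref{simple-unique} and the step-by-step matching of $\pi_{top}(P)$ are all as in the paper.

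The gap is in the final paragraph, the claim that the equivalence preserves labels. You argue: the top stratum is matched, every other simple is reached from a top-stratum simple by the operators $\tilde f_i$, and the equivalence intertwines the $\tilde f_i$. But this does not close the argument. The equivalence $\beta$ gives a bijection $\sigma$ on the label set $\Lambda$ satisfying $\sigma(\tilde f_i^{(1)}\bla)=\tilde f_i^{(2)}\sigma(\bla)$, where $\tilde f_i^{(1)},\tilde f_i^{(2)}$ are the two \emph{categorical} crystal operators transported to $\Lambda$ via the standardly stratified labeling. Knowing $\sigma=\operatorname{id}$ on the top stratum and wanting to propagate it downward requires knowing $\tilde f_i^{(1)}=\tilde f_i^{(2)}$ as operators on $\Lambda$; that is precisely the statement that the categorical crystal of any TPC coincides with the combinatorial tensor-product crystal. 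That statement is Theorem~\ref{Thm:cryst}, which the paper proves \emph{after} the uniqueness theorem by a separate and rather intricate $\operatorname{Ext}$-vanishing argument. Citing \cite[\S5.1]{LV} gives only the bijection with $B(\nu_1)\times\cdots\times B(\nu_n)$ at the level of $\gr\Cat$, not the compatibility of the categorical crystal of $\Cat$ itself with the tensor-product crystal rule; and the abstract isomorphism of crystals coming from \cite{BeKa,CR04} does not pin down the map on labels.

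The paper instead proves label preservation directly, without appealing to Theorem~\ref{Thm:cryst}, by an argument with string parametrizations: for each projective $P^i(\bla)$ it characterizes the maximal word $\Ba$ (in the sum-then-lexicographic order) for which $P^i(\bla)$ is a summand of $F_{i_1}^{a_1}F_{i_2}^{a_2}\cdots Q$ with $Q$ a top-stratum projective, and shows that $\Ba$ equals the string parametrization of $\lambda_n$; the label $\bla^+$ in the top stratum is then recovered as the maximal ``plus-label'' of a composition factor of $\cdots E_{i_2}^{a_2}E_{i_1}^{a_1}L^i(\bla)$, and induction finishes the proof. This is the part of the argument your sketch omits, and you would need to either supply it or prove Theorem~\ref{Thm:cryst} first and check carefully that its proof does not secretly use label preservation (it appears not to, so a reorganization could work, but as written your proof is incomplete).
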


Consider the categorification $\underline{\Cat}^+$; since the highest
weight of this categorification is $|\nu|-\nu_n$, we have a functor
$\underline{\pi}_{top}$ from $\underline{\Cat}^+$ to the category
$R^{|\nu|-\nu_n}\mmod$.  We wish to compare this to the functor $\pi_{top}$.

\begin{Lem}\label{lem:top-inflate}
  We have a commutative diagram
  \begin{equation*}
    \tikz[->,scale=2, very thick]{
\node (a) at (0,0) {$(\underline{\Cat}^+)^{\bar\Delta}$};
\node (b) at (2,0) {$\Cat^{\bar\Delta}$};
\node (c) at  (0,1) {$R^{|\nu|-\nu_n}\mmod$};
\node (d) at (2,1) {$R^{|\nu|}\mmod$};
\draw (a)-- (b) node[below,midway]{$\pi^!$};
\draw (a)-- (c) node [left,midway]{$\underline{\pi}_{top}$};
\draw (b)-- (d) node [right,midway]{$\pi_{top}$};
\draw (c)-- (d) node [above, midway]{$\operatorname{inf}$};
}
  \end{equation*}
\end{Lem}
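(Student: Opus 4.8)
The plan is to exhibit the required natural isomorphism $\pi_{top}\circ\pi^!\cong\operatorname{inf}\circ\underline{\pi}_{top}$ on $(\underline{\Cat}^+)^{\bar\Delta}$ directly, by writing both composites as $\Hom$-functors and threading a chain of adjunctions through them. Recall (as in the discussion preceding Theorem~\ref{Thm:doub_centr}, via Proposition~\ref{simple-unique}) that, viewed as a functor to $R^{|\nu|}\mmod$, $\pi_{top}$ is $X\mapsto\bigoplus_k\Hom_\Cat(F^k\mathbb{V},X)$ with $R^{|\nu|}$-module structure induced by the $R_k$-action on $F^k$; applying the same discussion to the tensor product categorification $\underline{\Cat}^+$, whose top weight space is $\operatorname{Vect}_\C$ with generator $\underline{\mathbb{V}}^+$, the functor $\underline{\pi}_{top}$ is $M\mapsto\bigoplus_k\Hom_{\underline{\Cat}^+}(\underline{F}^k\underline{\mathbb{V}}^+,M)$ as an $R^{|\nu|-\nu_n}$-module, where $\underline{F}=\bigoplus_i\underline{F}_i$. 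The key preliminary observation is that $\mathbb{V}\cong\pi^!(\underline{\mathbb{V}}^+)$: the object $\underline{\mathbb{V}}^+$ is the standard object $\Delta_+(\bla)$ attached to the highest-weight label $\bla$ of $V_1\otimes\cdots\otimes V_{n-1}$, and $\pi^!$ sends $\Delta_+(\bla)$ to $\Delta$ of the corresponding label of $\Cat$ — namely $\bla$ together with the highest-weight vertex of $B(\nu_n)$ — which is the highest-weight label of $V_1\otimes\cdots\otimes V_n$, so this standard object is precisely the generator $\mathbb{V}$ of $\Cat_\nu$.

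Now fix $M\in(\underline{\Cat}^+)^{\bar\Delta}$. Using in turn the biadjunction $(F^k,E^k)$, then $\mathbb{V}\cong\pi^!(\underline{\mathbb{V}}^+)$ together with the isomorphism $E^k\circ\pi^!\cong\pi^!\circ\underline{E}^k$ on $\bar\Delta$-filtered objects from Lemma~\ref{Lem:fun_coinc} (applied one strand at a time and extended from a single $\underline{E}_i$ to $\underline{E}=\bigoplus_i\underline{E}_i$), then the full faithfulness of the embedding $\pi^!\colon(\underline{\Cat}^+)^{\bar\Delta}\hookrightarrow\Cat^{\bar\Delta}$, and finally the biadjunction $(\underline{F}^k,\underline{E}^k)$ in $\underline{\Cat}^+$, I would assemble the chain
\begin{align*}
\Hom_\Cat(F^k\mathbb{V},\pi^!M)&\cong\Hom_\Cat(\mathbb{V},E^k\pi^!M)\cong\Hom_\Cat\bigl(\pi^!\underline{\mathbb{V}}^+,\pi^!\underline{E}^kM\bigr)\\
&\cong\Hom_{\underline{\Cat}^+}\bigl(\underline{\mathbb{V}}^+,\underline{E}^kM\bigr)\cong\Hom_{\underline{\Cat}^+}\bigl(\underline{F}^k\underline{\mathbb{V}}^+,M\bigr).
\end{align*}
Here one uses that $E$ preserves $\Cat^{\bar\Delta}$ and $\underline{E}$ preserves $(\underline{\Cat}^+)^{\bar\Delta}$, so every intermediate object stays in the range where Lemma~\ref{Lem:fun_coinc} applies. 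Each step is natural in $M$, so summing over $k$ yields a natural isomorphism $\pi_{top}(\pi^!M)\cong\underline{\pi}_{top}(M)$ of vector spaces, functorial on $(\underline{\Cat}^+)^{\bar\Delta}$.

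It will remain to match module structures: the left-hand side carries an $R^{|\nu|}$-action and the right-hand side an $R^{|\nu|-\nu_n}$-action, and one must check that the isomorphism above identifies the former with the inflation of the latter along the cyclotomic projection $R^{|\nu|}\twoheadrightarrow R^{|\nu|-\nu_n}$; this is the real content of the lemma, and the main obstacle. Transporting the $R_k$-action through the two biadjunctions is routine, since a biadjunction carries the $R_k$-action on $F^k$ to the conjugate $R_k$-action on $E^k$ and back; the middle isomorphism is $R_k$-equivariant by the second sentence of Lemma~\ref{Lem:fun_coinc}, once that sentence is strengthened to cover the dots and crossings of all colours rather than just powers of a single $\underline{E}_i$. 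Chasing through, the $R$-action on $\pi_{top}(\pi^!M)$ gets identified with the $R$-action on $\bigoplus_k\Hom_{\underline{\Cat}^+}(\underline{F}^k\underline{\mathbb{V}}^+,M)$ induced by the KLR action on $\bigoplus_k\underline{F}^k$; but that action already factors through $R^{|\nu|-\nu_n}$, because the cyclotomic relation for weight $|\nu|-\nu_n$ kills $\underline{F}_i\underline{\mathbb{V}}^+$ (exactly the computation behind $\underline{\pi}_{top}$ taking values in $R^{|\nu|-\nu_n}\mmod$). Hence the $R^{|\nu|}$-module $\pi_{top}(\pi^!M)$ is precisely the inflation of $\underline{\pi}_{top}(M)$, which is the claim. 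So apart from the extension of Lemma~\ref{Lem:fun_coinc} to the full functor $E=\bigoplus_iE_i$ with its entire KLR action, the argument is formal.
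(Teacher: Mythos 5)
Your proof is correct and follows the same route as the paper's: both rest on Lemma~\ref{Lem:fun_coinc}, which moves $E^k$ past $\pi^!$ while intertwining the KLR action, combined with the explicit $\Hom$-description of the top quotient functors. The paper's version is terser (it identifies $\pi_{top}\pi^!M$ and $\underline\pi_{top}M$ directly with the top-weight objects $E^k\pi^!M$ and $\underline E^kM$ in $\operatorname{Vect}_\C$ rather than writing out the chain of adjunctions), and although the statement of Lemma~\ref{Lem:fun_coinc} is phrased for powers of a single $\underline E_i$, its proof already establishes the intertwining for the full $R_k$-action on $E^k$, so the small strengthening you flag is available without extra work.
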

\begin{proof}
  By Lemma \ref{Lem:fun_coinc}, the vector spaces $E^k\pi^!(M)$ and
  $\underline{E}^k(M)$ are naturally isomorphic as $R$-modules.
  Thus, if we think of $E^k\pi^!(M)$ as an
  $R^{|\bnu|}$-module, it is simply obtained by considering
  $\underline{E}^k(M)$ as an $R^{|\bnu|-\nu_n}$-module and pulling back.
\end{proof}

\begin{proof}[Proof of Theorem \ref{main2}]
Let $\pi_1,\pi_2$ denote the quotient functors $\Cat\twoheadrightarrow \underline{\Cat}^+$
and $\Cat(\nu_1,\ldots,\nu_n)\twoheadrightarrow \Cat(\nu_1,\ldots,\nu_{n-1})$. Further
let $\pi^1_{top},\pi^2_{top}$ be the quotient functors from $\Cat$ and $\Cat(\nu_1,\ldots,\nu_n)$
to $R^{|\bnu|}\operatorname{-mod}$ and $\underline{\pi}^1_{top},\underline{\pi}^2_{top}$ the quotient functors
from $\underline{\Cat}^+,\Cat(\nu_1,\ldots,\nu_{n-1})$ to
$R^{|\bnu|-\nu_n}\mod$. Below we will sometimes write $\Cat_1$ for $\Cat$, and $\Cat_2$ for $\Cat(\nu_1,\ldots,\nu_n)$
(and also $\underline{\Cat}_1^+$ for $\underline{\Cat}^+$ and $\underline{\Cat}_2^+$ for $\Cat(\nu_1,\ldots,\nu_{n-1})$).

First, note that it suffices to check this theorem on the categories
of projective objects in each category.  By Theorem
\ref{Thm:doub_centr}, we can strongly equivariantly identify
$\Cat\operatorname{-proj}$ and $\Cat(\nu_1,\ldots,\nu_n)
\operatorname{-proj}$ with their images under $\pi^i_{top}$.  Thus,
the desired equivalence would follow from showing that
$\{\pi^1_{top}P^1(\bla)\}= \{\pi^2_{top}P^2(\bla)\}$, where
$P^1(\bla)$ and $P^2(\bla)$ are the indecomposable projectives
corresponding to $\lambda$ in $\Cat$ and
$\Cat(\nu_1,\ldots,\nu_n)$. To show that the equivalence is of
stratified categories and preserves the labels of simples, it is enough
to check that $\pi^1_{top}P^1(\bla)=\pi^2_{top}P^2(\bla)$ (indeed, the
only additional structure on a standardly stratified category is a
pre-order on the set of simples so if an equivalence preserves the
labels, then it will automatically intertwine the standardly
stratified structures).

Our proof proceeds by induction.  If $n=1$, then $\Cat\cong
\gr\Cat$, and the conclusion follows from Proposition \ref{simple-unique}.

For arbitrary $n$, we can conclude by the inductive hypothesis that
$\underline{\Cat}^+\cong \Cat(\nu_1,\dots, \nu_{n-1})$ and the equivalence
preserves the labels of the simples. Such an equivalence automatically intertwines
the functors $\underline{\pi}^1_{top}$ and $\underline{\pi}^2_{top}$. 
Thus, by Lemma \ref{lem:top-inflate}, $\pi^1_{top}\circ\pi_1^!(P_+^1(\bla))=
\pi^2_{top}\circ \pi_2^!(P^2_+(\bla))$ for all labels $\bla$ in
$\underline{\Cat}^+\cong \Cat(\nu_1,\ldots,\nu_{n-1})$. Since $\pi_i^! P^i_+(\bla)=
P^i(\bla)$, we see that the objects $\pi^i_{top}P^i(\bla)$ coincides when
$\bla=(\lambda_1,\ldots,\lambda_n)$ with $\varrho^n(\lambda_n)=\nu_n$.

We claim that any projective $P^i(\bla')$ appears as a summand in $F^N P^i(\bla)$.
Indeed, the classes of projectives $P^i(\bla)$ with $\varrho^n(\lambda_n)=\nu_n$ are a basis
in $V_1\otimes\ldots\otimes V_{n-1}\otimes v_{\nu_n}\subset V_1\otimes\ldots\otimes V_n$.
Since $V_1\otimes\ldots\otimes V_{n-1}\otimes v_{\nu_n}$ generates the $U(\mathfrak{n}^-)$-module $V_1\otimes\ldots\otimes V_{n}$,
we see that the classes of summands of $F^N P^i(\bla)$ (for all $N$
and $\bla$) generate $V_1\otimes\ldots\otimes V_n$, so there can be no
other projectives.

So any $\pi^i_{top}P^i(\bla')$ is an indecomposable (thanks to Theorem \ref{Thm:doub_centr})
summand of $F^N \pi^i_{top} P^i(\bla)$ with $\varrho^n(\lambda_n)=\nu_n$. It follows that
the sets $\{\pi^i_{top}P^i(\lambda)\}$ coincide, which proves the existence of a strongly
equivariant equivalence
$\beta:\Cat\rightarrow\Cat(\nu_1,\ldots,\nu_n)$.

Now, we need only check that the labels match. Let us choose a sequence of nodes $i_1,i_2,\dots$ as in
Section \ref{sec:labeling}, and order infinite sequences of
non-negative integers almost all of which are 0 by:
\begin{itemize}
\item If $\sum a_i < \sum b_i$, then $\Ba=(a_1,\dots) > \Bb=(b_1,\dots)$.
\item If $\sum a_i = \sum b_i$, then we use lexicographic order.
\end{itemize}
For each indecomposable projective object $P^i(\bla)$, where $i=1,2$,
there is a unique word $\Ba=(a_1,\dots)$ maximal in this order for which
$P^i(\bla)$ appears in $ F_{i_1}^{a_1}F_{i_2}^{a_2}\cdots Q$ for $Q$
projective in $\Cat^+$. This is equivalent to $\Ba$ being maximal so
that $Q$ has a non-zero map to  $\cdots
E_{i_2}^{a_2}E_{i_1}^{a_1}L^i(\bla)$.  In particular, the standard cover of
some composition factor of $\cdots
E_{i_2}^{a_2}E_{i_1}^{a_1}L^i(\bla)$
must have a label whose
$n$th component is $\upsilon_n$, the only indecomposable object in $\Cat^n_{\nu_n}$.
We call such a label a {\bf plus-label}.  Note that
a simple with plus-label can only occur as a composition factor of a
standard with plus-label.

Let $\Bb$ be the string parameterization, see Section \ref{sec:labeling}, of
$\lambda_n$. Assume $\Ba$ is larger than $\Bb$ in the order above.


Now, consider $\cdots E_{i_2}^{a_2}E_{i_1}^{a_1}L^i(\bla)$. This is a quotient of $\cdots
E_{i_2}^{a_2}E_{i_1}^{a_1}\bar\Delta^i(\bla)$; this module has a canonical
filtration by proper standard modules.  If $\Ba$ has smaller sum than $\Bb$,
then none of these standards has plus-label, since even the successive
quotient where we only use $E$'s from the last component cannot have
high enough weight in the last label.  On the other hand, if $\Ba$ has
the same sum, but is higher in lexicographic order, then the component
of the standard filtration where we apply all $E$'s in the last
component will have the correct weight, but be trivial by the
definition of string parametrization.
Thus, the word $\Ba$ mentioned above must be no greater in our order
than $\Bb$.

On the other hand, we claim that the object $\cdots E_{i_2}^{b_2}E_{i_1}^{b_1}L^i(\bla)$ contains the simple
with label $\bla^+$ matching $\bla$ except with $\upsilon_n$ in the last component as a
composition factor. Indeed, by (TPC3), $\oDelta^i(\bla^+)$ is a composition factor
of  $\cdots E_{i_2}^{b_2}E_{i_1}^{b_1}\oDelta^i(\bla)$ and the label $\bla^+$ is the largest among
the labels of the composition factors. If $L^i(\bla^+)$ appears in the kernel of the projection
$\cdots E_{i_2}^{b_2}E_{i_1}^{b_1}\oDelta^i(\bla)\twoheadrightarrow \cdots E_{i_2}^{b_2}E_{i_1}^{b_1}L^i(\bla)$,
then it also appears in $\cdots E_{i_2}^{b_2}E_{i_1}^{b_1}\oDelta^i(\bla')$ for some $\bla'<\bla$.
This is impossible if $\lambda'_n<\lambda_n$. Furthermore, if $\lambda'_n=\lambda_n$, then the largest
label of a composition factor in  $\cdots E_{i_2}^{b_2}E_{i_1}^{b_1}\oDelta^i(\bla')$ is $\bla'^+<\bla^+$.
This contradiction shows the claim in the beginning of the paragraph that, in turn, implies $a_i=b_i$
for all $i$.

Note that  all other proper standards appearing in $\cdots
E_{i_2}^{a_2}E_{i_1}^{a_1}\bar \Delta^i(\bla)$ do not have plus-labels, and
thus have no simple composition factors with plus-labels. Thus, if any other simple with a plus label occurs as a
composition factor $\cdots E_{i_2}^{b_2}E_{i_1}^{b_1}L^i(\bla)$, then is must a composition factor of
$\bar \Delta(\bla^+)$.
Thus, every other plus-label attached to a simple in $\cdots
E_{i_2}^{b_2}E_{i_1}^{b_1}L^i(\bla)$ must be $<\bla^+$.

This shows the uniqueness of labels: the word $\Ba$ is
determined by the definition of the action and the category $\Cat^+$,
and so must match for $P$ and $\beta(P)$; this shows that the last
term of the labels match, and the label $\bla^+$ is distinguished as
the maximal plus-label for a composition factor in $\cdots E_{i_2}^{a_2}E_{i_1}^{a_1}L^i(\bla)$. Since we already know that
the labels match for $\Cat^+$ by induction, this establishes the
general case.
\excise{
Let $\Delta'$ be the standard cover of the simple where
$\emptyset$ is taken in this last place instead, and $P'$ its
projective cover.  By the definition of
a tensor product categorification, the restriction $\cdots
E_{i_2}^{b_2}E_{i_1}^{b_1}\Delta$ has a submodule of the form
$\Delta'$.  Note that the quotient $\cdots
E_{i_2}^{b_2}E_{i_1}^{b_1}\Delta/\Delta'$ has a filtration by
standards with a non-highest weight in the last component; so we have a surjective map
$F_{i_1}^{b_1}F_{i_2}^{b_2}\cdots P'\to P$, and there is no other
indecomposable projective $P''$ which satisfies this.  Thus, we must have that
$(a_1,\dots)\geqslant (b_1,\dots)$.

If  $(a_1,\dots)\geqslant (b_1,\dots)$, since then $\cdots
E_{i_2}^{a_2}E_{i_1}^{a_1}\Delta$ will have a standard filtration with
no successive quotients have a highest weight

Now let us prove that $\pi^1_{top}P_1(\bla)=\pi^2_{top}P_2(\bla)$ for all $\bla$. The proof is by induction
on the number $k$ of simple root summands in $\nu_n-\varrho^n(\lambda_n)$. The base, $k=0$, is already done.

Assume now that we know our claim for given $k$ and want to prove it for $k+1$. First of all, we claim that $\iota(\oDelta_1(\bla))=
\oDelta_2(\bla)$ for all $\bla$ such that $\nu_n-\varrho^n(\lambda_n)$ consists of $k$ simple root summands. Indeed,
let $\Cat^{>k}_i$ denote the Serre subcategory in $\Cat_i$, $i=1,2$, spanned by the simples $L_i(\bla)$ such that
$\nu_n-\varrho^n(\lambda_n)$ consists of more than $k$ simple root summands. These are standardly stratified subcategories
and so the quotients $\Cat^{\leqslant k}_i$ inherit standardly stratified structure from $\Cat_i$. Let $\pi_i^{\leqslant k}:\Cat_i\twoheadrightarrow \Cat^{\leqslant k}_i$ denote the quotient functor. Then we have $\oDelta_i(\bla)=(\pi^{\leqslant k}_i)^!\oDelta^{\leqslant k}_i(\bla)$ as soon as $\nu_n-\varrho^n(\lambda_n)$ has not more than $k$ simple root summands.
Also, by the inductive assumption, $\iota$ maps $\Cat_1^{>k}$ to $\Cat_2^{>k}$ and hence induces (again, by induction)
a labeling preserving equivalence $\Cat_1^{\leqslant k}\xrightarrow{\sim}\Cat_2^{\leqslant k}$. This equivalence
maps $\oDelta_1^{\leqslant k}(\bla)$ to $\oDelta^{\leqslant k}_2(\bla)$. It follows that $\iota\oDelta_1(\bla)=\oDelta_2(\bla)$
when $\nu_n-\varrho^n(\bla_n)$ has $k$ simple root summands.

Now pick $\bla'$ with $\nu_n-\varrho^n(\lambda_n')$ consisting of $k+1$ simple root summands. There is an index $j$
such that $\tilde{e}_j^n \lambda_n'\neq 0$. Set $\bla:=(\lambda_1',\ldots,\lambda'_{n-1},\tilde{e}_j^n\lambda_n')$.
Since $\iota$ is equivariant we have $\iota F_j \oDelta_1(\bla)=F_j\oDelta_2(\bla)$.
According to (TPC3), $\Delta_i(F_j^n L_{\varrho(\bla)}(\bla))$ is a top quotient of $F_j \oDelta_i(\bla)$, while
all other successive quotients lie in $(\pi^{\leqslant k}_i)^!(\Cat_i^{\leqslant k})^{\oDelta}$. The equivalence $\iota$
intertwines the left adjoint of the inclusions $\iota^{>k}_i:\Cat^{>k}_i\hookrightarrow \Cat_i$. It follows
that $\iota(\Delta_1(F_j^n L_{\varrho(\bla)}(\bla)))=\Delta_2(F_j^n L_{\varrho(\bla)}(\bla))$. The module
$F_j^n L_{\varrho(\bla)}(\bla)$ has simple head, $L_{\varrho(\bla')}(\bla')$ and hence $\Delta_i(F_j^n L_{\varrho(\bla)}(\bla))$
also has simple head, $L_i(\bla')$. So we see that  $\iota(L_1(\bla'))=L_2(\bla')$. This completes the proof of the inductive step.
\excise{the images in $R^{|\nu|}\mmod$ of the images of $\pi^!_1$ and $\pi_2^!$ coincide.
  Our proof proceeds by induction.  If $n=1$, then $\Cat\cong
  \gr\Cat$, and the conclusion follows from the uniqueness theorem for
  simple categorifications.
  For arbitrary $n$, we can conclude by the inductive hypothesis that
  $\underline{\Cat}^+\cong \Cat(\nu_1,\dots, \nu_{n-1})$, and that their
  images in $R^{|\nu|-\nu_n}\mmod$ coincide. Thus, by Lemma
  \ref{lem:top-inflate}, the images in $R^{|\nu|}\mmod$ of the images of $\pi^!_1$ and $\pi_2^!$ coincide.
The functor $\pi^!$ sends projectives to projectives, so this image contains projective modules whose
classes in the Grothendieck group span the subspace $V_1\otimes
\cdots\otimes V_{n-1}\otimes \{v_{\nu_n}\}$.  Thus, consider the
additive subcategory generated by summands of categorification
functors applied to projectives in the image of $\pi^!$; since
categorification functors preserve projectives, this is a subcategory
of $\Cat\operatorname{-proj}$.
Since  $V_1\otimes
\cdots\otimes V_{n-1}\otimes \{v_{\nu_n}\}$ generates $V_1\otimes
\cdots \otimes V_n$ under the action of $U(\fg)$, the
indecomposable projectives in this subcategory must span the
Grothendieck group.  Thus, we must have obtained the entire category
of projectives.

Thus, the image of $\Cat\operatorname{-proj}$ under $\pi_{top}$ can be
obtained by taking the image under $\pi_{top}\circ \pi^!$ of
$\underline{\Cat}^+\operatorname{-proj}$, applying
categorification functors in all possible ways, and taking the
additive subcategory generated by summands of these objects. Since
$\pi_{top}$ is fully faithful on projectives, every such object  is in
the image of $\pi_{top}$.

However, this is just the description we gave earlier of the image of
$\Cat(\bnu)\operatorname{-proj}$ in $R^{|\nu|}\mmod$.  Thus, we are done.}}
\end{proof}

\subsection{Consequences}
\label{sec:consequences}

This theorem shows that many structures on tensor product
categorifications thus come for free:
\begin{Cor}\label{grading}
  If the polynomials $Q_{ij}(u,v)$ are chosen to be homogeneous, then any tensor product categorification has a unique
  graded lift (given by graded modules of $T^\bnu$) which carries a
  homogeneous action of $\mathfrak{A}$; that is, between any two such
  lifts, there is a strongly equivariant graded lift of the identity functor which is
  unique up to unique isomorphism.
\end{Cor}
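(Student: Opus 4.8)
The existence half is essentially Theorem~\ref{main2}. By that theorem we may assume $\Cat=\Cat(\bnu)=T^\bnu\mmod$, and when the polynomials $Q_{ij}(u,v)$ are homogeneous the algebra $T^\bnu$ is graded, being a quotient of a direct sum of graded subalgebras of KLR algebras (see \cite{WebCTP}); thus $T^\bnu\operatorname{-grmod}$ is a graded lift of $\Cat$. The categorification functors on $T^\bnu\mmod$ are tensoring with the KLR induction bimodules, which are graded once $R$ is graded, and the dot $y$ on $F_i$ has (positive) degree $(\alpha_i,\alpha_i)$; since the defining relations of $\mathfrak A$ are homogeneous for these degrees, this lift carries a homogeneous $\mathfrak A$-action. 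So the whole content of the corollary is uniqueness: that any graded lift $\tilde\Cat$ of a tensor product categorification $\Cat$ carrying a homogeneous $\mathfrak A$-action admits a strongly equivariant graded equivalence to $T^\bnu\operatorname{-grmod}$ lifting the identity of $\Cat$, and that this equivalence is unique up to unique isomorphism.

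The plan is to re-run the chain Proposition~\ref{simple-unique} $\to$ Theorem~\ref{Thm:doub_centr} $\to$ Theorem~\ref{main2} with degrees attached. First I would upgrade Proposition~\ref{simple-unique}: in a graded lift of $\gr\Cat=\EuScript C$, the distinguished object $\mathbb V$ carries a grading, unique once we place it in degree $0$; the computation $\Hom(F_i\mathbb V,F_i\mathbb V)\cong\C[y]/(y^{\alpha_i^\vee(\nu)})$ is automatically graded with $y$ in positive degree $(\alpha_i,\alpha_i)$, so the induced grading on $R^\nu$ is forced to be the standard one, and Rouquier's universal functor is graded; hence $\gr\tilde\Cat$ is strongly equivariantly graded-equivalent to $R^\nu\operatorname{-grmod}$ by a lift of the identity. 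Next, the proof of Theorem~\ref{Thm:doub_centr} goes through with graded $\Hom$-spaces, because Proposition~\ref{Prop:stand_soc} and Lemma~\ref{lem:standard-injective} only involve objects and short exact sequences built from $\mathbb V$ by the (homogeneous) functors $E_i,F_i$, so in $\tilde\Cat$ these acquire canonical graded lifts; the five-lemma argument then shows $\pi_{top}$ is fully faithful on graded projectives, identifying $\tilde\Cat\operatorname{-proj}$ with a graded subcategory of $R^{|\bnu|}\operatorname{-grmod}$ (with its forced grading).

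Then I would induct on $n$ exactly as in the proof of Theorem~\ref{main2}, the base case being the graded Proposition~\ref{simple-unique} above. For the inductive step one checks that the categorical splitting construction is homogeneous: the divided powers $E_i^{(r)}$, their adjoints, the truncated functors $\Efun,\Ffun,\underline E_i$, and the isomorphisms of Lemmas~\ref{Lem:fun_coinc} and \ref{lem:top-inflate} are all assembled from homogeneous data, so $\underline{\tilde\Cat}^+$ is a graded lift of $\underline\Cat^+$ with a homogeneous action, hence by induction is canonically $\Cat(\nu_1,\dots,\nu_{n-1})\operatorname{-grmod}$. The graded form of Lemma~\ref{lem:top-inflate} then pins down the graded lifts of $\pi_{top}P(\bla)$ for every $\bla$ whose $n$th component is $\nu_n$; every remaining indecomposable projective is a summand of some $F^N$ applied to one of these, and the string-parameterization bookkeeping from the proof of Theorem~\ref{main2} --- which exhibits $P(\bla)$ as a distinguished summand of a distinguished monomial $F_{i_1}^{a_1}F_{i_2}^{a_2}\cdots$ applied to a plus-labelled projective, via maps and composition factors sitting in a canonical degree --- fixes not only that summand but also its grading shift. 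Feeding this through the graded double centralizer property upgrades the equivalence of Theorem~\ref{main2} to a strongly equivariant graded equivalence lifting the identity. For uniqueness of the equivalence: two such lifts differ by a strongly equivariant graded automorphism $\Phi$ of $T^\bnu\operatorname{-grmod}$ that degrades to the identity, so $\Phi(\tilde{\mathbb V})$ is a shift of $\tilde{\mathbb V}$ with shift $0$, whence $\Phi$ fixes each $F^k\tilde{\mathbb V}$ with its graded $R$-module structure and is isomorphic to the identity on the generator, hence on $\tilde\Cat\operatorname{-proj}$, hence on $\tilde\Cat$; and a strongly equivariant graded natural transformation $\mathrm{id}_{\tilde\Cat}\to\mathrm{id}_{\tilde\Cat}$ lying over $\mathrm{id}_\Cat$ acts as the identity on $\tilde{\mathbb V}$, hence by strong equivariance on every $F^k\tilde{\mathbb V}$, hence is the identity, giving uniqueness of the isomorphism.

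The main obstacle, I expect, is exactly the pinning-down of grading shifts on the indecomposable projectives --- equivalently, ruling out a residual ``grading-shift'' autoequivalence. This rests on the rigidity of $\mathbb V$ (it is the unique indecomposable object in a copy of $\operatorname{Vect}$, so its graded lift is canonical once normalized to degree $0$) together with the homogeneity of the $\mathfrak A$-action, which propagates that normalization through all of the $F^k\mathbb V$ and their summands; the graded analogues of Propositions~\ref{simple-unique},~\ref{Prop:stand_soc} and Theorems~\ref{Thm:doub_centr},~\ref{main2} are then routine degree bookkeeping on top of the arguments already given.
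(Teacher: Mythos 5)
Your approach is correct in spirit but takes a considerably longer route than the paper's. The paper \emph{does not} re-run the induction of Theorem~\ref{main2} with degrees attached. Instead, having already established Theorem~\ref{main2}, it identifies $\Cat\cong\Cat(\bnu)$ once and for all, and then observes that the \emph{ungraded} double centralizer property (Theorem~\ref{Thm:doub_centr}) already pins down the grading: the surjection $R_k\twoheadrightarrow\End(F^k\mathbb{V})$ has homogeneous kernel, hence grades $\End(F^k\mathbb{V}^{\oplus p})$; a projective submodule $P\subset F^k\mathbb{V}^{\oplus p}$ is called homogeneous when the left ideal $\Hom(F^k\mathbb{V}^{\oplus p},P)$ is homogeneous, and the double centralizer property transports that grading to every $\Hom(P,P')$. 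The category of homogeneous projective submodules is then a graded lift of $\Cat\operatorname{-proj}$ depending only on the grading of $R_k$, and $\pi_{top}$ carries any other graded lift canonically onto it. Your plan --- graded versions of Proposition~\ref{simple-unique}, of the splitting lemmas (\ref{Lem:Efun_prop}--\ref{lem:top-inflate}), of the double centralizer property, and of the string-parameterization bookkeeping --- should go through, but it involves verifying homogeneity of many auxiliary constructions that the paper never needs to touch. Both arguments ultimately rest on the same two pillars you identify correctly: the rigidity of $\mathbb{V}$ (a canonical degree-$0$ normalization since $\Cat_\nu\cong\operatorname{Vect}_\C$), and the double centralizer property propagating the $R_k$-grading from $\End(F^k\mathbb{V})$ to all of $\Cat\operatorname{-proj}$. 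The paper's shortcut is to use Theorem~\ref{main2} as a black box rather than re-prove it gradedly; your version buys nothing extra and costs a good deal of bookkeeping, so if you are writing this up you should follow the paper's more economical route.
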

\begin{proof}
  The existence of this lift is clear, so we turn to its uniqueness.
  The natural map $R_k\to \End(F^k\mathbb{V})$ is surjective with
  homogeneous kernel, and thus induces a grading on the latter space.
  Furthermore, every indecomposable projective has an injective map
  into $F^k\mathbb{V}^{\oplus p}$ for some $p$ (its injective hull is
  a summand of this module).  We call a projective submodule $P\subset
  F^k\mathbb{V}^{\oplus p}$ {\bf homogeneous} if the left ideal of
  $\End(F^k\mathbb{V}^{\oplus p})$ consisting of endomorphisms whose
  image lies in $P$ is homogeneous for this grading.  Note that this module
  coincides with
  $\Hom(F^k\mathbb{V}^{\oplus p}, P)$ and thus grades this space.  By the double centralizer
  property, this induces a grading on the Hom space between any two
  homogeneous projective submodules of $F^k\mathbb{V}^{\oplus p}$.

The category of homogeneous projective submodules of
$F^k\mathbb{V}^{\oplus p}$ is thus a graded lift of the category of
projectives in $\mathcal{C}(\bnu)$, which only depends on the choice
of grading on $R_k$.  Any other graded lift has a canonical functor
from its category of homogeneous projectives to this lift induced by
the functor $\pi_{top}$, so this establishes the desired uniqueness.
\end{proof}

\begin{Cor}[\mbox{\cite[Thm. \ref{m-categor}]{Webmerged}}]
  The tensor product categorifications for different orderings of the
  same representations have  equivalent derived categories.
\end{Cor}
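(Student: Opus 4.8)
The plan is to deduce this from the uniqueness theorem together with the derived equivalences between the diagrammatic models constructed in \cite{KI-HRT}. By Theorem~\ref{main2}, for any permutation $\sigma\in S_n$ a tensor product categorification of $V_{\sigma(1)}\otimes\cdots\otimes V_{\sigma(n)}$ is strongly equivariantly equivalent to $\Cat(\nu_{\sigma(1)},\dots,\nu_{\sigma(n)})=T^{\sigma\bnu}\mmod$; hence it is enough to produce, for each $\sigma$, a triangulated equivalence $D^b(T^\bnu\mmod)\xrightarrow{\sim}D^b(T^{\sigma\bnu}\mmod)$. Since $S_n$ is generated by the adjacent transpositions $s_j$, it suffices to treat $\sigma=s_j$.

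First I would invoke \cite[Thm.~B]{KI-HRT}: there, for each $j$ one constructs a complex of $(T^{s_j\bnu},T^\bnu)$-bimodules out of a single elementary crossing of the $j$-th and $(j{+}1)$-st red strands, and shows that tensoring with it induces an equivalence $D^b(T^\bnu\mmod)\xrightarrow{\sim}D^b(T^{s_j\bnu}\mmod)$ (the crossing complexes moreover satisfy the braid relations, so the equivalences for different words in the $s_j$ agree and the construction is consistent). Composing these and transporting along the strongly equivariant equivalences supplied by Theorem~\ref{main2} yields the statement for every tensor product categorification, in any order.

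It is perhaps worth recording that the special case $\sigma=w_0$, i.e.\ reversal of the entire list of factors, already follows from results internal to this paper: by the Remark following Proposition~\ref{Lem:opp_catn} the Ringel dual $\Cat^\vee$ of a tensor product categorification $\Cat$ of $V_1\otimes\cdots\otimes V_n$ is itself a tensor product categorification of $V_n\otimes\cdots\otimes V_1$, so Theorem~\ref{main2} identifies it with $\Cat(\nu_n,\dots,\nu_1)$; and the standard Ringel-duality formalism (Remark~\ref{Rem:ringel_dual}) provides a triangulated equivalence $D^b(\Cat)\simeq D^b(\Cat^\vee)$ given by $\operatorname{RHom}_{\Cat}(T,\bullet)$.

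The hard part is precisely the input that the uniqueness theorem does not give: an actual derived equivalence for a single adjacent transposition, together with its invertibility --- an equality of Grothendieck groups being automatic but insufficient. This is what \cite{KI-HRT} establishes through an analysis of Rouquier complexes and the braid relations; Theorem~\ref{main2} only lets one reduce from all tensor product categorifications to the categories $T^\bnu\mmod$ and handle the full reversal via Ringel duality, but does not by itself build the equivalences for the remaining generators of the symmetric group.
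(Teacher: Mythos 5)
Your reconstruction matches the paper's intent: the paper writes no proof for this corollary, relying on the citation bracket, and the reasoning it expects the reader to supply is exactly what you spell out — Theorem~\ref{main2} reduces any tensor product categorification to $T^\bnu\mmod$, and \cite[Thm.~B]{KI-HRT} provides the derived equivalence $D^b(T^\bnu\mmod)\simeq D^b(T^{\sigma\bnu}\mmod)$. Your closing paragraph correctly identifies that the uniqueness theorem alone cannot produce the equivalences and the genuine content lies in the Rouquier-complex construction of \cite{KI-HRT}; the Ringel-duality observation for $\sigma=w_0$ is a correct and pleasant supplement, though not what the paper is implicitly invoking.
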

As mentioned before, there are different, competing notions of
categorical $\fg$-action;  the most obvious variation of the
definition we have used is the 2-category of Cautis and Lauda, where
rather than simply adjoining an inverse of $\rho_{s,\la}$, the
relation that another morphism is its inverse is imposed.  The
difference between these definitions is subtle, and the evidence thus
far suggests that most interesting actions in the sense of Rouquier
can be strengthened to one of these.  For example, in our case, we find:
\begin{Cor}[\mbox{\cite[Thm. \ref{m-main}]{Webmerged}}]
  Any tensor product action can be strengthened to an action of the
  2-category of
  Cautis and Lauda \cite{CaLa}.
\end{Cor}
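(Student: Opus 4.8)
The plan is to deduce this from the uniqueness theorem by transport of structure, using that the model category $\Cat(\bnu)$ is already known to carry a Cautis-Lauda action.

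First I would apply Theorem~\ref{main2} to fix a strongly equivariant equivalence $\eta\colon\Cat\xrightarrow{\sim}\Cat(\bnu)=T^\bnu\mmod$. By definition this is an equivalence of $2$-representations of Rouquier's $2$-category $\mathfrak{A}$: it carries isomorphisms $F\eta\cong\eta F$ which intertwine the $R_k$-actions on $F^k\eta\cong\eta F^k$, hence is compatible with all of the structure $2$-morphisms of $\mathfrak{A}$, in particular with the isomorphisms $\rho_{s,\mu}$. Thus $\eta$ identifies the given tensor product action on $\Cat$ with the Rouquier action on $\Cat(\bnu)$ appearing in its definition.

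Next I would recall that $\Cat(\bnu)=T^\bnu\mmod$ is, by the very construction of $T^\bnu$ in \cite{WebCTP}, equipped with an action of the Cautis-Lauda $2$-category of \cite{CaLa}: the diagrammatic presentation of $T^\bnu$ provides all of the generating $2$-morphisms (the units and counits of the biadjunctions, the dots, the crossings, and the bubbles) and the Cautis-Lauda relations hold there by \cite[2.11]{WebCTP} (compare \cite[1.12]{WebCTP}). A Cautis-Lauda action has an underlying Rouquier action, obtained by forgetting the surplus data; for $\Cat(\bnu)$ this underlying action is exactly the one used in (TPC1)--(TPC3), since both are assembled from the same biadjoint pair $(E_i,F_i)$ and the same KLR action, and the extra Cautis-Lauda $2$-morphisms restrict to Rouquier's $\rho_{s,\mu}$ by the defining relations of the two $2$-categories.

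Finally, conjugating the Cautis-Lauda action on $\Cat(\bnu)$ through $\eta$, together with the coherence isomorphisms packaged in $\eta$, yields an action of the Cautis-Lauda $2$-category on $\Cat$; because $\eta$ is a morphism of $\mathfrak{A}$-representations, the Rouquier action underlying this transported action is canonically the given tensor product action, so we have strengthened it as claimed. The one step I expect to require the most care is the last sentence of the previous paragraph --- checking that the Rouquier action obtained by forgetting the Cautis-Lauda structure on $T^\bnu\mmod$ genuinely coincides with the one built into our definition of a tensor product categorification; everything else is either the cited construction of \cite{WebCTP} or formal transport of structure along an equivalence of $2$-representations.
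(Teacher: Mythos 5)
Your transport-of-structure argument is exactly the proof the paper has in mind but leaves implicit: the paper states this corollary without a proof, attributing it to \cite[Thm.~A]{KI-HRT} and letting the phrase ``This theorem shows that many structures \dots come for free'' signal that it is now a formal consequence of Theorem~\ref{main2}. Your outline --- fix a strongly equivariant equivalence $\eta\colon\Cat\xrightarrow{\sim}\Cat(\bnu)$, observe that $\Cat(\bnu)$ already carries a Cautis--Lauda action, and conjugate that action through $\eta$ --- is precisely what ``comes for free'' means, and the closing paragraph where you flag the compatibility of the underlying Rouquier actions as the one delicate point is the right thing to worry about.

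The one inaccuracy is in the attribution. You source the Cautis--Lauda action on $T^\bnu\mmod$ to \cite[2.11]{WebCTP} and \cite[1.12]{WebCTP}. But \cite[2.11]{WebCTP} only produces the categorical $\fg$-action in Rouquier's weaker sense (it is what the paper itself cites for ``This category is a categorical $\g$-module''), and \cite[1.12]{WebCTP} is a classification statement for Cautis--Lauda $2$-representations, not a construction of such a structure on $T^\bnu$. The actual verification that the Rouquier action on $T^\bnu\mmod$ extends to a Cautis--Lauda action --- which requires checking the curl, bubble, and inversion relations that Rouquier's framework does not impose directly --- is the content of \cite[Thm.~A]{KI-HRT}, which is why the paper makes that the named citation for the corollary. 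With that substitution your proof is the intended one.
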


Moreover, Theorem \ref{main2} provides a new and independent proof of
\cite[\ref{m-equiv}]{Webmerged}, which establishes an equivalence of
$\Cat$, the sum of blocks in a parabolic category $\OCat$ mentioned in
Section \ref{sec:relat-with-prev}, and the category
$\Cat(\omega_{m_1},\omega_{m_2},\ldots,\omega_{m_n})$.  It also shows that the subquotient categories $\Cat_e$ from
Section \ref{sec:relat-with-prev} only depend on the ordered products
they categorify and not on the equivalence class $e$.

\excise{
Another perhaps surprising fact that this shows is the naturality of
``condensing'' some of the tensor factors.
\begin{Prop}\label{Prop:condens}
  For any tensor product categorification $\Cat(\nu_1,\ldots,\nu_n)$,
  and list of indices $1\leqslant j_1< \cdots <j_m< n$, we have an exact
  quotient functor \[p_{\mathbf{j}}\colon \Cat(\nu_1,\ldots,\nu_n)\to \Cat(\nu_1+\cdots
  +\nu_{j_1},\nu_{j_1+1}+\cdots
  +\nu_{j_2},\dots,\nu_{j_m+1}+\cdots+\nu_n)\] categorifying the
  natural projection of tensor product representations.
\end{Prop}
\begin{proof}
  By \cite[\ref{m-split-strands}]{Webmerged}, there's an idempotent $e_{\mathbf{j}}$ in
  $T^\bnu$ such that $e_{\mathbf{j}}T^{\bnu}e_{\mathbf{j}}\cong
  T^{\bnu_{\mathbf{j}}}$ where $\bnu_{\mathbf{j}}=(\nu_1+\cdots
  +\nu_{j_1},\dots,\nu_{j_m+1}+\cdots+\nu_n)$.  This is
  the sum of the diagrams with no crossings, no dots, and no black
  strands between the red strands corresponding to weights we have
  condensed.

  Thus, the desired exact functor $T^{\bnu}\mmod\to
  T^{\bnu_{\mathbf{j}}}\mmod$ is just multiplication by $e_{\mathbf{j}}$.
\end{proof}

Assume that $\fg$ is of
finite or affine type A, that $\C$ is algebraically closed and of
characteristic $p$ (which may be 0) and the polynomials $Q_{ij}$ are chosen as in
\cite{BKKL}.  We consider the cases where
\begin{itemize}
\item[(1)] $\fg\cong\mathfrak{{sl}}_m$.
\item[(2a)] $\fg\cong\mathfrak{\widehat{sl}}_m$ and $m$ is coprime to $p$.
   \item[(2b)] $\fg\cong\mathfrak{\widehat{sl}}_m$ and $m=p$.
\end{itemize}
\begin{Cor}\label{Cor:type_A_fund}
  The category $T^\bnu_\mu\mmod$ is
  strongly equivariantly equivalent to a quotient of a block of the
  representations of a cyclotomic $q$-Schur algebra with
$q$ not a root of unity, or a root of unity of order $>m$ in case (1),  a primitive $m$th root of unity
in $\C$ in case (2a) and $q=1$ in case (2b).

Assume that all the weights $\nu_i$ are fundamental. In case (1), no
quotient is necessary and each block can also be described as a block of a
parabolic category $\mathcal{O}$ over $\mathfrak{gl}_k$.  In case (2),  the
quotient is exactly that killing all simples corresponding to a multipartition with a component which
isn't $m$-regular.
\end{Cor}
\begin{proof}
Note that we need only show the fundamental case.  If $q$ is not a
root of unity, the case (1)
follows from \cite[\ref{m-quiver-schur}]{Webmerged}, which proves a graded Morita equivalence
between $T^\bnu_\mu$ and a grading of the cyclotomic $q$-Schur algebra
defined by Hu and Mathas. The interpretation in terms of category
$\mathcal{O}$ in proven in \cite[\ref{m-equiv}]{Webmerged}. The case (2) follows from
\cite[\ref{m-q-Schur}]{Webmerged}.  Case (1) for $q$ a root of unity of order $o>m$
follows from this one using the embedding
$\mathfrak{sl}_m\hookrightarrow \mathfrak{\widehat{sl}}_o$. The
algebra $T^\bnu_\mu$ for $\mathfrak{sl}_m$ is isomorphic to the analogous
algebra for $\mathfrak{\widehat{sl}}_o$ for certain weight spaces;
we've already realized these as the appropriate sort of quotient.
\end{proof}

We should note that with a little care, we could give a new proof of
these equivalences by showing that these quotients of modules over cyclotomic
$q$-Schur algebras are in fact a tensor product
categorifications (since we already know that $T^\bnu\mmod$ is).
}

\excise{

So far, we have considered the case when the base field was of characteristic $0$. Now let us consider some natural examples
of tensor product categorifications that arise in characteristic $p$. We still can describe the tensor product categorifications
for $\hat{\mathfrak{sl}}_p$ as quotients of the categories of modules over degenerate cyclotomic Schur algebras, compare with
(2) of Corollaries \ref{Cor:type_A_fund}, \ref{Cor:type_A_gen}. For $\g=\mathfrak{sl}_m$ with $m<p$ we can realize tensor product
categorifications with fundamental $\nu_i$'s as subquotients in the category $\operatorname{Rep}(\operatorname{GL}_n)$ of rational
representations of $\operatorname{GL}_n$. Namely, recall that the latter is a highest weight category that comes equipped with
a categorical action of $\mathfrak{sl}_p$, see \cite[7.5]{CR04}, that is highest weight in the sense of \cite{LoHWCII}. The labeling
set is the set of $n$-tuples $a_1>a_2>\ldots>a_n$ of integers. We can represent these highest weights as infinite to the left Young
diagrams with $n$ rows with rightmost boxes in the positions $a_1+1,a_2+2,\ldots,a_n+n$. The action of the generator $f_i$ of $\hat{\mathfrak{sl}}_p$ on $K_0$ is by adding  boxes with residue $i$, while $e_i$ removes boxes with residue $i$. Now fix consecutive
residues $r_1,\ldots,r_m$ and consider all diagrams with given sets of boxes whose residues are different from $r_1,\ldots,r_m$.
Any such set of diagrams is an interval in the highest weight poset. So we can form the subquotient category associated to
this interval. It is not difficult to see that this subcategory carries a categorical action of $\mathfrak{sl}_m$
(there are natural actions of the functors $E_{r_1},\ldots,E_{r_m}, F_{r_1},\ldots,F_{r_m}$) and that this is a categorification
of some tensor product $V_1\otimes\ldots\otimes V_k$ of fundamental representations of $\mathfrak{sl}_m$.
\ivantodo{Well, this is rather sketchy. Please let me know if you want me to elaborate on that or if you want me
to delete this discussion completely.}}


\excise{
Recall the standardly stratified quotient $\underline{\Cat}^+$ of $\Cat$ that is a tensor product categorification
of $V_1\otimes\ldots\otimes V_{n-1}$. Let $\underline{\Cat}^+_{top}$ be the top quotient of $\underline{\Cat}^+$,
a categorification of the Cartan component of $V_1\otimes\ldots\otimes V_{n-1}$ and let $\underline{\pi}_{top}:\underline{\Cat}^+
\twoheadrightarrow \underline{\Cat}^+_{top}$ denote the projection functor. The goal of this subsection
is to produce an embedding
$\varepsilon:\underline{\Cat}^+_{top}\hookrightarrow \Cat_{top}$.

Recall that $\pi$ denotes the projection $\Cat\twoheadrightarrow \underline{\Cat}^+$. Its left adjoint $\pi^!$
embeds $\underline{\Cat}^{+\oDelta}$ into $\Cat^{\oDelta}$ and, in particular, $\pi^!(P_+(\lambda))=P(\lambda)$.
\begin{Prop}
We have $\varepsilon\circ \underline{\pi}_{top}(P_+(\lambda))\cong \pi_{top}(P(\lambda))$.
\end{Prop}}

These techniques are also useful for analyzing the category $\cO$ for
a Lie superalgebra $\mathfrak{gl}(m|n)$.  These applications will be
explored further is a forthcoming joint paper of Brundan and the
authors \cite{BLW}.
\excise{
\subsection{The category $\OCat^{m|n}$}
\label{sec:category-comn}

One particularly interesting application of this theory is to the
structure of category $\OCat$ for the Lie superalgebra
$\mathfrak{gl}(m|n)$.

Fix a homogeneous Borel of $\mathfrak{gl}(m|n)$; the type of this Borel (i.e. the
sequence of super-dimensions of the associated flag $V_i$) is determined by
a sign sequence $(\epsilon_1,\dots, \epsilon_{m+n})$ where
$\epsilon_i$ is $+$ if $V_{i+1}/V_i$ is concentrated in even degree,
and $-$ if it is concentrated in odd.  Using the conventions of Kujawa
\cite{Kujawa},  we can parameterize the weights that appear in this
category with $m+n$-tuples of integers
$\mathbf{r}=(r_1,\dots, r_{m+n})$.  The parity of such a weight is the class in
$\Z/2\Z$ defined by $\sum_{\epsilon_i=-}r_i$.

Now consider the category
$\OCat^{m|n}$ of modules over $\mathfrak{gl}(m|n)$ which are locally
finite for this Borel, finitely generated, and have finite
dimensional, integral weights and $\Z/2\Z$ graded according to the
parity of weight spaces defined above.

Each weight
$(r_1,\dots, r_{m+n})$ defines a unique simple module with this as its
highest weight.  Two simple modules are in the same
block if and only if the signed multiplicity of each integer is the
same in the two $r$-vectors, and a simple is typical if there is no
pair with $r_i=r_j$ and $\epsilon_i\neq \epsilon_j$.

As in Brundan \cite{Bruglmn}, we wish to consider the functors $FM:= M\otimes
\C^{m|n}$ and $EM:= M\otimes
(\C^{m|n})^\star$.  These functors are easily decomposed; while
Brundan describes this decomposition in terms of projections to
blocks, we prefer to see it as the spectrum of an operator.
Let $\Theta\in U(\mathfrak{gl}(m|n))$ be the unique central element
which acts on the representation $L(\mathbf{r})$ by the eigenvalue
$\nicefrac{1}{2}\sum \epsilon_ir_i^2$.  This element is easily constructed
from those given by Kujawa \cite[\S 3.1]{Kujawa}.  Let
\[\Omega=\Delta(\Theta)-\Theta\otimes 1 -\nicefrac{1}{2} \in
U(\mathfrak{gl}(m|n))\otimes U( \mathfrak{gl}(m|n)).\]  This
commutes with the tensor product action of $\mathfrak{gl}(m|n)$ and
thus acts by a scalar on any indecomposable submodule of $M\otimes
\C^{m|n}$.
\begin{Lem}
  The functor $F_aM$ is given by the generalized $a$-eigenspace of $\Omega$ acting
  on $FM$.
\end{Lem}
\begin{proof}
 Assume that $M$ is the Verma module attached to the vector
 $\mathbf{r}$; then $F_aM$ has a Verma filtration consisting of all
 vectors $\mathbf{r}'$ obtained from $\mathbf{r}$ by turning $r_i$
 from $a$ to $a+1$ if $\epsilon_i=+$ and from $a+1$ to $a$ if
 $\epsilon_i=-$.  In this case, the eigenvalue of $\Omega$ on this
 subquotient is either \[\sum \nicefrac{1}{2} \epsilon_i(r_i')^2-
\nicefrac{1}{2} \epsilon_ir_i^2-\nicefrac{1}{2}=\nicefrac{1}{2}
(a+1)^2-\nicefrac{1}{2} a^2-\nicefrac{1}{2}=a\] if $\epsilon_i=+$ or
\[\sum \nicefrac{1}{2}\epsilon_i(r_i')^2-
\nicefrac{1}{2} \epsilon_ir_i^2-\nicefrac{1}{2}=\nicefrac{1}{2}
-a^2+\nicefrac{1}{2} (a+1)^2-\nicefrac{1}{2}=a\] if $\epsilon_i=-$.  Thus, the result follows.
\end{proof}

\begin{Lem}
  The operators $X=\Omega \colon E(-)\to E(-)$ and $T\colon E^2(-)\to
  E^2(-)$ defined by the usual switch of tensor factors satisfy the
  degenerate Hecke relations
\[T(X\otimes 1)T=X_{12}-T.\]
That is, the functor $E^k$ has a natural action of the degenerate
affine Hecke algebra.
\end{Lem}
\begin{proof}
  \begin{align*}
    T(X\otimes 1)T&=T (\Delta_{12}(\Theta)-\Theta\otimes 1 \otimes
    1-\nicefrac{1}{2} )T\\ &=\Delta_{13}(\Theta)-\Theta\otimes 1 \otimes
    1-\nicefrac{1}{2}\\& =X_{12}+\Delta_{13}(\Theta)-\Theta\otimes 1
    \otimes 1-\Delta^2(\Theta)+\Delta_{12}(\Theta) \\ &=X_{12}-T.
  \end{align*}
\end{proof}

Now, fix an isomorphism between the completion of the degenerate
affine Hecke algebra $H_k$ at the system of ideals $I_\ell$ generated
by the polynomials
\[C_\ell(X_1)=\prod_{a=-\ell}^\ell (X_1-a)^{\ell-|a|+1}\] and the
completion of the KLR
algebra $R_k$ of $\mathfrak{sl}_\infty$ at the ideals $J_\ell$
generated by all elements of degree above $\ell$ and all idempotents containing
labels $a$ with $|a|>\ell$.  Such an isomorphism is implicit in the
work of Brundan and Kleshchev \cite{BKKL}, and is easily derived from
the technique applied in \cite{Webnote} in the nondegenerate case.
\begin{Prop}
  The category $\OCat^{m|n}$ carries a categorical $\mathfrak{sl}_\infty$
  action defined by the functors $F_a$ and $E_a$.
\end{Prop}
\begin{proof}
  Using the isomorphism we have fixed above, we can give the functor
  $E^d$ an action of the rank $d$ KLR algebra $R_d$; we need only
  check that the action of the dAHA extends to the desired
  completion.  Of course, we need only check this on each
  indecomposable projective object $P$;
  having fixed the projective, only finitely many integers occur as
  eigenvalues of $X_1$ and each of these acts with finite length Jordan blocks
  since $ E^dP$ is finite length.  Thus, the minimal  polynomial of
  $X_1$ acting on  $ E^dP$ is a divisor of $C_\ell$ for some $\ell$,
  and the action extends to the completion.

  Thus, we can now apply \cite[5.27]{Rou2KM}:  we indeed have adjoint
  functors $E_a$ and $F_a$; their action is obviously integrable (we
  can only turn an $a$ into an $a+1$ or vice versa finitely many times
  before we run out) and compatible with the block decomposition of
  $\OCat^{m|n}$ and they have the desired action of the KLR algebra as
  argued above.  Thus, we have a categorical $\mathfrak{sl}_\infty$
  action, as desired.
\end{proof}

Consider the set $X([k,\ell])$ of weights such that $r_i\in [k,\ell]$.
Under the Bruhat order, this set of weights is an interval; whenever
we fix an interval in the
poset of weights, there is an induced highest weight category
$\OCat^{m|n}([k,\ell])$ (already discussed in one case in Section
\ref{sec:relat-with-prev}).  In terms of abelian categories, we can
realize this as a subquotient.
\begin{defi}
  Let $\OCat^{m|n}(\leqslant [k,\ell])$ be the subcategory of objects
  in $\OCat^{m|n}$ with no composition factors $\nleqslant
  X([k,\ell])$.  Let $\OCat^{m|n}([k,\ell])$ be the quotient of
  $\OCat^{m|n}(\leqslant [k,\ell])$ by the category  $\OCat^{m|n}(<
  [k,\ell])$ of modules whose composition factors are all $< X([k,\ell])$.
\end{defi}
Note that we have a right exact functor sending any module to its
largest quotient in $\OCat^{m|n}(\leqslant [k,\ell])$, which thus
induces a functor $\OCat^{m|n}\to \OCat^{m|n}([k,\ell])$.
After passing to the
dg-category of complexes in $\OCat^{m|n}$, this functor becomes a dg-quotient functor with kernel generated by
standards $M(\mathbf{r})$ for $ \mathbf{r}\notin X([k,\ell])$.
Furthermore, we have an induced right exact functor $q\colon \OCat^{m|n}([-k,k])\to
\OCat^{m|n}([-k+1,k-1])$.

The subcategories $\OCat^{m|n}(\leqslant [k,\ell])$ and $\OCat^{m|n}(<
[k,\ell])$ are closed under the action of $F_a$ and $E_a$ with $k\leqslant
a < \ell$, and thus the quotient carries a categorical action of
$\mathfrak{sl}_{\ell-k}$.  Let $V^+$ be the standard vector
representation of $\mathfrak{sl}_{\ell-k}$ and $V^-$ its dual.

\begin{Prop}
  The categorical $\mathfrak{sl}_{\ell-k}$-action on
  $\OCat^{m|n}([k,\ell])$ is a tensor product action for the tensor
  product $V^{\epsilon_1}\otimes\cdots\otimes V^{\epsilon_{m+n}}$.
\end{Prop}
\begin{proof}
  The property (TPC1) follows from the definition of Bruhat order.
  The property (TPC2) follows from \cite[4.28]{Bruglmn}.  The property
  (TPC3) follows from \cite[4.25]{Bruglmn}.
\end{proof}
Thus, we can write a tower of functors
\[\OCat^{m|n}\longrightarrow \cdots \overset{q}\longrightarrow \OCat^{m|n}([-k,k])\overset{q}\longrightarrow
\OCat^{m|n}([-k+1,k-1])\overset{q}\longrightarrow \cdots\]
Furthermore, by Corollary \ref{grading}, each of the categories in this tower has a canonical
graded lift of the categorical $\mathfrak{sl}_{2k}$
action.  Since each of categories is equivalent as a graded category to a block of
parabolic category $\OCat$ for $\mathfrak{gl}_N$ by \cite[\ref{m-sln-Koszul}]{Webmerged},  this grading is  Koszul.
\begin{Prop}
  The functor $q\colon \OCat^{m|n}([-k,k])\longrightarrow
\OCat^{m|n}([-k+1,k-1])$ has a graded lift.
\end{Prop}
\begin{proof}
We can consider the quotient of the graded lift of
$\OCat^{m|n}([-k,k])$ by the gradeable modules killed by $q$;  this
category is a graded lift of $\OCat^{m|n}([-k+1,k-1])$, and the
functor $q$ is strongly equivariant for the $\mathfrak{sl}_{2k-2}$
action on these two categories.  By the uniqueness of the graded lift
of Corollary \ref{grading}, this gives a graded lift as desired.
\end{proof}

\begin{Thm}
  The category $\OCat^{m|n}$ has a unique standard Koszul graded lift compatible with
  the lifts of the quotients $\OCat^{m|n}([-k,k])$; in particular, the
  functors $E_a$ and $F_a$ have unique self-dual graded lifts on this category.
\end{Thm}
\begin{proof}
  First, note that for every projective $P$ in  $\OCat^{m|n}$, there is
  a $k$ sufficiently large that every composition factor of $P$
  survives in $\OCat^{m|n}([-k,k])$.  In $\OCat^{m|n}([-k,k])$ there
  is an injection $q(P)\hookrightarrow F^\ell_{(k)}\mathbb{V}_k$.  We can
  lift this to an injection $P\to F^\ell_{(k)}\tilde {\mathbb{V}}_k$
  where $\tilde {\mathbb{V}}_k$ is the object in $\OCat^{m|n}$ with
  $\mathbf{r}=(\epsilon_1k,\dots, \epsilon_{m+n}k)$.  That is, every projective in $\OCat^{m|n}$ is
  isomorphic to a submodule of $(F^\ell_{(k)}\mathbb{V}_k)^{\oplus
    p}$.

We call a projective submodule of $(F^\ell_{(k)}\mathbb{V}_k)^{\oplus
    p}$ homogenous if its image in $\OCat^{m|n}([-k',k'])$ is
  homogeneous for all $k'\geqslant k$.  For two homogeneous projectives, we can
  write the (finite dimensional) homomorphism space $\Hom_{\OCat^{m|n} }(P,P')$ as an
  inverse limit:
\[\Hom_{\OCat^{m|n} }(P,P')\longrightarrow \cdots \longrightarrow \Hom_{\OCat^{m|n}([-k,k])  }(P,P')\longrightarrow
\Hom_{\OCat^{m|n}([-k+1,k-1])  }(P,P')\longrightarrow \cdots\]
Since the spaces $\Hom_{\OCat^{m|n}([-k,k])  }(P,P')$ are compatibly
graded, the inverse limit inherits a grading compatible with
composition.

For any standard and simple graded modules $M,L$, we have that for $k$
sufficiently large, the map $\Ext^i_{\OCat^{m|n}}(M,L)\cong
\Ext^i_{\OCat^{m|n}([-k,k])}(M,L)$ is an isomorphism; since the
latter category is standard Koszul, this space is homogeneous of
degree $i$, so the former category is as well.  Similarly, the grading
of the action of $E_a$ and $F_a$ is inherited from the fact that the
quotient categories have compatible graded lifts of these functors.
\end{proof}

This category inherits a number of nice properties from
$\OCat^{m|n}([-k',k'])$.  In particular, every simple has a
unique self-dual graded lift.  We abuse notation and let
$L(\mathbf{r})$ denote this lift.  Similary, we let $M(\mathbf{r}),
P(\mathbf{r})$ be the unique graded lift of the standard and
projective cover which map to $L(\mathbf{r})$, and $T(\mathbf{r})$ the
unique self-dual lift of the unique tilting where $\mathbf{r}$ which
has $L(\mathbf{r})$ and no representation higher in Bruhat order as a
composition factor.

This grading has the salutary effect of putting Brundan's conjecture
on KL polynomials for superalgebras \cite[4.32]{Bruglmn} in a graded
context.  This conjecture was recently proven by Cheng, Lam and Wang \cite{CLW}; we should note that one consequence of Theorem
\ref{Brundan} below is a totally different proof of their results, but
with the additional twist of computing graded multiplicities. Let $\mathscr{V}^{+}$ be the $U_q(\mathfrak{sl}_\infty)$-module
with basis $v_{a}^+$ and action
\[K_av_b ^+=q^{\delta_{a,b}}v_b ^+\qquad E_av_b ^+=\delta_{a+1,b}v_a ^+\qquad
F_av_b ^+=\delta_{a,b}v_{a+1}^+\]
with $\mathscr{V}^{-}$ denoting its dual, via the form $\langle
v_a^-,v_b^+\rangle =(-q)^{-a}\delta_{a,b}$.

\begin{Thm}\label{Brundan}
  The graded Grothendieck group of $\OCat^{m|n}$ is isomorphic to the
  tensor product $\mathscr{V}^{\epsilon_1}\otimes \cdots \otimes
  \mathscr{V}^{\epsilon_{m+n}}$ via the map sending
  $M(\mathbf{r})\mapsto v_{\mathbf{r}}=v^{\epsilon_1}_{r_1}\otimes \cdots \otimes
 v^{\epsilon_{m+n}}_{r_{m+n}}$.  The duality on $\OCat^{m|n}$ induces
 a bar involution on $\mathscr{V}^{\epsilon_1}\otimes \cdots \otimes
  \mathscr{V}^{\epsilon_{m+n}}$; the classes of self-dual tilting (simple) modules are the unique
  basis such that
  \begin{itemize}
  \item $\overline{[T(\mathbf{r})]}=[T(\mathbf{r})]$ and
    $\overline{[L(\mathbf{r})]}=[L(\mathbf{r})]$,
\item $[T(\mathbf{r})]\in v_{\mathbf{r}} +\sum_{\mathbf{r'}}
  q\Z[q]v_{\mathbf{r'}}$ and $[L(\mathbf{r})]\in v_{\mathbf{r}} +\sum_{\mathbf{r'}}
  q^{-1}\Z[q^{-1}]v_{\mathbf{r'}}$.
  \end{itemize}
The bar involution is uniquely characterized amongst anti-linear
involutions by the properties that \begin{enumerate}
\item for any $X\in U(\mathfrak{sl}_{\infty})$, we have that
  $\overline{Xv}=\bar X\bar v$.
\item if $\epsilon_{1}a>\epsilon_{1}r_i$ for all $r_i$, then we
  have that \[\overline{v^{\epsilon_{1}}_{a}\otimes v_{\mathbf{r}} }= v^{\epsilon_{1}}_{a}\otimes\overline{v_{\mathbf{r}}}\]
\end{enumerate}
and agrees with that of \cite[2.14]{Bruglmn} in the case where
$\boldsymbol{\epsilon}=(-,\dots, -,+,\dots, +)$.
\end{Thm}
\begin{proof}
  Each statement follows from the corresponding statements for
  $\OCat^{m|n}([-k,k])$. Their graded Grothendieck groups are the
  desired tensor product where we have killed $v^{\pm}_a$ for $|a|>k$
  by \cite[Th. A]{Webmerged}; the match with canonical bases in these
  tensor products is well known.  See, for example, \cite[Th. 5]{Subasis} or
  \cite[6.11]{WebCB} (note that in the latter case, we get projectives
  instead of tiltings since we are using a Ringel dual bar
  involution).

  That the bar-involution satisfies the desired properties follows
  from the same facts for the quotients $\OCat^{m|n}([-k,k])$ proven
  in \cite[\ref{m-Uq-action}]{Webmerged} (with tensor factors reversed to account for
  Ringel duality).

  To show this agrees with Brundan's bar involution, we need only show
  that his satisfies the properties (1-2).  Condition (1) is immediate
  from \cite[2.14(ii)]{Bruglmn}.  Thus, we turn to showing (2).  By \cite[2.12]{Bruglmn}, we can write $v_{\mathbf{r}}$
  as a sum over $u_i v_{\mathbf{r}^{(i)}}$ where $\mathbf{r}^{(i)}$ is
  typical and $u_i$ fixes $v^{\epsilon_1}_a$.  Thus, it suffices to
  prove the result when $\mathbf{r}$ is typical.

  We can write $v_{\mathbf{r}}=v_{\mathbf{r}'}h$ for $v_{\mathbf{r}'}$
  typical and anti-dominant and $h$ in the appropriate Hecke algebra,
  and we thus have that $\bar v_{\mathbf{r}}=v_{\mathbf{r}'}\bar h$.
  Now, $v^{\epsilon_{1}}_{a}\otimes v_{\mathbf{r}}
  =(v^{\epsilon_{1}}_{a}\otimes v_{\mathbf{r}'}) \cdot (1\otimes h).$
  By our conditions on $a$, the sequence $(a,r_1',\dots,r_{m+n-1}')$
  is again typical and antidominant.  Thus
\[  \overline{v^{\epsilon_{1}}_{a}\otimes v_{\mathbf{r}} }=
(v^{\epsilon_{1}}_{a}\otimes v_{\mathbf{r}'}) (1\otimes \bar h)=
v^{\epsilon_{1}}_{a}\otimes\overline{v_{\mathbf{r}}}.\qedhere\]
\end{proof}

\begin{Rem}
  The reader familiar with \cite{WebCB} might be surprised that
  tiltings rather than projectives appear here;  however, which of
  these sets of modules appear as the canonical basis is a matter of
  convention, and in particular of the choice of bar involution.  The
  space $\mathscr{V}^{\epsilon_1}\otimes \cdots \otimes
  \mathscr{V}^{\epsilon_{m+n}}$ has a natural sesquilinear form such
  that $\langle v_\mathbf{r},
  \bar{v}_\mathbf{s}\rangle=\delta_{\mathbf{r},\mathbf{s}}$.  We have
  a dual bar involution (in the sense of \cite[\S 7]{WebCB}) which
  will have the classes of projectives rather than the tiltings as its
  canonical basis.  We've used tiltings here to match Brundan's
  conventions in \cite{Bruglmn}.
\end{Rem}

}

\section{Crystals}
\subsection{Main result}

Recall that we have defined crystal operators $\tilde{e}_i,\tilde{f}_i$ on the set of simples
in a tensor product categorification $\Cat$ of
$V_1\otimes\cdots\otimes V_n$; this induces a crystal structure on
$\Lambda$, which we call the {\bf categorical} crystal structure to
avoid confusion.

Our main result in this section is to give a combinatorial description
of this structure; both the result and its proof generalize those from
\cite{LoHWCI}. Recall that the set of labels for simples in $\Cat$ is
$\{\bla=(\lambda_1,\ldots,\lambda_n)\}$, where $\lambda_j$ is a label
of a simple in the categorification $\Cat^j$ of $V_j$.  Recall that
the crystal of $\Cat^j$ is known: it is isomorphic to the crystal of
$V_j$ by \cite[\S 5.1]{LV}. Since this crystal is irreducible, this
determines the crystal operators,
${}_{j}\tilde{e}_i,{}_{j}\tilde{f}_i$, uniquely.

To describe the crystal operators for $\Cat$, we will need some notation
and the notion of the $i$-signature.  Let
\begin{itemize}
  \item $h_+^j(\lambda_j)$ be the maximum over all integers $k$ such that
  $({}_{j}F_i)^k L^j(\lambda_j)\neq 0$, and
\item $h_-^j(\lambda_j)$ be
  maximum over all integers $\ell$ such that $({}_{j}E_i)^\ell
  L^j(\lambda_j)\neq 0$.
\end{itemize}
\begin{defi}
  For each index $j$, we have a sign sequence given by concatenating $h_+^j(\lambda_j)$ many  $+$-signs,
  followed by $h_-^j(\lambda_j)$ many  $-$-signs.
  The {\bf $i$-signature} of $\bla$ is a sequence of $+$'s and $-$'s
  given by concatenating the sequences from each index in turn.
\end{defi}

For example, consider the case where $\g=\sl_2$, and
$\nu_1=\nu_2=\nu_3=3$.  In this case all $i$-signatures will be of
length 9; for example, if $\la_1$ is the  unique crystal element of
weight $1$ and $\la_2,\la_3$ have weight $-1$, then we have three groups: $(++-), (+--),(+--)$,
then the $i$-signature is $(++-+--+--)$.  Note that outside $\sl_2$,
the length of the $i$-signature can differ for labels in the
same representation.

We annotate each $i$-signature as follows: if you find a consecutive pair
of the form $-+$, cross both out, and continue this process, ignoring
crossed out symbols, until every pair of uncrossed $+$ and $-$-signs
have the $+$ sign to the left.  This process is often visualized by
replacing $+$ with the open parenthesis symbol $($ and $-$ with the
close parenthesis $)$ and ignoring matching
parentheses; in \cite[2.4]{LoHWCI}, symbols are turned into $0$'s
rather than struck out.  This allows us to speak, in any $i$-signature, of crossed
and uncrossed symbols. We let $h_{\pm}(\lambda)$ be the number of
uncrossed symbols remaining.  In the example above, we strike out the
symbols in positions 3,4,6, and 7 and arrive at
$(++\cancel{-}\cancel{+}-\cancel{-}\cancel{+}--)$.

Now, we define a second crystal structure on $\Lambda$ using the usual rule
for tensor products of crystals $\Lambda=\Lambda_1\otimes \cdots
\otimes \Lambda_n$; we call this the {\bf combinatorial} crystal
structure on $\Lambda$.  When we write a $\te_i\bla$ or $\tf_i\bla$,
we will always mean the action in the combinatorial structure, and
always write $\te_iL(\bla)$ or $\tf_iL(\bla)$ for the categorical structure.

Let us describe the tensor product crystal rule in our
language. Assume that the $j$-th group from the left contains the
rightmost uncrossed $+$, and the $j'$-th group contains the leftmost
uncrossed $-$; then we
define \[\tilde{e}_i(\lambda_1,\ldots,\lambda_n)=(\lambda_1,\ldots,
{}_j\tilde{e}_i\lambda_{j'},\ldots,\lambda_n)\qquad
\tilde{f}_i(\lambda_1,\ldots,\lambda_n)=(\lambda_1,\ldots,{}_{j'}\tilde{f}_i\lambda_j,\ldots,\lambda_n).\]
If
there is no uncrossed $-$, we set $\tilde{e}_i (\bla)=0$, and if
there is no uncrossed $+$, we set $\tilde{f}_i\bla=0$.  In the example
above, $j=2$ and $j'=1$.

We remark that the $i$-signature of $\tilde{f}_i\lambda$ is obtained
from the $i$-signature of $\lambda$ by switching the rightmost
uncrossed $+$ to a $-$; the $i$-signature of $\tilde{e}_i\lambda$ is
obtained by switching the leftmost uncrossed $-$ to a $+$.
\excise{Also note that the numbers $k,k'$ can be described
  alternatively as follows.  Take the $i$-signature of $\lambda$ and
  start to transform it using the following rules. On the first step
  we take a consecutive pair $-+$ and replace it with $00$. On the
  next steps we take a consecutive sequence $-0\ldots0+$ (we may have
  no zeroes) and replace it with $0\ldots0$ preserving the length. We
  finish when no $+$ occurs to the right of a $-$.  The resulting
  sequence is called the reduced $i$-signature of $\lambda$. Clearly,
  $k$ is the index of the rightmost $+$ and $k'$ is the index of the
  leftmost $-$. The number of $\pm$ in the reduced signature is
  denoted by $h_{\pm}(\lambda)$.  Clearly, $h_-(\lambda)$ is the
  maximal number $\ell$ such that $\tilde{e}_i^\ell \lambda\neq 0$,
  and a similar description in terms of $\tilde{f}_i$ is true for
  $h_+(\lambda)$.}

Here is our  main theorem concerning the crystal structure of $\Cat$. It is a partial generalization
of the main result of \cite{LoHWCI} and answers a question of the
second author from \cite[\ref{m-sec:crystal}]{Webmerged}.

\begin{Thm}\label{Thm:cryst}
The categorical and combinatorial crystal structures coincide.  That
is \[\tilde{e}_i L(\lambda)=L(\tilde{e}_i\lambda)\quad \text{ and
}\quad \tilde{f}_i L(\lambda)=L(\tilde{f}_i\lambda).\]
\end{Thm}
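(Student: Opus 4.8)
The plan is to argue by induction on $n$, with the index $i$ fixed throughout; since both crystal structures are defined using only $E_i,F_i$ and the operators ${}_j\tilde e_i,{}_j\tilde f_i$, one may work with the categorical $\sl_2$-action attached to $i$ while retaining the standardly stratified structure. For $n=1$ the category $\Cat$ coincides with $\gr\Cat=\Cat^1$, and the statement is exactly the description of the crystal of a simple categorification as $B(\nu_1)$ from \cite[\S 5.1]{LV}. I would then reduce the statement for $\tilde e_i$ to the one for $\tilde f_i$: in any categorical $\sl_2$-action $\tilde e_i$ and $\tilde f_i$ are mutually inverse partial bijections on isomorphism classes of simples (Chuang--Rouquier; \cite[\S 5.1]{LV}), and the combinatorial $\tilde e_i,\tilde f_i$ are likewise, so it suffices to compute $\tilde f_i L(\bla)$ and to verify that it vanishes precisely when $\tilde f_i\bla=0$ in the combinatorial structure.

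To compute $\tilde f_iL(\bla)$ I would use the standard filtration of (TPC3). Since $\oDelta(\bla)$ has simple head $L(\bla)$, there is a surjection $F_i\oDelta(\bla)\twoheadrightarrow F_iL(\bla)$, and $F_i\oDelta(\bla)$ is filtered by the proper standard objects associated with $L^1(\lambda_1)\boxtimes\cdots\boxtimes{}_jF_iL^j(\lambda_j)\boxtimes\cdots\boxtimes L^n(\lambda_n)$ for $j=1,\dots,n$; by \eqref{eq:Ext_vanish} the order of this filtration is forced by the inverse dominance order on the corresponding weight tuples, which for fixed $\bla$ is a chain. Hence the simple head $\tilde f_iL(\bla)$ of the quotient $F_iL(\bla)$ is $L(\lambda_1,\dots,{}_j\tilde f_i\lambda_j,\dots,\lambda_n)$ for a single $j$, and the whole problem becomes: show that this $j$ is the index $j'$ singled out by the parenthesis-matching of the $i$-signature (the group carrying the leftmost uncrossed $-$), and that $F_iL(\bla)=0$ exactly when the $i$-signature has no uncrossed $-$. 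Two inputs feed into this. First, by the inductive hypothesis the dual splitting $\bar\Cat^-\subset\Cat$ and the quotient $\underline{\Cat}^+$ constructed in the previous section are tensor product categorifications of $V_2\otimes\cdots\otimes V_n$ and of $V_1\otimes\cdots\otimes V_{n-1}$ whose categorical crystals are already the corresponding smaller tensor product crystals; since $\bar\Cat^-$ is closed under $E_i$ and $\underline{\Cat}^+$ inherits $\underline F_i$ compatibly with $F_i$, these pin down $\tilde e_i$ on the simples whose first label is of highest weight and $\tilde f_i$ on those whose last label is of highest weight, which by associativity of the tensor product rule are exactly the places where the combinatorial rule factors through the smaller products. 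Second, for the remaining labels one runs a secondary induction on the distance of $\varrho(\bla)$ below the top tuple $\bnu$: one writes $L^j(\lambda_j)$ inside the head (equivalently socle) of ${}_jF_iL^j(\lambda_j')$ for a suitable $\lambda_j'$ one step closer to the top, embeds $\oDelta(\bla)$ into $F_i\oDelta$ of the corresponding label via (TPC3), and transports the computation of $\tilde f_i$ downward exactly as in the proof of Proposition \ref{Prop:stand_soc}; the fact that the head of $F_iL(\bla)$ is not collapsed onto a wrong filtration piece is controlled by the statement proved there---that no simple killed by the relevant power of $E$ lies in the socle of a standard---together with the biadjointness of $E_i$ and $F_i$. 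Matching the weight bookkeeping with the cancellation of $+$ and $-$ signs then forces $j=j'$.

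The hard part, I expect, is precisely this last matching: translating the order-theoretic data coming out of the standard filtration (which tensor factor is affected, and when $F_i$ acts as zero on a proper standard piece) into the combinatorial statement about the position of the leftmost uncrossed $-$, uniformly in $\bla$. The base case $\bla=\mathbb{V}$ is explicit, and $\bar\Cat^-$, $\underline{\Cat}^+$ handle the extremal labels, so the real work is the propagation step of the secondary induction, where one must keep track of which uncrossed signs have been consumed while moving one step down in weight; this is the analogue in the standardly stratified setting of the corresponding argument in \cite{LoHWCI}, and is where (TPC3) and the stratification are genuinely used. Once $\tilde f_iL(\bla)=L(\tilde f_i\bla)$ holds for all $\bla$, the identity for $\tilde e_i$ follows formally.
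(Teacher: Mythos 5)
Your first step matches the paper: the surjection $F_i\oDelta(\bla)\twoheadrightarrow F_iL(\bla)$ together with the (TPC3) filtration shows that $\tilde f_iL(\bla)=L({}_{j'}\tilde f_i\bla)$ for \emph{some} $j'$, and this is precisely Lemma~\ref{Lem:cryst_easy}. But at that point you explicitly flag the "hard part"---identifying $j'$ with the index picked out by the parenthesis rule---and the ingredients you offer for it are not the ones that make the argument close. Knowing that $F_i\oDelta(\bla)$ has a chain-ordered proper-standard filtration does not tell you which filtration piece contributes to the head of the quotient $F_iL(\bla)$: some pieces may be killed by the surjection, and the surviving head is governed by $\Ext^1$ data, not by the filtration order. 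Proposition~\ref{Prop:stand_soc} is about socles of standards and the double centralizer property; it does not control the head of $F_iL(\bla)$ in the way you need, and the splitting categories $\bar\Cat^-,\underline{\Cat}^+$ are used in the uniqueness theorem, not in the crystal computation. Your induction on $n$ combined with a secondary induction "down in weight" with "weight bookkeeping" is a plausible-sounding heuristic but no concrete mechanism is given for why the head lands on the leftmost uncrossed $-$ rather than some other surviving piece.

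The missing ideas are two. First, an Ext-vanishing statement of the form $\Ext^r(\Delta(\bmu),L(\bla))=0$ for $r$ below a combinatorially defined bound involving $h_{-,\ell}$ (Proposition~\ref{Prop:ext_vanish}), proved by induction using the (TPC3) filtration of $E\Delta(\bar\bmu)$ and the numerical Lemma~\ref{Lem:h-}; this is what lets one conclude that $L(\bla)$ sits in the head of $E\Delta(\tilde f\bla)$ and hence that $h_-(L(\bla))\geqslant h_-(\bla)$. Second, a counting argument: the categorical crystal is abstractly isomorphic to the tensor product crystal (via \cite[5.55]{BeKa} and \cite[5.20]{CR04}), and comparing the number of $\bla$ with a given $(w,h_-)$ in the two structures upgrades the inequality to an equality $h_-(L(\bla))=h_-(\bla)$. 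Only with this equality in hand does one run the final ordering argument in the reverse lexicographic order on $i$-signatures, via a weight induction $(\phi_{w-1})\Rightarrow(\eta_w)\Rightarrow(\phi_w)$ (not an induction on $n$), to pin down $j'$. Without the Ext vanishing and the crystal-isomorphism counting, the proposal cannot identify the correct index, so as written it does not constitute a proof.
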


Since this can be checked one simple root at a time, we fix $i$ and
suppress the subscript $i$ in $E_i,\tilde{e}_i,\alpha_i$,
etc. throughout the rest of this section.

There is a weaker version of this claim that is easy to see. For $\bla=(\lambda_1,\ldots,\lambda_n)$
we write ${}_{j}\tilde{e} \bla$ for $(\lambda_1,\ldots,{}_{j}\tilde{e}\lambda_j,\ldots,\lambda_n)$. The notation
${}_{j}\tilde{f}\bla$ has a similar meaning.
\begin{Lem}\label{Lem:cryst_easy}
We have $\tilde{e} L(\bla)=L({}_{j}\tilde{e}\bla), \tilde{f} L(\bla)=L(\,_{j'}\!\tilde{f}\bla)$
for some $j,j'$.
\end{Lem}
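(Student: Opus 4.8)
The plan is to read the crystal operators off the proper standard object attached to $\bla$ using condition (TPC3). Write $\xi=\varrho(\bla)$, and assume $EL(\bla)\neq 0$ (otherwise $\tilde e L(\bla)=0$ and there is nothing to prove). Since the head of $\oDelta(\bla)=\Delta_\xi(L_\xi(\bla))$ is the simple $L(\bla)$, there is a surjection $\oDelta(\bla)\twoheadrightarrow L(\bla)$; applying the exact functor $E$ gives a surjection $E\oDelta(\bla)\twoheadrightarrow EL(\bla)$, so every simple quotient of $EL(\bla)$ is a simple quotient of $E\oDelta(\bla)$. As $\tilde e L(\bla)$ is, by the definition of the categorical crystal, the (simple) head of $EL(\bla)$, it suffices to show that every simple quotient of $E\oDelta(\bla)$ is isomorphic to $L({}_j\tilde e\bla)$ for some $j\in\{1,\dots,n\}$.

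By (TPC3), the object $E\oDelta(\bla)=E\Delta_\xi(L_\xi(\bla))$ carries a filtration whose nonzero successive quotients are the objects $\Delta({}_{j}EL_\xi(\bla))$, $j=1,\dots,n$. Under the identification $\gr\Cat\cong\Cat^1\boxtimes\cdots\boxtimes\Cat^n$ the simple $L_\xi(\bla)$ corresponds to $L^1(\lambda_1)\boxtimes\cdots\boxtimes L^n(\lambda_n)$, and ${}_{j}E$ acts only on the $j$th tensor factor, so ${}_{j}EL_\xi(\bla)$ is the external tensor product of the $L^k(\lambda_k)$ for $k\neq j$ with ${}_{j}EL^j(\lambda_j)$. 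The crystal of $\Cat^j$ is the irreducible crystal $B(\nu_j)$ by \cite[\S 5.1]{LV}, so ${}_{j}EL^j(\lambda_j)$ has simple head $L^j({}_{j}\tilde e\lambda_j)$ whenever it is nonzero; since the simples here are absolutely irreducible, external tensor products of simples stay simple and taking heads commutes with $\boxtimes$, so ${}_{j}EL_\xi(\bla)$ has simple head, namely the simple of $\gr\Cat$ labelled ${}_{j}\tilde e\bla$. Finally, a short computation with the adjunction defining $\Delta_\xi$, together with the fact that $\Delta_\xi(-)$ takes values in $\Cat_{\leqslant\xi}$, shows that $\head\Delta_\xi(M)=L(\mu)$ whenever $\head M$ is the simple labelled $\mu$; hence each nonzero layer $\Delta({}_{j}EL_\xi(\bla))$ has simple head $L({}_{j}\tilde e\bla)$.

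It then remains to invoke the elementary fact that any simple quotient of a finitely filtered object $0=M_0\subset M_1\subset\cdots\subset M_m=B$ is a simple quotient of one of the layers $M_k/M_{k-1}$: the images of the $M_k$ in a given simple quotient form an increasing chain from $0$ to the whole object, so some $M_k$ surjects onto it and the map factors through $M_k/M_{k-1}$. Applied to the filtration of $E\oDelta(\bla)$ above, this shows that every simple quotient of $E\oDelta(\bla)$, hence $\tilde e L(\bla)=\head EL(\bla)$, lies among the $L({}_{j}\tilde e\bla)$. The argument for $\tilde f$ is word for word the same, applying $F$ to $\oDelta(\bla)\twoheadrightarrow L(\bla)$ and using the $F$-part of (TPC3): $F\oDelta(\bla)$ is filtered by the objects $\Delta({}_{j}FL_\xi(\bla))$, each with simple head $L({}_{j}\tilde f\bla)$. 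The only step requiring genuine care is the identification of $\head\Delta_\xi(-)$ in the second paragraph, where one uses that standardized objects have all composition factors labelled by elements $\leqslant\xi$, so that no simple with a larger label can appear in the head; the rest is formal once (TPC3), the simple-head property of proper standards, and the known simplicity of $\tilde e L$ and $\tilde f L$ are in hand.
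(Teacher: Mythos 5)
Your proposal is correct and follows essentially the same path as the paper's proof: surject $E\oDelta(\bla)$ onto $EL(\bla)$, apply (TPC3) to filter $E\oDelta(\bla)$ by the objects $\Delta({}_jE L_{\varrho(\bla)}(\bla))$, observe each layer has simple head $L({}_j\tilde e\bla)$, and conclude that $\tilde e L(\bla)=\head EL(\bla)$ must be one of these. The only difference is that you spell out two small intermediate facts (that $\head\Delta_\xi(M)$ corresponds to $\head M$, and that a simple quotient of a filtered object factors through some layer) that the paper leaves implicit; this is more careful but not a genuinely different route.
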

\begin{proof}
We remark that $E \oDelta(\bla)\twoheadrightarrow E L(\bla)$ and so $\tilde{e} L(\bla)$
lies in the head of $E\oDelta(\bla)$. Thanks to (HWC3), the head of $E\oDelta(\bla)$ lies the direct
sum of the heads of the modules $\Delta_{\varrho(\bla)+\alpha^j}({}_{j}\tilde{e} L_{\varrho(\bla)}(\bla))$.
Since ${}_{j}\tilde{e} L_{\varrho(\bla)}(\bla)$ has simple head, equal to $L_{\varrho(\bla)+\alpha^j}({}_{j}\tilde{e}\bla)$,
we see that the head of $\Delta_{\varrho(\bla)+\alpha^j}({}_{j}\tilde{e} L_{\varrho(\bla)}(\bla))$ equals
$L({}_{j}\tilde{e}\bla)$. This completes the proof of the first equality of the lemma. The proof of the second one
is completely analogous.
\end{proof}

\subsection{Ext vanishing}
As in \cite{LoHWCI}, our proof of Theorem \ref{Thm:cryst} is based on vanishing of certain Ext's between $\Delta(\bmu)$'s
and $L(\bla)$'s. A result here is a direct generalization of \cite[Proposition 5.5]{LoHWCI}.

For $k=1,\ldots,n$
and a label $\bla'$, define an integer $h_{-,k}(\bla')$ as the number of $-$'s
in the reduced form of the part of the $i$-signature of $\bla'$ lying in the groups to the
right of and including the $k$th. In the example
we have used before, $h_{-,1}(\bla')=h_{-,2}(\bla')=3, h_{-,3}(\bla')=2$. In general,
$h_{-,1}(\bla'),\ldots, h_{-,n}(\bla')$ is a (non-strictly) decreasing sequence.

Consider integers $m>0$ such that $E^m
L(\bla)=0$ and $\ell\in \{1,\ldots,n\}$ with $h_{-,\ell}(\bla)< m$.
We can always take $\ell=n$, since if
$h_{-,n}(\bla)\geqslant m$, then $E^m L(\bla)\neq 0$. Indeed, $L({}_n\tilde{e}^m\bla)$ appears
as a composition factor in $E^m L(\bla)$,  which one can prove similarly to checking $a_i=b_i$ in the proof of Theorem \ref{main2}.

\begin{Prop}\label{Prop:ext_vanish}
Assume that $\tilde{e}L(\bla')=L(\tilde{e}\bla')$ holds whenever $E^{m-1}L(\bla')=0$ and $|\varrho(\bla')|-|\varrho(\bla)|$
is a multiple of $\alpha_i$. Then, in the above notation, we
have \[\Ext^r(\Delta(\bmu),L(\bla))=0\quad \text{for} \quad r\leqslant h_{-,\ell}(\bmu)-m.\]
\end{Prop}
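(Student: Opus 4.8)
The plan is to run the argument of \cite[Proposition 5.5]{LoHWCI}, the only essential change being that the single string invariant used there must be replaced by the family $h_{-,\ell}$, so that the ``budget'' for the Ext degree is tracked group by group. The induction is on $r$ (with a d\'evissage over the stratification poset $\Xi$ and over the $\te$-string through $\bla$ nested inside it), and it uses freely the hypothesis $\tf L(\bla')=L(\tf\bla')$ for all $\bla'$ with $F^{m-1}L(\bla')=0$, together with its immediate consequences: from the remark preceding the Proposition, $F^mL(\bla)=0$ forces $h_{-,n}(\bla)\le m-1$; and passing to $\tf\bla$ lowers the relevant power of $F$ by one, so the hypothesis keeps applying to the whole $\tf$-string below $\bla$. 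The base case $r=0$ is just $\Hom_\Cat(\Delta(\bmu),L(\bla))=\delta_{\bmu,\bla}\,\C$, which is compatible with the claimed bound.

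The two structural inputs are, first, biadjointness and exactness of $E,F$ --- giving $\Ext^r_\Cat(\Delta(\bmu),FN)\cong\Ext^r_\Cat(E\Delta(\bmu),N)$ and the symmetric statement --- together with the Chuang--Rouquier decomposition $FE\cong EF\oplus\operatorname{id}^{\oplus c}$ (or $EF\cong FE\oplus\operatorname{id}^{\oplus c}$) on a fixed weight space of \cite{CR04,Rou2KM}, which exhibits $L(\bla)$ as a direct summand after applying $E$ and then $F$ (or $F$ then $E$); and second, condition (TPC3): $E\Delta_{\bmu}(M)$ and $F\Delta_{\bmu}(M)$ are $\Delta$-filtered with the prescribed subquotients $\Delta({}_jE_iM)$, $\Delta({}_jF_iM)$, so that iterating, $E^k\Delta(\bmu)$ and $F^k\Delta(\bmu)$ are $\Delta$-filtered with standard subquotients whose labels differ from $\bmu$ only in the components for the chosen copies of $\fg$. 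Combining these with the orthogonality $\dim\Ext^i_\Cat(\Delta(\lambda),\onabla(\mu))=\delta_{i,0}\delta_{\lambda,\mu}$ from Lemma \ref{Lem:cost_filt}(1), every Ext group that arises in the manipulation is rewritten as a sum of Ext groups $\Ext^{r'}_\Cat(\Delta(\bmu''),L(\bla''))$ in which either $r'<r$, or $\bla''$ lies strictly above $\bla$ on its $\te$-string (so a smaller effective ``$m$'' is available for $\bla''$), and the relevant $h_{-,\ell}$ has changed by a tracked amount.

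The heart of the proof is then the bookkeeping. Using Lemma \ref{Lem:cryst_easy} and the induction hypothesis, $\tf L(\bla)=L({}_{j'}\tf\bla)$ where $j'$ indexes the group carrying the leftmost uncrossed $-$ of $\bla$; dually one locates the rightmost uncrossed $+$. One then presents $L(\bla)$ as a quotient of $E^{(e)}L'$, where $L'$ is the top of the $\te$-string through $\bla$ and $e=h_+(\bla)$ (equivalently, uses the surjective counit $FEL(\bla)\twoheadrightarrow L(\bla)$ together with the summand decomposition above), pulls $\Hom_\Cat(\Delta(\bmu),-)$ through this presentation, transfers the functors by adjunction, expands by (TPC3), and applies the inductive bounds. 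Each transfer across a unit or counit costs one unit of Ext degree; each application of $E$ or $F$ to $\Delta(\bmu)$ changes $h_{-,\ell}$ of the standard subquotients that appear by at most one for the relevant $\ell$; and $\alpha_{\ell,m}(\bla,\bmu)$ is precisely the extra unit of Ext degree that is lost exactly when the group $\ell$ is the one whose uncrossed $-$'s are being consumed, it is not yet exhausted ($h_{-,\ell}(\bmu)>h_{-,\ell+1}(\bmu)$), and $h_-(\mu_\ell)\ge h_-(\lambda_\ell)=m$, i.e.\ the boundary group is ``just barely'' able to absorb the $F$'s.

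The main obstacle I expect is exactly this last matching: verifying, step by step through the functorial manipulations, that the decrease of the allowed Ext degree never outruns the decrease of $h_{-,\ell}$, and that the single correction $\alpha_{\ell,m}(\bla,\bmu)\in\{0,1\}$ absorbs all the boundary effects contributed by the $n$ tensor factors (in \cite{LoHWCI} there is one factor and a single, cleaner invariant). A secondary nuisance is the usual one of extracting divided powers $E^{(k)},F^{(k)}$ as summands of $E^k,F^k$ without the multiplicities $r!\binom{r}{m_1,\dots,m_n}$ from (TPC3) interfering, handled by Krull--Schmidt exactly as in the proofs of Lemmas \ref{Lem:Efun_prop} and \ref{Lem:trunc_iso}.
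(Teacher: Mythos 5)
Your plan diverges from the paper's argument at a crucial point, and the place where it diverges is exactly the place where you confess unease (``the main obstacle I expect is\dots''), so this is a genuine gap rather than a stylistic difference. You propose to run the d\'evissage on the \emph{simple} side: present $L(\bla)$ as a quotient of $E^{(e)}L'$ (or use the counit $FEL(\bla)\twoheadrightarrow L(\bla)$), move the operators across $\Ext$, and expand $F^{(e)}\Delta(\bmu)$ by (TPC3). The paper instead does the d\'evissage on the \emph{standard} side, via $\bar{\bmu}:={}_\ell\tf\bmu$: it applies $E$ to $\Delta(\bar{\bmu})$, uses (TPC3) to get a $\Delta$-filtration of $E\Delta(\bar\bmu)$, and isolates the three-term chain $\mathcal{F}_0\subset\mathcal{F}\subset E\Delta(\bar\bmu)$ with $\mathcal{F}/\mathcal{F}_0\cong\Delta(\bmu)^{\oplus h_+(\bar\mu_\ell)}$. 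Lemma~\ref{Lem:h-} sorts the remaining filtration pieces according to whether $h_{-,\ell}$ dropped by one (cases $j<\ell$), grew (case $j>\ell$ or $j=\ell$, $\bmu'\neq\bmu$), and the three inductive hypotheses (i)\nobreakdash--(iii) --- on smaller $m$, on smaller $r$, and on larger $h_{-,\ell}(\bmu)$ --- are then distributed one to each of $\Ext^{r+1}(E\Delta(\bar\bmu)/\mathcal{F},L(\bla))$, $\Ext^{r-1}(\mathcal{F}_0,L(\bla))$, and (via biadjointness) $\Ext^r(\Delta(\bar\bmu),FL(\bla))$. You do not formulate this three-way induction, and without (iii) in particular the pieces coming from $j>\ell$ have no reason to vanish.

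Your route does not close the induction even in outline: if you trade $\Ext^r(\Delta(\bmu),L(\bla))$ for $\Ext^r(E\Delta(\bmu),EL(\bla))$, then the standard subquotients $\Delta(\bmu')$ of $E\Delta(\bmu)$ with $j\le\ell$ have $h_{-,\ell}(\bmu')=h_{-,\ell}(\bmu)-1$, while the simple constituents $L(\bla')$ of $EL(\bla)$ satisfy $F^{m+1}L(\bla')=0$ but in general \emph{not} $F^{m}L(\bla')=0$ --- so $m$ goes up by one at the same time as $h_{-,\ell}$ goes down by one, the target bound $r\le h_{-,\ell}(\bmu')-m'-\alpha$ stays exactly where it started, and no progress is made. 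The paper's choice of $\bar\bmu={}_\ell\tf\bmu$ is precisely what breaks this symmetry: the biadjointness step lands on $\Ext^r(\Delta(\bar\bmu),FL(\bla))$ where $FL(\bla)$ has constituents with $F^{m-1}L(\bla')=0$ (so $m$ goes \emph{down}) while $h_{-,\ell}(\bar\bmu)=h_{-,\ell}(\bmu)-1$, and the proposition's hypothesis supplies case~(i) for exactly this $m-1$ situation. A further mismatch: (TPC3) yields $\Delta$-filtrations after applying $E$ or $F$ to \emph{standard} objects, not to simples, so ``expands by (TPC3)'' after applying $E^{(e)}$ to $L'$ is not available; one would have to push everything across to $F^{(e)}\Delta(\bmu)$ first, at which point you are back in the balanced-budget problem above. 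The correction term $\alpha_{\ell,m}$ is likewise not ``absorbing boundary effects'' in some soft sense --- it enters through the explicit comparison $\alpha_{\ell,m}(\bla,\bmu')\le\alpha_{\ell,m}(\bla,\bmu)$ for $j\ne\ell$, proved using $h_-(\bar\mu_\ell)=h_-(\mu_\ell)-1$, and this step has no analogue in your sketch.
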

Our proof closely follows that of \cite[Proposition 5.5]{LoHWCI}. We will need an analog of \cite[Lemma 5.6]{LoHWCI}.
To state it, we need some more notation. Pick a label $\bmu$. Let $\Lambda_j(\bmu)$
denote the set of labels in the labeling set $\Lambda$ for $\Cat$ consisting of all $\bmu'=(\mu'_1,\ldots,\mu'_\ell)$
such that the projective $P_{\Cat^j}(\mu'_j)$ appears in ${}_jF P_{\Cat^j}(\mu_j)$ and $\mu'_p=\mu_p$ for $p\neq j$.
\begin{Lem}\label{Lem:h-}
Assume that a label $\bmu$ and an integer $\ell$  satisfy $h_{-,\ell}(\bmu)>h_{-,\ell+1}(\bmu)$. Set $\bar{\bmu}:=\,_\ell\!\tilde{f}\bmu$.
For any  $\bmu'\in \Lambda_j(\bar{\bmu})$, the following holds:
\begin{enumerate}
\item If $j<\ell$, then $h_{-,\ell}(\bmu')=h_{-,\ell}(\bar{\bmu})=h_{-,\ell}(\bmu)-1$.
\item If $j=\ell$ and $\bmu'\neq\bmu$, then $h_{-,\ell}(\bmu')>h_{-,\ell}(\bmu)$.
\item If $j> \ell$, then $h_{-,\ell}(\bmu')>h_{-,\ell}(\bmu)$.
\end{enumerate}
\end{Lem}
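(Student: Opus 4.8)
The plan is to turn this into pure crystal combinatorics and then compute. Recall (\cite{LV}) that the crystal of $\Cat^j$ is $B(\nu_j)$; on it $h_-^j(\mu_j)-h_+^j(\mu_j)=\langle\wt(\mu_j),\alpha_i^\vee\rangle$. First I would set up bookkeeping for the $i$-signature. Writing the group-$p$ contribution as $+^{h_+^p(\mu_p)}-^{h_-^p(\mu_p)}$, an easy induction on $k$ shows that the reduced form of the subword coming from groups $k,k+1,\dots,n$ is again of the shape $+^{h_{+,k}(\bmu)}-^{h_{-,k}(\bmu)}$, that $h_{\pm,k}$ depends only on $\mu_k,\dots,\mu_n$, and that
\[
h_{-,k}(\bmu)=h_{-,k+1}(\bmu)+\max\big(0,h_-^k(\mu_k)-h_{+,k+1}(\bmu)\big),\qquad h_{+,k}(\bmu)=h_+^k(\mu_k)+\max\big(0,h_{+,k+1}(\bmu)-h_-^k(\mu_k)\big).
\]
Since $\max(0,x)-\max(0,-x)=x$, subtracting gives the weight identity $h_{-,k}(\bmu)-h_{+,k}(\bmu)=\sum_{q\geqslant k}\big(h_-^q(\mu_q)-h_+^q(\mu_q)\big)$. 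By the first recursion the hypothesis $h_{-,\ell}(\bmu)>h_{-,\ell+1}(\bmu)$ is equivalent to $h_-^\ell(\mu_\ell)>h_{+,\ell+1}(\bmu)$; this shows ${}_\ell\tilde f\mu_\ell\neq 0$ (so $\bar{\bmu}$ is defined), and since $h_-^\ell(\bar{\bmu}_\ell)=h_-^\ell(\mu_\ell)-1\geqslant h_{+,\ell+1}(\bar{\bmu})$ the recursions give $h_{+,\ell}(\bar{\bmu})=h_+^\ell(\bar{\bmu}_\ell)$ and $h_{-,\ell}(\bar{\bmu})=h_{-,\ell}(\bmu)-1$ (the latter being the second equality asserted in (1)).

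The other ingredient is a local statement about $\Cat^j$, the analogue of \cite[Lemma 5.6]{LoHWCI}: if $P_{\Cat^j}(\mu'_j)$ is a summand of ${}_jE\,P_{\Cat^j}(\bar{\bmu}_j)$, then $\mu'_j$ lies in the weight $\wt(\bar{\bmu}_j)+\alpha_i$ (so $h_-^j(\mu'_j)-h_+^j(\mu'_j)=h_-^j(\bar{\bmu}_j)-h_+^j(\bar{\bmu}_j)+2$ for all of them), one has $h_-^j(\mu'_j)\geqslant h_-^j({}_j\tilde e\,\bar{\bmu}_j)=h_-^j(\bar{\bmu}_j)+1$, and equality $h_-^j(\mu'_j)=h_-^j(\bar{\bmu}_j)+1$ holds only for $\mu'_j={}_j\tilde e\,\bar{\bmu}_j$. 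Using that $\Cat^j$ categorifies the \emph{irreducible} $V_j$ with known crystal, this follows from $\Hom(P_{\Cat^j}(\mu'_j),{}_jE P_{\Cat^j}(\bar{\bmu}_j))\cong\Hom({}_jF P_{\Cat^j}(\mu'_j),P_{\Cat^j}(\bar{\bmu}_j))$, i.e. from $[{}_jF L^j(\mu'_j):L^j(\bar{\bmu}_j)]\neq 0$, together with the $\mathfrak{sl}_2$-string bound that such a composition factor $L^j(\bar{\bmu}_j)$ has $h_+^j$-value $\leqslant h_+^j(\mu'_j)+1$, with equality forcing $\bar{\bmu}_j={}_j\tilde f\mu'_j$; combined with the weight relation this is exactly the displayed bound on $h_-^j(\mu'_j)$.

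With these in hand the three cases are short. In (1), $j<\ell$: $\bmu'$ agrees with $\bar{\bmu}$ in all coordinates $\geqslant\ell$, hence $h_{-,\ell}(\bmu')=h_{-,\ell}(\bar{\bmu})=h_{-,\ell}(\bmu)-1$. In (2), $j=\ell$: since ${}_\ell\tilde e\,{}_\ell\tilde f\mu_\ell=\mu_\ell$, the excluded case $\bmu'=\bmu$ is exactly $\mu'_\ell={}_\ell\tilde e\,\bar{\bmu}_\ell$, so for $\bmu'\neq\bmu$ the local statement gives $h_-^\ell(\mu'_\ell)\geqslant h_-^\ell(\bar{\bmu}_\ell)+2=h_-^\ell(\mu_\ell)+1$; as coordinates $\geqslant\ell+1$ are unchanged and $h_-^\ell(\mu'_\ell)>h_{+,\ell+1}(\bmu)$, the first recursion yields $h_{-,\ell}(\bmu')=h_{-,\ell+1}(\bmu)+h_-^\ell(\mu'_\ell)-h_{+,\ell+1}(\bmu)\geqslant h_{-,\ell}(\bmu)+1$. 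In (3), $j>\ell$: now $\bmu'$ and $\bar{\bmu}$ differ only in coordinate $j$, where $h_-^j-h_+^j$ increases by exactly $2$ (all summand labels lie in $\wt(\bar{\bmu}_j)+\alpha_i$), hence by the weight identity $h_{-,\ell}(\bmu')-h_{+,\ell}(\bmu')=h_{-,\ell}(\bar{\bmu})-h_{+,\ell}(\bar{\bmu})+2$. On the other hand the $\ell$-th coordinate of $\bmu'$ is $\bar{\bmu}_\ell$, so the second recursion gives $h_{+,\ell}(\bmu')\geqslant h_+^\ell(\bar{\bmu}_\ell)=h_{+,\ell}(\bar{\bmu})$; combining, $h_{-,\ell}(\bmu')\geqslant h_{-,\ell}(\bar{\bmu})+2=h_{-,\ell}(\bmu)+1>h_{-,\ell}(\bmu)$.

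I expect the genuine obstacle to be the local statement about $\Cat^j$ — in particular the ``equality forces the leading term'' clause, which is precisely where one must invoke the irreducibility of $V_j$ (via the identification $\Cat^j\cong R^{\nu_j}\mmod$) and some structure theory of cyclotomic KLR algebras. Granting that, the tensor-product bookkeeping above is routine, the only delicate point being the repeated use of the hypothesis $h_-^\ell(\mu_\ell)>h_{+,\ell+1}(\bmu)$ to keep the relevant $\max(0,-)$ terms untruncated.
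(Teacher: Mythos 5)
Your argument is correct and follows the same architecture as the paper's: a local $\mathfrak{sl}_2$ fact from \cite[Proposition 5.20]{CR04} (appearing here via the adjunction $\Hom({}_jE P,L)\cong\Hom(P,{}_jF L)$ exactly as you use it), combined with the combinatorics of reduced $i$-signatures for a tensor product crystal; you merely spell out the $\max$-recursions and weight identity where the paper cites \cite[Lemma 5.6]{LoHWCI}. One small note: the ``equality forces the leading term'' clause in your local statement is general $\mathfrak{sl}_2$-categorification theory from \cite{CR04} and does not actually rely on the irreducibility of $V_j$, so the obstacle you anticipate there is not present.
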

\begin{proof}
The first claim follows from $\bmu'_p=\bar{\bmu}_p$ for $p\geqslant \ell$. To prove the second claim we note that
if  $P_{\Cat^j}(\mu'_j)$ appears in ${}_jF P_{\Cat^j}(\mu_j)$, then $h_-(L_{\Cat^j}(\mu'_j))\geqslant h_-(L_{\Cat^j}(\mu_j))+1$
with the equality if and only if $\mu'_j={}_{j}\tilde{f} \mu_j$. This follows from \cite[Proposition 5.20]{CR04} (applied
to ${}_jE$ using $\Hom({}_jF P_{\Cat^j}(\mu_j'), L_{\Cat^j}(\xi))=\Hom(P_{\Cat^j}(\mu_j'), {}_jE L_{\Cat^j}(\xi))$).
Now the claims of (2) and (3) become purely combinatorial and have
proofs like those of \cite[Lemma 5.6]{LoHWCI}.
\end{proof}

\begin{proof}[Proof of Proposition \ref{Prop:ext_vanish}]
We may assume that $\ell=n$ or $h_{-,\ell}(\bmu)>h_{-,\ell+1}(\bmu)$, since
otherwise the desired equality is a special case of the same claim for $\ell+1$.
In particular,  $\bar{\bmu}:=\,_\ell\!\tilde{f}\bmu$ is not 0. Fix $r\leqslant h_{-,\ell}(\bmu)-m$.
We are going to prove $\Ext^r(\Delta(\bmu), L(\bla))=0$. Below all elements $\bla'$ we consider
satisfy the condition that $|\varrho(\bla')|-|\varrho(\bla)|$
is a multiple of $\alpha_i$. For all $\bmu'$ we consider, we assume the difference of the weights of $\mu_j',\mu_j$
is a multiple of $\alpha_i$ for all $j$.

We can make the following assumptions in the proof:
\begin{itemize}
\item[(i)] For any $\bla'$ with $E^{m-1}L(\bla')=0, $ and any $\bmu'$, we have $\Ext^{r'}(\Delta(\bmu'), L(\bla'))=0$ for $r'\leqslant h_-(\bmu')-m$.
\item[(ii)] For 
any $\bmu'$ with $h^\ell_{-}(\mu'_\ell)<
h^\ell_{-}(\mu_\ell), h^j_-(\mu'_j)\geqslant h^j_-(\mu_j)$ for all $j>\ell$ with at least one strict inequality, we have $\Ext^{r'}(\Delta(\bmu'),L(\bla))=0$
for any $r'\leqslant h_{-,\ell}(\bmu')-m$.
\item[(iii)] For any $r'<r$, 
$\bmu'$ with $\mu'_j=\mu_j$ for $j>\ell$, $h_{-,\ell}(\bmu')\geqslant m+r'$, we have $\Ext^{r'}(\Delta(\bmu'), L(\bla))=0$.
\end{itemize}

If (i) does not hold, we can replace $m$ with $m-1$, note that we can take $\ell=1$ thanks to the assumption on $m$
in the proposition. If (ii) or (iii) do not hold, we replace $(\bmu,r)$ with $(\bmu',r')$. This procedure terminates.
Indeed, the sum $\sum_{j>\ell}h^j_-(\mu_j)$ weakly increases on every step (and strictly increases if we do a replacement as in (ii)). On the other hand, the sum cannot increase indefinitely, this follows from our restriction on $\bmu'$
and the fact that all tensor factors $V(\nu_j)$ are highest weight and integrable. So we can apply a replacement described
in (ii) only finitely many times. On the other hand, in (iii), we decrease $r$. This establishes the termination.



Consider the object $F \Delta(\bar{\bmu})$ and its filtration described in (TPC3). Let $\mathcal{F}$
denote the filtration subobject with successive filtration quotients $\Delta({}_{j}F P_{\varrho(\bar{\bmu})}(\bar{\bmu})), j\leqslant \ell$.
The object $\mathcal{F}$ has a quotient isomorphic to
$h_+(\bar{\mu}_\ell)$ copies of $\Delta(\bmu)$.
Let $\mathcal{F}_0$ denote the kernel of this quotient. Then all standards $\Delta(\bmu')$ occurring in the filtration of
$\mathcal{F}_0$ satisfy (1) or (2) of Lemma \ref{Lem:h-} and $\bmu'$ is as in (iii) above. Similarly, all standards
$\Delta(\bmu')$ in the filtration of
$F\Delta(\bar{\bmu})/\mathcal{F}$ satisfy (3) of Lemma \ref{Lem:h-}, and $\bmu'$ is as in (ii).

It follows  from (ii) that \begin{equation}\label{eq:Ext_vanish1}\Ext^{r+1}(F\Delta(\bar{\bmu})/\mathcal{F},L(\bla))=0.\end{equation}
It follows from (iii)  that
\begin{equation}\label{eq:Ext_vanish2}\Ext^{r-1}(\mathcal{F}_0,L(\bla))=0.\end{equation}

We claim that \begin{equation}\label{eq:Ext_vanish3}\Ext^r(F
  \Delta(\bar{\bmu}),L(\bla))=0.\end{equation}
Indeed, by the biadjointness, $\Ext^r(F \Delta(\bar{\bmu}),L(\bla))=
\Ext^r(\Delta(\bar{\bmu}),E L(\bla))$. All simple constituents $L(\bla')$ of $E L(\bla)$ satisfy the assumptions
of (i). Since $h_{-,\ell}(\bar{\bmu})=h_{-,\ell}(\bmu)-1$ and $r\leqslant (h_{-,\ell}(\bmu)-1)-(m-1)=h-m$, we deduce (\ref{eq:Ext_vanish3})
from (i).

Using standard short exact sequences for $\Ext$'s (compare with the proof of \cite[Proposition 5.5]{LoHWCI})
together with (\ref{eq:Ext_vanish1}),(\ref{eq:Ext_vanish2}),(\ref{eq:Ext_vanish3}), we see that
$\Ext^r(\mathcal{F}/\mathcal{F}_0, L(\bla))=0$. Since $\mathcal{F}/\mathcal{F}_0$ is the direct sum of several copies of
$\Delta(\bmu)$, we are done.
\end{proof}

\subsection{Proof of the main theorem}
Our proof basically repeats that in \cite{LoHWCI}.  Fix a weight $\mu$ for $\g$.  Of course, we can restrict our
 attention to  the weight subcategories with weights of the form
  $\mu+r\alpha_i, r\in \Z$.

Now, let us prove the claim that our crystal operators agree by induction on
  $w(\bla)=\langle\varrho(\bla),\alpha_i^\vee\rangle$. The set of
  values of $w(\bla)$ is finite since our representations are
  integrable.  In order to organize our induction, we consider two statements:
  \begin{itemize}
  \item [$(\phi_w)$] For all
  $L(\bla)$ with $w(\bla)\geqslant w$, we have that $\tilde{e}L(\bla)=L(\tilde{e}\bla)$.
  \item[$(\eta_w)$] We have both $(\phi_{w+1})$ and that $h_-(\bla)=h_-(L(\bla))$ for all $\bla$ with
    $w(\bla)\geqslant w$.
  \end{itemize}
Obviously, proving $(\phi_w)$ for all $w$ will complete the proof of
Theorem \ref{Thm:cryst}; we
will proceed in establishing $(\phi_{w+1})\Rightarrow (\eta_w)
\Rightarrow (\phi_w)$.

The base of induction is the statement $(\phi_w)$ where we take $w$ to be
  the minimal amongst those realized.  In this case, both sides of the
  equality are 0.

\begin{proof}[Proof that $(\phi_{w+1})\Rightarrow (\eta_w)$]

First, we establish the inequality $h_-(L(\bla))\geqslant h_-(\bla)$. Assume the contrary, $h_-(L(\bla))<h_-(\bla)$.
Then  $h_{-}(\bla)>0$ and so
  the element $\bar{\bla}:=\tilde{e}\bla$ is nonzero. Let $j\in
  \{1,\ldots,n\}$ be the unique index such that $\bar{\bla}={}_j\tilde{e}\bla$. Consider the object
  $F\Delta(\bar{\bla})$. Form the filtered subobjects $\mathcal{F}_0,\mathcal{F}$
  as in the proof of Proposition \ref{Prop:ext_vanish} so that
  $\mathcal{F}/\mathcal{F}_0$ is the direct sum of several copies of
  $\Delta(\bla)$.

Pick $\Delta(\bmu)$ appearing in
  $F\Delta(\bar{\bla})/\mathcal{F}$ and set $m=h_-(\bla), \ell=j$ so
  that $h_{-}(\bla)=h_{-,\ell}(\bla)$.   By (3) of Lemma \ref{Lem:h-},
  $h_{-,\ell}(\bmu)>m$. The assumptions of Proposition
  \ref{Prop:ext_vanish} are satisfied. It follows that
  $\Ext^1(\Delta(\bmu),L(\bla))=0$.
Thus,  $L(\bla)$ lies in the head of
  $F\Delta(\bar{\bla})$.

By \cite[Lemma 5.11]{CR04}, this implies that
  $h_-(L(\bla))\geqslant h_-(L(\bar{\bla}))+1$. By the inductive
  assumption, \[h_-(L(\bar{\bla}))=h_-(\bar{\bla})=h_-(\bla)-1.\] So
  we see that $h_-(L(\bla))\geqslant h_-(\bla)$, our desired
  inequality in
  the weight subcategories with $w(\bla)=w$.

Note that the
  crystal of $\Cat$ is isomorphic to the crystal of $V_1\otimes
  V_2\otimes\ldots\otimes V_n$ by \cite[5.55]{BeKa}
  combined with \cite[Proposition 5.20]{CR04}.
  Our crystal on $\Lambda$ is also isomorphic to that of $V_1\otimes
  V_2\otimes\ldots\otimes V_n$, since it is the standard tensor
  product crystal structure.  In particular, we see that for any
  $w,h$ the number of $\bla$ with $w(\bla)=w$ and $h_-(L(\bla))=h$
  equals to the number of $\bla$ with $w(\bla)=w$ and
  $h_-(\bla)=h$. So the inequality $h_-(L(\bla))\geqslant h_-(\bla)$
  must be an equality for all $\bla$ with
  $w(\bla)=w$; that is, the implication $(\phi_{w+1})\Rightarrow(\eta_w)$ is now proved.
\end{proof}
\begin{proof}
[Proof that $(\eta_w)\Rightarrow
 (\phi_w)$]  It suffices to prove instead that $\tilde{f}L(\bla')=L(\tilde{f}\bla')$
  for all $\bla'$ with $w(\bla')=w-2$.

  First of all, we have that $\tilde{f}L(\bla)=0$ if and only if $\tilde{f}\bla=0$. Indeed, since $h_-(L(\bla))=h_-(\bla)$, we know that
  $h_+(L(\bla))=h_+(\bla)$. So we may assume that $\tilde{f}L(\bla)\neq 0\neq \tilde{f}\bla$.

  We say that $\bla$ and $\bla'$ lie in the same {\bf $i$-family} if one can
  obtain $\bla$ from $\bla'$ by applying maps ${}_{j}\tilde{e}_i$ and
  ${}_{j}\tilde{f}_i$. We will order elements of the family using
  reverse lexicographic (i.e. reading from the right) order $\succ$ on the
  $i$-signatures with $+> -$.
  In our proof we may assume that
  $\tilde{f}L(\bla')=L(\tilde{e}\bla')$ is proved for all $\bla'$ such
  that $h_-(\bla')=h_-(\bla)$ and $\tilde{f}\bla\succ\tilde{f}\bla'$.

  Now, fix $\bla$ and define  $\tilde{\bla}$ by
  $\tilde{f}L(\bla)=L(\tilde{\bla})$. We already know that
  \[h_-(\tilde{f}\bla)=h_-(\tilde{\bla})=h_-(\bla)+1.\] By
  Lemma \ref{Lem:cryst_easy}, we know that $\tilde{f}\bla$ is amongst the labels
  ${}_{j}\tilde{f}\bla$ with $h_-({}_{j}\tilde{f}\bla)=h_-(\bla)+1$.

  Assume that $\tilde{\bla}\neq \tilde{f}\bla$.  Since  $h_-(\tilde{\bla})-1=h_-(\bla)\geqslant 0$, we see that
  $\bla':=\tilde{e}\tilde{\bla}\neq 0$ and so $\tilde{\bla}=\tilde{f}
  \bla'$.  If  $\tilde{f}\bla\succ \tilde{\bla}=\tilde{f}
  \bla'$, then by the inductive
  assumption \[\tilde{f}L(\bla')=L(\tilde{f}\bla')=L(\tilde{\bla})=\tilde{f}L(\bla)\]
  which is impossible.

Thus, we must have  $\tilde{\bla}\succ \tilde{f}\bla$.
But then Lemma \ref{Lem:h-}
  implies $h_-(\tilde{\bla})> h_-(\tilde{f} \bla)$, which is another
  contradiction. This proves $\tilde{f}L(\bla)=L(\tilde{f}\bla)$, and
  thus implies $(\phi_w)$.
  This closes the circle of the induction and completes the proof.
\end{proof}

%

\bibliography{./gen}
\bibliographystyle{amsalpha}

\end{document}